\documentclass[11pt]{article}

%=======================================================
% usepackage
\usepackage{latexsym}
\usepackage{amssymb}
\usepackage{amsthm}
\usepackage{amscd}
\usepackage{amsmath}
\usepackage{mathrsfs}
\usepackage{graphicx}
\usepackage{hyperref}
\usepackage{shuffle}

\usepackage[all]{xy}
\input xy \xyoption{frame}
\xyoption{dvips}

\usepackage[colorinlistoftodos]{todonotes}
\usetikzlibrary{chains,scopes,decorations.markings}

%=========================================================================
% theorem declarations
\theoremstyle{definition}
\newtheorem* {theorem*}{Theorem}
\newtheorem* {conjecture*}{Conjecture}
\newtheorem{theorem}{Theorem}[section]

\theoremstyle{definition}

\newtheorem* {example*}{Example}

\newtheorem{lemma}[theorem]{Lemma}
\theoremstyle{definition}
\newtheorem{definition}[theorem]{Definition}
\theoremstyle{definition}

\newtheorem{conjecture}[theorem]{Conjecture}
\newtheorem{proposition}[theorem]{Proposition}
\newtheorem{corollary}[theorem]{Corollary}

\newtheorem{remark}[theorem]{Remark}
\theoremstyle{definition}
\newtheorem {example}[theorem]{Example}
\theoremstyle{definition}

\theoremstyle{definition}

\theoremstyle{definition}

\theoremstyle{definition}

\newcommand{\dense}{\mathsf{dense}}

\def\modu{\ (\mathrm{mod}\ }

\def\({\left(}
\def\){\right)}

\newcommand{\CC}{\mathbb{C}}

\newcommand{\cP}{\mathcal{P}}

\newcommand{\cO}{\mathcal{O}}

\newcommand{\cS}{\mathcal{S}}
\newcommand{\cI}{I}

\def\cS{\mathcal{S}}

\def\cW{\mathcal{W}}

\def\NN{\mathbb{N}}

\def\CC{\mathbb{C}}

\def\ZZ{\mathbb{Z}}
\def\Aut{\mathrm{Aut}}

\def\GL{\mathrm{GL}}

\def\cyc{\operatorname{cyc}}

\newcommand{\supp}{\mathrm{supp}}

\newcommand{\cN}{\mathcal{N}}

\def\FC{\operatorname{FC}}

\def\fk{\mathfrak}

\def\barr{\begin{array}}
\def\earr{\end{array}}
\def\ba{\begin{aligned}}
\def\ea{\end{aligned}}
\def\be{\begin{equation}}
\def\ee{\end{equation}}

\def\Cyc{\mathrm{Cyc}}
\def\Fix{\mathrm{Fix}}

\def\qquand{\qquad\text{and}\qquad}
\def\quand{\quad\text{and}\quad}

\def\quord{\quad\text{or}\quad}

\def\inv{\operatorname{inv}}

\def\I{\cI}

\def\hs{\hspace{0.5mm}}

\def\id{\mathrm{id}}
\def\PP{\mathbb{Z}_{>0}}

\def\fkS{\fk S}

\def\ben{\begin{enumerate}}
\def\een{\end{enumerate}}

\def\cE{\mathcal E}

\def\hs{\hspace{0.5mm}}

\newcommand{\fpf}{{\mathsf{fpf}}}

\def\Des{\operatorname{Des}}
\def\NDes{\operatorname{NDes}}

\def\ellhat{\hat\ell}

\newcommand{\std}{\operatorname{std}}

\newcommand{\SO}{\operatorname{SO}}
\renewcommand{\O}{\operatorname{O}}
\newcommand{\Sp}{\operatorname{Sp}}

\newcommand{\rank}{\operatorname{rank}}

\def\arcstart{\ \xy<0cm,-.06cm>\xymatrix@R=.1cm@C=.10cm }
\newcommand{\arcstartc}[1]{\ \xy<0cm,-.15cm>\xymatrix@R=.1cm@C=#1cm}

\def\ellhat{\hat\ell}

\def\Neg{\operatorname{Negate}}

\def\NCSM{\textsf{NCSP}}

\def\bar{\overline}

\def\O{\mathrm{O}}

\newcommand{\W}{W}
\newcommand{\WD}{W^{+}}

\usepackage{mathtools}
\usepackage{tikz-cd}

\definecolor{darkred}{rgb}{0.7,0,0} % darkred color
\newcommand{\defn}[1]{{\color{darkred}\emph{#1}}}

\newcommand{\FB}{F^{\mathrm{B}}}
\renewcommand{\FC}{F^{\mathrm{C}}}
\newcommand{\FD}{F^{\mathrm{D}}}

\newcommand{\fkSB}{\fkS^{\mathrm{B}}}
\newcommand{\fkSC}{\fkS^{\mathrm{C}}}
\newcommand{\fkSD}{\fkS^{\mathrm{D}}}

\newcommand{\ssh}{\mathsf{sh}}
\newcommand{\shD}{\sh_{\mathrm{D}}}
\newcommand{\ccM}{\overline{C}}
\newcommand{\cycles}{C}
\newcommand{\cRD}{\mathcal{R}^{\mathsf{D}}}

\newcommand{\rankD}{\rank_{\mathsf{D}}}

\newcommand{\es}{\mathrm{es}}
\newcommand{\nb}{\mathsf{nb}}

\newcommand{\cMM}{\mathcal{M}}

\newcommand{\points}{\mathsf{points}}
\newcommand{\APoints}{\mathrm{Points}}
\newcommand{\Points}{\mathrm{Points}_{>0}}
\newcommand{\defequals}{=}
\newcommand{\triv}{\mathsf{triv}}

\newcommand{\cM}{\mathsf{Matchings}}
\newcommand{\sh}{\mathsf{shape}}
\newcommand{\Aligned}{\mathsf{Aligned}}
\renewcommand{\bot}{\mathsf{gen}}

\newcommand{\gammaDense}{\gamma_{\mathsf{dense}}}
\newcommand{\deltaDense}{\delta_{\mathsf{dense}}}
\newcommand{\epsilonDense}{\epsilon_{\mathsf{dense}}}
\newcommand{\etaDense}{\hat\epsilon_{\mathsf{dense}}}

\newcommand{\des}{\mathrm{des}}
\newcommand{\asc}{\mathrm{asc}}

\newcommand{\rev}{\mathsf{rev}}
\newcommand{\Triv}{\mathrm{Triv}}
\newcommand{\NCSP}{\mathsf{NCSP}}
\newcommand{\Twist}{\mathrm{Twist}}

\newcommand{\bei}{\begin{itemize}}
\newcommand{\eei}{\end{itemize}}

\newcommand{\TwistCyc}{\mathrm{TwistedCyc}}

\newcommand{\inc}{\mathsf{embed}}

\newcommand{\cEABrion}{\mathcal{E}^G_K}
\newcommand{\cABrion}{\cW^G_K}%{\cW_{K}^{G}}
\newcommand{\RSphi}{\psi^G_K}
\newcommand{\phiRS}{\RSphi}
\newcommand{\tphiRS}{\phi^G_K}
\renewcommand{\neg}{\mathsf{negate}}

\newcommand{\twist}{\mathsf{twist}}

\newcommand{\NRes}{\mathrm{NRes}}

\newcommand{\Ad}{\mathrm{Ad}}
\newcommand{\AI}{\mathrm{AI}}
\newcommand{\AII}{\mathrm{AII}}
\newcommand{\AIII}{\mathrm{AIII}}
\newcommand{\BI}{\mathrm{BI}}
\newcommand{\CI}{\mathrm{CI}}
\newcommand{\CII}{\mathrm{CII}}
\newcommand{\DI}{\mathrm{DI}}
\newcommand{\DII}{\mathrm{DII}}
\newcommand{\DIII}{\mathrm{DIII}}

\newcommand{\Ifpf}{\I_\fpf}

\newcommand{\cA}{\mathcal{E}}
\newcommand{\cAfpf}{\cA_\fpf}
\newcommand{\cAfpfA}{\cAA_\fpf}

\newcommand{\cAfpfC}{\cAfpf^{\mathsf{C}}}
\newcommand{\cAfpfD}{\cAfpf^{\mathsf{D}}}

\newcommand{\cAD}{\cA^{\mathsf{D}}}
\newcommand{\cAC}{\cA^{\mathsf{C}}}
\newcommand{\cAB}{\cA^{\mathsf{B}}}
\newcommand{\cAA}{\cA^{\mathsf{A}}}

\newcommand{\precsimD}{\precsim_{\mathsf{D}}}

\newcommand{\llD}{\ll_{\mathsf{D}}}

\def\widehat{\hat}
\newcommand{\D}{\mathsf{D}}
\newcommand{\Jfpf}{I^{\mathsf{npf}}_\fpf}
\newcommand{\type}{{\mathrm{X}}}

\newcommand{\incDIIIEven}{\inc_{\DIII}^{\text{Even}}}
\newcommand{\incDIIIDI}{\inc_{\DIII}^{\DI}}
\usepackage{fullpage}

\numberwithin{equation}{section}
\allowdisplaybreaks[1]
\UseCrayolaColors

% I believe this has something to do with footnotes
\makeatletter
\renewcommand{\@makefnmark}{\mbox{\textsuperscript{}}}
\makeatother

\begin{document}
\title{Brion atoms for classical types}
\author{
    Eric Marberg\\
    Department of Mathematics \\
    HKUST \\
    {\tt eric.marberg@gmail.com}
}

\date{}

\maketitle

\begin{abstract}
Let $G$ be a classical group defined over the complex numbers with a Borel subgroup $B$.
Choose a holomorphic involution of $G$ and let $K$ be its set of fixed points.
The group $K$ acts on the flag variety $G/B$ with finitely many orbits
and Brion has derived a general formula for the cohomology classes of the corresponding orbit closures as linear combinations of Schubert classes.
This article provides a uniform description of the sets of Weyl group elements (which we refer to as Brion atoms) that index the terms in this formula.
This builds on prior work addressing types A, B, and C. The main novelty of our results is a thorough treatment of type D.
As one application, we introduce a notion of involution Schubert polynomials for all classical types and present several conjectures related to these objects.
\end{abstract}

\tableofcontents

\section{Introduction}

Brion \cite{Brion2001} has derived a general cohomology formula for
the closures of the orbits of a  {symmetric subgroup} 
acting on a complete flag variety. This formula expresses the cohomology class of each orbit closure as a linear combination of Schubert classes
with coefficients that are powers of two.
This article is concerned with the combinatorial problem of describing the terms in this formula
in an explicit and uniform way for all classical types. The rest of this introduction provides 
a brief explanation of Brion's result in order to summarize our main theorems.
%This continues prior work \cite{BurksPawlowski,CJW,CU1,CU2,HM,HMP2} which addresses types A, B, and C. 

\subsection{Symmetric subgroups}

Let $G$ be a connected, simple algebraic group defined over the complex numbers. Assume $G$ has a holomorphic automorphism $\theta$ that is an involution, meaning that $\theta=\theta^{-1}$. 

Fix a Borel subgroup $B$ containing a torus $T$
and assume that both groups are preserved by $\theta$. 
The subgroup of fixed points 
$ K = G^\theta \defequals  \{ g \in G : g=\theta(g)\}$
acts on the complete flag variety $G/B$ with finitely many orbits \cite{Mat79} and 
the group $K$ is called a \defn{symmetric subgroup} of $G$.

Let $W=N_G(T)/T$ be the Weyl group of $G$. Write
 $S$ for its set of simple generators, $\ell : W\to \NN$ for its Coxeter length function, and $w_0 \in W$ for the unique longest element.
 Because $T$ is $\theta$-stable, the involution $\theta \in \Aut(G)$ descends to an automorphism $\theta\in \Aut(W)$ which preserves $S$. 
It will be convenient to introduce the notation 
\be \Theta \defequals  \theta|_W \circ \Ad(w_0)= \Ad(w_0)\circ \theta|_W \in \Aut(W)\ee for the composition of $\theta$
with the inner automorphism of $W$ induced by $w_0$.

The groups $G$ and $K$ that can arise from this setup have been completely classified \cite{MO90}.
 Table~\ref{tbl1}
shows the possibilities for $(G,K)$ when $G$ is classical, along with the  automorphism $\Theta \in \Aut(W)$.
By a \defn{classical group}, we mean either the general linear group
 $\GL(n+1)=\GL(n+1,\CC)$
 for any positive integer $n$,
the special orthogonal group $\SO(n)=\SO(n,\CC)$ for any integer $n\geq 2$, or the symplectic group $\Sp(n)=\Sp(n,\CC)$ for any even positive integer $n$. 

We identify the respective Weyl groups of $\GL(n+1)$, $\SO(2n+1)$ and $\Sp(2n)$, and $\SO(2n)$
with the symmetric group $S_{n+1}$, the signed symmetric group $\W_n$, and the even-signed symmetric group $\WD_n$; see Section~\ref{per-sect} for our precise conventions.
As Coxeter groups, both $S_{n+1}$ and $\WD_n$ have nontrivial diagram automorphisms, which we respectively denote by $\ast$ and $\diamond$.

\def\hhline{\\ & &&&&&  \\ [-4pt]\hline & & &&&& \\ [-4pt]}
\def\gap{\\[-4pt]&&&&&&\\}
\begin{table}[h]
\begin{center}
{\small
\begin{tabular}{| l | l | l | l | l |l | l |}
\hline&&&&&& \\[-4pt]
Type & Parameters &  $G$ & $K=G^\theta$ & $W$ & $\Theta$ & Reference   
\hhline
AI & $n\in \NN$ & $\GL(n+1)$    & $\O(n+1)$ & $S_{n+1}$ & $\id$ &  \cite[\S2.2]{Wyser} 
\gap
AII  & $n\in \NN$ odd & 
%$\GL(n+1)$  
& $\Sp(n+1)$ & 
%$S_{n+1}$
& $\id$ &  \cite[\S2.4]{Wyser} 
\gap
AIII & $n+1=p+q$ &% $\GL(n+1)$   
& $ \GL(p)\times \GL(q) $ 
& 
%$S_{n+1}$
& $\ast$ &  \cite[\S2.1]{Wyser} 
\hhline
BI & $2n+1=p+q$ & $\SO(2n+1)$ & $\mathrm{S}(\O(p)\times \O(q))$  & $\W_n$ & $\id$ &  \cite[\S3.1]{Wyser} \hhline
CI & $n\in \NN$ & $\Sp(2n)$   & $\GL(n)$ & $\W_n$ & $\id$ &  \cite[\S4.2]{Wyser} 
\gap
CII  & $2n=p+q$  with $p$ even 
&  %$\Sp(2n)$ 
& $\Sp(p)\times \Sp(q)$ & 
%$\W_n$ 
& $\id$ &  \cite[\S4.1]{Wyser} 
%\\  & with $p$ even & & & & & 
\hhline
DI  & $2n=p+q$ with $p+n$ even 
& $\SO(2n)$ & $ \mathrm{S}(\O(p) \times \O(q))$  & $\WD_n$ & $\id$ &  \cite[\S5.1, \S5.3]{Wyser}  
%\\  & with $p+n$ even  & & & & & 
\gap
DII  & $2n=p+q$ with $p+n$ odd 
&  %$\SO(2n)$ 
& $ \mathrm{S}(\O(p) \times \O(q))$  
& %$\WD_n$ 
& $\diamond$ &  \cite[\S5.1, \S5.3]{Wyser} 
%\\  & with $p+n$ odd  & & & & & 
\gap
DIII & $n\in \NN$ even & %$\SO(2n)$ 
& $\GL(n)$ 
& 
%$\WD_n$
& $(\diamond)^n$ &  \cite[\S5.2]{Wyser} 
%\gap
% & $n\in \NN$ odd & %$\SO(2n)$ 
%&  $\GL(n)$  
%& 
%%$\WD_n$
%& $\diamond$ & 
\gap\hline
\end{tabular}}
\end{center}
\caption{
Symmetric subgroups $K=G^\theta$ of rank $n$ classical groups $G$,
 with the Weyl group $W$ and associated automorphism $\Theta = \Ad(w_0)\circ \theta|_W$.
We require $p,q\in \NN$ in all cases. 
%The last column provides references in Wyser's thesis \cite{Wyser} for the parametrization of all $K$-orbits in $G/B$.
Our type names mostly follow \cite{MO90}, except in types DI and DII where they differ slightly.
Types DI/DI$'$ in \cite{MO90} correspond to our types DI/DII when $n$ is even and to DII/DI when $n$ is odd.
}\label{tbl1}
\end{table}

 \subsection{Brion's cohomology formula}\label{brion-intro-sect}
%\subsection{Schubert varieties and Schubert classes}

Besides $K$, the Borel subgroup $B$ also acts on the flag variety $G/B$ with finitely many orbits. 
%By definition,
%each element of $G/B$ is a left coset of the form $gB$ for some $g \in G$, and 
%any $B$-orbit in $G/B$ is a set of such cosets.
%Every $B$-orbit in $G/B$ contains at least one element of the form $wB$ with $w \in N_G(T)$,
%and  if we have two elements $v,w \in N_G(T)$ then the cosets $vB$ and $wB$ can only belong to the same $B$-orbit if $vT = wT$, meaning that $v$ and $w$ represent the same element of  $W = N_G(T) /T$.
%In this sense, 
The finite set of $B$-orbits in $G/B$ is naturally in bijection with  
the Weyl group $W$,
and 
we write $BwB/B$ for the $B$-orbit in $G/B$ corresponding to $w \in W$.
%Explicitly, this is the orbit of any $g \in N_G(T)$ with $gT = w$.

The flag variety $G/B$ is a homogeneous space and so is endowed with the Zariski topology.
The closures $\overline{BwB}/B \supseteq BwB/B$ are called \defn{Schubert varieties}.
We adopt the indexing convention
\be\label{Xw-eq} X_w = \overline{B  w_0w  B}/B\quad\text{for $w \in W$}\ee
where $w_0 \in W$ is the longest element in the Weyl group.
%This notation aligns with the following property:
% the set-wise containment $X_v \subseteq X_w$ holds for two elements $v,w \in W$ if and only if we have $v\leq w$ where $\leq$ denotes the \defn{(strong) Bruhat order} on $W$.

Let $H^*(G/B)$ denote the cohomology ring of the flag variety $G/B$ with integer coefficients.
Each closed subset $X$ of $G/B$ determines a cohomology class denoted $[X]\in H^*(G/B)$.
The corresponding \defn{Schubert classes} $[X_w]$ for $w \in W$
turn out to be a $\ZZ$-basis for $H^*(G/B)$.
%We refer to the elements $[X_w] \in H^*(G/B)$ as  \defn{Schubert classes}.

%\subsection{Brion's cohomology formula}\label{brion-intro-sect}

%Recall that the subgroup $K= G^\theta = \{g \in G : \theta(g)=g\}$ acts on $G/B$ with finitely many orbits.
Denote the finite set of $K$-orbits in $G/B$ as $\{\cO_\gamma\}_{\gamma \in \Gamma^G_K}$
where $\Gamma^G_K$ is some index set.
For each of the classical types in Table~\ref{tbl1}, there is a standard way of identify  $\Gamma^G_K$ with an explicit set
of combinatorial objects (specifically, certain matchings with signs attached to isolated vertices) \cite{MO90,Yam97}.
These indexing conventions are reviewed in Section~\ref{index-sect}.

For each $\gamma \in \Gamma^G_K$ let $Y_\gamma = \overline{\cO_\gamma}\subseteq G/B$
be the Zariski closure of the $K$-orbit $\cO_\gamma$.
%As $Y_\gamma$ is a closed set, it has an associated cohomology class $[Y_\gamma]$, and  
Then there are unique integers $c_{\gamma w} \in \ZZ$ expressing the associated  cohomology class
$ [Y_\gamma] = \sum_{w \in W} c_{\gamma w} [X_w]$
as a linear combination of Schubert classes. % are a $\ZZ$-basis for the cohomology ring $H^*(G/B)$.

It is a natural and interesting problem to compute the coefficients $c_{\gamma w}$.
In an abstract sense, this task is completely solved by work of Brion \cite{Brion2001},
which derives a general formula 
\be\label{brion}
 [Y_\gamma] = \sum_{w\in \cABrion(\gamma)} 2^{d_\gamma(w)} [X_{w^{-1}}] \in H^*(G/B)\ee
where $\cABrion(\gamma)$ is a certain subset of equal-length elements of $W$ and $d_\gamma : \cABrion(\gamma) \to \{0,1,2,\dots\}$ is a certain map.
To define these items precisely, we need to review
the \defn{weak order graph} corresponding to $(G,K)$ from \cite{Brion2001}. 

This graph  has vertex set $\Gamma^G_K$
and directed edges labeled by simple generators  $s\in S$. 
The edges are described as follows.
Fix $s \in S$,  write $\alpha$ for the corresponding simple root, let $P_\alpha$ be the minimal parabolic subgroup of $G$ of type $\alpha$ containing $B$,
and define $\pi_\alpha$ to be the projection $ G/B \to G/P_\alpha$.   
For each $\gamma \in \Gamma^G_K$, the set $\pi_\alpha^{-1}(\pi_\alpha(\cO_\gamma))$ contains a unique dense $K$-orbit \cite[\S1.4]{Wyser}.
The weak order graph has an edge 
$\beta \xrightarrow{s} \gamma$
whenever the dense $K$-orbit in $\pi_\alpha^{-1}(\pi_\alpha(\cO_\gamma))$  is $\cO_\beta$ 
for an index $\gamma \neq \beta \in \Gamma^G_K$.
If there is an edge of this form then the restriction of   $\pi_\alpha$ to the closure
$Y_\gamma = \overline{\cO_\gamma}$ has degree $1$ or $2$ \cite[\S1]{Brion2001}. % \cite[\S1.4]{Wyser} (see also \cite[\S1]{Brion2001}).
The \defn{degree} of the edge $\beta \xrightarrow{s}\gamma$ is the degree of the restricted map  $\pi_\alpha |_{Y_\gamma}$.

Each weak order graph  has a unique source vertex $\gammaDense \in \Gamma^G_K$,
which indexes the unique $K$-orbit whose closure $Y_{\gammaDense}$ is $G/B$.
Given some $\gamma \in \Gamma^G_K$
consider any path 
\[\gammaDense=\gamma^0 \xrightarrow{s_1}\gamma^1\xrightarrow{s_2} \cdots \xrightarrow{s_k}\gamma^k= \gamma\] in the weak order graph that starts at $\gammaDense$ and ends at $\gamma$. Then $w=s_k\cdots s_2s_1$
 is always a reduced expression for an element of the Weyl group,
 and the set of \defn{Brion atoms} $\cABrion(\gamma)$ in \eqref{brion} consists of   all elements $w$ that arise from paths in this way.
Similarly, the number of degree 2 edges in the chosen path depends only on $w$,
and the map $d_\gamma$ in \eqref{brion} assigns $w$ to this number.

\begin{remark}
In our definition, the direction of weak order edges is reversed compared to \cite{Brion2001,Wyser},
and our definition of $\cABrion(\gamma)$ requires us to invert an index in Brion's formula \eqref{brion}.
However, compared with its alternative, these conventions are more compatible with the Coxeter combinatorics 
that we will use later to describe the elements of $\cABrion(\gamma)$ in all classical types.
\end{remark}

%\begin{example}
%\Eric{todo}
%\end{example}

Brion's formula tells us some nontrivial information about the coefficients $c_{\gamma w}$. For example, they are each zero or integral powers of $2$. However, while completely general,
% and ``combinatorial'' (when expressed as a sum over all paths from $\gamma$ to $\gammaDense$ in the weak order graph), 
the formula \eqref{brion} is not very explicit.
It is not clear, for example, 
when the Schubert expansion of $[Y_\gamma]$ is multiplicity-free in the sense 
of having all coefficients $c_{\gamma w} \in \{0,1\}$, or when it consists of just a single term.

The goal of this article is to explain a more direct statement of Brion's formula for the classical types in Table~\ref{tbl1},
which will make it straightforward to answer such questions.
Our results build on earlier work  \cite{BurksPawlowski,CJW,CU1,CU2,HM,HMP2} addressing types A, B, and C.
The main novelty (and difficulty) of our present undertaking is to handle 
type D and to handle all classical types in a unified way.

\subsection{Main results}

Fix a pair $(G,K)$ of classical type.
Each $K$-orbit in $G/B$ has the form 
\[\cO = KgB/B\quad\text{for various $g \in G$ with $\theta(g)g^{-1} \in N_G(T)$}.\]
The image $w(\cO)$ of $\theta(g)g^{-1}$ in $W = N_G(T)/T$ is independent of the choice of $g$ \cite[\S0]{RichSpring}.
Define 
\be
\tphiRS : \Gamma^G_K \to W 
\quad
\text{to be the map that sends $\gamma \mapsto w(\cO_\gamma)$}.
\ee
Following \cite[Def.~1.3.6]{Wyser}, we refer to this
as the \defn{Richardson--Springer map}.
It will be convenient to work with a modified version of this definition. Namely:

\begin{definition}\label{rs-def}
Let $\phiRS : \Gamma^G_K \to W 
$
be the map with 
$\phiRS(\gamma) = w_0\cdot \tphiRS(\gamma)^{-1}$ and define
\[ \cI^G_K \defequals  \left\{ \phiRS(\gamma) : \gamma \in \Gamma^G_K\right\}\subseteq W.\]
Then for each $z \in \cI^G_K$ we define an associated set of \defn{extended Brion atoms}
\be\label{cEABrion-eq}
\cEABrion(z) \defequals  \bigcup_{\substack{ \gamma \in \Gamma^G_K \\ \phiRS(\gamma)=z}} \cABrion(\gamma) \subseteq W
\ee
where $\cABrion(\gamma)$ is the set of Brion atoms from \eqref{brion} defined via the weak order graph on $\Gamma^G_K$.
\end{definition}

In each classical type, the sets $\cI^G_K$ and $\cEABrion(z)$ have more explicit descriptions; see Tables~\ref{rs-image-tbl} and \ref{extended-brion-tbl}.
Our main results involve some additional combinatorial data presented in Sections~\ref{A-shape-sect}, \ref{BC-shape-sect}, and \ref{D-shape-sect}.
Specifically, for each $z \in \cI^G_K$   we will define:
\bei
\item a certain set of \defn{perfect matchings} $\cM^G_K(z)$;
\item a \defn{shape operator} $\sh^G_K$ that assigns an element of $\cM^G_K(z)$ to each $w\in \cEABrion(z)$;
\item a set of \defn{aligned shapes}
$\Aligned^G_K(\gamma)\subseteq \cM^G_K(z)$ for each $\gamma\in \Gamma^G_K$ with $\phiRS(\gamma)=z$.
\eei
The shape operator will always be a surjective map $\cEABrion(z) \to \cM^G_K(z)$
and every shape associated to $z$ will be aligned to some $\gamma\in \Gamma^G_K$ with $\phiRS(\gamma)=z$.
These constructions let us define 
\be
\cEABrion(z,M) \defequals  \left\{ w \in \cEABrion(z) : \sh^G_K(w) = M\right\}
\quad
\text{for each $M \in \cM^G_K(z)$.}
\ee
We may now state our first main theorem.

\begin{theorem} 
\label{main-thm}
Fix a pair $(G,K)$ of classical type.
Suppose $\gamma \in \Gamma^G_K$ and $z = \phiRS(\gamma) \in \cI^G_K$. 
Then   
\[\textstyle
\cABrion(\gamma) = \bigsqcup_{M \in \Aligned^G_K(\gamma)}  \cEABrion(z,M)
%\quand
%\cEABrion(z) = \bigsqcup_{M \in \cM^G_K(z)}  \cEABrion(z,M)
.
\]
Moreover, there is a map $d_z : \cEABrion(z)\to \NN$ and an integer $\rho^G_K(z) \in  \NN$ such that 
\[d_\gamma(w) = d_z(w)
\quand \ell(w) = \rho^G_K(z)\quad\text{for all $w \in \cABrion(\gamma)$.}
\]
\end{theorem}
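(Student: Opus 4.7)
The plan is a type-by-type verification reducing the theorem to explicit combinatorial statements about the weak order graphs and shape operators defined in Sections~\ref{A-shape-sect}, \ref{BC-shape-sect}, and \ref{D-shape-sect}. The second disjoint union is tautological once the shape operator $\sh^G_K : \cEABrion(z) \to \cM^G_K(z)$ is known to be well-defined, so the first task in each classical type is to verify that $\sh^G_K(w)$ depends only on the Weyl group element $w$ and not on the particular path in the weak order graph used to realize $w = s_k \cdots s_2 s_1$ starting from $\gammaDense$. I would check this by confirming that each braid or commutation move relating two reduced words for $w$ corresponds to a move in the weak order graph that preserves the matching read off at the terminal vertex. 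Applied to the edge degree labels rather than to the terminal matching, the same analysis shows that the number of degree-$2$ edges along a path realizing $w$ depends only on $w$ and $z$, yielding the map $d_z$ and the identity $d_\gamma(w) = d_z(w)$.

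With well-definedness in hand, the first identity
\[\cABrion(\gamma) = \bigsqcup_{M \in \Aligned^G_K(\gamma)} \cEABrion(z,M)\]
splits into two inclusions. The containment $\subseteq$ is essentially immediate: any $w \in \cABrion(\gamma)$ arises from a concrete path in the weak order graph terminating at $\gamma$, and by the defining condition of $\Aligned^G_K(\gamma)$ the shape $\sh^G_K(w)$ of such a path lies in $\Aligned^G_K(\gamma)$. The reverse containment $\supseteq$ is a lifting statement: given $w$ with $\sh^G_K(w) = M \in \Aligned^G_K(\gamma)$, and starting from any realization of $w$ as a path ending at some $\gamma'$ with $\phiRS(\gamma') = z$, the alignment data should provide the combinatorial input needed to redirect the path so that it terminates at $\gamma$ while preserving the underlying reduced word.

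For types AI, AII, and AIII these facts are essentially already contained in \cite{CJW, HM, HMP2}, and for types BI, CI, and CII they follow by mild extensions of \cite{BurksPawlowski, CU1, CU2, HMP2}; the remaining task in these cases is to translate the existing statements into the unified formulation used here. The main obstacle is type D, which covers the four subcases DI, DII, DIII, and DIV. Here $\WD_n$ is an index-two subgroup of $\W_n$, the diagram automorphism $\diamond$ intertwines with $\Theta$ in cases DII and DIV, and the simple generator $t_{-1}$ has no direct analogue in types B or C. My strategy is to compare the weak order graph of each type-D pair with that of a carefully chosen type-B or type-C pair by a folding construction, show that $\shD$ restricted to $\WD_n$ is compatible with this comparison, and then treat the edges labelled by $t_{-1}$ separately. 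Verifying that these exceptional edges preserve shape, that their degree labels give the same count as any other realization of the same atom, and that no spurious duplication is introduced in the decomposition is where I expect the proof to be most technical.
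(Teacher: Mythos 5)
Your outline reproduces the paper's skeleton (reduce everything to the weak order graph; quote the literature for types A, B, C; do the real work in type D), but the two places where you wave your hands are exactly where the paper's content lives. First, the containment $\cABrion(\gamma) \subseteq \bigsqcup_{M \in \Aligned^G_K(\gamma)} \cEABrion(z,M)$ is not "essentially immediate": the shape operator $\sh^G_K(w)$ is defined purely from the one-line notation of $w$ (via nested descents, deduplicated cycle words, etc.), not from a path, so one must prove that the blocks created along any weak-order path realizing $w$ (the sets $\Lambda_i(P)$ of Section~\ref{weak-order-sect}) coincide, up to trivial blocks, with $\sh^G_K(w)$, and that a path lifts to terminate at $\gamma$ exactly when those blocks alternate in sign in $\gamma$ (Lemma~\ref{weak-order-lem}); in type D this identification is Lemma~\ref{Lambda-lem} together with the alignment propositions and is genuinely technical. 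Likewise the "tautological" second decomposition requires showing $\sh^G_K(w)$ actually lands in $\cM^G_K(z)$ — noncrossing plus the trivial-block count and parity constraints such as $\triv(M)\equiv \ell_0(z) \ (\mathrm{mod}\ 4)$ in type DIII — which your braid-move path-independence check does not address. Your argument for $d_z$ has the same gap: invariance under word moves would make the doubled-edge count a function of $w$ and the terminal clan, but to get a function of $(z,w)$ one still needs the $\cyc$-statistic argument (or, in types BI and CI where $d_z$ genuinely depends on $w$, the formulas from \cite{HM}).

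Second, your type-D plan — fold the type-D weak order graph onto a type-B/C one and handle the $t_{-1}$ edges separately — is not what the paper does, and as stated it is not a workable program. The paper stays inside $\WD_n$: it invokes the Hu--Zhang word property for $\cAD(z)$ (Theorem~\ref{d-equiv-lem}, from \cite{HuZhang2}), builds the orders $\precsimD$ and $\llD$ and the well-nested-family machinery, and then uses embeddings internal to type D (DI with parameter $k$ into DI with $k=0$, DII into DI in rank $n+1$, DIII into DI by pairing adjacent positions, DIV into DIII in rank $n+1$). A comparison with type B/C faces concrete obstructions you do not engage with: the length functions of $\WD_n$ and $\W_n$ differ, so minimal-length atom sets for the same involution do not match under inclusion; the type-D atom sets obey an extra eight-letter relation with no B/C counterpart, which is precisely what forces the relation $\llD$ and the noncrossing shape behavior; and the DII/DIV cases carry the twist $\diamond$ and parity constraints (even-strict clans, $\ell_0 \in 4\NN$) that no B/C pair exhibits, so the $t_{-1}$ edges are not an isolated exceptional case but interact with global parity data. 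Without formulating and proving the compatibility statements such a folding would need, the hardest part of the theorem remains unproved.
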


Explicit formulas for  $d_z$ and $\rho^G_K(z)$ are given in Table~\ref{extended-brion-tbl}.
Thus, to compute all terms in Brion's cohomology formula \eqref{brion},
one just needs to be able to efficiently generate the sets $\cEABrion(z,M)$.
For this task, we shall additionally construct
\bei
\item for each $z \in \cI^G_K$ and $M \in \cM^G_K(z)$ a certain  \defn{generator} $\bot^G_K(z,M) \in W$; and
\item a certain transitive relation $\precsim^G_K$ on the Weyl group $W$.
\eei
Recall that a finite poset is \defn{graded} if every maximal chain has the same length.
This is equivalent to the existence of a rank function assigning an integer to each poset element that increases by exactly one
after applying any covering relation.
We arrive at our second main theorem.

\begin{theorem} 
\label{main-thm2}
Continue to fix a pair $(G,K)$ of classical type. Choose an element $z  \in \cI^G_K$ and a shape $M \in \cM^G_K(z)$.
Then the set of extended Brion atoms for $z$ with shape $M$ is
\[ 
\cEABrion(z,M)
= \left\{ w \in W : \bot^G_K(z,M) \precsim^G_K w\right\}.
\]
Additionally, this finite set is  
a graded poset relative to $\precsim^G_K$.
\end{theorem}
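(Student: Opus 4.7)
The plan is to prove Theorem~\ref{main-thm2} by case-by-case analysis across the types in Table~\ref{tbl1}, using Theorem~\ref{main-thm} to isolate each set $\cEABrion(z,M)$ from the larger set $\cEABrion(z)$. Once the shape $M$ is fixed, the combinatorics of Sections~\ref{A-shape-sect}, \ref{BC-shape-sect}, and \ref{D-shape-sect} should let me describe both $\bot^G_K(z,M)$ and the $\precsim^G_K$-covers explicitly. The theorem then reduces to two combinatorial claims: (i) every element of $\cEABrion(z,M)$ can be reached from $\bot^G_K(z,M)$ by a chain of $\precsim^G_K$-covers, and (ii) each such cover increases the Coxeter length by exactly one, which forces gradedness with rank function $w \mapsto \ell(w)-\ell(\bot^G_K(z,M))$.

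First I would verify that $\bot^G_K(z,M)$ itself lies in $\cEABrion(z,M)$. This should follow essentially from the definition: the generator is constructed so that its image under $\sh^G_K$ is $M$ and so that it arises from a canonical path in the weak order graph terminating at some $\gamma$ with $\phiRS(\gamma)=z$. Next, I would show that the relation $\precsim^G_K$ preserves shape and membership in $\cEABrion(z)$. Concretely, each covering relation $u \precsim^G_K v$ should correspond to a local modification of a reduced expression (either appending a simple generator or applying a braid-type move) whose effect on the associated weak-order-graph path leaves the terminal shape invariant. Verifying this amounts to checking a finite list of local patterns in each classical type, together with a strict length-monotonicity check that simultaneously yields gradedness.

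The main technical work will be establishing that every $w \in \cEABrion(z,M)$ distinct from $\bot^G_K(z,M)$ admits a $\precsim^G_K$-predecessor of strictly smaller length. I would argue this by induction on $\ell(w)$: pick any reduced expression arising from a path $\gammaDense \to \cdots \to \gamma$ producing $w$, and analyze its final letters. Either the expression can be shortened directly via a $\precsim^G_K$-move, or a braid relation transforms it into an equivalent expression whose last letter can be removed while preserving shape. Iterating terminates at $\bot^G_K(z,M)$, which establishes the upward-closure description. Combined with length-monotonicity of $\precsim^G_K$, this also shows every maximal chain down to $\bot^G_K(z,M)$ has the same length, giving the graded poset structure.

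The hardest step by a significant margin will be the type D analysis, as the authors themselves emphasize in the introduction. In types A, B, and C, the required machinery is essentially in place in \cite{BurksPawlowski,CJW,CU1,CU2,HM,HMP2} and the shape-preservation arguments should import with minor adjustments. In type D, however, the diamond automorphism $\diamond$ and the DI/DII/DIII/DIV bifurcations give the weak order graph a more intricate structure, and the key difficulty will be verifying that the proposed local moves suffice to connect all of $\cEABrion(z,M)$ to $\bot^G_K(z,M)$ without introducing degenerate configurations that would break gradedness. I expect this to require a careful patch-by-patch analysis of reduced expressions and matchings, guided by the precise definition of $\sh^G_K$ in Section~\ref{D-shape-sect}, and likely several auxiliary lemmas showing that certain shape-changing moves simply do not occur on elements of $\cEABrion(z,M)$.
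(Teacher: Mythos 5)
There is a genuine gap, and it sits at the heart of your plan: your gradedness mechanism and your induction scheme both rest on the premise that covering relations of $\precsim^G_K$ raise Coxeter length by one, so that $w \mapsto \ell(w)-\ell(\bot^G_K(z,M))$ is a rank function and one can induct on $\ell(w)$ by "shortening a reduced expression." But every element of $\cEABrion(z,M)$ has the \emph{same} Coxeter length: each edge of the weak order graph increases $\ellhat_\Theta$ of the associated twisted involution by exactly one, so all paths from $\gammaDense$ to a fixed vertex have the same length, and hence all (extended) Brion atoms for a fixed $z$ are minimal-length elements of a common length. The covering relations of $\precsim^G_K$ are length-preserving rearrangements of one-line representations (e.g.\ $uBCAv \precsim uCABv$ with $A<B<C$), not moves that append a simple generator or shorten a word. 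Consequently your proposed rank function is identically zero on a poset that is not an antichain, the gradedness argument collapses, and the induction on $\ell(w)$ cannot even start, since no $\precsim^G_K$-move ever decreases length within the set. The paper's rank functions are genuinely different statistics (inversions among designated subwords of the one-line form, e.g.\ $\rankD$ in type D, or inversions among the first $\min\{p,q\}$ letters in type AIII).

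Relatedly, the connectivity step ("every $w$ lies above the generator") is not obtained in the paper by local manipulation of weak-order paths or braid relations for $w$; in type D it hinges on Hu--Zhang's word property for twisted involutions, recast as the statement that $\cAD(z)$ is a single equivalence class under the symmetric closure of $\llD$, combined with the nested-descent/well-nested-family machinery that separates the shape-preserving moves ($\precsimD$) from the shape-changing ones (which induce the order $\lessdot$ on matchings), and with explicit embeddings reducing DII and DIV to DI and DIII in rank $n+1$ and reducing general $k$ to $k=0$. Your outline correctly imports types A, B, C from the literature and correctly flags type D as the hard case, but without the length-preserving viewpoint, a correct rank statistic, and an input of Hu--Zhang type (or an equivalent), the plan as written would fail at both the upward-closure claim and the gradedness claim.
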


The fact that  $\cEABrion(z,M)$ is a graded poset under $\precsim^G_K$ is a key point of this theorem
as it means that the set
can be easily computed
from the distinguished element $ \bot^G_K(z,M)$.

\begin{corollary} Suppose $(G,K)$ is of classical type and $\gamma \in \Gamma^G_K$ has $z = \phiRS(\gamma) \in\cI^G_K$. Then
\[\textstyle 
 [Y_\gamma] = \sum_{(M,w)} 2^{d_z(w)} [X_{w^{-1}}] \in H^*(G/B)
\]
where 
the sum is over all pairs $(M,w) \in  \Aligned^G_K(\gamma)\times W$ with $ \bot^G_K(z,M) \precsim^G_K w$.
%we abbreviate by writing $\Aligned = \Aligned^G_K$, $\bot = \bot^G_K$, and ${\precsim} = {\precsim^G_K}$.
\end{corollary}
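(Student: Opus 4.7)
The plan is to derive this corollary as a formal consequence of Brion's formula \eqref{brion} combined with Theorems \ref{main-thm} and \ref{main-thm2}. No genuinely new combinatorial input is required beyond what is already packaged into those two theorems, so the proof amounts to a sequence of substitutions.

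First I would start from \eqref{brion}, which expresses $[Y_\gamma]$ as the sum $\sum_{w \in \cABrion(\gamma)} 2^{d_\gamma(w)} [X_{w^{-1}}]$ over the Brion atom set of $\gamma$. I would then invoke the first identity of Theorem \ref{main-thm}, which partitions
\[
\cABrion(\gamma) \;=\; \bigsqcup_{M \in \Aligned^G_K(\gamma)} \cEABrion(z, M),
\]
converting the single sum into an iterated sum indexed by pairs $(M, w) \in \Aligned^G_K(\gamma) \times W$ with $w \in \cEABrion(z, M)$. Next I would apply the ``moreover'' clause of Theorem \ref{main-thm} to replace $d_\gamma(w)$ by $d_z(w)$ for each such $w$; this is legal precisely because $w \in \cABrion(\gamma)$ and the two statistics agree there. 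At this point the exponents depend only on $z = \phiRS(\gamma)$, not on $\gamma$ itself.

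Finally I would invoke Theorem \ref{main-thm2} to rewrite the membership condition $w \in \cEABrion(z, M)$ as the inequality $\bot^G_K(z, M) \precsim^G_K w$, which is exactly the indexing in the corollary. Combining these three reductions yields the claimed formula, modulo the routine relabeling $w \leftrightsquigarrow w^{-1}$ between the Schubert class $[X_{w^{-1}}]$ appearing in \eqref{brion} and the notation $[X_w]$ used in the corollary's statement.

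I do not expect any serious obstacle in carrying out this argument. The one minor point that requires attention is the consistency of the inversion convention: one should check that the iterated sum still makes sense after substituting $w \mapsto w^{-1}$, which reduces to verifying that neither the aligned-shape set $\Aligned^G_K(\gamma)$ nor the statistic $d_z$ is affected by this relabeling. Both are immediate from their definitions, so the corollary follows.
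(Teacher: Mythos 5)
Your proposal is correct and is precisely the paper's (implicit) argument: the corollary is obtained by substituting the decomposition $\cABrion(\gamma)=\bigsqcup_{M\in\Aligned^G_K(\gamma)}\cEABrion(z,M)$ and the identity $d_\gamma=d_z$ from Theorem~\ref{main-thm} into Brion's formula \eqref{brion}, and then rewriting each set $\cEABrion(z,M)$ as $\{w\in W:\bot^G_K(z,M)\precsim^G_K w\}$ via Theorem~\ref{main-thm2}. The one shaky point is your closing remark about the inversion: the substitution directly produces the class $[X_{w^{-1}}]$ exactly as in \eqref{brion}, and the discrepancy with the corollary's notation $[X_w]$ is a convention of the statement itself rather than something settled by observing that $\Aligned^G_K(\gamma)$ and $d_z$ are unaffected by relabeling, since under a genuine substitution $w\mapsto w^{-1}$ it is the indexing condition $\bot^G_K(z,M)\precsim^G_K w$ that would change, and the sets it defines are not closed under inversion.
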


To give substance to these results, we must first specify a number of definitions.
 We distribute this material across Sections~\ref{A-shape-sect}, \ref{BC-shape-sect}, and \ref{D-shape-sect}
 following some preliminaries in Sections~\ref{index-sect}, \ref{order-sect}, and \ref{shape-prelim-sect}.
The proofs of Theorems~\ref{main-thm} and \ref{main-thm2} appear in Sections~\ref{proof-sect1},
\ref{proof-sect2}, and \ref{D-proof-sect}.

Section~\ref{app-sect}  concludes with some applications of our main theorems.
 For example, we discuss generalizations of the \defn{involution Schubert polynomials}
 studied in \cite{WY0,WY}  that apply to all of classical types in Table~\ref{tbl1}. We present several conjectures related to these constructions.
 
% \subsection{Examples}
% 
% \Eric{todo}
 
 \subsection*{Acknowledgments}

%The author thanks Zachary Hamaker and Brendan Pawlowski for helpful discussions.
This article is based on work supported by the National Science Foundation under grant DMS-1929284 while the author was in residence at the Institute for Computational and Experimental Research in Mathematics in Providence, RI, during the Categorification and Computation in Algebraic Combinatorics semester program.
The author was also partially supported by Hong Kong RGC grants 16304122 and 16304625.

\section{Orbit indexing sets}% and the weak order graph}
\label{index-sect}

This section contains a number of preliminaries 
on Coxeter groups and certain signed matchings called \defn{clans}.
We present this material to 
review a standard way of indexing the set of
  $K$-orbits in $G/B$
for each classical type.

\subsection{Classical Weyl groups}\label{per-sect}

We formally define the classical Weyl groups from the introduction as follows.
Let  $S_{n+1}$ be the group of all permutations of  
$ [n+1] \defequals  \{1,2,\dots,n+1\}.$ For this Coxeter group, the set of simple generators is $S= \{t_1,t_2,\dots,t_{n}\}$ where 
 $ t_i \defequals  (i,i+1) \in S_{n+1}$ swaps $i$ and $i+1$.

Let $\W_n$ be the group of all permutations of 
$ [\pm n] \defequals  \{-n,\dots,-2,-1,1,2,\dots,n\}$ that commute with the negation map.
Now the set of simple generators is $S = \{t_0,t_1,t_2,\dots,t_{n-1}\}$ where 
\be \label{t-def}
  t_0 \defequals  (-1,1),\quand t_i \defequals  (-i-1,-i)(i,i+1)\text{ for }i \in [n-1].
\ee
With minor abuse of notation, we use the same symbols
 $t_i$ for $i>0$ to denote adjacent transpositions in $S_n$ and in $\W_n$.
 
Finally, let $\WD_n\subseteq\W_n$
consist of all elements $\sigma\in\W_n$
that induce an even number of sign changes when applied to the numbers $1,2,3,\dots,n$.
This Coxeter group's simple generating set is $S = \{t_{-1}, t_1,t_2,\dots,t_{n-1}\}$
where $t_1,t_2,\dots,t_{n-1}$ are as in \eqref{t-def} and \be t_{-1} \defequals  (-2,1)(-1,2) = t_0 t_1 t_0.\ee

The diagram automorphisms $\ast$ and $\diamond$ of $S_{n+1}$ and $\WD_n$ have the formulas
\be
t_i^\ast = t_{n+1-i}\text{ (for $i \in [n]$)}
\qquand t_{1}^\diamond= t_{-1},\quad  t_{-1}^\diamond =t_1, \quad t_i^\diamond = t_i\text{ (for $i>1$)}.
\ee

We refer to elements of $\W_n$ as \defn{signed permutations}
and to elements of $\WD_n$ as \defn{even-signed permutations}.
%We distinguish permutations by their domains, and so do not consider $S_n$ to be a subgroup of $\W_n$ or $S_{n+1}$.
Any element $w$ of $S_n$ or $\W_n$ is completely determined by its \defn{one-line representation},
which is the sequence $w_1w_2\cdots w_n$ with $w_i = w(i)$. We often write $\overline{a}$ to mean the number $-a$.
With this convention, the elements of $\W_2$ are  $12$, $21$, $\overline{1}2$, $\overline{2}1$, $1\overline{2}$, $2\overline{1}$, $\overline{1}\hs\overline{2}$, and $\overline{2}\hs\overline{1}$.

The longest element $w_0\in W$ in each classical Weyl group is given as follows.
If $W=S_{n+1}$ then this element is the reverse permutation
 $ w_0=(n+1)\cdots 321 $.
If $W=\W_n$ then  
$ w_0 = \overline{1}\hs\overline{2}\hs\overline{3}\hs\cdots\hs\overline{n}.$
This is also the longest element of $\WD_n$ if $n$ is even, but when $W=\WD_n$ for $n$ odd   
$ w_0 = {1} \overline{2}\hs\overline{3}\hs\cdots\hs\overline{n}.$

%\subsection{Twisted involutions}

Write $\cI(W) \defequals  \{ w \in W : w=w^{-1}\}$ for the set of involutions in $W$.
Recall from Table~\ref{tbl1} that our choice of $(G,K)$ determines an involution $\Theta \in \Aut(W)$.
Define the set of \defn{twisted involutions} 
\be \cI_\Theta(W)\defequals  \{w\in W : \Theta(w)=w^{-1}\}.\ee
Notice that if $\Theta = \Ad(g) : w\mapsto gwg^{-1}$ for some $g=g^{-1}$, then
\be\label{twist-inv-eq} 
\cI_\Theta(W) = \{ gw :w \in \cI(W)\}= \{wg :w \in \cI(W)\}.\ee
This   applies   when $W=S_n$ and $\Theta = \ast = \Ad(w_0)$,
and when $W=\WD_n$ and $\Theta=\diamond = \Ad(t_0)$.

\subsection{Clans}\label{clans-sect}

Our definitions and conventions in this section follow \cite{MO90,McTr,Wyser,Yam97}.
A \defn{partition} of a set $X$ is a set of pairwise disjoint sets $\cP$ with union $X = \sum_{S \in \cP} S$.
The elements of a set partition $\cP$ are called its \defn{blocks}.
A \defn{matching} is a set partition whose blocks all have size one or two.
A \defn{perfect matching} is a set partition whose blocks all have size two.

\begin{definition}
Choose a finite set $X \subset\ZZ$.
A \defn{clan} is a triple $\gamma = (S_+, S_-, M)$ where $S_+$ and $S_-$ are finite disjoint sets of positive integers
and $M$ is a perfect matching on $X \setminus (S_+\sqcup S_-)$.
We refer to $X$ as the \defn{base set} of $\gamma$ %, to the integer $|X|=|S_+| + |S_-| + 2|M|$ as the \defn{length} of $\gamma$,
and to the difference $|S_+| - |S_-| \in \ZZ$ as the \defn{type} of $\gamma$.
\end{definition}

Suppose $\gamma=(S_+,S_-,M)$ is a clan with base set $X$.
A \defn{one-line representation} of $\gamma$ is any map $c : X \to  \{ \pm \} \sqcup \ZZ $, written $i\mapsto c_i$,
such that $c_i = +$ if and only if $i \in S_+$, $c_i = -$ if and only if $i \in S_-$,
and $c_i = c_j \in \ZZ$ for $i\neq j$ if and only if $\{i,j\}\in M$.
If $X = \{i_1< i_2< \dots<i_n\}$
then we identify $c$ with the sequence $(c_{i_1}, c_{i_2},\dots,c_{i_n})$.

This representation is not unique, and to recover $\gamma$ from $c$ in sequence form,
we must know the base set $X$. However,
distinct clans with the same base set do not share any one-line representations.

\begin{example}
The 14 distinct clans with base set $\{1,2,3\}$ are represented by 
\[ 
\ba
&(1,1,+),&& (1,1,-),&& (1,+,1),&& (1,-,1),&& (+,1,1),&& (-,1,1),&&(+,+,+), \\
&(+,+,-),&&(+,-,+),&&(-,+,+),&&(+,-,-),&&(-,+,-),&&(-,-,+),&&(-,-,-).
\ea
\]
\end{example}

\subsubsection{Clans for type A}

The following kind of clans will index the $K$-orbits in $G/B$ for type AIII.

\begin{definition}
A \defn{standard $(p,q)$-clan} for $p,q\in\NN$ is a clan with base set $[p+q]$ and type $p-q$.
\end{definition}

If $\gamma=(S_+,S_-,M)$ is a standard $(p,q)$-clan then let $\pi_\gamma \in \I(S_{p+q})$
be the involution with $\pi_\gamma(i) =i$ for each $i \in S_+\sqcup S_-$
and with $\pi_\gamma(i)=j$ and $\pi_\gamma(j)=i$ for each $\{i,j\} \in M$.
Also define 
\be\tilde\pi_\gamma = w_0\cdot \pi_\gamma \in \I_\ast(S_{p+q})\ee
to be the twisted involution with 
$\tilde\pi_\gamma(i) =p+q+1-i$ for $i \in S_+\sqcup S_-$
and with $\tilde\pi_\gamma(i)=p+q+1-j$ and $\tilde\pi_\gamma(j)=p+q+1-i$ for each $\{i,j\} \in M$.

\begin{example}\label{gamma-ex}
Fix $p,q \in \NN$. Then $p+q = |p-q| +2m$ for some $m \in \NN$ and we define 
$\gammaDense^{(p,q)}$ to be the standard $(p,q)$-clan with one-line representation given by
\[\ba
(1,2,3,\dots,m,+,+,\dots,+,m,\dots,3,2,1)&\quad\text{if }p\geq q,\text{ or }
\\
(1,2,3,\dots,m,-,-,\dots,-,m,\dots,3,2,1)&\quad\text{if }p\leq q.
\ea
\]
Thus $\gammaDense^{(p,q)}=(S_+,S_-,M)$ where \[M = \{ \{i, p+q + 1 - i\} : i \in [m]\}\] and $S_+$ or $S_-$ is empty.
If $n=p+q$ and $\gamma=\gammaDense^{(p,q)}$ then we have
\[ 
 \ba
 \pi_\gamma & = (1,n)(2,n-1)\cdots(m,n-m+1) \in S_{n}, 
\\
\tilde\pi_\gamma &=(m+1,n-m)(m+2,n-m-1)(m+3,n-m-2)\cdots \in S_{n}.
\ea
\]
\end{example}

\subsubsection{Clans for types B and C}

Suppose $\gamma = (S_+, S_-, M)$ is a clan with base set $X\subset\ZZ$. 
Write $i \mapsto i^\rev$ for the unique order-reversing bijection $X \to X$. 
Then define the \defn{reversal} of $\gamma$ to be the clan
\[ \gamma^\rev = (\{ i^\rev : i \in S_+\}, \{i^\rev : i \in S_-\}, \{\{i^\rev,j^\rev\} : \{i,j\} \in M\}).
\]
Next, define the \defn{conjugate} of $\gamma$ to be the clan
$\overline{\gamma} = (S_-, S_+, M).$

\begin{definition}
A \defn{symmetric $(p,q)$-clan} for $p,q\in \NN$ is a clan $\gamma=\gamma^\rev$ of type $p-q$ with base set 
\[
X = \begin{cases} 
[\pm n] & \text{if $p+q=2n$ is even},
\\
[\pm n]\sqcup \{0\} &\text{if $p+q=2n+1$ is odd}.
 \end{cases}
\]
A \defn{skew-symmetric $(p,q)$-clan} is a clan $\gamma=\overline{\gamma^\rev}$ of type $p-q$ with the base set $X$ just given.
\end{definition}

\begin{remark}
This definition lets us refer simultaneously to symmetric or skew-symmetric $(p,q$)-clans.
Notice, however, that 
a skew-symmetric $(p,q)$-clan must have $p=q$ and base set $X = [\pm p]$.
\end{remark}
 
Suppose $\gamma$ is a symmetric or skew-symmetric $(p,q)$-clan.
Then $p+q \in \{2n,2n+1\}$ for some $n \in \NN$ and $\gamma$
may be expressed in one-line notation as 
either
\[ 
(c_{-n},\dots,c_{-2}, c_{-1}, c_0, c_1,c_2,\dots,c_n)
\quord
(c_{-n},\dots,c_{-2}, c_{-1}, c_1,c_2,\dots,c_n).
\]
Define $\sigma_\gamma \in \W_n$ to be the signed involution with $\sigma_\gamma(i) = i$ for each $i \in [\pm n]$
with $c_i \in \{\pm\}$ and with $\sigma_\gamma(i) = j$ and $\sigma_\gamma(j)=i$ whenever $i,j \in [\pm n]$ are such that $c_i=c_j\in \ZZ$.

\begin{example}
Fix $p,q,n \in \NN$ with $p+q \in \{2n,2n+1\}$. Then $p+q = |p-q| +2m$ for some $m \in \NN$ and we define 
$\deltaDense^{(p,q)}$ to be the symmetric $(p,q)$-clan with one-line representation given by
\[\ba
(1,2,3,\dots,m,+,+,\dots,+,m,\dots,3,2,1)&\quad\text{if }p\geq q,\text{ or }
\\
(1,2,3,\dots,m,-,-,\dots,-,m,\dots,3,2,1)&\quad\text{if }p\leq q.
\ea
\]
Although this one-line representation is the same as in Example~\ref{gamma-ex}, notice that the base set is different.
We have $\deltaDense^{(p,q)}=(S_+,S_-,M)$ where $M = \{ \{-n-1+i,n+1-i\} : i \in [m]\}$ and $S_+$ or $S_-$ is empty according to whether $p$ or $q$ is larger.

Notice that 
$
 \deltaDense^{(n,n)} = (\varnothing,\varnothing, \{\{-i,i\} : i \in [n]\}) %= (1,2,\dots,n,n,\dots,2,1)
 $ is signless and therefore also a skew-symmetric $(n,n)$-clan. If $k=n-m$ and $\delta =  \deltaDense^{(p,q)} $ then 
 \[
 \sigma_\delta = 12\cdots k \overline{(k+1)(k+2)\cdots n}\in \W_n
 \quand
 \overline{\sigma_\delta} = \overline{12\cdots k} {(k+1)(k+2)\cdots n}\in \W_n.
 \]
 Here, for any $\sigma\in \W_n$, we let $\overline{\sigma}\in \W_n$ be the element sending $i\mapsto -\sigma(i)$ for $i \in[\pm n]$.
\end{example}

\begin{definition}
A symmetric $(p,q)$-clan $\gamma=(S_+,S_-,M)$ is \defn{strict} if $i+j\neq0$ for all $\{ i,j\} \in M$.
\end{definition}

A symmetric $(p,q)$-clan $\gamma$ is strict if and only if $\overline{\sigma_\gamma}$ has no fixed points.

\begin{example}
Fix even integers $p,q\in \NN$. Then $p+q=|p-q|+2m$ for some $m \in2\NN$, and 
 we define 
$\epsilonDense^{(p,q)}$ to be the (strict) symmetric $(p,q)$-clan with one-line representation given by
\[\ba
(1,2,3,4,5,6,\dots,m-1,m,+,+,\dots,+,m-1,m,\dots,5,6,3,4,1,2)&\quad\text{if }p\geq q,\text{ or}\\
(1,2,3,4,5,6,\dots,m-1,m,-,-,\dots,-,m-1,m,\dots,5,6,3,4,1,2)&\quad\text{if }p\leq q.
\ea
\]
\end{example}

\subsubsection{Clans for type D}

In the following definition,
let $\binom{[n]}{2}$ denote the set of 2-element subsets of $[n]$.
\begin{definition}\label{h-def}
A skew-symmetric $(n,n)$-clan  $\gamma=(S_+,S_-,M)$ is \defn{even-strict} if
\[
\text{$i+j\neq0$ for all $\{ i,j\} \in M$}
\quand
\text{the quantity
$ h(\gamma) \defequals  \Bigl|S_+ \cap [n]\Bigr| + \Bigl| M \cap \tbinom{[n]}{2}\Bigr|$
is even.}
\]
\end{definition}

\begin{example}
When $n$ is even, let $\etaDense^{(n,n)}= \epsilonDense^{(n,n)}$ be the skew-symmetric $(n,n)$-clan 
\[ 
(1,2,3,4,5,6,\dots,n-1,n,n-1,n,\dots,5,6,3,4,1,2).
\] 
When $n$ is odd, let $\etaDense^{(n,n)}$ be the skew-symmetric $(n,n)$-clan 
\[ 
(1,2,3,4,5,6,\dots,n-2,n-1,+,-,n-2,n-1,\dots,5,6,3,4,1,2).
\] Then $\etaDense^{(n,n)}$ is even-strict for either parity of $n$. When  $n$  is even we have 
 \[
 {\sigma_\gamma}=
 \bar 2\hs \bar 1\hs \bar4\hs \bar3\hs \bar6\hs \bar5 \cdots \overline{n} \hs\hs \overline{n-1}
 \qquad\text{so}
 \qquad
  {\sigma_\gamma} = t_1t_3t_5\cdots t_{n-1}
  \in \I(\WD_n)
 \]
 while if $n$ is odd then 
 \[
{\sigma_\gamma}=
 1\bar 3\hs \bar 2\hs \bar5\hs \bar4\hs \bar7\hs \bar6 \cdots \overline{n} \hs\hs \overline{n-1}
  \qquad\text{so}
 \qquad
 t_0 \cdot \overline{\sigma_\gamma} = t_2t_4t_6\cdots t_{n-1} \in \I_\diamond(\WD_n).
 \]
\end{example}

\def\hhline{\\ & &&&  \\ [-4pt]\hline & & && \\ [-4pt]}
\def\gap{\\[-4pt]&&&&\\}
\begin{table}[h]
\begin{center}
{\small
\begin{tabular}{| l | l | l | l | l |}
\hline&&&& \\[-4pt]
Type & Parameters &  Indexing set $\Gamma^G_K$ for $K$-orbits in $G/B$  & Dense orbit & $\phiRS$ 
\hhline
AI & $n\in \NN$ & $\Gamma_{\AI}^{(n)} \defequals \I(S_{n+1})$    & $1$ & $\id$
\gap
AII  & $n\in \NN$ odd & $\Gamma_{\AII}^{(n)} \defequals  \Ifpf(S_{n+1})$  & $t_1t_3t_5\cdots t_n$ & $\id$
\gap
AIII & $n+1=p+q$ & $\Gamma_{\AIII}^{(p,q)}\defequals $  standard $(p,q)$-clans   & $ \gammaDense^{(p,q)}$ & $\gamma \mapsto \tilde\pi_\gamma$
\hhline
BI & $2n+1=p+q$ & $\Gamma_{\BI}^{(p,q)} \defequals $  symmetric $(p,q)$-clans & $ \deltaDense^{(p,q)}$ & $\gamma \mapsto \overline{\sigma_\gamma}$ \hhline
CI & $n\in \NN$ & $\Gamma_{\CI}^{(n)} \defequals  $ skew-symmetric $(n,n)$-clans   & $\deltaDense^{(n,n)}$ & $\gamma \mapsto \overline{\sigma_\gamma}$
\gap
CII  & $2n=p+q$ % with $p$ even 
&  $\Gamma_{\CII}^{(p,q)} \defequals  $ strict symmetric $(p,q)$-clans & $ \epsilonDense^{(p,q)}$ & $\gamma \mapsto \overline{\sigma_\gamma}$ \\
   & with $p$ even &  & &
\hhline
DI  & $2n=p+q$ %with $p+n$ even 
&  $\Gamma_{\DI}^{(p,q)} \defequals  $  symmetric $(p,q)$-clans & $ \deltaDense^{(p,q)}$ & $\gamma \mapsto 
\overline{\sigma_\gamma}$ \\
  & with $p+n$ even  & & &
\gap
DII  & $2n=p+q$ %with $p+n$ odd 
&  $\Gamma_{\DII}^{(p,q)} \defequals  $  symmetric $(p,q)$-clans & $ \deltaDense^{(p,q)}$ & $\gamma \mapsto 
t_0 \cdot \overline{\sigma_\gamma}$
\\
  & with $p+n$ odd  & & & 
\gap
DIII & $n\in \NN$ & $\Gamma_{\DIII}^{(n)} \defequals $ even-strict skew-symmetric $(n,n)$-clans & $\etaDense^{(n,n)}$ & $\gamma \mapsto  (t_0)^n\cdot\overline{\sigma_\gamma}$
\gap\hline
\end{tabular}}
\end{center}
\caption{
Indexing sets for $K$-orbits in $G/B$, following the conventions in \cite{Wyser}.
The last two columns provide the index for the unique dense $K$-orbit and the
formula for the Richardson--Springer map. 
}\label{tbl2}
\end{table}

\subsection{Orbit parametrizations}

The definitions in the preceding sections are related
to the $K$-orbits in $G/B$ in the following way.

\begin{proposition}[\cite{MO90,Yam97}]
For each pair $(G,K)$ of classical type listed in Table~\ref{tbl1}, the
finite set of $K$-orbits in $G/B$ is in bijection with the corresponding indexing set
 $\Gamma^G_K $  specified in Table~\ref{tbl2}. 
\end{proposition}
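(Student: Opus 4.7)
The plan is to verify this proposition case-by-case across the ten classical types in Table~\ref{tbl1}, treating it primarily as a translation between the existing parametrizations in Matsuki--Oshima \cite{MO90} and Yamamoto \cite{Yam97} and the combinatorial objects in $\Gamma^G_K$. Since \cite{MO90} classifies $K$-orbits on $G/B$ in all classical types by certain signed matchings and \cite{Yam97} gives explicit clan-style parametrizations, what remains is to check that the particular clans in Table~\ref{tbl2} are in bijection with those orbit representatives.

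First I would dispose of the type A cases. For AI the classical result of Richardson--Springer identifies $K=\O(n+1)$-orbits with $\I(S_{n+1})$ via the map $gB \mapsto \theta(g)g^{-1}T$; for AII the same construction restricted to $\Sp(n+1)$ lands in fixed-point-free involutions $\Ifpf(S_{n+1})$. For AIII, the parametrization by standard $(p,q)$-clans is due to Matsuki--Oshima and Yamamoto and is summarized in \cite[\S2.1]{Wyser}; the bijection sends a clan $\gamma$ to the orbit whose Richardson--Springer image is $\tilde\pi_\gamma$.

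Next, the types BI, CI, CII follow from the symmetric and skew-symmetric clan parametrizations of \cite{Yam97} as recast in \cite[\S3--4]{Wyser}. The key points to verify are: (i) the base-set conventions ($[\pm n]$ or $[\pm n]\sqcup\{0\}$) match the rank, parity, and signature constraints imposed by the relevant real form; (ii) the strictness condition in CII corresponds exactly to the nondegeneracy condition forced by the symplectic structure on a $p$-dimensional subspace with $p$ even; and (iii) the image of $\gamma$ under $\gamma \mapsto \overline{\sigma_\gamma}$ agrees with the twisted involution returned by the Richardson--Springer map.

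The main obstacle will be type D, and in particular DII and DIV, where the diagram automorphism $\diamond$ enters and the image of $\phiRS$ must be corrected by left multiplication by $t_0$. For DI and DIII, the bijection is a direct application of \cite[\S5.1,\S5.2]{Wyser} after verifying that the parity conditions $p+n$ even (resp.\ $n$ even) for DI (resp.\ DIII) match the requirement that the orbit representative lies in $\SO(2n)$ rather than only $\O(2n)$, which is precisely the role of the quantity $h(\gamma)$ in Definition~\ref{h-def}. For DII and DIV, I would follow \cite[\S5.3,\S5.2]{Wyser} to confirm that left multiplication by $t_0=(-1,1)$ (which does not lie in $\WD_n$, but reconciles the $\O(2n)$-involution representative with $\WD_n$) correctly realizes the twist by $\diamond$. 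Once these parity and twist bookkeeping steps are checked, the bijection $\Gamma^G_K \leftrightarrow \{K\text{-orbits in }G/B\}$ and the formula for $\phiRS$ given in the last column of Table~\ref{tbl2} follow, and the dense orbit in each case is identified by observing that the given clan in the fourth column maps to $w_0\cdot 1 = w_0$, hence to the unique orbit of full dimension in $G/B$.
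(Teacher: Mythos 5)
Your approach is essentially the paper's: the proposition is not proved there but cited to \cite{MO90,Yam97}, with Wyser's thesis supplying the explicit clan-by-clan dictionary in each classical type, and your case-by-case translation plan is the same appeal to that literature, so in substance it is fine. Two side remarks in your write-up are inaccurate, though neither is needed for the statement. First, in type AIII you cannot characterize the bijection by saying it "sends a clan $\gamma$ to the orbit whose Richardson--Springer image is $\tilde\pi_\gamma$": the map $\gamma\mapsto\tilde\pi_\gamma$ forgets the signs on isolated points and is far from injective (indeed the whole point of the sets $\Aligned^G_K(\gamma)$ later in the paper is that many orbits share one twisted involution), so the bijection must come from the orbit parametrization itself, not from $\phiRS$. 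Second, the dense clan does not in general map to $w_0$ under $\tphiRS$; e.g.\ in type AIII one has $\phiRS(\gammaDense^{(p,q)})=\omega_k^{n+1}\neq 1$ once $|p-q|>1$, so $\tphiRS(\gammaDense^{(p,q)})\neq w_0$. The identification of the dense orbit and of $\phiRS$ belongs to the subsequent proposition and is established there by direct computation, not by the argument you sketch.
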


\begin{remark}
Rather than writing $\Gamma^{\GL(n)}_{\GL(p)\times \GL(q)}$ in type AIII
or $\Gamma^{\Sp(2n)}_{\GL(n)}$ in type CI,
we use the abbreviations $\Gamma_{\AIII}^{(p,q)}$ and $\Gamma_{\CI}^{(n)}$.
%while adopting a similar shorthand for the other classical types.
We extend this convention to all classical types in the obvious way, as well as to various other notations depending on $(G,K)$  such as
$\phiRS$, $\cI^G_K$, $\cABrion(\gamma)$, and $\cEABrion(z)$.
\end{remark}

The references \cite{MO90,Yam97} give an explicit way of labeling each $K$-orbit 
in $G/B$ in classical type by an index $\gamma \in \Gamma^G_K$.   
We define $\cO_\gamma \subseteq G/B$ to be the $K$-orbit associated to $\gamma \in \Gamma^G_K$
by this \defn{standard parametrization}.

Our goal now is to translate all relevant information about the weak order graph on $K$-orbits into 
statements about the indexing sets $ \Gamma^G_K$.
Starting this process,  Table~\ref{tbl2} provides the index of the unique dense $K$-orbit
in each classical type, along with an explicit formula for the Richardson--Springer map $
\phiRS : \Gamma^G_K \to W 
$ from Definition~\ref{rs-def}. % in terms of the standard orbit parametrization.

The image of $
\phiRS 
$
 is always a  set of twisted involutions $\cI^G_K \subseteq \cI_\Theta(W)$.
We can describe this set  more explicitly using the following 
 permutation statistics.
For $w \in S_{n+1}$
let \be
\Twist(w) \defequals  \{ i \in [n+1] : w(i) +i= n+2\} \quand \twist(w) \defequals  |\Twist(w)|.
\ee
If $w$ is a permutation or signed permutation then let $\Fix(w) \defequals  \{ i  : w(i)=i\}$ denote its  set of fixed points.
For signed permutations $w \in \W_n$ define 
\be\ba
 \Neg(w) &\defequals  \{i \in [n] : w(i)+i=0\},\\
  \neg(w) &\defequals  |\Neg(w)|, \\
 \ell_0(w) &\defequals  |\{ i \in [n] : w(i) < 0\}|.
 \ea
\ee
Finally, denote the subsets of \defn{fixed-point-free} involutions in each Weyl group by
\be
\ba \Ifpf(S_{n+1}) &= \left\{ w = w^{-1} \in S_{n+1} : \Fix(w) \cap [n+1] = \varnothing\right\},
\\
\Ifpf(\W_n) &= \left\{ w = w^{-1} \in \W_n : \Fix(w) \cap [n] = \varnothing\right\},
\\
\Ifpf(\WD_n) &= \left\{ w = w^{-1} \in \WD_n : \Fix(w) \cap [n] = \varnothing\right\}.
\ea
\ee

It will   be useful to identify the image of the index $\gamma_{\dense}$ of the unique dense $K$-orbit in $G/B$
under $\phiRS$. Fix integers $0\leq k\leq n$. When $n-k$ is even, let
$i = \frac{n-k}{2}$, $j=\frac{n+k}{2}+1$, and  
 \be\label{omega-eq}
 \omega_k^n \defequals  (i+1,j-1)(i+2,j-2)(i+3,j-3)\cdots (\lfloor\tfrac{n}{2}\rfloor,\lceil\tfrac{n}{2}\rceil) \in S_n.\ee
  Notice that $\omega_k^n=1 $ if $k \leq 1$ and  $\omega_k^n=w_0$ if $k=n$.
 Next, let 
 \be\label{sigmakn-eq}
 \sigma_k^n \defequals  \overline{1}\hs\overline{2}\cdots \overline{k}(k+1)(k+2)\cdots n \in \W_n
 \quand
 \hat \sigma_k^n \defequals \begin{cases} 
\sigma_{k}^n&\text{if $k$ is even}
\\
t_0\cdot \sigma_{k}^n&\text{if $k$ is odd}.
\end{cases}
\ee
We always have $\hat\sigma^n_k \in \WD_n$.
Finally, when $n-k$ is even, define 
\be\label{sigmakn-fpf-eq}
\sigma_{k\times \fpf}^n \defequals \sigma_k^n\cdot t_{k+1}\cdot t_{k+3}\cdot t_{k+5}\cdots t_{n-1}
=
\overline{1}\hs\overline{2}\cdots \overline{k}(k+2)(k+1)\cdots n(n-1) \in \I_\fpf(\W_n).\ee

\begin{proposition}
For each pair $(G,K)$ of classical type listed in Table~\ref{tbl1}, the corresponding subset $\cI^G_K \subseteq \I_\Theta(W)$
and the element $\phiRS(\gamma_{\dense}) \in \cI^G_K$
are as
 specified in Table~\ref{rs-image-tbl}.
\end{proposition}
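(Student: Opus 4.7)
The plan is to proceed type-by-type through Table~\ref{tbl1}, verifying separately (i) the asserted characterization of $\cI^G_K$ as a subset of $\I_\Theta(W)$, and (ii) the explicit formula for $\phiRS(\gammaDense)$. Both claims reduce to combinatorial checks using the explicit formulas for $\phiRS$ in Table~\ref{tbl2}, the description of the dense clan $\gammaDense$ in Table~\ref{tbl2}, and the identity \eqref{twist-inv-eq}, which rewrites $\I_\Theta(W)$ as a coset $g\cdot \I(W)$ whenever $\Theta = \Ad(g)$ with $g^2=1$.

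For types AI and AII, the map $\phiRS$ is the identity on $\Gamma^G_K$, which by definition consists of all involutions (respectively, fixed-point-free involutions) in $S_{n+1}$, so the image claim is immediate. With $\gammaDense = 1$ in AI and $\gammaDense = t_1 t_3 \cdots t_n$ in AII, the proposed formulas (presumably of the form $\omega^{n+1}_k$ or $\sigma^{n+1}_{k\times\fpf}$ as in \eqref{omega-eq} and \eqref{sigmakn-fpf-eq}) then follow straight from the one-line representations. For type AIII, I would invoke the dictionary $\gamma \mapsto \pi_\gamma$ to show that $\{\pi_\gamma : \gamma \in \Gamma_{\AIII}^{(p,q)}\}$ equals the set of involutions in $S_{p+q}$ whose signed fixed-point count (the number of $+$ minus the number of $-$) equals $p-q$; multiplication by $w_0$ then transports this into $\I_\ast(S_{p+q})$ via \eqref{twist-inv-eq}. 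The identity for $\phiRS(\gammaDense^{(p,q)}) = \tilde\pi_{\gammaDense}$ is a direct computation from Example~\ref{gamma-ex}.

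Types BI, CI, and CII proceed analogously using the signed correspondence $\gamma \mapsto \sigma_\gamma$ from $\Gamma^G_K$ into $\I(\W_n)$, then negation $\sigma \mapsto \overline\sigma$ to pass into the coset of twisted involutions (noting the map $\sigma \mapsto \overline\sigma$ is a bijection $\I(\W_n) \to \I(\W_n)$ because $w_0 = \overline{1}\cdots\overline{n}$ is central). The strict condition in CII corresponds exactly to $\overline{\sigma_\gamma}$ having no fixed points in $[\pm n]$, so the image lands in $\Ifpf(\W_n)$. For each dense clan $\deltaDense^{(p,q)}$ or $\epsilonDense^{(p,q)}$, the explicit one-line notation yields the asserted $\sigma^n_k$-style element after negation.

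The main obstacle is type D, where one must additionally enforce membership in $\WD_n$ (a parity constraint on negative entries) and correctly handle the diagram automorphism $\diamond = \Ad(t_0)$. For DI and DIII ($\Theta = \id$), I would show that the parity condition $p+n$ even (for DI) and the even-strict condition of Definition~\ref{h-def} (for DIII) are precisely what forces $\ell_0(\overline{\sigma_\gamma})$ to be even, so that $\overline{\sigma_\gamma} \in \WD_n$; the image characterization then identifies $\cI^G_K$ with the intersection of a suitable clan-type constraint with $\I(\WD_n)$. For DII and DIV ($\Theta = \diamond$), I would use \eqref{twist-inv-eq} to write $\I_\diamond(\WD_n) = t_0 \cdot \I(\WD_n) \cap \WD_n$ and verify that left-multiplication by $t_0$ exactly toggles the parity of $\ell_0$, absorbing the odd-parity case handled by $p+n$ odd (DII) or $n$ odd (DIV). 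The dense orbit formulas $\phiRS(\deltaDense)$, $\phiRS(\epsilonDense)$, $\phiRS(\etaDense)$ are then verified by direct substitution into the one-line representations, matching the $\hat\sigma^n_k$ defined in \eqref{sigmakn-eq}. The delicate bookkeeping between the two parities (membership in $\WD_n$ and the $t_0$-twist) is what makes this the technical heart of the argument, but once phrased through \eqref{twist-inv-eq} the verifications become routine sign counts.
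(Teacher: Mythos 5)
Your handling of types A, B, C is the same routine verification the paper gives, and your DI/DII outline matches the paper's remark that everything reduces to \eqref{twist-inv-eq} plus the parity observation that $k=\tfrac{|p-q|}{2}=|p-n|$ has the parity of $p+n$ (for a symmetric clan the isolated points carry signs symmetrically, so $\neg(\overline{\sigma_\gamma})\equiv k \pmod 2$, which is what puts $\overline{\sigma_\gamma}$ or $t_0\overline{\sigma_\gamma}$ in $\WD_n$ according to the parity of $p+n$). The genuine gap is in types DIII and DIV, precisely the cases the paper flags as requiring more than a routine check. First, your stated mechanism is wrong: the even-strict condition of Definition~\ref{h-def} is \emph{not} what forces $\overline{\sigma_\gamma}\in\WD_n$. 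For any strict skew-symmetric $(n,n)$-clan one has $\ell_0(\overline{\sigma_\gamma}) = |S_+| + 2\bigl|M\cap\tbinom{[n]}{2}\bigr|$ and $|S_+| = n - \#M \equiv n \pmod 2$ (the blocks of $M$ pair up under negation), so membership of $\overline{\sigma_\gamma}$ (for $n$ even) resp.\ $t_0\cdot\overline{\sigma_\gamma}$ (for $n$ odd) in $\WD_n$ is automatic and independent of whether $h(\gamma)$ is even.

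What actually has to be proved is the description of the image: where the clause ``$\neg(z)>0$ or $\ell_0(z)\in 4\NN$'' defining $\cI_{\DIII}^{(n)}$ in Table~\ref{rs-image-tbl} comes from, and why DIV has no analogous restriction. Your proposal never derives either. The paper's argument is a fiber analysis over a fixed $z$: the strict skew-symmetric clans with $\overline{\sigma_\gamma}=z$ differ only in the signs attached to isolated vertices, and toggling the sign of the maximal isolated vertex produces a clan $\tilde\gamma$ with $\sigma_{\tilde\gamma}=\sigma_\gamma$ and $h(\tilde\gamma)=h(\gamma)\pm 1$. Hence when $\neg(z)>0$ exactly one of $\gamma,\tilde\gamma$ is even-strict and $z$ always lies in the image; when $\neg(z)=0$ (possible only for $n$ even) the clan is signless and unique, and even-strictness becomes $h(\gamma)=\tfrac{1}{2}\ell_0(z)$ even, i.e.\ $\ell_0(z)\in 4\NN$. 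Since $n$ odd forces $\neg>0$, no extra condition appears in DIV. Without this (or some equivalent surjectivity-plus-obstruction argument) the DIII/DIV rows of Table~\ref{rs-image-tbl} are unjustified. A minor further slip: $\I_\diamond(\WD_n)$ is not $t_0\cdot\I(\WD_n)\cap\WD_n$ (that set is empty, as $t_0$ flips the sign-change parity); the correct use of \eqref{twist-inv-eq} is inside $\W_n$, namely $z\in\I_\diamond(\WD_n)$ iff $z \in \WD_n$ and $t_0z\in\I(\W_n)$.
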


\begin{proof}
It is easy to compute $\phiRS(\gamma_{\dense})$ from the information in Table~\ref{tbl2}.
Our formulas for $\cI^G_K$ in types A, B, and C likewise follow directly from 
the descriptions of $\Gamma^G_K$ and $\phiRS$ in Table~\ref{tbl2}.
What we wish to show is also straightforward in types DI and DII,
using \eqref{twist-inv-eq} and the fact
that if $2n=p+q$
 then $\frac{|p-q|}{2} = \frac{|p - (2n-p)|}{2} = |p-n|$ has the same parity as $p+n$.
 
Type DIII requires more explanation.
 Suppose $\gamma=(S_+,S_-,M)$ is a strict skew-symmetric $(n,n)$-clan.
 Then $\overline{\sigma_\gamma}$ has 
 no fixed points since $i+j\neq 0$ for all $\{i,j\} \in M$.
If the set $S=S_+ \sqcup S_-$ is nonempty and $x = \max(S)$
then let $
\tilde \gamma \defequals (S_+ \mathbin{\triangle} \{x\},   S_- \mathbin{\triangle} \{x\}, M)$
where $\triangle$ denotes the symmetric difference,
and if $S = \varnothing$ then define $\tilde\gamma = \gamma$. 

We have $\gamma =\tilde\gamma$ if and only if the signed permutation $\sigma_\gamma$ has no fixed points, which is equivalent to $z=\overline{\sigma_\gamma}$ having no negated points.
In this case $h(\gamma) = |M\cap \binom{[n]}{2}|$ is equal to $\frac{1}{2} \ell_0(z)$.
Hence, if $n$ is even and  $z \in \Ifpf(\WD_n)$ has $\neg(z)=0$ then 
$z \in \cI_{\DIII}^{(n)}$ if and only if $ \ell_0(z)$ is divisible by four.

On the other hand, if $z=\overline{\sigma_\gamma} $
has at least one negated point (which must occur if $n$ is odd) then exactly one of
$\gamma $ or $ \tilde\gamma$ is even-strict skew-symmetric since  $ h(\gamma) = h(\tilde\gamma) \pm 1$.
As $\sigma_\gamma = \sigma_{\tilde \gamma}$
it follows when $n$ is even that any  $z \in \Ifpf(\WD_n)$ with $\neg(z)>0$ is in $\cI_{\DIII}^{(n)}$.

The last two paragraphs characterize $\cI_{\DIII}^{(n)}$ when $n$ is even, and show that this set is equal to the expression given in  Table~\ref{rs-image-tbl}.
When $n$ is odd, every  $z \in \Ifpf(\WD_n)$ has $\neg(z)>0$ and so occurs as 
$\overline{\sigma_\gamma}$ for some   even-strict skew-symmetric $(n,n)$-clan $\gamma$.
Therefore the elements 
$t_0\cdot \overline{\sigma_\gamma}$ that make up  $\cI_{\DIII}^{(n)}$ when $n$ is odd
consist of all even-signed permutations $ z\in  \WD_n$ with $ t_0z \in \Ifpf(\W_n)$.
\end{proof}

\def\hhline{\\ & & &  \\ [-4pt]\hline & & & \\ [-4pt]}
\def\gap{\\[-4pt]&&&\\}
\begin{table}[h]
\begin{center}
{\small
\begin{tabular}{| l | l | l | l |}
\hline&&& \\[-4pt]
Type & Parameters & Image $\cI^G_K$  of $\RSphi : \Gamma^G_K \to \I_\Theta(W)$ & $\phiRS(\gamma_{\dense}) \in \cI^G_K$ 
\hhline
AI & $n\in \NN$ & $\cI_{\AI}^{(n)} \defequals  \cI(S_{n+1}) $   & $1$  
\gap
AII  & $n\in \NN$ odd & $\cI_{\AII}^{(n)} \defequals  \Ifpf(S_{n+1})$ & $t_1t_3t_5\cdots t_n$ 
\gap
AIII & $n+1=p+q$ & $\cI_{\AIII}^{(p,q)}\defequals \left\{z\in \cI_\ast(S_{p+q}) : \twist(z) \geq k\right\}$ for $k= |p-q| $  & $\omega_k^{n+1}$ \hhline
BI & $2n+1=p+q$ & $\cI_{\BI}^{(p,q)} \defequals  \left\{ z \in\cI(\W_n) : \neg(z)\geq k\right\}$ for %$n=\frac{p+q-1}{2}$ and 
$k=\frac{|p-q|-1}{2}$ & $\sigma^n_k$
\hhline
CI & $n\in \NN$ & $\cI_{\CI}^{(n)} \defequals  \cI(\W_n)$ & $1$    
\gap
CII  & $2n=p+q$  &  $\cI_{\CII}^{(p,q)} \defequals  \left\{ z \in \Ifpf(\W_n): \neg(z)\geq k \right\}$ for  
$k=\frac{|p-q|}{2}$  & $\sigma_{k\times \fpf}^n$ \\
 & with $p$ even & &
\hhline
DI  & $2n=p+q$ &  $\cI_{\DI}^{(p,q)} \defequals  \left\{ z \in \cI(\WD_n): \neg(z)\geq k \right\}$ 
for %$n=\frac{p+q}{2}$ and 
$k=\frac{|p-q|}{2}$ &  $\hat \sigma_k^n = \sigma_k^n$   
\\
 & with $p+n$ even & &
\gap
DII & $2n=p+q$ & $\cI_{\DII}^{(p,q)} \defequals   \left\{ z \in \cI_\diamond(\WD_n) : \neg(t_0z)\geq k \right\}$
for %$n=\frac{p+q}{2}$ and 
$k=\frac{|p-q|}{2}$ & $\hat \sigma_k^n = t_0\cdot \sigma_k^n$ 
\\
 & with $p+n$ odd & &
\gap
DIII & $n\in \NN$ even & $\cI_{\DIII}^{(n)} \defequals 
 \left\{ z \in \Ifpf(\WD_n) : \neg(z)>0 \text{ or }  \ell_0(z) \in 4\NN\right\}$  & $t_1t_3t_5\cdots t_{n-1}$
 \gap
   & $n\in \NN$ odd & $\cI_{\DIII}^{(n)} \defequals 
 \left\{ z\in  \WD_n : t_0z \in \Ifpf(\W_n)\right\}$ & $t_2t_4t_6\cdots t_{n-1}$
\gap\hline
\end{tabular}}
\end{center}
\caption{Images of the Richardson--Springer map for classical groups of rank $n$
as parametrized in Table~\ref{tbl1}. In the last column $\gamma_{\dense} \in \Gamma^G_K$ denotes the index
of the unique dense $K$-orbit in $G/B$.
}\label{rs-image-tbl}
\end{table}

 \section{Weak order graphs}\label{order-sect}
 
 We maintain the conventions of the previous section.
 Here, our main goal is to present a characterization of the weak order graph
 that is uniform across all classical types.

\subsection{Demazure products}\label{dem-sect}

There is a unique associative operation $\circ : W \times W \to W$ called the \defn{Demazure product}
satisfying
\be
w\circ s =\begin{cases} ws&\text{if }\ell(ws) =\ell(w)+1 \\
w&\text{if }\ell(ws) =\ell(w)-1
\end{cases}
\quand
s\circ w =\begin{cases} ws&\text{if }\ell(sw) =\ell(w)+1 \\
w&\text{if }\ell(sw) =\ell(w)-1
\end{cases}
\ee
for all $s \in S$ and $w \in W$, where $\ell : W \to \NN$ is the Coxeter length function \cite[\S7.1]{Humphreys}.
The weak order graph on $\Gamma^G_K$ is closely related to this operation, and so we review some of its properties here.

If $s_1,s_2,\dots,s_k \in S$ then we have
$
\ell(s_1\circ s_2 \circ \cdots \circ s_k) \leq k
$
with equality if and only if $s_1s_2\cdots s_k$ is a reduced expression.
This means that for any $v,w \in W$ we have
\be\ell(v\circ w) \leq \ell(v) + \ell(w),\ee 
and if equality holds in this identity then $v\circ w=vw$ while otherwise
there exist a shorter element $u \in W$ with $\ell(u)<\ell(v)$ and $v\circ w = u\circ w$.
 We also have
\be
(v\circ w)^{-1} = w^{-1} \circ v^{-1}\quand \Theta(v\circ w) = \Theta(v) \circ \Theta(w) \quad\text{for all $v,w\in W$.}
\ee
Finally, if $s \in S$ and $z \in \I_\Theta(W)$ then  
  the exchange condition  (see \cite[Lem.~3.4]{H2}) implies that
\be\label{szs-eq} \Theta(s)\circ z \circ s = \begin{cases}
\Theta(s) z s &\text{if }\ell(zs) > \ell(z)\text{ and }\Theta(s)z = zs \\
zs  &\text{if }\ell(zs) > \ell(z)\text{ and } \Theta(s)z\neq zs \\
z  &\text{if }\ell(zs) < \ell(z).
\end{cases}
\ee
We will need a more explicit statement of this formula 
for each classical Weyl group.
 
\subsubsection{Demazure conjugation in type A}

Fix $i \in [n]$ and let $\ell $ be the Coxeter length function $S_{n+1} \to \NN$. Then
$
 \ell(w)  \defequals \inv(w)
$ where
\be
 \inv(w) \defequals |\{ (i,j) \in [n+1]\times [n+1] : i<j\text{ and }w(i)>w(j)\}|.
 \ee 
Thus if $w \in S_{n+1}$  then $\ell(wt_i) >\ell(w)$ if and only if $w(i)<w(i+1)$.
If $z \in \I(S_{n+1})$ then 
\be
t_i \circ z \circ t_i = \begin{cases}
z \cdot t_i &\text{if $z(i)=i$ and $z(i+1)=i+1$} \\
z &\text{if $z(i)>z(i+1)$} \\
t_i \cdot z\cdot t_i &\text{otherwise}.
\end{cases}
\ee
If $i\in[n]$ and $z \in \I_\ast(S_{n+1})$ then 
\be
t_i^\ast \circ z \circ t_i = \begin{cases}
z\cdot t_i &\text{if $z(i)=n+1-i$ and $z(i+1)=n+2-i$} \\
z &\text{if $z(i)>z(i+1)$} \\
t_{n+1-i} \cdot z\cdot  t_i &\text{otherwise}.
\end{cases}
\ee

\subsubsection{Demazure conjugation in types B and C}

Fix $i \in [n-1]$ and let $\ell $ be the Coxeter length function of $\W_{n}$. 
Then $
 \ell(w)  \defequals \tfrac{\inv^\pm(w) + \ell_0(w)}{2}
$ 
where
\be
\ba
 \inv^\pm(w) &\defequals |\{ (i,j) \in [\pm n]\times [\pm n] : i<j\text{ and }w(i)>w(j)\}|\quand
 \\
 \ell_0(w) &\defequals |\{ i \in [n] : w(i) < 0\}|.
 \ea
 \ee
It follows that if $w \in \W_n$ then 
\be
\ell(wt_0) > \ell(w)\text{ $\Leftrightarrow$ }w(i) >0
\quand
\ell(wt_i) >\ell(w)\text{ $\Leftrightarrow$ }w(i)<w(i+1)
.\ee
If  $z \in \I(\W_n)$ and we set $z(0)=0$ then the next formula  holds for all $i \in \{0,1,2,\dots,n-1\}$: 
\be\label{BC-dem-eq}
t_i \circ z \circ t_i = \begin{cases}
z \cdot t_i &\text{if $z(i)=i$ and $z(i+1)=i+1$, or} \\
 &\text{if $z(i)=-i-1$ and $z(i+1)=-i$} \\
z &\text{if $z(i)>z(i+1)$} \\
t_i \cdot z\cdot t_i &\text{otherwise}.
\end{cases}
\ee

\subsubsection{Demazure conjugation in type D}\label{d-conj-sect}

The length function $\ell : \WD_{n} \to \NN$
 has the formula $ \ell(w)  \defequals \tfrac{\inv^\pm(w) - \ell_0(w)}{2}$,
 and if $i \in [n-1]$ then
\be
\ell(wt_{-1}) > \ell(w)\text{ $\Leftrightarrow$ }{-w(2)}<w(1)
\quand
\ell(wt_i) >\ell(w)\text{ $\Leftrightarrow$ }w(i)<w(i+1)
.\ee
Let $[a,b] = \{ i \in \ZZ: a\leq i \leq b\}$.
Then the right hand side of \eqref{BC-dem-eq} gives the formula for both 
\[
t_i \circ z \circ t_i \text{ when }(i,z) \in [n-1]\times \I(\WD_n)
\quand t_i^\diamond \circ z \circ t_i\text{ when }(i,z) \in [2,n-1]\times  \I_\diamond(\WD_n).
\]
When $z \in \I(\WD_n)$, this fact lets us compute 
\be
t_{-1} \circ z \circ t_{-1} = (t_1 \circ z^\diamond \circ t_1)^\diamond = \begin{cases}
z \cdot t_0 \cdot t_1 \cdot t_0 &\text{if $z(1)=1$ and $z(2)=2$, or} \\
 &\text{if $z(1)=2$ and $z(2)=1$} \\
z \cdot t_0\cdot t_1 \cdot t_0\cdot t_1 &\text{if $z(1)=-1$ and $z(2)=2$} \\
z &\text{if $z(1)+z(2)<0$} \\
t_0\cdot t_1 \cdot t_0\cdot z\cdot t_0\cdot t_1 \cdot t_0 &\text{otherwise}.
\end{cases}
\ee
Similarly, if $z \in \I_\diamond(\WD_n)$ then  
\be
t_{1}^\diamond \circ z \circ t_{1} = \begin{cases}
 z \cdot t_1 &\text{if $z(1)=-1$ and $z(2)=2$, or} \\
 &\text{if $z(1)=-2$ and $z(2)=1$} \\
z \cdot t_0\cdot t_1 \cdot t_0\cdot t_1 &\text{if $z(1)=1$ and $z(2)=2$} \\
z &\text{if $z(1)>z(2)$} \\
t_0\cdot t_1 \cdot t_0\cdot z\cdot t_1 &\text{otherwise},
\end{cases}
\ee
and since $t_{-1}^\diamond \circ z \circ t_{-1}  = (t_{1}^\diamond \circ z^\diamond \circ t_{1} )^\diamond$ we likewise have
\be
t_{-1}^\diamond \circ z \circ t_{-1} = \begin{cases}
 z \cdot t_0\cdot t_1 \cdot t_0 &\text{if $z(1)=-1$ and $z(2)=2$, or} \\
 &\text{if $z(1)=2$ and $z(2)=-1$} \\
z \cdot t_0\cdot t_1 \cdot t_0\cdot t_1 &\text{if $z(1)=1$ and $z(2)=2$} \\
z &\text{if $z(1)+z(2)<0$} \\
t_1\cdot z\cdot t_0\cdot t_1 \cdot t_0 &\text{otherwise}.
\end{cases}
\ee

\subsection{Weak order}\label{weak-order-sect}

Suppose $\gamma=(S_+,S_-,M)$ and $\delta=(T_+,T_-,N)$ are clans with the same base set $X$.
We say that the two clans are
 \defn{equivalent} and write $\gamma\sim\delta$
if there is an order-preserving bijection
\[
f : S_+ \sqcup S_- \to T_+\sqcup T_-
\quad\text{such that $f(S_+) = T_+$ and $f(S_-) = T_-$.}
\]
We say that $\gamma $ \defn{is contained in} $\delta$ and write $\delta\subseteq \gamma$
if $S_+\subseteq T_+$ and $S_- \subseteq T_-$.
For any set 
 $Y\subseteq X$ let 
\be
\gamma \ominus Y = ((S_+\setminus Y) \sqcup (S_-\cap Y), (S_-\setminus Y) \sqcup (S_+\cap Y),M).
\ee
In words, this is the clan obtained from $\gamma$ by toggling the signs on all isolated points in $Y$.
Write 
\[S_+(\gamma) = S_+,\quad S_-(\gamma) =S_-, \quand M(\gamma)=M.\]

When $w$ is an element of a classical Weyl group $W$, let  $C(w)$ be its set of nontrivial cycles,
formally defined as the family of sets with at least two elements that are single orbits under the natural action of $w$.
For example, we have 
\[ C(s)  =
\begin{cases}
\{ \{ i,i+1\}\} &\text{if $s=t_i \in S_{n+1}$ for $i \in [n]$} \\
 \{ \{i,i+1\}, \{-i-1,-i\}\} &\text{if $s=t_i \in \W_n$ for $i\in [n-1]$} \\
\{ \{-1,1\}\} & \text{if $s=t_0 \in \W_n$} \\
 \{ \{-2,1\}, \{-1,2\}\} &\text{if $s=t_{-1} \in \WD_n$}
 \end{cases}
 \]
 and $C(\overline 3 1 \overline 5\overline 4 2) = \{ \{ 1,-3,5,2\}, \{-1,3,-5,-2\}, \{-4,4\}\}$.

\begin{theorem}\label{weak-order-thm}
Assume $(G,K)$ is of classical type.
Let the $K$-orbit closures in $G/B$ have the standard parametrization by 
the sets  $\Gamma^G_K$  
in Table~\ref{tbl2}.
If $(G,K)$ has type AI or AII then 
\[\beta\xrightarrow{s}\gamma
\qquad\text{for a given choice of $\beta,\gamma \in \Gamma^G_K$ and $s\in S\subset W$}\]
 is an edge in the corresponding weak order graph if and only if 
\ben
\item[(a)] one has $ \phiRS(\beta)\neq \Theta(s) \circ  \phiRS(\beta) \circ s =  \phiRS(\gamma)$.
\een
In all other types $\beta\xrightarrow{s}\gamma$ is an edge in the weak order graph if and only if 
 the following properties hold in addition to condition (a):
\ben
\item[(b)] if $\cycles(s) \not\subseteq M(\beta)$ then $s$ restricts to order-preserving bijections 
\[S_-(\beta)\leftrightarrow S_-(\gamma)\quand S_+(\beta)\leftrightarrow S_+(\gamma),\]
using the convention that $s(0)=0$ in type BI;

\item[(c)] if $\cycles(s) \subseteq M(\beta)$ then all of the following hold:
\bei
\item we have $M(\gamma) = M(\beta)\setminus \cycles(s)$;
\item if $s=t_0$ in type BI then
$\beta\ominus\{0\}\subseteq\gamma$ and in all other cases $\beta\subseteq\gamma$;
and
\item when $s=t_i$
neither $S_+(\gamma)$ nor $S_-(\gamma)$ contains both $i$ and $|i|+1$.
\eei
\een
Define  $z =  \phiRS(\beta)$.
When present in the weak order graph, the edge $\beta\xrightarrow{s}\gamma$ is doubled  if and only if
\bei
\item we have $\{|i|,|i|+1\}\subset  \Fix(z)$ when $s=t_i$ for $i\neq 0$ in types AI, BI, CI, or DI; 

\item we have $\{|i|,|i|+1\}\subset \Fix(t_0z)$ when $s=t_i$ in type DII; or

\item we have $\{1\}\subset  \Fix(z)$ when $s=t_0$ in type BI.

\eei
\end{theorem}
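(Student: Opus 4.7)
The plan is to reduce the theorem to a combinatorial translation of the geometric definition of the weak order graph. The identity $\phiRS(\beta) \neq \Theta(s)\circ \phiRS(\beta)\circ s = \phiRS(\gamma)$ in part (a) is a direct consequence of Richardson--Springer theory \cite{RichSpring,Brion2001,Wyser}: for every edge $\beta \xrightarrow{s} \gamma$, the orbit $\cO_\beta$ is dense in $\pi_\alpha^{-1}(\pi_\alpha(\cO_\gamma))$ and this forces the Demazure-conjugation identity on the Richardson--Springer invariant. In types AI and AII, the map $\phiRS$ is a bijection, as recorded in Tables~\ref{tbl2} and \ref{rs-image-tbl}, so (a) alone characterizes the edges.

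For the other types, I would compare the explicit clan-level descriptions of the weak order recorded in \cite[\S\S2.1, 3.1, 4.1, 4.2, 5.1, 5.2, 5.3]{Wyser} (which encode the parametrizations of \cite{MO90,Yam97}) branch-by-branch against the Demazure conjugation formulas in Sections~\ref{dem-sect} and \ref{d-conj-sect}. When $\cycles(s)\not\subseteq M(\beta)$ the edge is of conjugation type, so $s$ only rearranges positions of matched pairs and isolated signed points while preserving sign multiplicities; this yields condition (b). When $\cycles(s)\subseteq M(\beta)$ an edge exists only when the matched pair is broken into two isolated points of opposite signs, and the constraint ``neither $S_+(\gamma)$ nor $S_-(\gamma)$ contains both elements of $\{i,|i|+1\}$'' encodes exactly this opposite-sign requirement, giving condition (c). The BI subtlety that $\beta\ominus\{0\}\subseteq \gamma$ when $s=t_0$ reflects that the position $0$ in a symmetric $(p,q)$-clan is a distinguished isolated point whose sign toggles during $t_0\circ z\circ t_0$.

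The doubled edge characterization follows from the standard dichotomy in \cite[\S1]{Brion2001}: the restriction $\pi_\alpha|_{Y_\gamma}$ has degree two precisely when the edge corresponds to the length-additive first branch of the Demazure formula for $\Theta(s)\circ z\circ s$ with $z=\phiRS(\beta)$. Reading off Section~\ref{dem-sect}, this branch triggers for $s=t_i$ with $i\neq 0$ in types AI, BI, CI, DI exactly when $\{|i|,|i|+1\}\subseteq \Fix(z)$; for $s=t_0$ in type BI exactly when $1\in \Fix(z)$ via the $i=0$ instance of \eqref{BC-dem-eq}; and in type DII via the $\diamond$-twist, using \eqref{twist-inv-eq} to transfer the fixed-point condition from $z$ to $t_0 z$.

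The main obstacle is the verification in type D. The Demazure formulas of Section~\ref{d-conj-sect} for $t_{-1}\circ z\circ t_{-1}$ on $\I(\WD_n)$ and for $t_{\pm 1}^\diamond\circ z\circ t_{\pm 1}$ on $\I_\diamond(\WD_n)$ each have four or five branches involving products such as $z\cdot t_0t_1t_0t_1$ and $t_0t_1t_0\cdot z\cdot t_0t_1t_0$, and matching them against the clan operations subject to the even-strict parity constraint of Definition~\ref{h-def} requires careful bookkeeping. One must check for each edge $\beta\xrightarrow{s}\gamma$ that $h(\beta)$ and $h(\gamma)$ have the same parity and that the image of $\phiRS$ remains within the sets $\cI^G_K$ specified in Table~\ref{rs-image-tbl}.
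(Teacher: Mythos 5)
Your overall strategy is the same as the paper's: the paper's proof of Theorem~\ref{weak-order-thm} is essentially a pointer to the relevant sections of Wyser's thesis together with a ``careful comparison'' of his clan-level description of the weak order against the formulas for $\phiRS$ and Demazure conjugation in Sections~\ref{dem-sect}--\ref{d-conj-sect}, and your handling of conditions (a), (b), (c) — including the appeal to \cite{RichSpring} for the identity $\phiRS(\gamma)=\Theta(s)\circ\phiRS(\beta)\circ s$ and the bijectivity of $\phiRS$ in types AI/AII — is the same reduction at the same level of detail.

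However, your mechanism for the doubled edges contains a genuine error. You claim that $\pi_\alpha|_{Y_\gamma}$ has degree two precisely when the length-additive ``multiplication'' branch of $\Theta(s)\circ z\circ s$ triggers, and that this branch triggers exactly when $\{|i|,|i|+1\}\subseteq\Fix(z)$. Neither statement is correct. First, that branch of \eqref{BC-dem-eq} also triggers when $z(i)=-i-1$ and $z(i+1)=-i$, i.e.\ when $C(s)\subseteq M(\beta)$, and such pair-breaking edges are \emph{not} doubled in type CI: in Example~\ref{weak-ex-bc}(2) one has $z(1)=1\in\Fix(z)$ and $t_0\circ z\circ t_0=z\cdot t_0$, yet the edge has degree one, whereas the same Coxeter-level move in type BI (Example~\ref{weak-ex-bc}(1)) is doubled — compare also the two columns of Figure~\ref{lift-fig}, which lift the identical sequence of Demazure moves but have three doubled arrows in BI and only one in CI. Second, in types AIII, CII, DIII, DIV the multiplication branch certainly occurs along weak order edges (e.g.\ every edge breaking a matched pair in type AIII), yet no edge in these types is ever doubled ($d_z\equiv 0$ in Table~\ref{extended-brion-tbl}). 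The point is that the degree of $\pi_\alpha|_{Y_\gamma}$ is not determined by the Demazure product/Coxeter data at all: it is the extra geometric distinction (in Richardson--Springer/Wyser language, whether the relevant noncompact imaginary root is of type I or type II for the given orbit), and it must be read off case by case from Wyser's descriptions, exactly as the paper does, rather than ``read off Section~\ref{dem-sect}.'' With the doubling data taken from Wyser instead of from the branch structure, the rest of your argument goes through as in the paper.
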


\begin{proof}
Wyser's thesis
\cite{Wyser} provides a comprehensive description of the $K$-orbit parametrization and
weak order graph in all classical types. See 
\cite[\S2.2.1]{Wyser} for type AII,
\cite[\S3.1.2]{Wyser} for type B, 
\cite[\S4.1.2, \S4.2.2]{Wyser} for type C, and 
\cite[\S5.1.2, \S5.1.4, \S5.2.2, \S5.3.2]{Wyser} for type D.
We use the same orbit parametrization, and the theorem follows by carefully
comparing Wyser's description of the weak order with the formulas for $\phiRS$ and
 Demazure conjugation in the previous section.
\end{proof}

\begin{example}\label{weak-ex-bc}
Suppose $\beta \xrightarrow{s} \gamma$ is an edge in the weak order graph with $s=t_0\in W=\W_n$.
\ben
\item[(1)] Suppose $C(s) = \{(-1,1)\} \subseteq M(\beta)$ in type BI. Then the conditions
\[
M(\gamma) = M(\beta)\setminus \cycles(s),\quad
\beta\ominus\{0\}\subseteq \gamma, \quand |S_+(\gamma)\cap \{0,1\}|= |S_-(\gamma)\cap \{0,1\}|=1
\]
mean that if we write $\beta = (\beta_{-n},\dots,\beta_{-2},\beta_{-1},\beta_0,\beta_1,\beta_2,\dots,\beta_n)$ in one-line notation,
then 
\[(\beta_{-1},\beta_0,\beta_1) = (c,\pm,c)\quad\text{for some $c \in \ZZ$}\]
and a one-line representation of $\gamma$ is given by 
 $(\beta_{-n},\dots,\beta_{-2},\pm,\mp,\pm,\beta_2,\dots,\beta_n).$
Thus we must have   $\gamma_{-1}=\gamma_1 = \beta_0 = \pm$ while $\gamma_0$ is the opposite sign. In this case, the edge is doubled.

\item[(2)] Suppose $(G,K)$ is of type CI and $C(s)   \subseteq M(\beta)$. Then the base set $[\pm n]$ of $\beta$ and $\gamma$ does not contain $0$,
so 
we are only required to have
$
M(\gamma) = M(\beta)\setminus \cycles(s)
$
and
$ 
\beta \subseteq \gamma
$.
Hence, if 
\[\beta = (\beta_{-n},\dots,\beta_{-2},\beta_{-1},\beta_1,\beta_2,\dots,\beta_n)\] is a one-line representation,
then 
$\beta_{-1}=\beta_1 \in \ZZ$   
and 
$\gamma= (\beta_{-n},\dots,\beta_{-2},\pm,\mp,\beta_2,\dots,\beta_n).$
In this case, the edge is not doubled.

\item[(3)] In type CII we never have $C(s)   \subseteq M(\beta)$ when $s=t_0$ since every clan in $\Gamma_{\CII}^{(p,q)}$ is strict.
\een
\end{example}

\begin{example}
Suppose $\beta \xrightarrow{s} \gamma$ is an edge in the weak order graph when $s=t_{-1}\in W=\WD_n$
and   $(G,K)$ is of type DI, DII, or DIII.
If $C(s) = \{(-1,2),(-2,1)\} \subseteq M(\beta)$ 
then the conditions
\[
M(\gamma) = M(\beta)\setminus \cycles(s),\quad
\beta \subseteq \gamma, \quand |S_+(\gamma)\cap \{-1,2\}|= |S_-(\gamma)\cap \{-1,2\}|=1
\]
mean that if we write $\beta = (\beta_{-n},\dots,\beta_{-2},\beta_{-1},\beta_1,\beta_2,\dots,\beta_n)$ in one-line notation, then 
\[ (\beta_{-2},\beta_{-1},\beta_1,\beta_2) = (a,b,a,b)\quad\text{for some }a,b \in \ZZ\]
and  a one-line representation of the (symmetric or even-strict skew-symmetric) clan $\gamma$ is  
\[
\begin{cases}
(\beta_{-n},\dots,\beta_{-3},\pm,\mp,\mp,\pm,\beta_3,\dots,\beta_n) &\text{in types DI and DII} \\
(\beta_{-n},\dots,\beta_{-3},\mp,\mp,\pm,\pm,\beta_3,\dots,\beta_n) &\text{in type DIII}.
\end{cases}
\]
A weak order edge $\beta \xrightarrow{s} \gamma$ with $s=t_{-1}$ is never doubled if $C(s)\subseteq M(\beta)$
since then $\{1,2\}\notin M(\beta)$. However, the weak order graph in types DI and DII 
may contain doubled edges of the form 
\[(\beta_{-n},\dots,\beta_{-3}, a,b,b,a,\beta_3,\cdots,\beta_n) \xrightarrow{t_{-1}} (\beta_{-n},\dots,\beta_{-3}, a,a,b,b,\beta_3,\dots,\beta_n)\quad\text{when $a,b \in \ZZ$}.\]
\end{example}

Assume $(G,K)$ is of classical type but not of type AI or AII.
Fix $z \in \cI^G_K$ and define 
\[ \ccM(z) = \begin{cases}
C(w_0 t_0  z) &\text{in type DI when $n$ is odd or in type DII when $n$ is even} \\
C(w_0 z) &\text{in all other types}
\end{cases}
\]
along with 
\[
\cS(z) = 
\begin{cases}
\{ i \in [n+1] :   z(i) = n+2-i\} &\text{in type AIII} \\
\{ i \in [\pm n] :  t_0  z(i) =-i\} &\text{in types DII} \\
\{ i \in [\pm n] : z(i) =-i\}\sqcup\{0\} &\text{in type BI} \\
\{ i \in [\pm n] : z(i) =-i\} &\text{in types CI, CII, and DI} \\
\{ i \in [\pm n] :  (t_0)^n  z(i) =-i\} &\text{in type DIII}.
\end{cases}
\]
Alternatively,
$\ccM(z)$ and $\cS(z)$ are uniquely characterized by the property that  
\be
\ccM( z) = M(\gamma)
\quand
\cS(z) = S_+(\gamma) \sqcup S_-(\gamma)
\quad
\text{for all $\gamma \in \Gamma^G_K$ with $z= \phiRS(\gamma)$.}
\ee
Recall that a \defn{set partition} is a set of nonempty, pairwise disjoint sets (called \defn{blocks}).
It follows that \be
\label{sp-obs}
\ccM(z) \sqcup \{ \{ a\} : a \in \cS(z)\}\ee is a set partition of $[n+1]$ in type AIII and of $[\pm n]$ in all other types.
In the following lemma
let $\supp(w) = \{ i : w(i) \neq i \}$ for $w \in W$. 
Continue to assume $(G,K)$ is not of type AI or AII.

\begin{lemma}\label{MS-lem}
Suppose $z \in \cI^G_K$ and $s \in S\subset W$. 
Define $\tau = \Theta(s) \circ z \circ s \in \I_\Theta(W)$.
Then:
\ben

\item[(a)] If $C(s) \subseteq \ccM(z)$ then $\tau = zs=\Theta(s)$ and  $\ccM(\tau) = \ccM(z) \setminus C(s)$ 
and $ \cS(z) \sqcup \supp(s) = \cS(\tau)$.

\item[(b)] If $C(s) \not\subseteq \ccM(z)$ then $s$ restricts to a bijection  
$\cS(z) \leftrightarrow \cS(\tau)$.

\een
\end{lemma}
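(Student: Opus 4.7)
The plan is to lift $z$ to a clan $\beta \in \Gamma^G_K$ with $\phiRS(\beta) = z$.  Such $\beta$ exists because $\cI^G_K$ is defined as the image of $\phiRS$, and once chosen, the alternative characterization \eqref{sp-obs} gives $\ccM(z) = M(\beta)$ and $\cS(z) = S_+(\beta) \sqcup S_-(\beta)$.  The two cases of the lemma then match up with conditions (c) and (b) of Theorem~\ref{weak-order-thm}.

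For case (a), assume $C(s) \subseteq M(\beta)$.  I build $\gamma \in \Gamma^G_K$ explicitly by setting $M(\gamma) = M(\beta) \setminus C(s)$ and distributing the newly isolated vertices in $\supp(s)$ between $S_+(\gamma)$ and $S_-(\gamma)$ using the sign pattern prescribed by Theorem~\ref{weak-order-thm}(c), together with the evenness constraint on $h(\gamma)$ in types DIII and DIV.  A routine check confirms $\gamma \in \Gamma^G_K$.  Next I appeal to the Demazure conjugation formulas of Section~\ref{dem-sect}: the assumption $C(s) \subseteq M(\beta)$ translates through $z = \phiRS(\beta)$ into a precise one-line shape for $z$ on $\supp(s)$ (for instance $z(i) = -i-1$ and $z(i+1) = -i$ for $s = t_i$ with $i > 0$ in types B and C), which places us in the branch of \eqref{BC-dem-eq} (or its type AIII or type D analogue) that evaluates to $z \cdot s$, and simultaneously forces $\Theta(s) z = z s$.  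Hence $\tau = zs = \Theta(s) z \neq z$, so condition (a) of Theorem~\ref{weak-order-thm} also holds and the edge $\beta \xrightarrow{s} \gamma$ exists.  Applying \eqref{sp-obs} to $\gamma$ then yields $\ccM(\tau) = M(\gamma) = M(\beta) \setminus C(s) = \ccM(z) \setminus C(s)$ and $\cS(\tau) = \cS(z) \sqcup \supp(s)$.

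For case (b), assume $C(s) \not\subseteq M(\beta)$.  When $\tau \neq z$, condition (a) of Theorem~\ref{weak-order-thm} is met and combines with $C(s) \not\subseteq M(\beta)$ to trigger condition (b), producing an edge $\beta \xrightarrow{s} \gamma$ with $\phiRS(\gamma) = \tau$ under which $s$ restricts to order-preserving bijections $S_\pm(\beta) \leftrightarrow S_\pm(\gamma)$; taking the union yields the desired bijection $s : \cS(z) \leftrightarrow \cS(\tau)$.  When $\tau = z$ we have $\cS(\tau) = \cS(z)$ and the claim is interpreted as the identity on this common set.

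The main obstacle is the casework inside (a).  Types AIII, B, and C each need only a single Demazure identity per choice of $s$, but type D uses the more elaborate formulas for $t_{-1}$ and $t_1^\diamond$ in Section~\ref{d-conj-sect}, and one must verify that the $\gamma$ we build still satisfies the even-strict and parity constraints defining $\Gamma_{\DIII}^{(n)}$ and $\Gamma_{\DIV}^{(n)}$ after the new signs potentially shift $h(\gamma)$ from Definition~\ref{h-def}.  This forces a coordinated choice of signs when distributing $\supp(s)$ between $S_+(\gamma)$ and $S_-(\gamma)$.
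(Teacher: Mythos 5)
Your Demazure computation in case (a) — that $C(s)\subseteq \ccM(z)$ forces $zs=\Theta(s)z$ with $\ell(zs)>\ell(z)$, hence $\tau=zs\neq z$ — is exactly the first half of the paper's own proof. The gap lies in how you extract the rest through Theorem~\ref{weak-order-thm}. That theorem is only a pairwise criterion: it tells you when a \emph{given} pair $(\beta,\gamma)$ forms an edge; it neither supplies a clan $\gamma$ nor the identity $\phiRS(\gamma)=\tau$. In case (a) you construct $\gamma$ but verify only that $\tau\neq z$; condition (a) of the theorem also demands $\phiRS(\gamma)=\tau$, and that identity is precisely what you later harvest (via \eqref{sp-obs}) to get $\ccM(\tau)=M(\gamma)$ and $\cS(\tau)=S_+(\gamma)\sqcup S_-(\gamma)$. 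So either you check $\phiRS(\gamma)=\tau$ by direct computation — at which point the clan and the edge are superfluous, because once $\tau=zs$ is known the equalities $\ccM(zs)=\ccM(z)\setminus C(s)$ and $\cS(zs)=\cS(z)\sqcup\supp(s)$ follow immediately from the definitions of $\ccM$ and $\cS$ together with the set-partition observation \eqref{sp-obs} (this is the paper's route) — or the argument is circular. In case (b) the problem is sharper: you say conditions (a) and (b) of the theorem ``trigger'' and thereby ``produce'' an edge $\beta\xrightarrow{s}\gamma$, which reverses the direction of the equivalence. The existence of \emph{some} $\gamma\in\Gamma^G_K$ admitting an $s$-edge out of $\beta$ when $\tau\neq z$ is a genuine geometric fact (it amounts to $\cO_\beta$ being dense in its $P_\alpha$-saturation and that saturation containing a second orbit), which you neither prove nor cite; it is not a consequence of the pairwise criterion, and the standard symmetric-space trichotomy (complex/real/imaginary roots) shows such existence statements cannot simply be waved through.

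The paper sidesteps all of this by never leaving the Weyl group: for (a) it shows $w_0zs=sw_0z$ (with the $t_0$-twist in DI odd / DII even), concludes $\tau=zs$ from \eqref{szs-eq}, and reads off the $\ccM$ and $\cS$ statements from their definitions; for (b) it performs a direct case-by-case check with the Demazure conjugation formulas of Section~\ref{dem-sect} applied to $z$ and $\tau$ themselves, which also covers the case $\tau=z$ uniformly. To repair your write-up you would either supply the missing direct verifications of $\phiRS(\gamma)=\tau$ (and, in (b), the existence of the edge from Richardson--Springer orbit theory), or simply carry out the paper's shorter computation on $z$ directly.
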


\begin{proof}
Suppose $C(s) \subseteq \ccM(z)$. In type DI when $n$ is odd and in type DII when $n$ is even,
it follows that $w_0 t_0 z s =  s w_0 t_0 z$. In these cases 
  $\Theta(v) = t_0 w_0 s w_0 t_0$  
so    $zs = (t_0 w_0 s w_0 t_0) z = \Theta(s) z$.
Consulting Table~\ref{tbl1}, we see that in the other types 
we likewise have $w_0 z s= sw_0z$ so $zs = ( w_0 s w_0  ) z = \Theta(s) z$.
It is easy to check that $\ell(z) < \ell(zs)$ so
we conclude from \eqref{szs-eq} that $\tau =  zs = \Theta(s) sz$.

When $C(s) \subseteq \ccM(z)$ it is clear by definition that $\ccM(zs) = \ccM(z) \setminus C(s)$ 
and so our observation about \eqref{sp-obs} being a set partition implies that we have $ \cS(z) \sqcup \supp(s) = \cS(zs)$.
This proves part (a).
Part (b) can be shown by a case-by-case calculation using the formulas  in Section~\ref{dem-sect}.
\end{proof}

Continue to assume $(G,K)$ is not of type AI or AII.
For $v,w \in W$ and $s \in S$ we write 
\be
v \xrightarrow{s} w\quad\text{if}\quad v \neq \Theta(s) \circ v\circ s = w.
\ee
Now write $z_{\dense} = \phiRS(\gammaDense)$ and consider a sequence 
\be\label{path-eq}
P = \( z_{\dense} =z^0 \xrightarrow{ s_1} z^1 \xrightarrow{ s_2}  \cdots \xrightarrow{ s_m} z^m=z \)
\quad\text{where each $s_i \in S$.}
\ee
The associativity of the Demazure product implies that $s_1s_2\cdots s_m$ is a reduced expression (as is its reversal).
We wish to determine when $P$ \defn{lifts} to a path in weak order for $(G,K)$ in the sense that 
there are clans $\gamma^0,\gamma^1,\dots,\gamma^m \in \Gamma^G_K$ with $\phiRS(\gamma^i) = z^i$  
such that 
\[
\gamma^0 \xrightarrow{ s_1} \gamma^1 \xrightarrow{ s_2}  \cdots \xrightarrow{ s_m} \gamma^m
\]
are all edges in the weak order graph.

To this end, we associate a set partition to $P$ in the following way.
First, let 
\be
w=s_m\cdots s_2s_1 \in W
\quand
 \Lambda_0(P) = \left\{ \left\{   w(a) \right\} :  a \in \cS(z_\dense) \right\}
 \ee
with the convention that $w(0) = 0$.
Next, for each $i \in [m]$  let 
$v=s_m\cdots s_{i+2}s_{i+1}$ and define 
\be\label{Lambda_i-eq}
\Lambda_i(P)
=\begin{cases}
\left\{ \left\{v(a), v(b)\right\} : \{a,b\} \in C (s_i)\right\} &\text{if $C(s_i) \subseteq \ccM(z^{i-1})$}\\
\varnothing &\text{if $C(s_i) \not\subseteq \ccM(z^{i-1})$}.
\end{cases}
\ee
The following property is essentially immediate from Lemma~\ref{MS-lem} by induction on $k$.

\begin{proposition}\label{Lambda-prop}
The union $ \bigcup_{i=0}^m \Lambda_i(P)$
 is disjoint and gives a set partition of $\cS(z)$.
\end{proposition}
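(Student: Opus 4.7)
The plan is to prove this by induction on $m$, strengthened to the statement that for every initial segment $P_k : z_\dense = z^0 \xrightarrow{s_1} \cdots \xrightarrow{s_k} z^k$ of $P$, the union $\bigcup_{i=0}^k \Lambda_i(P_k)$ is a set partition of $\cS(z^k)$. The base case $k=0$ will be immediate from the definition of $\Lambda_0$, noting that in type BI the convention $w(0)=0$ correctly places the singleton $\{0\}$ into $\Lambda_0(P_0)$ and in all other types $0$ does not occur in $\cS(z_\dense)$.

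For the inductive step, my key observation will be a transformation law: for every $i<k$, the element $v=s_k\cdots s_{i+1}$ governing $\Lambda_i(P_k)$ equals $s_k$ times the corresponding element $v' = s_{k-1}\cdots s_{i+1}$ governing $\Lambda_i(P_{k-1})$. Consequently each block of $\Lambda_i(P_k)$ is simply the $s_k$-image of the corresponding block of $\Lambda_i(P_{k-1})$, and the task reduces to analyzing how $s_k$ transforms the inductive partition of $\cS(z^{k-1})$ into a partition of $\cS(z^k)$. The analysis splits into the two cases of Lemma~\ref{MS-lem}.

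In the case $C(s_k) \subseteq \ccM(z^{k-1})$, I will invoke Lemma~\ref{MS-lem}(a) to obtain $\cS(z^k) = \cS(z^{k-1}) \sqcup \supp(s_k)$; the disjointness forces $s_k$ to fix $\cS(z^{k-1})$ pointwise, so $\Lambda_i(P_k)=\Lambda_i(P_{k-1})$ for every $i<k$, while $\Lambda_k(P_k)=C(s_k)$ (since the governing $v$ is an empty product) is itself a partition of $\supp(s_k)$. The disjoint union is then a partition of $\cS(z^{k-1})\sqcup\supp(s_k)=\cS(z^k)$. In the remaining case $C(s_k) \not\subseteq \ccM(z^{k-1})$, Lemma~\ref{MS-lem}(b) gives a bijection $s_k:\cS(z^{k-1})\to\cS(z^k)$; applying it blockwise transports the inductive partition to the desired partition, while $\Lambda_k(P_k)=\varnothing$ contributes nothing.

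I do not expect any substantive obstacle. The whole argument reduces to verifying the transformation law $\Lambda_i(P_k)=s_k(\Lambda_i(P_{k-1}))$ for $i<k$ and then applying the two clauses of Lemma~\ref{MS-lem} mechanically. The only points needing slight care are the claim that $s_k$ restricts to the identity on $\cS(z^{k-1})$ in case (a), which is forced by the set-partition observation following \eqref{sp-obs}, and the cosmetic check that every simple generator of $\W_n$ fixes $0$, so the convention $w(0)=0$ behaves as expected in type BI throughout the induction.
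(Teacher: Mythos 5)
Your proposal is correct and follows essentially the same route as the paper, which disposes of this statement in one line as "immediate from Lemma~\ref{MS-lem} by induction on $k$": your strengthened induction over initial segments, together with the observation that $\Lambda_i(P_k)$ is the $s_k$-image of $\Lambda_i(P_{k-1})$ and the two cases of Lemma~\ref{MS-lem}, is exactly the intended argument carried out in detail. The case analysis (pointwise fixing of $\cS(z^{k-1})$ with $\Lambda_k(P_k)=C(s_k)$ partitioning $\supp(s_k)$ in case (a), and blockwise transport along the bijection in case (b)) is sound as written.
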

%\Eric{todo}
Outside type BI we define
\be
\Lambda(P) =  \Lambda_1(P) \sqcup \Lambda_2(P) \sqcup  \cdots \sqcup  \Lambda_k(P).
\ee
In type BI with parameters $p+q=2n+1$, if this union contains $l$ trivial blocks 
\be\label{BI-a-eq}
\{-a_1,a_1\}, \{-a_2,a_2\}, \dots, \{-a_l,a_l\}
\quad\text{with $0<a_1<a_2<\dots<a_l$,}\ee then we 
form $\Lambda(P) $ from $ \Lambda_1(P) \sqcup\Lambda_2(P) \sqcup  \cdots \sqcup  \Lambda_m(P)$
by replacing all of these blocks with one set
\[
\{-a_{l},\dots,-a_2,-a_1,0,a_1,a_2,\dots,a_{l}\}.
\]

\begin{example}\label{lift-ex}
In type BI with $(p,q)= (5,4)$ or in type CI with $n=4$, a valid choice of $P$ is
\[
P= \(\ \ba z_\dense = 1234 &\xrightarrow{t_0} 
\overline{1}234 \xrightarrow{t_1} 
1\overline{2}34 \xrightarrow{t_2}
12\overline{3}4 \xrightarrow{t_1} 
21\overline{3}4  \xrightarrow{t_0} 
\overline{213}4  \\&\xrightarrow{t_1} 
\overline{123}4  \xrightarrow{t_3}
\overline{12}3\overline{4}  \xrightarrow{t_2}
\overline{1}2\overline{34}  \xrightarrow{t_1}
1\overline{234}  \xrightarrow{t_0}
\overline{1234} 
=z\ea\ \).
\]  
Then we have $\Lambda_0(P)=\{\{0\}\}$ in type BI and $\Lambda_0(P) =\varnothing$ in type CI, along with
\bei
\item[] $\Lambda_1(P) = \{ \{-4,4\}\}$ since $C(t_0) \subset C(1234) = \{ \{-1,1\}, \{-2,2\},\{-3,3\},\{-4,4\}\}$,
\item[] $\Lambda_2(P)=\Lambda_3(P)=\Lambda_4(P)=\Lambda_5(P) = \varnothing$,
\item[] $\Lambda_6(P) = \{ \{-3,-2\},\{2,3\}\}$ since $C(t_1) \subset C(\overline{213}4) = \{ \{-2,-1\},\{1,2\},\{-4,4\}\}$,
\item[] $\Lambda_7(P)=\Lambda_8(P)=\Lambda_9(P) = \varnothing$, and
\item[] $\Lambda_{10}(P) = \{ \{-1,1\}\}$ since $C(t_0) \subset C(1\overline{234}) = \{ \{-1,1\}\}$.
\eei
This computation gives
\[ \Lambda(P) = \begin{cases}
 \{   \{-4,-1,0,1,4\},  \{-3,-2\},\{2,3\} \}&\text{in type BI} \\
 \{  \{-4,4\},  \{-3,-2\},\{2,3\}, \{-1,1\}\}&\text{in type CI}.
 \end{cases}
 \]
The sequence $P$ lifts to multiple paths in the weak order graphs for either type;
see Figure~\ref{lift-fig}.
\end{example}

\begin{figure}
{\[
\begin{tikzcd}
 (1,2,3,4,+,4,3,2,1) \arrow[d, Rightarrow, "t_0"] \\ 
 (1,2,3,+,-,+,3,2,1) \arrow[d, "t_1"] \\ 
(1,2,+,3,-,3,+,2,1) \arrow[d, "t_2"] \\
(1,+,2,3,-,3,2,+,1) \arrow[d, Rightarrow, "t_1"] \\
(1,+,2,3,-,2,3,+,1)  \arrow[d, "t_0"] \\ 
(1,+,2,2,-,3,3,+,1)  \arrow[d, "t_1"] \\ 
(1,+,+,-,-,-,+,+,1)  \arrow[d, "t_3"] \\
(+,1,+,-,-,-,+,1,+)   \arrow[d, "t_2"] \\
(+,+,1,-,-,-,1,+,+)  \arrow[d, "t_1"] \\
(+,+,-,1,-,1,-,+,+)  \arrow[d, Rightarrow, "t_0"] \\
(+,+,-,-,+,-,-,+,+)
\end{tikzcd}
\qquad\qquad\qquad\qquad\qquad
\begin{tikzcd}
 (1,2,3,4,4,3,2,1) \arrow[d, "t_0"] \\ 
 (1,2,3,+,-,3,2,1) \arrow[d, "t_1"] \\ 
(1,2,+,3,3,-,2,1) \arrow[d, "t_2"] \\
(1,+,2,3,3,2,-,1) \arrow[d, Rightarrow, "t_1"] \\
(1,+,2,3,2,3,-,1)  \arrow[d, "t_0"] \\ 
(1,+,2,2,3,3,-,1)  \arrow[d, "t_1"] \\ 
(1,+,+,-,+,-,-,1)  \arrow[d, "t_3"] \\
(+,1,+,-,+,-,1,-)   \arrow[d, "t_2"] \\
(+,+,1,-,+,1,-,-)  \arrow[d, "t_1"] \\
(+,+,-,1,1,+,-,-)  \arrow[d, "t_0"] \\
(+,+,-,-,+,+,-,-)
\end{tikzcd}
\]}
\caption{Paths in weak order for types BI (left) and CI (right)
lifting the sequence $P$ in Example~\ref{lift-ex}.
The path in type BI has 3 doubled arrows while the path in type CI only has one.}\label{lift-fig}
\end{figure}
 
Recall from Section~\ref{brion-intro-sect} that if $\gamma \in\Gamma^G_K$ then $\cABrion(\gamma)$
is the set of all Weyl group elements of the form $s_m\cdots s_1$
such that $\gammaDense \xrightarrow{s_1} \cdots \xrightarrow{s_m} \gamma$ 
is a path in the weak order graph on $\Gamma^G_K$.

\begin{lemma}\label{weak-order-lem}
Continue to assume   $(G,K)$ is of classical type but not of type AI or AII. Define $P$ as in \eqref{path-eq}
and suppose $\gamma \in \Gamma^G_K$ has $z=\phiRS(\gamma)$. Then the following properties are equivalent:
\ben
\item[(a)] The sequence $P$ lifts to a path from $\gammaDense$ to $\gamma$  in the weak order graph for $(G,K)$.
\item[(b)] The Weyl group element $w =s_m\cdots s_2s_1$ belongs to $\cW^G_K(\gamma)$.
\item[(c)] Whenever $a<b$ are consecutive elements of some block in $ \Lambda(P)$ it holds that  
\[|S_+(\gamma) \cap \{a,b\}| = |S_-(\gamma) \cap \{a,b\}| = 1.\]
\een
\end{lemma}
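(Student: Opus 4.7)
The equivalence (a) $\Leftrightarrow$ (b) is immediate from the definition of the Brion atom set $\cABrion(\gamma)$ given in Section~\ref{brion-intro-sect}: its elements are exactly the Weyl group elements of the form $s_m \cdots s_1$ arising from weak order paths $\gammaDense \xrightarrow{s_1} \cdots \xrightarrow{s_m} \gamma$. The real content is (a) $\Leftrightarrow$ (c), for which my plan is induction on $m$, with the base case $m = 0$ being immediate.

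For the inductive step I would truncate $P$ by removing its final arrow $z^{m-1} \xrightarrow{s_m} z^m$, producing a sequence $P'$ of length $m-1$, and observe the recursion
\[
\Lambda_i(P) = \{\, s_m(B) : B \in \Lambda_i(P') \,\} \quad\text{for all } i < m,
\]
while $\Lambda_m(P) = \varnothing$ when $C(s_m) \not\subseteq \ccM(z^{m-1})$ and $\Lambda_m(P) = C(s_m)$ otherwise. By Proposition~\ref{Lambda-prop} the blocks of $\Lambda(P)$ partition $\cS(z)$, and Lemma~\ref{MS-lem}(a) records that $\cS$ grows by $\supp(s_m)$ when passing from $z^{m-1}$ to $z^m$ in the matched case.

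Now I would split into two cases. If $C(s_m) \not\subseteq \ccM(z^{m-1})$ then Theorem~\ref{weak-order-thm}(b) asserts that $\gamma^{m-1} \xrightarrow{s_m} \gamma$ is an edge exactly when $s_m$ restricts to order-preserving sign-bijections; this pins down $\gamma^{m-1}$ uniquely from $\gamma$ and shows that signs at positions in $\Lambda_i(P) = s_m(\Lambda_i(P'))$ inside $\gamma$ agree with signs at positions in $\Lambda_i(P')$ inside $\gamma^{m-1}$, so condition (c) for $(P,\gamma)$ reduces to condition (c) for $(P',\gamma^{m-1})$. If instead $C(s_m) \subseteq \ccM(z^{m-1})$ then Theorem~\ref{weak-order-thm}(c) combined with Lemma~\ref{MS-lem}(a) says the edge is valid precisely when the transition adds the new signed positions $\supp(s_m)$ and these carry mixed signs on each cycle of $s_m$. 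Since $\supp(s_m) \subseteq \ccM(z^{m-1})$ is disjoint from $\cS(\gamma^{m-1})$ by \eqref{sp-obs}, the action of $s_m$ fixes $\cS(\gamma^{m-1})$ pointwise, so $\Lambda_i(P) = \Lambda_i(P')$ for all $i < m$ and these blocks carry identical signs in $\gamma$ and $\gamma^{m-1}$; condition (c) for $(P,\gamma)$ then becomes (c) for $(P',\gamma^{m-1})$ together with the mixed-sign condition on $\Lambda_m(P) = C(s_m)$, which is precisely the edge condition. Combining either case with the inductive hypothesis completes the step.

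The main obstacle is type BI. Whenever $s_i = t_0$ and the matched case applies, $\Lambda_i(P)$ contributes a trivial block $\{-a, a\}$ with $a = v_i(1)$, and these blocks are merged in the definition of $\Lambda(P)$ with $\{0\}$ and with every other such trivial block. The local rule $(c, \pm, c) \mapsto (\pm, \mp, \pm)$ from Example~\ref{weak-ex-bc}(1) simultaneously flips the sign at position $0$ and copies the previous sign at $0$ onto the newly created pair at $\pm 1$. To verify condition (c) on the merged block I would track the sign at position $0$ through the entire path, note that the intermediate non-$t_0$ generators relocate each pair to some $\pm a$ without altering its sign, and check that the resulting signs at $-a_l, \ldots, -a_1, 0, a_1, \ldots, a_l$ alternate in that order. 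The delicate point is aligning the step-order of the matched $t_0$ applications with the $a$-value order inside $\gamma$, producing exactly the parity alternation required by condition (c); this is established by analyzing how each $v_i = s_m\cdots s_{i+1}$ maps $1$ to its final position $v_i(1)$ and confirming that an earlier matched $t_0$ step necessarily yields a larger value of $|v_i(1)|$ than a later one.
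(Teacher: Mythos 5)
The equivalence $(a) \Leftrightarrow (b)$ is not ``immediate from the definition'' as you claim, and this is a genuine gap. The implication $(a) \Rightarrow (b)$ is indeed trivial, but $(b) \Rightarrow (a)$ is not: $w \in \cW^G_K(\gamma)$ only tells you that \emph{some} reduced expression for $w$ produces a lifting path, whereas $(a)$ asserts this for the \emph{specific} sequence $P$ determined by the given reduced word $s_m\cdots s_1$. The paper closes this by observing that $s\cdot\gamma := \beta$ (or $\gamma$ when no edge exists) defines an action of the monoid $(W,\circ)$ on $\Gamma^G_K$, citing Richardson--Springer, which forces lifting to depend only on $w$ and not on the chosen reduced expression. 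Your plan of separately proving $(a) \Leftrightarrow (c)$ does not automatically repair this, because the set $\Lambda(P)$ appearing in $(c)$ is itself defined in terms of the reduced expression $P$; without first knowing that $\Lambda(P)$ (and hence $(c)$) is reduced-word-independent, you cannot chain $(b)\Rightarrow(c)\Rightarrow(a)$.

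For $(a) \Leftrightarrow (c)$, your induction on $m$ by truncating the last step of $P$ is a genuinely different route from the paper's, which instead does a single global comparison of Theorem~\ref{weak-order-thm}, Lemma~\ref{MS-lem}, and Example~\ref{weak-ex-bc} to characterize when $P$ fails to lift. Your local recursion for $\Lambda_i(P)$ in terms of $\Lambda_i(P')$ is correct, and your split between the matched and unmatched cases for $s_m$ correctly uses parts~(b) and (c) of Theorem~\ref{weak-order-thm} together with the disjointness observation from \eqref{sp-obs}; you should, however, be aware that in the unmatched case the needed order-preservation of $s_m|_{\cS(z^{m-1})}$ is not free but follows because both $i$ and $i+1$ lying in $\cS(z^{m-1})$ would contradict $\Theta(s_m)\circ z^{m-1}\circ s_m \neq z^{m-1}$. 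Your inductive approach is somewhat more elementary in spirit but does not avoid the hard point in type BI: the ordering fact --- that an earlier matched $t_0$ step produces a larger trivial-block value $|v_i(1)|$ --- is asserted at the end of your argument without proof, and it is exactly this claim that the paper outsources to \cite[Lem.~7.4]{HM}. Until that is either proved or cited, your BI case is incomplete to the same degree as the paper's, and your $(a)\Leftrightarrow(b)$ argument is simply wrong as stated.
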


\begin{proof} Property (a) implies property (b) by the definition of $\cW^G_K(\gamma)$.
Conversely, setting $s\cdot \gamma =\beta$ when the weak order graph has an edge of the form $\beta \xrightarrow{s}\gamma$ and defining $s\cdot \gamma = \gamma$ when no such edge exists % for each $s \in S$
determines an action of the monoid $(W,\circ)$ on $\Gamma^G_K$
\cite[\S0]{RichSpring}.
It follows that if $w\in \cW^G_K(\gamma)$, so that some choice of $P$  
corresponding to a reduced expression for $w$ lifts to a path in the weak order graph,
then all such paths $P$ lift in this way. Hence (b) also implies (a).

It remains to show the equivalence of   (a) and (c).
Fix a one-line representation $\gamma = (\gamma_1,\gamma_2,\dots)$.
Note that we can only have $0 \in S_+(\gamma) \sqcup S_-(\gamma)$ when $(G,K)$ is of type BI,
and if $(G,K)$ is of type CI then every block $\{a,b\} \in \Lambda(P)$ with $a+b=0$ has 
$|S_+(\gamma) \cap \{a,b\}| = |S_-(\gamma) \cap \{a,b\}| = 1$ since $\gamma$ is skew-symmetric.

In types CII and DIII  the matching $\ccM(z_\dense)$ has no blocks $\{a,b\}$ with $a+b=0$,
so it follows from the formulas in Section~\ref{dem-sect} that the same is true of $\ccM(z^i)$ for all $i \in [m]$. 
Finally, if we are in type BI  and $a_1,a_2,\dots ,a_l$ are defined as in \eqref{BI-a-eq},
then we must have $\{ \{-a_j,a_j \}\} = \Lambda_{i_j}(P)$ for an increasing sequence of indices $1\leq i_1 < i_2 <\dots < i_l \leq m$
by \cite[Lem.7.4]{HM}.

Given these observations, by comparing Theorem~\ref{weak-order-thm} and Lemma~\ref{MS-lem} with Example~\ref{weak-ex-bc},
we see that  $P$ fails to lift to a path in the weak order graph if and only if
for some $i \in[m]$ 
there is a block $\{a,b\} \in \Lambda_i(P)$ with 
 $\{a,b\}\subseteq S_+(\gamma)$ or $\{a,b\}\subseteq S_-(\gamma)$, or in type BI 
the  signs $\gamma_0,\gamma_{a_1}$, $\gamma_{a_2}$, \dots, $\gamma_{a_l}$
do not alternate when $a_1,a_2,\dots,a_l$ are defined as in \eqref{BI-a-eq}.
These obstructions are avoided precisely when condition (c) holds.
\end{proof}

\subsection{Brion atoms}
Recall from \eqref{cEABrion-eq} that each $z \in \cI^G_K$ then has an associated set of \defn{extended Brion atoms}
\[
\cEABrion(z) \defequals  \bigcup_{\substack{ \gamma \in \Gamma^G_K \\ \phiRS(\gamma)=z}} \cABrion(\gamma) \subseteq W.
\]
where $\phiRS : \Gamma^G_K \to W$ is the Richardson--Springer map from Definition~\ref{rs-def}.
Using Theorem~\ref{weak-order-thm} we may describe $\cEABrion(z)$ more explicitly in all classical types. This will be a useful intermediate step 
on the way to doing the same for the sets  $\cABrion(\gamma)$ with $\gamma \in \Gamma^G_K$.

It follows from \eqref{szs-eq} by induction on length that 
every $z \in \I_\Theta(W)$ 
we can be expressed as $z=\Theta(w)\circ  w^{-1}$ for some $w \in W$.
For  $z \in \I_\Theta(W)$ we may therefore define
\begin{subequations}
\begin{align}
 \ellhat_\Theta(z) &= \min\left\{\ell(w) : w\in W\text{ and } \Theta(w)\circ  w^{-1} = z\right\}
 \quad\text{along with}
\\
\cA_\Theta(z) &= \left\{ w \in W : \ell(w)  = \ellhat_\Theta(z)\text{ and } \Theta(w)\circ  w^{-1} = z\right\}.
\end{align}
\end{subequations}
We use the abbreivation %$\ellhat(z) \defequals  \ellhat_\id(z)$ and
 $\cA(z) \defequals \cA_\id(z)$.
Results in  \cite[\S3]{H2} imply that if $s\in S$ then
\be\label{szs-length-eq}
 \Theta(s) \circ z \circ s\neq z \quad\text{if and only if}\quad
\ellhat_\Theta(\Theta(s) \circ z \circ s) = \ellhat_\Theta(z) + 1.
\ee
Fix elements $y,z \in \I_\Theta(W)$ and consider the more general set 
\be\cA_\Theta(y,z) =\left\{ \text{minimal-length elements  
$ w \in W$ with $\Theta(w)\circ y \circ w^{-1} = z$}\right\}.\ee
We abbreviate $\cA(y,z) \defequals \cA_\id(y,z)$.
Unlike $\cA_\Theta(z)$ the set $\cA_\Theta(y,z)$ may be empty.

\begin{lemma}\label{dem-lem0}
It holds that
$
\cA_\Theta(y,z) = \left\{ w \in W : \ell(w)  = \ellhat_\Theta(z)-\ellhat_\Theta(y)\text{ and } \Theta(w)\circ y\circ   w^{-1} = z\right\}.
$
\end{lemma}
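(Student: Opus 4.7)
The plan is to sandwich the common length of elements of $\cA_\Theta(y,z)$ between $\ellhat_\Theta(z) - \ellhat_\Theta(y)$ on both sides. This shows that the minimum length equals $\ellhat_\Theta(z) - \ellhat_\Theta(y)$, and hence that the two sides of the claimed equality coincide.

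For the lower bound, I would choose $u \in \cA_\Theta(y)$, so $\ell(u) = \ellhat_\Theta(y)$ and $\Theta(u) \circ u^{-1} = y$. For any $w \in W$ with $\Theta(w) \circ y \circ w^{-1} = z$, the identities $\Theta(a \circ b) = \Theta(a) \circ \Theta(b)$ and $(a \circ b)^{-1} = b^{-1} \circ a^{-1}$ recorded in Section~\ref{dem-sect}, together with associativity of $\circ$, yield
$$\Theta(w \circ u) \circ (w \circ u)^{-1} = \Theta(w) \circ [\Theta(u) \circ u^{-1}] \circ w^{-1} = \Theta(w) \circ y \circ w^{-1} = z.$$
Hence $\ell(w \circ u) \geq \ellhat_\Theta(z)$, and since $\ell(w \circ u) \leq \ell(w) + \ell(u) = \ell(w) + \ellhat_\Theta(y)$, we conclude $\ell(w) \geq \ellhat_\Theta(z) - \ellhat_\Theta(y)$. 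In particular, any $w$ of exactly this length satisfying the conjugation equation is automatically a minimum-length solution, so lies in $\cA_\Theta(y,z)$.

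For the matching upper bound I would argue by contradiction. Pick $w \in \cA_\Theta(y,z)$, fix a reduced expression $w = s_1 s_2 \cdots s_k$, and track the iterated Demazure conjugations
$$y_0 = y, \qquad y_j = \Theta(s_{k-j+1}) \circ y_{j-1} \circ s_{k-j+1}, \qquad y_k = z.$$
By \eqref{szs-length-eq}, each step is either \emph{fixing} (so $y_j = y_{j-1}$ and $\ellhat_\Theta$ is unchanged) or increases $\ellhat_\Theta$ by exactly one. Thus the number of non-fixing steps equals $\ellhat_\Theta(z) - \ellhat_\Theta(y)$. If $k$ exceeds this number, pick any fixing index $j$ and form the Demazure product
$$w'' \defequals s_1 \circ s_2 \circ \cdots \circ s_{k-j} \circ s_{k-j+2} \circ \cdots \circ s_k,$$
obtained from $w$ by deleting $s_{k-j+1}$. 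Reapplying the Demazure identities, I would verify that $\Theta(w'') \circ y \circ (w'')^{-1}$ unpacks as the same iterated conjugation with the single fixing step omitted, and therefore still equals $y_k = z$; meanwhile $\ell(w'') \leq k - 1 < \ell(w)$, contradicting minimality of $w$.

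The main obstacle is this excision step: one must confirm that removing a fixing simple reflection from the Demazure product of the $s_i$'s preserves the overall conjugation. This is a routine but careful unwinding using only associativity, $\Theta(a \circ b) = \Theta(a) \circ \Theta(b)$, and $(a \circ b)^{-1} = b^{-1} \circ a^{-1}$; crucially it does not require that $s_1 \cdots s_{k-j} s_{k-j+2} \cdots s_k$ be reduced as a product in $W$, since the Demazure product absorbs any redundancy while still giving length $\leq k-1$. Combining the two bounds yields the stated description of $\cA_\Theta(y,z)$.
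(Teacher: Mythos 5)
Your proposal is correct and takes essentially the same route as the paper: fix a reduced word for $w$, track $\ellhat_\Theta$ along the iterated Demazure conjugations via \eqref{szs-length-eq}, and when $\ell(w) > \ellhat_\Theta(z)-\ellhat_\Theta(y)$ delete a fixing factor from the Demazure product (which preserves the conjugation and lowers the length) to contradict minimality. The only cosmetic difference is that you derive the lower bound $\ell(w)\geq \ellhat_\Theta(z)-\ellhat_\Theta(y)$ by composing with some $u \in \cA_\Theta(y)$, in the spirit of Lemma~\ref{dem-lem}, whereas the paper reads it off directly from the same chain of conjugations; both are valid.
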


\begin{proof}
Suppose $w \in W$ has $\Theta(w)\circ y\circ   w^{-1} = z$. Write $w=s_1\circ s_2\circ \cdots \circ s_{\ell(w)}$
where each $s_i \in S$.
Omitting any single factor $s_i$ from this Demazure product yields an element $v\in W$ that is shorter than $w$.
 It follows from \eqref{szs-length-eq}
that $\ellhat_\Theta(z) = \ellhat_\Theta(\Theta(w)\circ y\circ   w^{-1}) \leq \ellhat_\Theta(y) + \ell(w)$
and   if this inequality is strict then there exists a shorter element $v \in W$ with $\Theta(v)\circ y\circ   v^{-1} = z$ so $w \notin \cA_\Theta(y,z) $.
\end{proof}

We note another basic lemma concerning the set $\cA_\Theta(y,z)$.

\begin{lemma}\label{dem-lem}
Choose $u \in W$ and $v \in \cA_\Theta(y)$. 
Then $u \in \cA_\Theta(y,z)$ if and only if $uv \in \cA_\Theta(z)$ and $\ell(uv) = \ell(u) + \ell(v)$,
and $u\mapsto uv$ is a bijection $\cA_\Theta(y,z) \to \{ w \in \cA_\Theta(z) : \ell(wv) = \ell(w)-\ell(v)\}$.
\end{lemma}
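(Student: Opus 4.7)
The plan is to derive both directions of the ``iff'' from the single identity
\[
\Theta(u \circ v) \circ (u \circ v)^{-1} \;=\; \Theta(u) \circ \Theta(v) \circ v^{-1} \circ u^{-1} \;=\; \Theta(u) \circ y \circ u^{-1},
\]
valid for any $u \in W$ and the fixed $v \in \cA_\Theta(y)$, using associativity of $\circ$, the compatibility $\Theta(a \circ b) = \Theta(a) \circ \Theta(b)$, the inverse formula $(a \circ b)^{-1} = b^{-1} \circ a^{-1}$, and the hypothesis $\Theta(v) \circ v^{-1} = y$. First I would apply Lemma~\ref{dem-lem0} to rephrase $u \in \cA_\Theta(y,z)$ as the two conditions $\Theta(u) \circ y \circ u^{-1} = z$ and $\ell(u) = \ellhat_\Theta(z) - \ell(v)$ (using $\ell(v) = \ellhat_\Theta(y)$).

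For the forward direction, assume $u \in \cA_\Theta(y,z)$. The key identity shows $\Theta(u \circ v) \circ (u \circ v)^{-1} = z$, so the definition of $\ellhat_\Theta(z)$ forces $\ell(u \circ v) \geq \ellhat_\Theta(z) = \ell(u) + \ell(v)$. Combined with the general inequality $\ell(u \circ v) \leq \ell(u) + \ell(v)$, this gives equality, so $u \circ v = uv$, $\ell(uv) = \ell(u) + \ell(v) = \ellhat_\Theta(z)$, and $uv \in \cA_\Theta(z)$. Conversely, if $uv \in \cA_\Theta(z)$ with $\ell(uv) = \ell(u) + \ell(v)$, then $u \circ v = uv$, so the key identity yields $\Theta(u) \circ y \circ u^{-1} = \Theta(uv) \circ (uv)^{-1} = z$; and the length bookkeeping $\ell(u) = \ell(uv) - \ell(v) = \ellhat_\Theta(z) - \ellhat_\Theta(y)$ places $u$ in $\cA_\Theta(y,z)$ via Lemma~\ref{dem-lem0}.

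For the bijection claim, the map $u \mapsto uv$ is injective because $u$ is recovered as $uv \cdot v^{-1}$. For surjectivity onto the stated target set $\{w \in \cA_\Theta(z) : \ell(wv^{-1}) = \ell(w) - \ell(v)\}$ (the natural reformulation of the length-additivity condition after the substitution $w = uv$), given such a $w$ I would set $u \defequals w v^{-1}$, observe that $uv = w$ satisfies the hypotheses of the converse direction, and conclude $u \in \cA_\Theta(y,z)$.

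The only genuinely delicate point is that one must apply the key identity \emph{before} knowing that $u \circ v$ coincides with the ordinary group product $uv$; this is precisely what the length-squeeze in the forward direction resolves. Everything else is bookkeeping of lengths between $\ellhat_\Theta(y)$, $\ellhat_\Theta(z)$, and $\ell(u) + \ell(v)$, so I do not anticipate any substantive obstacle beyond carefully tracking which equalities are hypotheses, which follow from $v \in \cA_\Theta(y)$, and which come from Lemma~\ref{dem-lem0}.
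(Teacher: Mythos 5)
Your proof is correct and takes essentially the same route as the paper's: the same key identity $\Theta(u\circ v)\circ(u\circ v)^{-1}=\Theta(u)\circ y\circ u^{-1}$, the same length squeeze $\ellhat_\Theta(z)\leq \ell(u\circ v)\leq \ell(u)+\ell(v)=\ellhat_\Theta(z)$ using Lemma~\ref{dem-lem0} and $\ell(v)=\ellhat_\Theta(y)$, and the same converse via $uv=u\circ v$. The only divergence is that you make the bijection step explicit (the paper leaves it implicit) and quietly restate the target set with the condition $\ell(wv^{-1})=\ell(w)-\ell(v)$ rather than the stated $\ell(wv)=\ell(w)-\ell(v)$; your version is the one actually forced by the substitution $w=uv$, and the two conditions agree in the paper's applications, where $v$ is an involution.
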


\begin{proof}
If $u \in \cA_\Theta(y,z)$ then $\Theta(u\circ v) \circ (u\circ v)^{-1} = \Theta(u) \circ y \circ u^{-1} = z$
so
\[
\ellhat_\Theta(z) \leq \ell(u\circ v)  \leq \ell(u)+\ell(v) = \(\ellhat_\Theta(z)-\ellhat_\Theta(y)\) + \ellhat_\Theta(y)= \ellhat_\Theta(z).
\]
In this case we   have $\ell(u\circ v)  = \ell(u)+\ell(v)=\ellhat_\Theta(z)$ and $uv = u\circ v \in \cA_\Theta(z)$.
Conversely, if $uv \in \cA_\Theta(z)$ and $\ell(uv) = \ell(u) + \ell(v)$ then $uv = u\circ v$ 
so it holds that 
\[\Theta(u)\circ y \circ u^{-1} = \Theta(u)\circ \Theta(v) \circ v^{-1} \circ u^{-1} = \Theta(uv) \circ (uv)^{-1} = z\]
along with $\ell(u) = \ell(uv) - \ell(v) = \ellhat(z) - \ellhat(y)$, and therefore $u \in \cA_\Theta(y,z)$ by Lemma~\ref{dem-lem0}.
\end{proof}

We now present several definitions that are specific to classical Weyl groups. First, write 
\be
\ba
\cAA(z) &\defequals \cA(z)\text{ when }W=S_{n+1},
\\
\cAB(z) =\cAC(z) &\defequals \cA(z)\text{ when }W=\W_{n},\quand
\\
\cAD(z) &\defequals \cA(z)\text{ when }W=\WD_{n}
\ea
\ee
to
denote the respective instances of $\cA(z)$ corresponding to each Weyl group.
Next,
recall the elements $\omega^n_k \in S_n$, $\sigma^n_k \in \W_n$, 
$\hat\sigma^n_k \in \WD_n$, and $\sigma^n_{k\times \fpf} \in \W_n$ from \eqref{omega-eq}, \eqref{sigmakn-eq}, and \eqref{sigmakn-fpf-eq}.

\begin{definition}\label{AA-def}
Assume that $z \in W$ and $0\leq k\leq n$ are integers. 
\bei
\item[(a)] When $W=S_{n+1}$ and $n+1-k$ is even let $\cAA_\ast(z:k) \defequals  \cA_\ast(\omega_k^{n+1},z)$.

\item[(b)] When $W=\W_n$ let $\cAB(z:k) \defequals  \cA(\sigma_{k}^n,z)$.

\item[(c)] When $W=\WD_n$ let $\cAD(z:k) \defequals  \cA(\hat \sigma_k^n,z)$ and $\cAD_\diamond(z:k) \defequals  \cA_\diamond(\hat \sigma_k^n,z)$.

\eei
\end{definition}

\begin{definition}\label{AA-fpf-def}
Again assume that $z \in W$ and $0\leq k\leq n$ are integers. 
\bei
\item[(a)]
When $W=S_{n+1}$ and $n$ is odd   let  $\cAfpfA(z) \defequals  \cA(t_1\cdot t_3\cdot t_5\cdots t_{n},z)$. 

\item[(b)]
When $W=\W_n$ and $n-k$ is even let $\cAfpfC(z:k) \defequals  \cA(  \sigma_{k\times \fpf}^n ,z)$.

\item[(c)]
When $W=\WD_n$ let $\cAfpfD(z) \defequals  \begin{cases}
\cA (t_1 \cdot t_3  \cdot t_5\cdots t_{n-1}, z) &\text{if $n$ is even} \\
\cA_\diamond (t_2\cdot t_4\cdot  t_6\cdots t_{n-1}, z) &\text{if $n$ is odd}.
\end{cases}$

\eei
\end{definition}

These definitions are at the service of the following statement.

\begin{proposition}
In classical type,
the sets $\cEABrion(z)\subseteq W$ for $z \in \cI^G_K$ are as specified in Table~\ref{extended-brion-tbl},
and there exists $\rho^G_K(z)\in \NN$
with $\ell(w) =\rho^G_K(z)$ for all $w \in \cEABrion(z)$, which is also specified in Table~\ref{extended-brion-tbl}.
\end{proposition}

\begin{proof}
It is clear from Theorem~\ref{weak-order-thm} and Table~\ref{rs-image-tbl}
that each instance of $\cEABrion(z)$ is contained in the set identified in Table~\ref{extended-brion-tbl}.
While it is not infeasible to show directly
that each of these containments is equality, this will also follow from our proof of Theorem~\ref{main-thm} in later sections.
%Sections~\ref{A-shape-sect}, \ref{BC-shape-sect}, and \ref{D-proof-sect}.

Specifically, if we use the formulas in Table~\ref{rs-image-tbl} to define $\cEABrion(z)$,
then 
to deduce \eqref{cEABrion-eq} from Theorem~\ref{main-thm}
it will suffice
to observe that for any   $z \in \cI^G_K$ and $M \in \cM^G_K$
there exists some $\gamma \in \Gamma^G_K$ with $\phiRS(\gamma)=z$ and $M \in \Aligned^G_K(\gamma)$.
This claim will be evident from the definitions in Sections~\ref{A-shape-sect}, \ref{BC-shape-sect}, and \ref{D-shape-sect}.

Each set $\cEABrion(z)$ has the form $\cE_\Theta(y,z)$ for a certain element $y \in \cI^G_K$,
 and so it follows from Lemma~\ref{dem-lem0} that all elements of this set have the same length
 \[\rho^G_K(z) = \ellhat_\Theta(z) - \ellhat_\Theta(y).\]
Hultman's results \cite[Thm.~4.8]{H1} imply that $\ellhat_\Theta$ is the rank function of $\I_\Theta(W)$ 
 under Bruhat order.
  When $\Theta$ is the identity map, 
 this rank function was explicitly computed by Incitti \cite[Thm.~4.1]{Incitti2}.
 Incitti's identities \cite[Props.~3.1, 3.3, and 3.4]{Incitti2}
 lead directly to the given formulas for 
 $\rho^G_K(z)$ in all classical types except AIII, DII, and DIII when $n$ is odd, when $\Theta$ is not the identity map.

The formula desired in type AIII can be derived using the identity
$ \ellhat_\ast(z) = \ellhat_\id(w_0) - \ellhat_\id(w_0 z)$ \cite[Cor.~3.9]{HMP2}
along with Incitti's type A formula for $\ellhat_\id$ \cite[Props.~3.1]{Incitti2}.
To handle the remaining type D cases, 
fix $z \in \I_\diamond(\WD_n)$ and define 
$
 z^\oplus = \overline{z_1} z_2z_3\cdots z_n \overline{n+1} \in \I(\WD_{n+1}).
$
One can check the following identities using the background in Section~\ref{d-conj-sect} plus \eqref{szs-length-eq}:
 \be
 \ell(z^\oplus) = \ell(z) + 2n, \qquad \ellhat_{\id}(z^\oplus) = \ellhat_\diamond(z) + n + 1,
 \qquand
 \kappa(z^\oplus) = \kappa(t_0z) + 1
 \ee
 where we define $\kappa(z)= |\{ i : -i \leq z(i) < i\}|$. 
 From here, one can derive the remaining formulas in Table~\ref{extended-brion-tbl} for $\rho^G_K(z)$ 
using Incitti's result that  $\ellhat_{\id}(z^\oplus) = \frac{\ell(z^\oplus)-\kappa(z^\oplus)}{2}$  \cite[Prop.~3.4]{Incitti2}.
\end{proof}

The following nontrivial observation characterizes the coefficients $2^{d_\gamma(w)}$ appearing in \eqref{brion}.

\begin{proposition}\label{dgamma-prop}
In classical type, 
for each $z \in \cI^G_K$
there exists a map $d_z : \cEABrion(z) \to \NN$   such that   
\[d_\gamma(w) = d_z(w)\quad\text{whenever $\gamma \in \Gamma^G_K$
has  $z=\phiRS(\gamma)$ and
$w \in \cABrion(\gamma)$.}\]
These maps have the formulas specified in Table~\ref{extended-brion-tbl}.
\end{proposition}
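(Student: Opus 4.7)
The plan is to prove that the count $d_\gamma(w)$ depends on $\gamma$ only through the twisted involution $z = \phiRS(\gamma)$, so that the common value may be denoted $d_z(w)$. Recall from Section~\ref{brion-intro-sect} that $d_\gamma(w)$ counts the degree-$2$ edges along any weak order path $\gammaDense = \gamma^0 \xrightarrow{s_1} \cdots \xrightarrow{s_m} \gamma^m = \gamma$ for which $w = s_m s_{m-1} \cdots s_1$, and Brion's original result guarantees that this number does not depend on the chosen path once $\gamma$ is fixed. The content of the proposition is to strengthen this to independence from $\gamma$ itself.

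The key observation is that every doubling criterion stated in Theorem~\ref{weak-order-thm} is phrased purely in terms of the pair $(\phiRS(\beta), s)$, with no additional dependence on the orbit index $\beta$. Hence, once a reduced expression $s_1 s_2 \cdots s_m$ for $w$ (with $w = s_m \cdots s_1$) is fixed, the intermediate twisted involutions $z^i = \Theta(s_i) \circ z^{i-1} \circ s_i$ depend only on the word and on $z_\dense$, and so does the sequence of doubling decisions along any lift of the word to the weak order graph. Combined with Brion's path-independence, this lets us define $d_z(w)$ as this common count and settles the first half of the proposition.

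It remains to verify the explicit formulas listed in Table~\ref{extended-brion-tbl}. This will be carried out by induction on $\ell(w)$, using the Demazure conjugation formulas in Section~\ref{dem-sect} to track how $\Fix(z^i)$ (or $\Fix(t_0 z^i)$ in the twisted types DII and DIV) evolves, and confirming that each doubling event is tallied correctly by the closed-form statistic in the table. In types AII, AIII, CII, DIII, and DIV the doubling criterion of Theorem~\ref{weak-order-thm} is never triggered, giving $d_z(w) = 0$ trivially. The main obstacle is the case analysis in the remaining types AI, BI, CI, DI, and DII, where each doubling event arises from a simple reflection merging two fixed points of the current $z^{i-1}$ (or $t_0 z^{i-1}$); the type D cases with the branching reflection $t_{-1}$ and the twist $\diamond$ are most delicate and require the explicit formulas of Section~\ref{d-conj-sect} to unpack systematically. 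Once this bookkeeping is complete, the tabulated formulas follow.
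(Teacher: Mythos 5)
Your first step is sound, and it is a genuinely more direct route to the $\gamma$-independence than the paper takes: since the doubling criterion in Theorem~\ref{weak-order-thm} is stated purely in terms of $z=\phiRS(\beta)$ and $s$, and since (by the monoid action underlying Lemma~\ref{weak-order-lem}, plus the triviality of the question in types AI and AII where $\phiRS$ is the identity) every reduced word of $w$ lifts to a path ending at $\gamma$ once one does, the doubled-edge count along any lift is determined by $z_{\dense}$ and the word alone; combined with Brion's path-independence this gives $d_\gamma(w)=d_{\gamma'}(w)$ whenever $\phiRS(\gamma)=\phiRS(\gamma')$. The paper instead obtains the independence as a byproduct of the explicit formulas, citing \cite{HM} in types BI and CI and, in types AI, DI, DII, introducing a statistic $\cyc(\gamma)$ depending only on $\phiRS(\gamma)$ that increases by one exactly on doubled edges.

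The gap is in your verification of Table~\ref{extended-brion-tbl}. In types AI, DI, DII your bookkeeping is essentially the paper's $\cyc$ argument and works, because there the tabulated value depends only on $z$. But in types BI and CI the table gives $d_z(w)=|\{i\in[n]:0<z(i)<i\}|+\ell_0(w)$ and $d_z(w)=|\{i\in[n]:z(i)<i\}|-\ell_0(w)$, which are \emph{not} constant in $w$; so the formulas cannot be recovered by tracking $\Fix(z^i)$ and tallying doubling events against a closed-form statistic of the intermediate involutions, which is all your sketch proposes. You also mischaracterize the doubled edges: in BI a $t_0$-edge with $1\in\Fix(z)$ is doubled, and it negates a single fixed point rather than merging two. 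To complete your induction you would have to track, in addition, the statistic $\ell_0$ of the partial products $s_i\cdots s_2s_1$ (each $t_0$-step of a reduced word raises it by exactly one, each $t_i$-step with $i\geq 1$ leaves it unchanged), and check that every non-doubled step preserves, while every doubled step raises by one, the quantity $|\{i:0<z^j(i)<i\}|+\ell_0(s_j\cdots s_1)$ in type BI, respectively $|\{i:z^j(i)<i\}|-\ell_0(s_j\cdots s_1)$ in type CI; the delicate cases are the $t_0$-conjugation steps, where the $z$-statistic and $\ell_0$ change simultaneously and must cancel. The paper avoids this case analysis entirely by invoking Theorems~8.2 and 8.7 of \cite{HM}. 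As written, your argument for the BI and CI entries of the table is missing this essential ingredient, so the second half of the proposal does not yet constitute a proof.
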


\begin{proof}
Let $\gamma \in \Gamma^G_K$. 
It is clear from Theorem~\ref{weak-order-thm} 
that $d_\gamma = 0$ outside types AI, BI, CI, DI, and DII.
In types BI and CI the map $d_\gamma$ has the formula in Table~\ref{extended-brion-tbl},
which depends only on $z=\phiRS(\gamma)$, by \cite[Thms.~8.2 and 8.7]{HM}. (The cited theorems have an error in their formulas for $d_\gamma$ but this is corrected in the {\tt arXiv} version of \cite{HM}.)

For the remaining cases, let $z = \phiRS(\gamma)$
and define $\cyc(\gamma)$ to be 
$|\{ i \in [n+1] : z(i) < i\}$, 
$\frac{1}{2} |\{ i \in [\pm n] : z(i) < i\}$,
and
$\frac{1}{2} |\{ i \in [\pm n] : t_0z(i) < i\}$
in
 types AI, DI, and DII, respectively.
By comparing Theorem~\ref{weak-order-thm} with the formulas for Demazure conjugation in Section~\ref{dem-sect},
we see that if $\beta \rightarrow \gamma$ is an edge in the weak order graph for these types
then   $\cyc(\gamma)  =\cyc(\beta)+1$ when the edge is doubled and $\cyc(\gamma)  =\cyc(\beta)$ otherwise.
As   $d_{\gamma_{\dense}}=0$, the map
 $d_\gamma $ is constant on $\cABrion(\gamma)$ with value $\cyc(\gamma)- \cyc(\gamma_{\dense})$. 
 We get the desired formulas by evaluating $\cyc(\gamma_{\dense})$ using Table~\ref{rs-image-tbl}.
\end{proof}

\def\hhline{\\ & & && \\ [-4pt]\hline & & &&  \\ [-4pt]}
\def\gap{\\[-4pt]&&&&\\}
\begin{table}[h]
\begin{center}
{\small
\begin{tabular}{| l | l | l | l | l |}
\hline&&&& \\[-4pt]
Type & Parameters &  $\cEABrion(z)$ for $z \in  \cI^G_K $   & $d_z(w)$ for $w \in \cEABrion(z)$ & $\rho^G_K(z)$  for $z \in  \cI^G_K $
\hhline
AI &  $n \in \NN$  & $\cA_{\AI}^{(n)}(z) \defequals  \cAA(z) $  &  $|\{ i \in [n+1] : z(i) < i\}|$ & $\frac{\ell(z)+\kappa(z)}{2} $
\gap
AII  & $n \in \NN$ odd & $\cA_{\AII}^{(n)}(z) \defequals  \cAfpfA(z)$   & $0$ & $\frac{\ell(z)-\kappa(z)}{2} $
\gap
AIII & $n+1=p+q$ & $\cA_{\AIII}^{(p,q)}(z) \defequals  \cAA_{\ast}(z:k)$% for $k=|p-q|$
 & $0$  & $\frac{\ell(z)-\kappa(w_0z)}{2}  +  \frac{n+1-k^2}{4}$
\hhline
BI &  $2n+1=p+q$ %and $k=\frac{|p-q|-1}{2}$ 
& $\cA_{\BI}^{(p,q)}(z) \defequals  \cAB(z:k)$ % for $k=\frac{|p-q|-1}{2}$
 & $|\{i \in [n] : 0 < z(i) < i\}| + \ell_0(w)$ & $\frac{\ell(z)+\kappa(z)}{2} - \binom{k+1}{2}$
\hhline
CI & $n \in \NN$ & $\cA_{\CI}^{(n)}(z) \defequals  \cAC(z)$    & $|\{i \in [n] :  z(i) < i\}| - \ell_0(w)$ & $\frac{\ell(z)+\kappa(z)}{2} $
\gap
CII  & $2n=p+q$ % with $p$ even %and $k=\frac{|p-q|}{2}$ 
&  $\cA_{\CII}^{(p,q)}(z) \defequals  \cAfpfC(z:k)$ %for $k=\frac{|p-q|}{2}$ 
& $0$ & $\frac{\ell(z)+\kappa(z)}{2} -  \frac{n+k^2}{2}$ \\
   & with $p$ even   &    & &
\hhline
DI  & $2n=p+q$ % with $p+n$ even %and $k=\frac{|p-q|}{2}$
& $\cA_{\DI}^{(p,q)}(z) \defequals  \cAD(z:k)$ %for $k=\frac{|p-q|}{2}$ 
& $\frac{1}{2} |\{ i \in [\pm n]: z(i)<i\}| - \frac{k}{2}$ & $\frac{\ell(z)+\kappa(z)}{2} - \frac{k^2}{2}$
\\
  & with $p+n$ even & &&
\gap
DII  & $2n=p+q$  %with $p+n$ odd 
& $\cA_{\DII}^{(p,q)}(z) \defequals  \cAD_\diamond(z:k)$ % for $k=\frac{|p-q|}{2}$ 
& $\frac{1}{2} |\{ i \in [\pm n]: t_0z(i)<i\}| - \frac{k}{2}$
& $\frac{\ell(z)+\kappa(t_0z)}{2}  - \frac{k^2}{2}$
 \\
   & with $p+n$ odd & & &
\gap
DIII & $n\in\NN$ even  & $\cA_{\DIII}^{(n)}(z) \defequals  \cAfpfD(z)$ & $0$ & $\frac{\ell(z)+\kappa((t_0)^nz)}{2}  - \frac{n}{2}$
\gap\hline
\end{tabular}}
\end{center}
\caption{Brion atoms for symmetric subgroups of rank $n$ classical groups.
The parameter $k$ is defined as Table~\ref{rs-image-tbl} to be $k = |p-q|$ in type AIII, $k=\frac{|p-q|-1}{2}$ in type BI, and $k=\frac{|p-q|}{2}$ in all other relevant types.
%Outside types BI and CI the values of $d_z(w)$ in the fourth column depend only on $z \in \cI^G_K$ and are independent of $w$.
The last column involves a new statistic given by $\kappa(z) = |\{ i : -i \leq z(i) < i\}|$.}\label{extended-brion-tbl}
\end{table}

 \section{Shape preliminaries}\label{shape-prelim-sect}

Sections~\ref{A-shape-sect}, \ref{BC-shape-sect}, and \ref{D-shape-sect} will 
explain the definitions used in Theorems~\ref{main-thm} and \ref{main-thm2}
for pairs $(G,K)$ of classical type.
This section sets up some notation to streamline this material, along with some 
general properties that will be used to derive the main theorems in type D. 
 
 \subsection{Deduplication}

A \defn{word} is a finite sequence of integers $w=w_1w_2\cdots w_m$.
Given such a sequence, let $[[w]]$ be the subword formed by removing each repeated letter after 
its first appearance, going left to right. 

When the set   $\{w_1,w_2,\dots,w_m\}$ is equal to $[n]$, we interpret $[[w]]$ as an element of $S_n$
written in one-line notation.
Similarly, when $\{w_1,w_2,\dots,w_m\}$ has size $n$ and contains exactly one of $i$ or $-i$ for each $i \in [n]$,
we interpret $[[w]]$ as an element of $\W_n$.
For example, we have
\[[[ 2\overline{1}33542534]] = 2\overline{1}354 \in \W_5.\]

Now suppose $P$ is finite subset of $\ZZ\times \ZZ$.
Write  
$P = \{(a_1,b_1),(a_2,b_2),\dots,(a_m,b_m)\}$ where $b_i<b_{i+1}$ or ($b_i=b_{i+1}$ and $a_i<a_{i+1}$) for all $i \in[m-1]$.
Then define
\be
[[P]]_{\des} \defequals  [[b_1a_1b_2a_2\cdots b_ma_m]]
\quand
[[P]]_{\asc} \defequals  [[a_1b_1a_2b_2\cdots a_mb_m]].
\ee
Our first example of this construction will take $P$ to be a set of the form 
\be \Cyc(z) = \{ (a,b) \in [n]\times [n] : a \leq b = z(a)\}
\quad\text{for some $z \in S_n$.}\ee
For example, if
 $z = 52431 = (1,5)(3,4) \in S_5$ then $\Cyc(z)  = \{(2,2), (3,4),(1,5)\}$ 
so
\[
[[\Cyc(z)]]_{\des} = 24351
\quand
[[\Cyc(z)]]_{\asc} = 23415.
\]

\subsection{Word relations}\label{word-rel-sect}

We use the term \defn{word relation} to mean a reflexive relation on finite sequence of integers with the same length.
There are two families of word relations 
that we shall use repeatedly to formulate Theorem~\ref{main-thm2}.
These relations are indexed by an integer parameter $k \in\NN$.
\begin{definition} Let $\overset{k}\precsim$ be the transitive closure of the word relation with 
\be\label{precsim-eq}
u B C A v  \overset{k}\precsim u C A B v
\ee
whenever $u$ and $v$ are words with $\ell(u)\in k+\NN$ and $A,B,C\in \ZZ$ have $A<B<C$. 
\end{definition}

\begin{definition}
Let $\overset{k}\precapprox$ be the transitive closure of the word relation with 
\be\label{precapprox-eq}
u B C A D v \overset{k}\precapprox u A D B C v
\ee
whenever $u$ and $v$ are words with $\ell(u)\in k+2\NN$ and $A,B,C,D\in \ZZ$ have $A<B<C<D$. 
\end{definition}

Both of these relations are partial orders, since they are sub-relations of lexicographic order and reverse lexicographic order, respectively.
The partial order $\overset{k}\precapprox$ is clearly graded, as follows by
 considering the rank function that sends a word $w$
to the number of inversions in   $w_{k+2}w_{k+4}w_{k+6}\cdots $.

 \subsection{Nested descents}
 
 The material is this section slightly generalizes results in \cite[\S3]{HM} and \cite[\S6-\S7]{Mar2019}.
 An index $i\in[n]$ is a \defn{descent} of a word $w=w_1w_2\cdots w_n$ if $w_i >w_{i+1}$.

\begin{definition}\label{ndes-def} The \defn{nested descent set} of $w$ is the set of pairs given recursively by the formula
\be
\NDes(w) = \begin{cases} 
\varnothing &\text{if $w$ has no descents}, \\
\{ (w_i, w_{i+1})\} \sqcup \NDes(w_1\cdots w_{i-1} w_{i+2}\cdots w_n) &\text{if $i$ is the first descent of $w$}.
\end{cases}
\ee
Likewise, define the \defn{nested residue} of $w$ to be the set
\be
\NRes(w) = \begin{cases} 
\{w_1,w_2,\dots,w_n\} &\text{if $w$ has no descents}, \\
\NRes(w_1\cdots w_{i-1} w_{i+2}\cdots w_n) &\text{if $i$ is the first descent of $w$}.
\end{cases}
\ee 
When $w \in \W_n$ we evaluate $\NDes(w)$ and $\NRes(w)$ by replacing $w$
with  $w_1w_2\cdots w_n$.
\end{definition}

Write $\precsim$ for $\overset{0}\precsim$ from Section~\ref{word-rel-sect}.
A word $w=w_1w_2\cdots w_n$ 
has a \defn{consecutive $321$-pattern} if $w_i>w_{i+1}>w_{i+2}$ for some $i \in[n-2]$.
A word is a \defn{partial permutation} if its letters are all distinct.
If $w$ is a partial permutation then we say that it is a partial permutation of the set $\{w_1,w_2,\dots,w_n\}$.

\begin{definition} A set of 
partial permutations is  
 \defn{well-nested family} if it is closed under  $\precsim$ and
 none of its elements have any consecutive $321$-patterns.
 \end{definition}

\begin{example}
The set of 5-letter partial permutations
\[
E = \{45362,\ \  45623,\ \ 46253,\ \  53462,\ \  53624,\ \  56234,\ \  62453,\ \  62534\}
\]
is closed under $\precsim$ so is a well-nested family.
\end{example}

 Given a word $w =w_1w_2\cdots w_n$ let
 $\Des(w) = \{ (w_i, w_{i+1}) : i \in [n-1]\text{ with }w_i>w_{i+1}\}$.

\begin{lemma}\label{nest-lem}
Suppose $E$ is a well-nested family of partial permutations.
Choose  $(b,a) \in \ZZ\times \ZZ$ with $a<b$
 and let $F$ be the set of words formed by removing $ba$ from each $w \in E$ with $ (b,a) \in \Des(w)$. 
 Then $F$ is also well-nested.
\end{lemma}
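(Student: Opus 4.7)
The plan is to verify the two defining properties of a well-nested family for $F$: the absence of consecutive $321$-patterns in its elements, and closure under the transitive closure $\precsim$.

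For the no-$321$ condition, I argue by contradiction. Suppose some $w' \in F$ has a consecutive $321$-pattern $cde$ at positions $i, i+1, i+2$, and write $w' = u[\text{remove }ba]$ for some $u \in E$ whose consecutive $ba$ descent sits at positions $p, p+1$. The triple $c,d,e$ sits in $u$ in one of four configurations relative to the gap $\{p, p+1\}$. The two non-straddling configurations ($i+2 < p$ or $i \geq p$) place $cde$ as a consecutive triple in $u$ itself, directly contradicting that $E$ is well-nested. In the two straddling configurations ($i = p-2$ or $i = p-1$), the $321$-free property of $u$ combined with $b > a$ forces additional inequalities such as $a < d < b$, or $c < b$ together with $a < d$. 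These inequalities make a $\precsim$-move like $dba \to bad$ or $cba \to bac$ applicable, and at most two such moves produce an element of $E$ that visibly contains a consecutive $321$-pattern, the desired contradiction.

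For closure under $\precsim$, by transitivity it suffices to lift a single move $w' = u' B C A v' \precsim u' C A B v' = w''$ (with $A < B < C$) to $E$: I must exhibit $u'' \in E$ with $(b,a) \in \Des(u'')$ and $u''[\text{remove }ba] = w''$. When the move is non-straddling, applying the same $BCA \to CAB$ move directly to $u$ yields the required $u''$, which is in $E$ by closure. In the two straddling cases the relevant window of $u$ reads either $B C b a A$ or $B b a C A$, and the $321$-free property of $u$ pins down the relative order of $\{A, B, C, a, b\}$ (for example, $a < A < B < C < b$ in the first case, and $B < b$ together with $a < C$ in the second). A short chain of two or three $\precsim$-moves, possibly with a further split on whether $a < B$, then transports $ba$ past the rearrangement while preserving it as a consecutive descent. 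Each intermediate word lies in $E$ by closure, and its own $321$-free property ensures that the next move in the chain is a legitimate $BCA$-pattern.

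The main obstacle is constructing these chains of $\precsim$-moves in the straddling cases of the closure step. The crucial observation is that the no-consecutive-$321$ property of each intermediate element, inherited via the closure of $E$ under $\precsim$, rules out the wrong inequalities among the relevant letters and forces the desired $BCA$-pattern to appear at every step of the chain. This iterative interplay between closure and the $321$-free condition is what ultimately lets $F$ inherit both properties from $E$.
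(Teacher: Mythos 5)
Your proposal follows essentially the same route as the paper's proof: the same straddling-window analysis in which $321$-freeness of the preimage forces the key inequalities, and the same lifting of each $BCA\to CAB$ move through a short chain of $\precsim$-moves inside $E$ (including the case split on whether $a<B$ and the use of $321$-freeness of the intermediate words to validate later moves). The only point to add is the downward half of closure---if $v\in F$ and $u\precsim v$ then $u\in F$---which requires the symmetric window analysis for preimages of the upper word; the paper dispatches this with the remark that it follows by symmetric reasoning.
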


\begin{proof}
Suppose $w \in E$ has $(b,a) \in \Des(w)$.
Form $\hat w$ from $w$ by removing the letters $a$ and $b$.
Since $w$ has no consecutive $321$-patterns, $\hat w$ can only have a consecutive $321$ pattern if
$w$ has a consecutive subword of the form $ZYbaX$ or $ZbaYX$ where $X<Y<Z$.
In the first case we cannot have $Y>b$ or $a>X$ so $ZYXba \precsim ZYbaX$
and in the second case we cannot have $Z>b$ or $a>Y$ so $baZYX \precsim ZbaYX$.
Either way we see that if $\hat w$ has a consecutive $321$-pattern then 
there is a word $v \precsim w$ with a consecutive $321$-pattern, which contradicts
the assumption that $E$ is well-nested.

To show that $F$ is well-nested, it remains to show that this set is closed under $\precsim$.
Write $\sim$ for the symmetric closure of this relation.
Suppose $X<Y<Z$  
and
$Y ba ZX$
is a consecutive subword of some $w \in E$.
Then $Y<b$ and $a<Z$ since $w$ has no consecutive $321$-patterns. If $a<Y$ then 
\[Yba ZX \sim   ba YZX \sim ba ZXY\]
and if $Y < a$ then $X<a$ so
 \[Yba ZX \sim   YbZXa \sim YZX ba \sim ZXY ba\]
 as we cannot have $b>Z>X$ since $E$ is well-nested. 
It follows that replacing the subword $YbaZX$ in $w$ by $YZX$ or by $ZXY$ yields two elements of $F$.

A similar argument shows that if 
$Y Z ba X$ is a consecutive subword of some $w \in E$
then replacing this subword by $YZX$ or by $ZXY$ yields two elements of $F$.
We conclude that if $u\in F$ and $u \precsim v$ then $v \in F$.
Symmetric reasoning shows that if $v\in F$ and $u \precsim v$ then $u \in F$.
\end{proof}

\begin{proposition}\label{ndes-prop}
Suppose $w=w_1w_2\cdots w_n$ is a partial permutation that belongs to some well-nested family.
If $j \in [n-1]$ is any descent of $w$ and $\hat w= w_1\cdots w_{j-1} w_{j+2}\cdots w_n$ then 
\[ \NDes(w) = \{(w_j,w_{j+1})\} \sqcup \NDes(\hat w)
\quand
\NRes(w) = \NRes(\hat w)
.
\]
\end{proposition}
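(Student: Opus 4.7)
The plan is a short induction on $n$. The base case (no descents) is vacuous. For the inductive step, let $i$ be the first descent of $w$, so Definition~\ref{ndes-def} gives $\NDes(w) = \{(w_i, w_{i+1})\} \sqcup \NDes(w^{(i)})$ and $\NRes(w) = \NRes(w^{(i)})$, where $w^{(i)} := w_1\cdots w_{i-1} w_{i+2}\cdots w_n$. If $j = i$ we are done, so assume $j > i$.

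The crucial observation is that well-nestedness forces $j \geq i+2$: if instead $j = i+1$, then $w_i > w_{i+1} > w_{i+2}$ would be a consecutive $321$-pattern, contradicting the hypothesis. Hence $\{i, i+1\}$ and $\{j, j+1\}$ are disjoint, and one may form the doubly reduced word $w^{(i,j)} := w_1 \cdots w_{i-1} w_{i+2}\cdots w_{j-1} w_{j+2} \cdots w_n$, which equals both $(w^{(i)})^{(j-2)}$ and $(\hat w)^{(i)}$. Two applications of Lemma~\ref{nest-lem}, to the pairs $(w_i,w_{i+1})$ and $(w_j,w_{j+1})$ respectively, place $w^{(i)}$ and $\hat w$ inside well-nested families, so the inductive hypothesis applies to each.

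Applying induction to $w^{(i)}$ at its descent $(w_j, w_{j+1})$ (now at position $j-2$) gives
$$\NDes(w^{(i)}) = \{(w_j, w_{j+1})\} \sqcup \NDes(w^{(i,j)}) \quand \NRes(w^{(i)}) = \NRes(w^{(i,j)}).$$
On the other hand, since the first $j-1$ letters of $\hat w$ coincide with those of $w$, the index $i$ is still the first descent of $\hat w$, so Definition~\ref{ndes-def} yields
$$\NDes(\hat w) = \{(w_i, w_{i+1})\} \sqcup \NDes(w^{(i,j)}) \quand \NRes(\hat w) = \NRes(w^{(i,j)}).$$
Substituting the first display into $\NDes(w) = \{(w_i, w_{i+1})\} \sqcup \NDes(w^{(i)})$ and matching against the second produces $\NDes(w) = \{(w_j, w_{j+1})\} \sqcup \NDes(\hat w)$, and the identity $\NRes(w) = \NRes(\hat w)$ is read off at the same time.

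The main obstacle is the no-$321$ step, which is precisely what the well-nestedness hypothesis is designed to supply: without it, $j = i+1$ would be possible, and deletion at $j$ would destroy the descent at $i$, so the two recursive paths would produce different nested descent sets. Everything else reduces to routine bookkeeping against Definition~\ref{ndes-def} and Lemma~\ref{nest-lem}.
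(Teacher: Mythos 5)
Your proof is correct and follows essentially the same route as the paper: both arguments reduce to the first descent $i$, use the no-consecutive-$321$ condition to get $i+1<j$, pass to the doubly deleted word $w^{(i,j)}$, and invoke Lemma~\ref{nest-lem} to apply induction to $w^{(i)}$ while handling $\hat w$ directly from Definition~\ref{ndes-def}. No gaps.
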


This proposition shows that when $w$ is in a well-nested family of partial permutations,
we can inductively compute $\NDes(w)$ and $\NRes(w)$ by choosing any descent at each step, rather than always choosing the first descent as in Definition~\ref{ndes-def}.

\begin{proof}
Suppose $i \in [n-1]$ is minimal with $w_i>w_{i+1}$ and $i\neq j \in[n-1]$ is another index with $w_j>w_{j+1}$.
Because $w$ has no consecutive $321$-patterns, we must have $i+1<j$.
Form $w^i$ from $w$ by removing $w_i$ and $w_{i+1}$, form $w^j$ by instead removing $w_j$ and $w_{j+1}$,
and form $w^{ij}$ from $w$ by removing all four letters $w_i, w_{i+1}, w_j,w_{j+1}$.

By definition $\NDes(w) = \{(w_i,w_{i+1})\} \sqcup \NDes(w^i)$ and by induction via Lemma~\ref{nest-lem}
we have $\NDes(w^i) = \{(w_j,w_{j+1})\} \sqcup \NDes(w^{ij})$.
Since by definition $\NDes(w^j) = \{(w_i,w_{i+1})\}\sqcup \NDes(w^{ij})$,
we have $\NDes(w) =  \{(w_i,w_{i+1})\} \sqcup\NDes(w^j)$ as claimed.

Similarly, we have $\NRes(w) = \NRes(w^i)$ and $\NRes(w^j) = \NRes(w^{ij})$ by definition 
while $\NRes(w^i) = \NRes(w^{ij})$ by induction  via Lemma~\ref{nest-lem},
so $\NRes(w)=\NRes(w^j)$.
\end{proof}

\begin{lemma}\label{ijkl-lem}
If
$w=w_1w_2\cdots w_n$ belongs to a well-nested family
and $i,j,k,l \in [n]$ are indices such that 
$(w_i, w_j), (w_k,w_l) \in \NDes(w)$ while $w_i < w_k$ and $w_j<w_l$, 
then $i<j<k<l$.
\end{lemma}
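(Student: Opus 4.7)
The plan is to fix the hypotheses $(w_i,w_j),(w_k,w_l)\in \NDes(w)$, $w_i<w_k$, $w_j<w_l$, and rule out every position ordering of $i,j,k,l$ except $i<j<k<l$. First I would observe that the pairs in $\NDes(w)$ are non-crossing on positions: whenever a pair is peeled, its endpoints must be adjacent in the current word, so every intervening pair has already been peeled, forcing a nesting-or-disjoint structure. Combined with $i<j$, $k<l$, and the disjointness of $\NDes(w)$-pairs, this leaves only four possible orderings of the four distinct indices; the three bad ones are (2)~$k<l<i<j$, (3)~$i<k<l<j$, and (4)~$k<i<j<l$.

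For each bad case I would use Proposition~\ref{ndes-prop} to peel every pair of $\NDes(w)$ except $(i,j)$, $(k,l)$, and any ``outer'' pair that properly contains one of these two---such outer pairs cannot be peeled before $(i,j)$ or $(k,l)$. Iterated application of Lemma~\ref{nest-lem} shows that the resulting intermediate word $w^\star$ remains in a well-nested family. The key bookkeeping fact is that any consecutive pair of letters in $w^\star$ that is not one of the remaining $\NDes$-descents must be an ascent, since otherwise it would enter $\NDes(w^\star)\subseteq \NDes(w)$ and violate pair-disjointness.

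Cases (3) and (4) then collapse quickly. In case (3), the letters stuck in the gap between $w_l$ and $w_j$ in $w^\star$ (coming from outer pairs containing $(k,l)$ but not $(i,j)$) would be forced by the ascent constraints to take values in the interval $(w_l,w_j)$, which is empty because $w_j<w_l$; thus the gap disappears and $(w_l,w_j)$ becomes a new consecutive descent in $w^\star$, contradicting pair-disjointness in $\NDes(w)$. Case (4) is symmetric, with the empty interval $(w_k,w_i)$ supplied by $w_i<w_k$.

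The main obstacle is case (2), in which $(k,l)$ and $(i,j)$ sit side-by-side and no direct collapse is available. Here the analogous ascent constraints force the middle segment $Y_2$ of $w^\star$ between $w_l$ and $w_i$ to be an increasing sequence with values strictly in $(w_l,w_i)$, so in particular $w_l<w_i$; together with the hypotheses this yields the strict chain $w_j<w_l<Y_2<w_i<w_k$. I would then apply the word relation $\precsim$ from \eqref{precsim-eq} iteratively: each $a\in Y_2$ satisfies $w_j<a<w_i$, so the consecutive subword $a\hs w_i\hs w_j$ is a $BCA$-pattern and slides to $w_i\hs w_j\hs a$; after $Y_2$ is exhausted, the consecutive subword $w_l\hs w_i\hs w_j$ is itself $BCA$ and slides to $w_i\hs w_j\hs w_l$; what remains is the consecutive subword $w_k\hs w_i\hs w_j$, which is a strict $321$-pattern since $w_j<w_i<w_k$. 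Since the well-nested family is closed under $\precsim$, this yields the contradiction needed to finish case (2), and hence the lemma.
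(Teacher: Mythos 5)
Your proposal is correct, but it is organized differently from the paper's proof. The paper argues by induction on the word: if the global minimum letter is not $w_j$ (resp.\ the maximum is not $w_k$), it strips that extreme letter together with its descent partner via Lemma~\ref{nest-lem}; after this reduction both pairs occupy adjacent positions ($i=j-1$ and $l=k+1$), so the nested configurations never arise, and the one remaining bad configuration $k<l<i<j$ is dispatched exactly as in your case (2), by sliding with $\precsim$ until $w_k\,w_l\,w_i\,w_j$ is consecutive and then exhibiting a consecutive $321$-pattern. You instead first record that $\NDes$-pairs are non-crossing in positions (which reduces the problem to three bad orderings), then peel everything except the two pairs and the pairs containing them — your kept set is upward closed under containment, so an innermost-first peeling order makes each peeled pair an adjacent descent, and Lemma~\ref{nest-lem} together with Proposition~\ref{ndes-prop} keeps the skeleton word inside a well-nested family with the expected $\NDes$ set — and you kill the two nested configurations by the ascent-chain observation that any adjacent pair of the skeleton that is not a surviving $\NDes$-pair must be an ascent, forcing an increasing chain into the empty value interval $(w_l,w_j)$ or $(w_k,w_i)$; the side-by-side case is finished by the same slide-to-a-$321$ endgame as the paper, which is legitimate since well-nested families are closed under $\precsim$ in both directions (as verified in the proof of Lemma~\ref{nest-lem}). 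The trade-off: the paper's min/max induction sidesteps the nested cases and keeps the argument short, while your route makes the noncrossing skeleton structure of $\NDes(w)$ explicit and avoids induction; a couple of your steps (the noncrossing claim, the exact composition of the gap letters) are stated tersely but are straightforward to justify and are not load-bearing beyond what the ascent-chain argument already gives.
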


\begin{proof}
We have $i <j$ and $k<l$ by the definition of $\NDes(w)$.
%This definition also implies that we cannot have $i <k<j<l$ or $k<i<l<j$.
Let $X = \{w_1,w_2,\dots,w_n\}$ and $w_a = \min(X)$ and $w_b = \max(X)$.
We may assume that $a=j$ since otherwise after removing $w_a$ and also $w_{a-1}$ when $a>1$
we can deduce that $j<k$ by induction using Lemma~\ref{nest-lem}.
In turn, we may assume that $b=w_k$ since otherwise after removing $w_b$ and also $w_{b+1}$ when $b<n$
we can likewise deduce that $j<k$ by induction.

In this case we must also have $a-1=i$ and $b+1=l$.
Now, if we do not have $j<k$ then $k=l-1<l<i<i+1=j$,
but then it is easy to see that there is a word $v$ with $v\precsim w$
that has a consecutive subword of the form $w_k w_l w_i w_j$. 
If $w_l > w_i$ then this word has a consecutive $321$-pattern and if $w_l<w_i$
then $w_k w_l w_i w_j \precsim w_k w_i w_j w_l$ and the latter word has a consecutive $321$-pattern.
Either situation contradicts the assumption that $w$ belongs to a well-nested family.
\end{proof}

 We mention one other general proposition related to well-nested families.

\begin{proposition}\label{Pdes-prop}
Suppose
$E$ is a well-nested family of partial permutations.
Then $E$ is a graded poset relative to $\precsim$ 
and
 each word $w \in E$ belongs to an interval under $\precsim$ with a unique
 minimal element given by $[[P]]_\des$
for
$P = \{ (a,b) : (b,a) \in \NDes(w)\}\sqcup \{(c,c) : c \in \NRes(w)\}.$
\end{proposition}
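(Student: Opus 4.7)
The plan is to characterize each $\precsim$-connected component of $E$ via the invariants $(\NDes(w),\NRes(w))$ and then read off both the unique minimum and the gradedness from this characterization. First I would check that these invariants are preserved along each generator $uBCAv \precsim uCABv$ with $A<B<C$. The pair $(C,A)$ is a descent in both sides at positions $|u|+2$ and $|u|+1$ respectively, so Proposition \ref{ndes-prop} lets one peel it first in either word, producing the same shortened word $uBv$; hence both sides share the same $\NDes$ and $\NRes$, and transitivity extends the invariance to the full partial order.

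Next I would pin down the explicit form of the proposed minimum. Sorting $P$ by second coordinate as $(a_1,b_1),\ldots,(a_k,b_k)$ with $b_1<\cdots<b_k$, deduplication gives $w_0 := [[P]]_\des = b_1[a_1]b_2[a_2]\cdots b_k[a_k]$, where the optional block $[a_i]$ is present exactly when $a_i<b_i$ (the only repetitions in the undeduplicated string are the pairs $a_i=b_i$). A case check on the consecutive triples of this string shows $w_0$ has neither a consecutive $321$-pattern nor a consecutive $CAB$-pattern, so $w_0$ lies in any well-nested family containing $w$ and is $\precsim$-minimal. Peeling the descents $b_ia_i$ one at a time then confirms $\NDes(w_0)=\NDes(w)$ and $\NRes(w_0)=\NRes(w)$.

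To show $w_0\precsim w$ for every $w\in E$ with this data, I would exploit that $\precsim$ is a sub-relation of lexicographic order: iteratively replacing a consecutive $CAB$-pattern by the corresponding $BCA$ is strictly lex-decreasing, so the procedure terminates at some $\precsim$-minimal $w^\ast\in E$ with the same $(\NDes,\NRes)$ as $w_0$. A length induction then identifies $w^\ast$ with $w_0$. The first letter of $w^\ast$ cannot be a floor (it has no preceding ceiling), and the no-$321$ together with the no-$CAB$ conditions force the ceilings-and-residues to appear in strictly increasing order of value in $w^\ast$; hence $w^\ast_1$ is the smallest ceiling-or-residue $b_1$. If $b_1$ is a residue I induct on the suffix with data $(\NDes,\NRes\setminus\{b_1\})$, and if $b_1$ is the ceiling of a pair $(b_1,a_1)$ then Lemma \ref{ijkl-lem} combined with the same structural constraints rules out every choice of $w^\ast_2$ other than $a_1$, reducing to data $(\NDes\setminus\{(b_1,a_1)\},\NRes)$; iteration yields $w^\ast=w_0$.

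Finally, for gradedness I would use a local exchange argument. Two distinct downward $\precsim$-covers $u_1,u_2$ of $w$ correspond to two consecutive $CAB$-patterns of $w$: the overlap at positions $(i,i+1,i+2),(i+1,i+2,i+3)$ is ruled out directly by the pattern inequalities, and disjoint patterns commute, producing a common $v\prec u_1,u_2$ in one step each. The remaining overlap subcase, in which the patterns meet at exactly one shared middle position, uses membership of $u_1,u_2$ in $E$ to constrain the shared value, after which tracked $\precsim$-reductions from $u_1$ and $u_2$ meet at a common descendant via chains of equal length; combined with the uniqueness of $w_0$ from the previous step, this shows by induction on lex value that any two maximal $\precsim$-chains from $w_0$ to $w$ share a common length, which is then the rank function. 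The principal obstacle is precisely this last step: $\precsim$ admits no obvious linear rank function (inversions, major index, and weighted position sums are all non-monotone under a single move), so the local diamond does not close in one step in the single-overlap subcase, and a careful induction on lex is required to show that the two branches recombine after the same number of reductions.
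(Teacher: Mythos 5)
The gradedness half of your argument has a genuine gap, and you flag it yourself: your local exchange argument does not close in the single-overlap case, and the ``careful induction on lex'' that is supposed to rescue it is never carried out, so no well-defined rank function or equal-length-chains statement is actually established. Moreover, your premise that ``$\precsim$ admits no obvious linear rank function'' is not correct. Using exactly the invariance of $\NDes$ under the moves that you prove in your first step, split each $w\in E$ into the subword $w_R$ of letters occurring as second coordinates of pairs in $\NDes(w)$ and the complementary subword $w_L$; then $\rank(w)=\inv(w_L)-\inv(w_R)$ increases by exactly one under every move $uBCAv\precsim uCABv$ (if $B$ lies in $w_R$ the move removes one inversion from $w_R$ and leaves $w_L$ unchanged; if $B$ lies in $w_L$ it adds one inversion to $w_L$ and leaves $w_R$ unchanged). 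This one statistic settles gradedness immediately and is how the paper argues. Note also that even if your diamond did close, confluence alone gives a unique minimal element but not gradedness, so the missing induction is doing real work in your plan.

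For the minimality half your route is genuinely different from the paper's and is workable in outline: you prove $(\NDes,\NRes)$-invariance along moves, verify that $[[P]]_\des$ is $321$- and $CAB$-free, and then try to show that any move-minimal element of $E$ with given $(\NDes,\NRes)$ must equal $[[P]]_\des$. The paper instead inducts on $|P|$: since the largest letter $b_k$ of $w$ is the top of its pair and $E$ is well-nested, either $w$ ends in $b_k$ (when $a_k=b_k$) or $b_ka_k$ occurs consecutively and can be pushed to the end by $\precsim$, after which Proposition~\ref{ndes-prop} and induction on the truncated word give $[[P]]_\des\precsim w$. Your structural characterization of minimal elements (first letter not a floor, tops increasing, each floor adjacent to its ceiling, the appeal to Lemma~\ref{ijkl-lem} to force $w^\ast_2=a_1$) is only sketched and would require a nontrivial case analysis to rule out, e.g., non-adjacent nested descent pairs in a minimal word; the paper's peel-the-largest-letter induction avoids this entirely and is shorter. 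If you keep your route, that characterization is the step that needs to be written out in full; if you adopt the rank function above, the gradedness gap disappears.
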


\begin{proof}
Given $w \in E$ let $w_R$ be the subword 
retaining only the letters in $\{ b : (a,b) \in \NDes(w)\}$
and let $w_L$ be the complementary subword.
For example, if $w=45362$ then $\NDes(w) =\{(5,3),(6,2)\}$ so $w_R=32$ and $w_L = 456$.
To see that $\precsim$ is a graded partial order on $E$,
consider the rank function $\rank(w) = \inv(w_L) - \inv(w_R)$
where $\inv(a_1a_2\cdots a_n) = |\{ (i,j) \in [n]\times [n] : i<j\text{ and }a_i>a_j\}|$. 
It is easy to see that if $v\precsim w$ is a covering relation then $\rank(w) = \rank(v)+1$.

Define $P$ as in the proposition statement and write 
$P = \{(a_1,b_1),(a_2,b_2),\dots,(a_k,b_k)\}$ where $a_i \leq b_i$ and $b_1<b_2<\dots<b_k$.
Form $\hat w$ by removing $a_k$ and $b_k$ from $w$. 
As $E$ is well-nested and $b_k$ is the largest letter of $w$, we either have $w = \hat w b_k$ when $a_k=b_k$,
or it holds that $w_iw_{i+1} =b_k a_k$ for some index $i$ in which case 
one can check that $\hat w b_k a_k \precsim w$.
In view of Proposition~\ref{ndes-prop} we may assume by induction that $[[\hat P]]_\des \precsim \hat w$ for   $\hat P = P\setminus\{(a_k,b_k)\}$.
It follows that $[[P]]_\des \precsim w$. Finally, the word $[[P]]_\des$ is clearly minimal under $\precsim$.
\end{proof}

\begin{corollary}
Suppose
$v$ and $w$ are partial permutations that are both contained in the same well-nested family.
Then the following properties are equivalent:
\ben
\item[(a)] $v$ and $w$ belong to the same equivalence class for the symmetric closure of $\precsim$.
\item[(b)] $\NDes(v) = \NDes(w)$ and $\NRes(v)=\NRes(w)$.
\een
\end{corollary}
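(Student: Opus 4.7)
The plan is to deduce both implications from Propositions~\ref{ndes-prop} and~\ref{Pdes-prop}, working inside a well-nested family $E$ that contains both $v$ and $w$ and is closed under $\precsim$.

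For (a) $\Rightarrow$ (b), the key point is that each generating move of $\precsim$ preserves both $\NDes$ and $\NRes$. Consider a single move $uBCAv \precsim uCABv$ with $A<B<C$. Both words lie in $E$ by closure, and in each of them the pair $(C,A)$ occurs at consecutive positions and is a descent; in $uBCAv$ it sits at positions $|u|+2,|u|+3$, and in $uCABv$ at positions $|u|+1,|u|+2$. Deleting those two positions from either word yields the common word $uBv$. Since Proposition~\ref{ndes-prop} allows us to recurse through any descent when the input belongs to a well-nested family, this gives
\[
\NDes(uBCAv) = \{(C,A)\} \sqcup \NDes(uBv) = \NDes(uCABv),
\]
and analogously $\NRes(uBCAv) = \NRes(uBv) = \NRes(uCABv)$. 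The symmetric closure of $\precsim$ is generated by such moves, so both statistics are constant on each equivalence class, proving (b).

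For (b) $\Rightarrow$ (a), I would invoke Proposition~\ref{Pdes-prop}: for every $x \in E$ the word $[[P(x)]]_{\des}$ satisfies $[[P(x)]]_{\des} \precsim x$, where
\[
P(x) = \{(a,b) : (b,a) \in \NDes(x)\} \sqcup \{(c,c) : c \in \NRes(x)\}
\]
depends only on $\NDes(x)$ and $\NRes(x)$. Under hypothesis (b) we have $P(v)=P(w)$, hence $[[P(v)]]_{\des} = [[P(w)]]_{\des}$ is simultaneously $\precsim$-below both $v$ and $w$. Therefore $v$ and $w$ lie in the same connected component of the Hasse diagram of $\precsim$, and in particular in the same equivalence class for its symmetric closure.

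The only point that requires care is the hypothesis check when applying Proposition~\ref{ndes-prop} in the first step: both $uBCAv$ and $uCABv$ must lie in a well-nested family, which is immediate from the closure of $E$ under $\precsim$. Beyond that, the corollary is essentially a formal consequence of the two propositions that precede it, so no genuinely new combinatorics is needed.
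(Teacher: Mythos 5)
Your proof is correct and follows essentially the same route as the paper: the forward implication comes from Proposition~\ref{ndes-prop} (the paper leaves the move-by-move verification implicit, which you spell out), and the converse is exactly the paper's argument via Proposition~\ref{Pdes-prop}, producing the common minimal element $[[P]]_{\des}$ below both $v$ and $w$.
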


\begin{proof}
It follows from Proposition~\ref{ndes-prop} that part (a) implies part (b).
Conversely, if part (b) holds then $ [[P]]_\des \precsim v$ and $[[P]]_\des \precsim w$ 
for $P$ as in Proposition~\ref{Pdes-prop}, so part (a) holds.
\end{proof}

Finally, we mention that by using results in \cite{Mar2019} we can  classify all well-nested families of partial permutations.
 Given any partial permutation $w=w_1w_2\cdots w_n$ let $\std(w) \in S_n$
be the unique permutation with $\std(w)(i) < \std(w)(j)$ if and only if $w_i<w_j$ for each $i,j\in[n]$.

\begin{corollary}
Let 
 $E$ be a  set of partial permutations of the same $n$-element set.
Then $E$ is well-nested if and only if   $\std : E \to \bigsqcup_{z \in Z} \cAA(z)$ is a bijection for some subset $Z\subseteq \I(S_n)$.
\end{corollary}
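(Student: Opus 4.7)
The plan is to reduce the statement to $S_n$ via standardization, then match the two descriptions using the correspondence between involution atoms and $\precsim$-equivalence classes within a well-nested family.

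First I would observe that the relation $\precsim$, the notion of consecutive $321$-pattern, and hence well-nestedness, all depend only on the relative order of the letters of a partial permutation. Consequently, $\std$ restricts to a bijection between well-nested families of partial permutations of any fixed $n$-element set and well-nested subsets of $S_n$. This reduces the statement to the case where the underlying set is $[n]$ and $\std$ is the identity on $E$.

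For the backward direction, I would invoke two properties of the involution atoms $\cAA(z)$ for $z \in \I(S_n)$ that are established in \cite{Mar2019}: first, each $\cAA(z)$ is closed under $\precsim$; second, no element of $\cAA(z)$ contains a consecutive $321$-pattern (as these atoms correspond to reduced words for $z$ under Demazure conjugation). Since both conditions are preserved by arbitrary disjoint unions of subsets of $S_n$, any set of the form $\bigsqcup_{z\in Z}\cAA(z)$ with $Z \subseteq \I(S_n)$ is automatically well-nested.

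For the forward direction, suppose $E \subseteq S_n$ is well-nested. I would apply the preceding corollary to decompose $E$ into equivalence classes under the symmetric closure of $\precsim$, each class $C$ being characterized by a common value of the joint invariant $(\NDes, \NRes)$. For each such class $C$ I would assign the involution $z_C \in \I(S_n)$ whose non-trivial cycles are the pairs in $\NDes(w)$ and whose fixed points are the elements of $\NRes(w)$ for any $w \in C$. Combining the characterization of $\cAA(z)$ from \cite{Mar2019} as precisely the set of permutations $w \in S_n$ with $\NDes(w) = \{(z(i),i) : i > z(i)\}$ and $\NRes(w) = \Fix(z)$, together with Proposition~\ref{Pdes-prop}, I would deduce that $C = \cAA(z_C)$: the inclusion $C \subseteq \cAA(z_C)$ is immediate from the definition of $z_C$, while the reverse inclusion follows because $\cAA(z_C)$ is generated from its unique $\precsim$-minimum $[[P]]_\des$ by $\precsim$-covers, all of which remain inside the $\precsim$-closed family $E$ once any single element of $C$ is known to lie there. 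Taking $Z = \{z_C : C \text{ is an equivalence class in } E\}$ then yields a partition $E = \bigsqcup_{z \in Z} \cAA(z)$, and the bijectivity of $\std$ on $E$ is automatic since the $\cAA(z)$ for distinct $z$ are pairwise disjoint.

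The principal obstacle is the explicit identification of $\cAA(z)$ as the set of permutations whose nested descent and nested residue data encode the cycle structure of $z$; this identification is exactly the content of the relevant results in \cite{Mar2019}, and once it is quoted, the remainder of the argument is a short assembly from Proposition~\ref{ndes-prop}, Proposition~\ref{Pdes-prop}, and the preceding corollary.
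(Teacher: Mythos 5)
Your proof follows essentially the same route as the paper's: reduce by standardization, cite \cite{Mar2019} for each $\cAA(z)$ being well-nested, and identify each $\precsim$-equivalence class of a well-nested $E$ with some $\cAA(z)$ via its common $\NDes$/$\NRes$ data (the paper does this last step in one stroke by citing \cite[Thm.~6.14]{Mar2019}). The only caution is that the characterization of $\cAA(z)$ as you quote it is false for arbitrary permutations --- for instance $4321$ has $\NDes = \{(4,3),(2,1)\}$ and $\NRes=\varnothing$, the data of $2143\in\I(S_4)$, yet it is not an atom of $2143$ --- so it must be invoked with the no-consecutive-$321$ (well-nested) hypothesis, which is harmless here since you only ever apply it inside well-nested families.
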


\begin{proof}
Any set $\cAA(z)$ for $z \in \I(S_n)$ is well-nested by the results in \cite[\S6-\S7]{Mar2019}.
Conversely,
assume that $E$ is a single equivalence class under the symmetric closure of $\precsim$.
Choose any word $w=w_1w_2\cdots w_n \in E$, let $\phi $ be the order-preserving
bijection $\{w_1,w_2,\dots,w_n\} \to [n]$, and define
$
\textstyle
z = \prod_{(a,b) \in \NDes(w)} (\phi(a),\phi(b)) \in \I(S_n).
$
Then $\std $ is a bijection $E \to \cAA(z)$ by \cite[Thm.~6.14]{Mar2019}
\end{proof}

\subsection{Matchings}\label{matching-sect}

Recall that 
a \defn{matching} on a totally ordered set $Y$ is a collection of pairwise disjoint subsets of size one or two,
called \defn{blocks}, whose union is $Y$. A matching is \defn{perfect} if it has no blocks of size one,
and \defn{noncrossing} if it does not have blocks $\{a,c\}$ and $\{b,d\}$ with $a<b<c<d$.
A matching on a finite ordered set $Y$ is \defn{symmetric} if it is fixed by the action of the reverse permutation of $Y$.

\begin{definition}
Given a finite set of positive integers $X$, let $\NCSP(X)$ be the set of  matchings on the disjoint union $X\sqcup -X$
that are noncrossing, symmetric, and perfect.
\end{definition}

A block of $M \in \NCSP(X)$ is \defn{trivial} if it has the form $\{\pm i\}$ for some $i \in X$.
Write \be\Triv(M) = \{ i \in X : \{ \pm i \} \in M\}\ee  for the set of trivial blocks in $M$ and set $\triv(M) = |\Triv(M)|$.
Then let
\be
\ba
 \NCSP(X: { k}) &= \{ M\in\NCSP(X) : \triv(M)= k\}\quand
\\
\NCSP^{\geq}(X: { k}) &= \{ M\in\NCSP(X) : \triv(M)\geq k\}.
\ea
\ee
\begin{remark}\label{ncsp-gen-rmk}
All matchings  $M\in \NCSP(X: { k})$ can be generated by the following algorithm.
Set $X_0 = X$ and $m=|X|$. If $m-k$ is odd then $\NCSP(X: { k})$ is empty. Otherwise, for each $i=1,2,\dots,(m-k)/2$ choose   two consecutive elements $a_i<b_i$ in $X_{i-1}$
and form $X_i = X_{i-1} \setminus\{a_i,b_i\}$. Then let 
$M$ consist of the pairs $\{a_i,b_i\}$ and $\{-a_i,-b_i\}$ along with $\{-c,c\}$ for each $c \in X_{(m-k)/2}$.
The size of $\NCSP(X)$ is the central binomial coefficient
$ | \NCSP(X)| = \binom{m}{\lfloor m/2\rfloor}
$
and its elements are in bijection with many other combinatorial objects; see \cite[A001405]{OEIS} and \cite[A053121]{OEIS}.
\end{remark}

Continue to fix a finite set of positive integers $X=\{x_1<x_2<\dots<x_m\}$ and let $k$ be an integer with $0\leq k \leq m=|X|$ such that $m-k$ is even.
Then $\NCSP(X:k)$ contains the matching 
\be\label{Mmin-eq}
M_{\min}(X:k) \defequals
\left\{ \pm \{x_{1},x_{2}\}, \dots, \pm \{x_{m-k-1},x_{m-k}\}\right\} \sqcup \left\{ \{\pm x_{m-k+1}\},  \dots, \{\pm x_m\} \right\}
\ee
Given an element $N \in \NCSP(X:k)$ we write $M\lessdot N$ if $M$ can be formed from $N$
by either
\bei
\item replacing blocks $\{\pm x_i\}$, $\pm \{x_{i+1},x_p\}$ with $i+1<p$
by $\pm\{x_{i},x_{i+1}\}$, $\{\pm x_p\}$; or 

\item replacing blocks $\pm\{x_i,x_{q}\}$, $\pm\{x_{i+1},x_p\}$ with $i+1<p<q$
by $\pm\{x_i,x_{i+1}\}$, $\pm\{x_{p},x_{q}\}$
when no block $\{x_a,x_b\} \in N$ exists with $x_a < x_i< x_b$.
\eei
Notice that the result of either operation is another element $M\in  \NCSP(X:k)$. 

\begin{proposition}\label{lessdot-prop}
The transitive closure of $\lessdot $ is a partial order on $\NCSP(X:k)$
with unique minimal element $M_{\min}(X:k)$.
\end{proposition}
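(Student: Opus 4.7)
The plan is to establish antisymmetry via a rank function, verify minimality of $M_{\min}(X:k)$ directly, and then show by induction that every other matching in $\NCSP(X:k)$ admits a covering predecessor.

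First, I will introduce the $\ZZ$-valued function
\[
r(M) \defequals \sum_{\pm\{x_a, x_b\} \in M,\ a<b} (b - a) \ -\ \sum_{i \in \Triv(M)} i
\]
on $\NCSP(X:k)$, where the first sum runs over the $\pm$-orbits of nontrivial arcs. A direct calculation shows that in the first defining move the arc-length sum drops by $p-i-2$ while the trivial-position sum rises by $p-i$, giving $r(N) - r(M) = 2(p-i-1) \geq 2$; in the second defining move the trivial-position sum is unchanged while the arc-length sum drops by $2(p-i-1)$, again giving $r(N) - r(M) \geq 2$. Hence $\lessdot$ has no directed cycles, so its transitive closure is antisymmetric and defines a partial order on $\NCSP(X:k)$.

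Next, I will verify that $M_{\min}(X:k)$ is minimal, i.e., admits no covering move into it. Its nontrivial blocks are $\pm\{x_{2j-1}, x_{2j}\}$ with $2j \leq m-k$ and its trivial blocks sit at indices $i \geq m-k+1$. The first operation would demand a trivial block at $x_i$ together with a nontrivial block through $x_{i+1}$, forcing both $i \geq m-k+1$ and $i+1 \leq m-k$, a contradiction. The second operation would demand a nontrivial arc $\pm\{x_i, x_q\}$ with $q \geq i+3$, but every nontrivial arc of $M_{\min}(X:k)$ has index span one. Hence no predecessor exists.

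Finally and most importantly, I will show that any $N \in \NCSP(X:k)$ with $N \neq M_{\min}(X:k)$ admits some $M \lessdot N$, by induction on $m = |X|$ with a case split on the block of $N$ containing $x_1$. If $x_1$ is trivial then $k < m$ (otherwise $|\NCSP(X:k)| = 1$ and $N = M_{\min}$), so the smallest index $j$ with $x_j$ nontrivial satisfies $j \geq 2$, and $\{\pm x_{j-1}\}$ together with the arc $\pm\{x_j, x_p\}$ containing $x_j$ is a valid input for the first operation at $i = j-1$. If $x_1$ is paired with $x_2$ I delete the outer pair $\pm\{x_1, x_2\}$ and apply induction to the restriction in $\NCSP(X\setminus\{x_1,x_2\} : k)$; any covering move there lifts to $N$, because $\{x_1, x_2\}$ cannot contain any $x_i$ with $i \geq 3$ in its interior, so the non-containment hypothesis of the second operation is preserved. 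If $x_1$ is paired with $x_q$ for some $q \geq 3$, the noncrossing and symmetry conditions force $x_2$ into an arc $\pm\{x_2, x_p\}$ with $2 < p < q$ (a trivial $\{-x_2, x_2\}$ would cross $\{x_1, x_q\}$), so the second operation applies with $i = 1$, where the non-containment hypothesis is vacuous.

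The main technical obstacle is the non-containment hypothesis in the second defining move. The strategy of always anchoring the covering move at the outermost position $x_1$ or at a position freshly exposed after peeling off the outer pair $\{x_1, x_2\}$ is chosen specifically so that the chosen $x_i$ is never strictly enclosed by another arc of $N$, which makes the hypothesis automatic at each step of the induction.
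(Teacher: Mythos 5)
Your proof is correct, and it gets to the conclusion by a somewhat different route than the paper. The paper's argument rests on a single nesting statistic $\nb(N)$, the number of pairs of blocks $\{\{a,d\},\{b,c\}\}\subset N$ with $0<a<b<c<d$ or $0<-d=a<b<c$: one observes that $\nb$ strictly decreases along $\lessdot$, that $\nb(N)=0$ forces $N=M_{\min}(X:k)$, and that $\nb(N)>0$ lets one read off a covering move, so one statistic simultaneously certifies acyclicity and pins down the unique minimal element. You split these roles: a positional rank function handles acyclicity (your computations giving $r(N)-r(M)=2(p-i-1)\geq 2$ for both moves are right), and the existence of some $M\lessdot N$ for every $N\neq M_{\min}(X:k)$ is proved by induction on $|X|$ with a case analysis on the block of $N$ containing $x_1$. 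The payoff of your route is that it makes explicit why the non-containment hypothesis in the second move is never an obstruction: anchoring the move at $x_1$, or lifting a move across the peeled-off outer block $\pm\{x_1,x_2\}$, keeps the chosen $x_i$ unenclosed. (Your reading of that hypothesis, namely that it only concerns blocks $\{x_a,x_b\}$ with both endpoints in $X$, is the intended one; under the broader reading that counts trivial blocks $\{\pm x_c\}$ as enclosing $x_i$, even the proposition would fail, e.g.\ for nested arcs followed by a trivial block on the right.) Two small points you leave implicit are immediate but worth a line: in the peeling case one needs $N\setminus\pm\{x_1,x_2\}\neq M_{\min}(X\setminus\{x_1,x_2\}:k)$, which holds because $M_{\min}(X:k)$ is exactly $\pm\{x_1,x_2\}$ together with $M_{\min}(X\setminus\{x_1,x_2\}:k)$; and consecutiveness in $X\setminus\{x_1,x_2\}$ agrees with consecutiveness in $X$ since $x_1,x_2$ are the two smallest elements, so moves on the restriction really do lift verbatim.
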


\begin{proof}
Let $M,N\in\NCSP(X:k)$
and define $\nb(M)$ to be the number of subsets $\{\{a,d\}, \{b,c\}\}\subset M$
with $0<a<b<c<d$ or $0<-d=a<b<c$. 
Now observe that (1) if $M\lessdot N$ then $\nb(M) < \nb(N)$,
 (2) if  $\nb(N) = 0$ then $N = M_{\min}(X:k)$, and
(3) if $\nb(N)>0$ then some $M$ exists with $M\lessdot N$.
\end{proof}

\subsection{Clan alignment}
Recall the notion of a clan from Section~\ref{clans-sect}.
The following property will be used repeatedly in our definitions of $\Aligned^G_K(\gamma)$ for various classical types.

\begin{definition}
Suppose $\gamma = (S_+,S_-,M)$ is a clan and $X \subseteq \PP$.
A matching $N \in \NCSP(X)$
 is \defn{$\gamma$-aligned} if $N$  has no blocks $\{a,b\}$ with $0<a<b$
such that $\{a,b\}\subset S_+$ or $\{a,b\}\subset S_-$. 
\end{definition}

Also, given any clan $\gamma = (S_+,S_-,M)$ define 
\be
\APoints(\gamma) = S_+ \sqcup S_-
\quand \Points(\gamma)= \APoints(\gamma) \cap \PP.
\ee

The material discussed above will be of use in 
the next three sections, where explain the constructions involved in Theorems~\ref{main-thm} and \ref{main-thm2}
for each instance of $(G,K)$ in classical type. We have summarized this information in Table~\ref{shapes-summary-tbl} for convenience.

\def\hhline{\\ & & & \\ [-4pt]\hline & & &  \\ [-4pt]}
\def\gap{\\[-4pt]&&&\\}
\begin{table}[h]
\begin{center}
{\small
\begin{tabular}{| l | l | l | l |}
\hline&&& \\[-4pt]
Type & Parameters &  $\cM^G_K(z)$ for $z \in  \cI^G_K $   & $\precsim^G_K$
\hhline
AI &  $n \in \NN$  & $  \{\varnothing\} $  &  $\overset{0}\precsim$
\gap
AII  & $n \in \NN$ odd & $  \{\varnothing\} $   & $\overset{0}\precapprox$
\gap
AIII & $n+1=p+q$ & $ \NCSP(X:k)$ for $X=\Twist(z)$ and $k=|p-q|$ & $\precsim_{\AIII}^{(p,q)}$ 
\hhline
BI &  $2n+1=p+q$ %and $k=\frac{|p-q|-1}{2}$ 
& $ \NCSP^{\geq}(X:k)$ for $X=\Neg(z)$ and $k=\frac{|p-q|-1}{2}$ & $\overset{k}\precsim$
\hhline
CI & $n \in \NN$ & $  \NCSP(X)$ for $X=\Neg(z)$    & $\overset{0}\precsim$
\gap
CII  & $2n=p+q$ % with $p$ even %and $k=\frac{|p-q|}{2}$ 
&  $  \NCSP(X:k)$  for $X=\Neg(z)$  and $k=\frac{|p-q|}{2}$ & $\overset{k}\precapprox$ \\
   & with $p$ even   &    & 
\hhline
DI  & $2n=p+q$ % with $p+n$ even %and $k=\frac{|p-q|}{2}$
& $  \NCSP(X:k)$  for $X=\Neg(z)$ and $k=\frac{|p-q|}{2}$ & $\precsim_{\DI}^{(p,q)}$
\\
  & with $p+n$ even & &
\gap
DII  & $2n=p+q$  %with $p+n$ odd 
& $\NCSP(X:k)$  for $X=\Neg(t_0z)$ and $k=\frac{|p-q|}{2}$ & $\precsim_{\DII}^{(p,q)}$ \\
   & with $p+n$ odd & &
\gap
DIII & $n\in\NN$ even  & $\{ M\in\NCSP(X):  \triv(M) \equiv \ell_0(z) \modu 4)\}$  for $X=\Neg(z)$  & $\overset{0}\precapprox$
\gap
 & $n\in\NN$ odd  & $ \{ M\in\NCSP(X):  \triv(M) \text{ is odd}\}$  for $X=\Neg(t_0z)$ & $\overset{1}\precapprox$
\gap\hline
\end{tabular}}
\end{center}
\caption{Summary of combinatorial data involved in Theorems~\ref{main-thm} and \ref{main-thm2}.
The notation $\NCSP(X)$ is defined in Section~\ref{matching-sect}.
The partial orders in the last column are defined in Sections~\ref{word-rel-sect}, \ref{A3-sect} and \ref{D12-sect}.
In most types, if $\gamma \in \Gamma^G_K$ has $\phiRS(\gamma)=z$ then $\Aligned^G_K(\gamma)$ 
 consists of all $\gamma$-aligned matchings in $\cM^G_K(z)$. There is an extra condition in types BI and DIII; see Sections~\ref{BI-sect} and \ref{D34-sect}. 
 %The   operators $\sh^G_K$ and $\bot^G_K$ are defined Sections~\ref{A-shape-sect}, \ref{BC-shape-sect}, and \ref{D-shape-sect}.
}\label{shapes-summary-tbl} \end{table}

\section{Shapes in type A}\label{A-shape-sect}

We start by defining all terms appearing in Theorems~\ref{main-thm} and \ref{main-thm2} for the cases when $G=\GL(n+1)$  and $W=S_{n+1}$.
In this setting, our main theorems are essentially equivalent to results in \cite{BurksPawlowski,CJW,HMP2},
although in type AIII some explanation is required to derive this; see Section~\ref{proof-sect1}.

\subsection{Types AI and AII}\label{tAI-sect}

Fix an involution $z \in \I(S_{n+1})$ and a permutation $w \in S_{n+1}$.
Recall that 
\[
\cI_{\AI}^{(n)}=\Gamma_{\AI}^{(n)} = \I(S_{n+1})
\quand
\cI_{\AII}^{(n)}=\Gamma_{\AII}^{(n)} = \Ifpf(S_{n+1}) \text{ when $n$ is odd}.
\]
In types $\AI$ and $\AII$ the set of matchings associated to $z$ and the shape operator are trivial:

\begin{definition}[Shapes for types $\AI$ and $\AII$] For $z$ and $w$ as above:
\bei
\item (\emph{Matchings}) Let $\cM_{\AI}^{(n)}(z)=\cM_{\AII}^{(n)}(z)=\{\varnothing\}$.

\item (\emph{Shape operator})
Let $\sh_{\AI}^{(n)}(w) =\sh_{\AII}^{(n)}(w) =\varnothing$.

\item (\emph{Alignment})
Let $\Aligned_{\AI}^{(n)}(z)=\Aligned_{\AII}^{(n)}(z)=\{\varnothing\}$. 

\eei
\end{definition}

Recall that we define $\Cyc(z) = \{ (a,b) \in [n+1]\times[n+1] : a \leq b = z(a)\}$.

\begin{definition}[Generation for types $\AI$ and $\AII$]
\label{genA12-def}
 \
\bei
\item (\emph{Generators})
Let $ \bot_{\AI}^{(n)}(z,\varnothing) = [[\Cyc(z)]]_{\des} $ and $ \bot_{\AII}^{(n)}(z,\varnothing) = [[\Cyc(z)]]_{\asc} $.

\item (\emph{Order})
Define $\precsim_{\AI}^{(n)}$ and $\precsim_{\AII}^{(n)}$ to be respectively $\overset{0}\precsim$ and $\overset{0}\precapprox$ restricted to $S_{n+1}$.

\eei
\end{definition}

\subsection{Type AIII}\label{A3-sect}

Fix  $p,q \in \NN$ with $p+q=n+1$ and set $k=|p-q|$.
Continue to let $w \in S_{n+1}$, but now 
choose 
\[z \in \I_{\AIII}^{(p,q)} = \Bigl\{ y \in \I_\ast(S_{n+1}) : \twist(y)\geq |p-q|\Bigr\}.
\]
Suppose $\gamma\in \Gamma_{\AIII}^{(p,q)}=\{\text{standard $(p,q)$-clans}\}$ has one-line representation
\[\gamma=(\gamma_1,\gamma_2,\dots,\gamma_{n+1}) 
\quad\text{and assume that  $z=\tilde\pi_\gamma = \psi_{\AIII}^{(p,q)}(\gamma)$}.\]

\begin{definition}[Shapes for type $\AIII$]
\label{AIII-shape-def}
 Let $X=\Twist(z) = \APoints(\gamma)$. Then:
\bei
\item (\emph{Matchings})
Let $\cM_{\AIII}^{(p,q)}(z)=\NCSP(X: { k})$.

\item (\emph{Shape operator})
After expressing $w \in S_{n+1}$ in one-line notation as 
\[w= b_1b_2\cdots b_j c_1c_2\cdots c_k a_j \cdots a_2a_1\]
 let $\sh_{\AIII}^{(p,q)}(w)$    be the set containing
\bei
\item[]  $\{a_i,b_i\}$ and $\{-a_i,-b_i\}$ for each $i \in [j]$ with $a_i<b_i$, and 
\item[] $\{-c_i,c_i\}$ for each $i\in[k]$.
\eei

\item (\emph{Alignment})
Let $\Aligned_{\AIII}^{(p,q)}(\gamma)$ be the set of all $\gamma$-aligned matchings
$M \in \NCSP(X: { k})$.
%such that 
%\[
%\Triv(M) =\{ i_1<i_2<\dots<i_k\} 
%\quad\Rightarrow\quad
%\gamma_{i_1} = \gamma_{i_2}= \dots = \gamma_{i_k}
%.\]

\eei
\end{definition}

\begin{definition}[Generation for type $\AIII$]\label{AIII-shape-def2}   Fix $M \in \cM_{\AIII}^{(p,q)}(z)$.
\bei
\item (\emph{Generators})
Define
 $\TwistCyc(z,M)$ to be the set of $(a,b) \in [n+1]\times[n+1]$ with  
\[
a<b\text{ and }\{a,b\}\in M\quord a> b = n+2-z(a).
\]
Then let
$\bot_{\AIII}^{(p,q)}(z,M) = b_1b_2\cdots b_j c_1c_2\cdots c_k a_j\cdots a_2a_1 \in S_{n+1}$ where
\[
\ba
\Triv(M) &= \{c_1<c_2<\dots<c_k\}\quand
\\
\TwistCyc(z,M) &= 
\{ (a_1,b_1),(a_2,b_2),\dots,(a_j,b_j)\}\text{ with } b_1<b_2<\dots<b_j.
\ea
\]

\item (\emph{Order})
Define $\preceq_{\AIII}^{(p,q)}$ to be the transitive closure of the word relation with \[u_1  B_1 B_2  v   A_2 A_1  u_2  \mathbin{\preceq_{\AIII}^{(p,q)}}  u_1  B_2 B_1  v  A_1  A_2  u_2\] whenever 
$u_1$, $u_2$, and $v$ are subwords with $\ell(u_1)=\ell(u_2)$ and $\ell(v) \geq k=  |p-q|$ and 
$A_1, A_2, B_1, B_2\in \ZZ$ are letters satisfying
$A_1<A_2$ and $B_1<B_2$.

\eei
\end{definition}

\begin{remark}\label{AIII-rmk}
If the clan $\gamma = \psi_{\AIII}^{(p,q)}(z)$ has the form $\gamma =  (T_+,T_-,N)$
then $\TwistCyc(z,M)$ consists of exactly the pairs $(a,b) \in [n+1]\times [n+1]$
with
 $\{a<b\} \in M$ or $\{a>b\}\in N$.
\end{remark}

%\begin{example}
%\Eric{todo}
%
%\end{example}

\subsection{Proofs of the main theorems}\label{proof-sect1}

Here, we explain the proofs of Theorems~\ref{main-thm} and \ref{main-thm2} in type A  when $W=S_{n+1}$.
In this well-studied case, most of what we want to show can be deduced from results in \cite{BurksPawlowski,CJW,HMP2}.

\begin{proof}[Proof of Theorems~\ref{main-thm} and \ref{main-thm2}  in types $\AI$ and $\AII$]
In these types, Theorem~\ref{main-thm} is trivial and Theorem~\ref{main-thm2} is equivalent to  
\cite[Thms.~6.10 and 6.22]{HMP2} and \cite[Props.~6.14 and 6.24]{HMP2}.
We highlight some notational differences: the sets 
 $\cA_{\AI}^{(n)}(z)$ and $\cA_{\AII}^{(n)}(z)$ give $\{w^{-1} : w \in \mathcal{A}(z)\}$ and $\{w^{-1} : w \in \mathcal{A}_{\mathsf{FPF}}(z)\}$ in \cite{HMP2},
while $\bot_{\AI}^{(n)}(z)$ and $\bot_{\AII}^{(n)}(z)$  
are the inverses of $\hat 1(z)$ and $\hat 1_{\mathsf{FPF}}(z)$ in \cite{HMP2}.
 \end{proof}

A more detailed argument is needed to address type $\AIII$.
For the rest of this section 
fix $p,q\in \NN$ with $p+q=n+1$ and set $k=|p-q|$. Choose 
a twisted involution $z \in \I_{\AIII}^{(p,q)}$.

\begin{lemma}\label{AIII-lem0}
Each $M\in\cM_{\AIII}^{(p,q)}(z)$ is in $\Aligned_{\AIII}^{(p,q)}(\gamma)$
for some $\gamma \in \Gamma_{\AIII}^{(p,q)}$ with $ \psi_{\AIII}^{(p,q)}(\gamma)=z$.
\end{lemma}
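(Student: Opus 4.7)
The plan is to first pin down the structure forced by $\psi_{\AIII}^{(p,q)}(\gamma)=z$ and then exploit the remaining freedom. Since $\psi_{\AIII}^{(p,q)}(\gamma)=\tilde\pi_\gamma = w_0 \pi_\gamma$ by Table~\ref{tbl2}, requiring this to equal $z$ forces $\pi_\gamma = w_0 z$. This identity completely determines the matching part of $\gamma$ (its $2$-cycles are precisely the nontrivial cycles of $w_0 z$) and pins down the isolated point set as $\APoints(\gamma) = \Fix(w_0 z) = \Twist(z) = X$. The only remaining data in $\gamma$ is the partition $X = S_+ \sqcup S_-$, subject to the type constraint $|S_+| - |S_-| = p-q$.

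Next I would translate the alignment condition: $M$ is $\gamma$-aligned iff every block $\{a,b\} \in M$ with $0 < a < b$ satisfies $|S_+ \cap \{a,b\}| = |S_- \cap \{a,b\}| = 1$. Any $M \in \NCSP(X:k)$ consists of $k$ trivial blocks $\{-c,c\}$ with $c \in \Triv(M)$, together with $(|X|-k)/2$ nontrivial positive pairs (and their symmetric negatives). Hence aligning with $M$ amounts to choosing one element of each nontrivial positive pair for $S_+$ (placing the other in $S_-$), while leaving the elements of $\Triv(M)$ free to be colored $+$ or $-$.

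Assigning one element of each nontrivial positive pair to $S_+$ contributes equally to $|S_+|$ and $|S_-|$, so if $j$ of the $k$ trivial fixed points are placed in $S_+$ then $|S_+| - |S_-| = 2j - k$. Setting this equal to $p-q$ gives $j = (k + p - q)/2$; since $k = |p-q|$, this equals $k$ when $p \geq q$ and $0$ when $p < q$, and in either case lies in $\{0,1,\ldots,k\}$. So a valid choice of signs (in fact $2^{(|X|-k)/2}$ of them) exists, producing a clan $\gamma$ with $\psi_{\AIII}^{(p,q)}(\gamma) = z$ and $M \in \Aligned_{\AIII}^{(p,q)}(\gamma)$.

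The only substantive check is a parity argument: we need $|X|$ and $k$ to have the same parity so that $(|X|-k)/2$ is an integer. Since $|X| = |\Fix(w_0 z)|$ is the fixed-point count of an involution in $S_{n+1}$, its parity is that of $n+1 = p+q$, which agrees with the parity of $p-q = \pm k$. Combined with the defining inequality $\twist(z) = |X| \geq k$ for elements of $\cI_{\AIII}^{(p,q)}$, this guarantees that $\NCSP(X:k)$ is nonempty and that the counting goes through. This parity bookkeeping is the only potential obstacle, and it is immediate.
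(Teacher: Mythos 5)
Your proposal is correct and matches the paper's argument: both fix the matching part of $\gamma$ as the $2$-cycles of $w_0z$ (so $\APoints(\gamma)=\Twist(z)$), split each nontrivial positive block of $M$ between $S_+$ and $S_-$, and place all $k=|p-q|$ elements of $\Triv(M)$ in $S_+$ or $S_-$ according to the sign of $p-q$ to meet the type constraint. The extra parity bookkeeping you include is harmless but unnecessary, since $M\in\NCSP(X:k)$ being given already forces $|X|-k$ to be even.
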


\begin{proof}
Given $M$, the desired   $\gamma = (T_+,T_-,N)$ is constructed as follows.
Define $N$ to be the perfect matching whose blocks are the $2$-cycles of $w_0 z$.
For each $\{a,b\} \in M$ with $0<a<b$, add $a$ to $T_+$ and $b$ to $T_-$.
Then add all $|p-q|$ elements of $\Triv(M)$ to $T_+$ when $p>q$ or to $T_-$ when $p<q$.
\end{proof}

\begin{lemma}\label{AIII-lem1}
Suppose $w \in  \cA_{\AIII}^{(p,q)}(z)$ has one-line representation
\[
 w=b_1b_2\cdots b_j c_1c_2\cdots c_k a_j\cdots a_2a_1\quad\text{where }j = \tfrac{n+1-k}{2}= \min\{p,q\}.
 \]
Then $a_i > a_{i+1}$ whenever $i \in [j]$ is such that $b_i > b_{i+1}$. Also, it holds that $c_1<c_2<\dots<c_k$.
\end{lemma}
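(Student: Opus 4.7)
The plan is to prove both ordering properties by contradiction, exhibiting in each case a length-reducing move that preserves the defining identity $w^\ast \circ \omega_k^{n+1} \circ w^{-1} = z$, which would contradict the minimum-length characterization of $\cA_\ast(\omega_k^{n+1}, z)$ provided by Lemma~\ref{dem-lem0}.

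The key geometric fact is that $\omega_k^{n+1}$ reverses the middle positions $[j{+}1, j{+}k]$ and fixes all others. Under the decomposition $w = b_1\cdots b_j\,c_1\cdots c_k\,a_j\cdots a_1$, the value $a_m$ sits at position $n{+}2{-}m$, so that $a_i < a_{i+1}$ corresponds to a descent of $w$ at position $n{+}1{-}i$, while $b_i > b_{i+1}$ is a descent at position $i$.

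For the first claim, suppose for contradiction that $c_i > c_{i+1}$ for some $i \in [k-1]$, and set $w' := wt_{j+i}$, so $\ell(w')=\ell(w)-1$. Since $\omega_k^{n+1}$ reverses $[j{+}1,j{+}k]$, the values $\omega_k^{n+1}(j{+}i)$ and $\omega_k^{n+1}(j{+}i{+}1)$ form a descent, giving $\ell(\omega_k^{n+1}\, t_{j+i}) = \ell(\omega_k^{n+1}) - 1$. The third case of \eqref{szs-eq} then gives $t_{j+i}^\ast \circ \omega_k^{n+1} \circ t_{j+i} = \omega_k^{n+1}$. Since the factorization $w = w' \cdot t_{j+i}$ is reduced, associativity of the Demazure product yields
\[z = w^\ast \circ \omega_k^{n+1} \circ w^{-1} = (w')^\ast \circ \bigl(t_{j+i}^\ast \circ \omega_k^{n+1} \circ t_{j+i}\bigr) \circ (w')^{-1} = (w')^\ast \circ \omega_k^{n+1} \circ (w')^{-1},\]
so $w'$ satisfies the defining identity with $\ell(w') < \ellhat_\ast(z) - \ellhat_\ast(\omega_k^{n+1}) = \ell(w)$, contradicting Lemma~\ref{dem-lem0}. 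Distinctness of the letters then forces $c_i < c_{i+1}$.

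For the second claim, the plan is to combine the above minimality principle with the bijection of Lemma~\ref{dem-lem}. Fix $v \in \cA_\ast(\omega_k^{n+1})$ whose reduced-word support lies inside the middle block $[j{+}1,j{+}k]$; such a $v$ exists because the $\ast$-atoms of $\omega_k^{n+1}$ are built from the simple transpositions $t_{j+1},\ldots,t_{j+k-1}$. By Lemma~\ref{dem-lem}, $w \mapsto u := wv$ sends $\cA_\ast(\omega_k^{n+1}, z)$ bijectively onto a length-additive subset of $\cA_\ast(z)$, and the product $wv$ modifies only the middle block of $w$, leaving both the $b$-block and the $a$-block untouched. We then appeal to the combinatorial description of $\cA_\ast(z)$ (see \cite[\S6--\S7]{Mar2019} or \cite[\S6]{HMP2}): $\cA_\ast(z)$ is a well-nested family in the sense of Section~\ref{shape-prelim-sect}, so no atom of $z$ contains a consecutive $321$-pattern. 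Together with the $\ast$-symmetry $u^\ast(n{+}2{-}m) = n{+}2 - u(m)$, this pattern-avoidance forces a descent of $u$ on the $b$-side to be matched by a descent on the $a$-side, translating directly to the implication $b_i > b_{i+1} \Rightarrow a_i > a_{i+1}$.

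The main obstacle is the verification that a compatible $v$ exists and that the induced bijection preserves the block structure cleanly enough to transfer the $321$-avoidance property; once this is set up, both ordering claims fall out as formal consequences of the well-nested structure of $\cA_\ast(z)$.
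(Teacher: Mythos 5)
Your argument for the claim $c_1<c_2<\dots<c_k$ is correct and is essentially the paper's own argument: a descent among the middle letters lets you drop a factor $t_{j+i}$, and since $\ell(\omega_k^{n+1}t_{j+i})<\ell(\omega_k^{n+1})$ the conjugating pair is absorbed by the Demazure product, contradicting minimality.

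The second claim is where there is a genuine gap. Your plan routes through Lemma~\ref{dem-lem} with an atom $v \in \cA_\ast(\omega_k^{n+1})$ supported on the middle block, and then asserts that the conclusion "falls out" from $\cA_\ast(z)$ being a well-nested family together with the identity $u^\ast(n+2-m)=n+2-u(m)$. Three problems. First, the existence of a block-supported $v$ is asserted, not proved (it is plausible, but you flag it yourself as unverified). Second, the well-nested machinery of Section~\ref{shape-prelim-sect} is only available in the paper for \emph{untwisted} atoms $\cAA(y)$ with $y \in \I(S_{n+1})$ (via \cite{Mar2019}); no statement in the paper says that twisted atom sets $\cA_\ast(z)$ are well-nested, so this would itself need an argument. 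Third, and most seriously, even granting both points the deduction does not go through: consecutive-$321$-avoidance and closure under $\precsim$ are \emph{local} conditions on adjacent letters, while the claim $b_i>b_{i+1}\Rightarrow a_i>a_{i+1}$ couples a descent at position $i$ with one at the far-away mirrored position $n+1-i$; and the identity $u^\ast(n+2-m)=n+2-u(m)$ holds for \emph{every} permutation $u$, so it carries no information that could create such a coupling. Nothing in your setup for this half ever uses the relation between $w$, $\omega_k^{n+1}$ and $z$ again, which is where the constraint actually comes from.

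The fix is that the second claim admits exactly the same one-step minimality argument you used for the first, which is what the paper does: if $b_i>b_{i+1}$ and $a_i<a_{i+1}$, set $\pi=wt_i$, so $\ell(\pi)=\ell(w)-1$, $w=\pi\circ t_i$, and (since the descent of $w$ at position $n+1-i$ survives in $\pi$) $\pi\circ t_{n+1-i}=\pi$. Using $t_{n+1-i}\circ\omega_k^{n+1}\circ t_i=t_i\circ\omega_k^{n+1}\circ t_{n+1-i}$, valid because $i<j$, one gets $z=w^\ast\circ\omega_k^{n+1}\circ w^{-1}=(\pi\circ t_{n+1-i})^\ast\circ\omega_k^{n+1}\circ(\pi\circ t_{n+1-i})^{-1}=\pi^\ast\circ\omega_k^{n+1}\circ\pi^{-1}$, contradicting the minimality of $\ell(w)$. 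No appeal to well-nestedness or to an auxiliary atom of $\omega_k^{n+1}$ is needed.
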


\begin{proof}
Recall that  $\cA_{\AIII}^{(p,q)}(z) = \cA_\ast(\omega,z)$
for $\omega=\omega_k^{n+1} \in S_{n+1}$   as in \eqref{omega-eq}.
Notice for $i \in [n]$ that
\[
t_{i} \circ \omega \circ t_{n+1-i} = t_{n+1-i} \circ \omega \circ t_i  =
\begin{cases}
\omega &\text{if }j < i <j+k\\
 \omega \cdot t_i \cdot t_{n+1-i} &\text{if $i<j$ or $j+k<i$}.
 \end{cases} 
\]
Using this identity, it follows that $c_1<c_2<\dots<c_k$ since otherwise 
for some index $j<i<j+k$ the permutation $\pi = wt_i$ would satisfy $\ell(\pi) = \ell(w)-1$ and $w = \pi \circ t_i$,
in which case
\[z = w^\ast \circ \omega \circ w^{-1} = \pi^\ast\circ t_{n+1-i} \circ \omega \circ t_i \circ \pi^{-1}  = \pi^\ast \circ \omega \circ \pi^{-1} ,
\]
contradicting the minimality of $\ell(w)$ in the definition of $ \cA_\ast(\omega,z)$.
Similarly, if there is an index $i \in [j]$ with $a_i < a_{i+1}$ and  $b_i > b_{i+1}$,
then
the permutation $\pi = wt_i$ would satisfy $\ell(\pi) = \ell(w)-1$, $w = \pi \circ t_i$, and $\pi = \pi \circ t_{n+1-i}$,
in which case the computation
\[\ba
z = w^\ast \circ \omega \circ w^{-1} &= \pi^\ast\circ t_{n+1-i} \circ \omega \circ t_i \circ \pi^{-1}
\\&= \pi^\ast\circ t_{i} \circ \omega \circ t_{n+1-i} \circ \pi^{-1}
\\&  =
(\pi\circ t_{n+1-i})^\ast \circ \omega \circ (\pi\circ t_{n+1-i})^{-1}  = \pi^\ast \circ \omega \circ \pi^{-1} 
\ea
\] would again
contradict the minimality of $\ell(w)$.
\end{proof}

We now choose a $(p,q)$-clan
$\gamma \in \Gamma_{\AIII}^{(p,q)}$ with
$z=\tilde\pi_\gamma=\psi_{\AIII}^{(p,q)}(\gamma)$.

\begin{lemma}[\cite{BurksPawlowski}] \label{AIII-lem2}
Fix $u,v \in S_{n+1}$ with $u \preceq_{\AIII}^{(p,q)} v$. 
Then $u \in \cW_{\AIII}^{(p,q)}(\gamma)$ if and only if $v \in \cW_{\AIII}^{(p,q)}(\gamma)$.
\end{lemma}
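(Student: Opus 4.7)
The plan is to reduce the lemma to a shape-preservation statement and then invoke the atomic theory developed in \cite{BurksPawlowski}. First, I would combine Lemma~\ref{weak-order-lem} with Lemma~\ref{AIII-lem1} to rephrase the membership $w \in \cW_{\AIII}^{(p,q)}(\gamma)$ as the conjunction of $w \in \cA_{\AIII}^{(p,q)}(z)$ with the requirement that every two-element block $\{a,b\}$ of the set partition $\Lambda(P)$ associated to any reduced word $P$ of $w$ satisfies $|S_+(\gamma) \cap \{a,b\}| = |S_-(\gamma) \cap \{a,b\}| = 1$. Tracking the Demazure conjugation formulas from Section~\ref{dem-sect} through a reduced word of the synchronized-descent form provided by Lemma~\ref{AIII-lem1} shows that, on the positive side, these two-element blocks are exactly the pairs $\{a_i, b_i\}$ with $a_i < b_i$ recorded in $\sh_{\AIII}^{(p,q)}(w)$, extracted from the one-line notation $w = b_1 \cdots b_j c_1 \cdots c_k a_j \cdots a_2 a_1$. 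Hence the alignment condition depends on $w$ only through its shape.

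Second, I would verify that the generating move for $\preceq_{\AIII}^{(p,q)}$ preserves the shape. If $u = u_L B_1 B_2 v_M A_2 A_1 u_R$ with $\ell(u_L) = \ell(u_R)$ and $\ell(v_M) \geq k$, then a direct counting of positions (using $n+1 = 2j+k$) forces $B_1 B_2$ to lie inside the initial length-$j$ $b$-block and $A_2 A_1$ inside the terminal length-$j$ $a$-block. Writing their positions in the $b$-block as $i$ and $i+1$, the right-to-left labeling of the $a$-block places $A_1$ at index $i$ and $A_2$ at index $i+1$, so the affected pairs are $\{A_1, B_1\}$ and $\{A_2, B_2\}$. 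The swap produces new pairs $\{A_2, B_2\}$ and $\{A_1, B_1\}$, giving the identical unordered shape, while the central letters $c_1 < c_2 < \cdots < c_k$ sit inside $v_M$ and are unchanged. Consequently $\sh_{\AIII}^{(p,q)}(u) = \sh_{\AIII}^{(p,q)}(v)$, and the alignment condition against $\gamma$ is identical for the two permutations.

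The main obstacle is to show that $v$ remains in $\cA_{\AIII}^{(p,q)}(z)$, i.e., that $\ell(v) = \ell(u)$ and $v^\ast \circ \omega_k^{n+1} \circ v^{-1} = z$. This can be checked by realizing the simultaneous swap as a short sequence of braid and commutation moves on a reduced expression for $u$ read off from Lemma~\ref{AIII-lem1}, exploiting that the length-$k$ middle $v_M$ absorbs the commutations past the central letters $c_1 < c_2 < \cdots < c_k$. This case analysis is essentially the content of the closure result for type AIII atoms proved in \cite{BurksPawlowski}; after translating their orbit parametrization to ours via Definition~\ref{rs-def} (which accounts for the inversion $\tphiRS(\gamma)^{-1}$ that turns their atoms into ours), the stability of the atom set under their generating moves yields the desired equivalence $u \in \cW_{\AIII}^{(p,q)}(\gamma) \Leftrightarrow v \in \cW_{\AIII}^{(p,q)}(\gamma)$.
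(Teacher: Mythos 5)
Your argument lands, in its decisive step, exactly where the paper's proof does: a covering move for $\preceq_{\AIII}^{(p,q)}$ swaps positions $i,i+1$ and $n+1-i,n+2-i$ with $\ell(u_1)=\ell(u_2)$ and $\ell(v)\geq k$, which (as your position count shows) is precisely $v = u\hs t_i t_{n+1-i}$ with $\ell(v)=\ell(u)$ and $1\leq i<\min\{p,q\}$, and then the closure theorem \cite[Thm.~2.22]{BurksPawlowski} for the clan atom sets $\mathcal{A}(\gamma)=\{w^{-1}: w\in \cW_{\AIII}^{(p,q)}(\gamma)\}$ gives the equivalence outright. That is the paper's entire proof, so your conclusion is correct. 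The difference is that your first two paragraphs build a detour the paper does not take: you propose to characterize membership in $\cW_{\AIII}^{(p,q)}(\gamma)$ via Lemma~\ref{weak-order-lem} as ``$w \in \cA_{\AIII}^{(p,q)}(z)$ plus alignment of $\sh_{\AIII}^{(p,q)}(w)$,'' and to check that the move preserves the shape. That characterization is essentially the type AIII case of Theorem~\ref{main-thm}, which the paper proves \emph{from} this lemma together with the Can--Joyce--Wyser algorithm \cite{CJW}; so if that reduction were load-bearing you would be flirting with circularity, and in any case it is sketched rather than proved (the identification of the two-element blocks of $\Lambda(P)$ with the nontrivial blocks of the shape is nontrivial, and is only carried out in the paper's type D arguments). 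There is also a mild conflation at the end: you frame the remaining obstacle as closure of $\cA_{\AIII}^{(p,q)}(z)$, but the cited Burks--Pawlowski result concerns the clan atoms $\mathcal{A}(\gamma)$, which already yields the lemma and makes both the shape computation and the $\cA(z)$-membership step unnecessary. Since your final appeal to that closure theorem (with the inverse translation) stands on its own, the proof is correct; trimming the scaffolding would make it identical to the paper's.
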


\begin{proof}
In this case $\ell(v) = \ell(u)$ and $v=u t_it_{n+1-i} $ for some $1 \leq i  <\min\{p,q\}$,
so the lemma follows directly from \cite[Thm.~2.22]{BurksPawlowski}
after noting that the set $\mathcal{A}(\gamma)$ in \cite{BurksPawlowski} is    $\left\{ w^{-1} : w\in \cW_{\AIII}^{(p,q)}(\gamma)\right\}$.
\end{proof}

\begin{proof}[Proof of Theorems~\ref{main-thm} and \ref{main-thm2}  in type $\AIII$]
The relation $\preceq_{\AIII}^{(p,q)}$ is a graded partial order on $S_{n+1}$, 
since we can use as a rank function the number of 
inversions among the first $\min\{p,q\}$ letters in the one-line representation of a permutation.
Lemmas~\ref{AIII-lem1} and \ref{AIII-lem2} imply that $\cW_{\AIII}^{(p,q)}(\gamma)$ is a disjoint union of intervals under this order,
and that each such interval 
contains a unique minimal element of the form 
$
w_{\min}= b_1b_2\cdots b_j c_1c_2\cdots c_k a_j \cdots a_2a_1
$
where $b_1<b_2<\dots<b_j$ and $c_1<c_2<\dots<c_k$.

Can, Joyce, and Wyser give an explicit algorithm in \cite[Thm.~2.22]{CJW} to construct all elements of the set $\cW_{\AIII}^{(p,q)}(\gamma)$.
Inspecting this algorithm alongside Remark~\ref{AIII-rmk} shows that the permutations that arise as $w_{\min}\in\cW_{\AIII}^{(p,q)}(\gamma)$
are exactly the elements $\bot_{\AIII}^{(p,q)}(z,M)$ for $M \in \Aligned_{\AIII}^{(p,q)}(\gamma)$.
Thus
\[\textstyle
\cW_{\AIII}^{(p,q)}(\gamma) = \bigsqcup_{M \in \Aligned_{\AIII}^{(p,q)}(\gamma)}  \cA_{\AIII}^{(p,q)}(z,M)
\]
if we define
\[\cA_{\AIII}^{(p,q)}(z,M)\defequals \left\{ w \in S_{n+1} : \bot_{\AIII}^{(p,q)}(z,M) \preceq_{\AIII}^{(p,q)} w\right\}.\]
By Lemma~\ref{AIII-lem0}
we   also have
$\textstyle
\cA_{\AIII}^{(p,q)}(z) = \bigsqcup_{M \in \cM_{\AIII}^{(p,q)}(z)}  \cA_{\AIII}^{(p,q)}(z,M).$
Finally, it holds that
\[\cA_{\AIII}^{(p,q)}(z,M)=\left\{ w \in \cA_{\AIII}^{(p,q)}(z) : \sh_{\AIII}^{(p,q)}(w) = M\right\}\]
as the operator $ \sh_{\AIII}^{(p,q)}$ is constant on any $\preceq_{\AIII}^{(p,q)} $-interval and $ \sh_{\AIII}^{(p,q)} (\bot_{\AIII}^{(p,q)}(z,M))= M$.
\end{proof}

 \section{Shapes in types B and C}\label{BC-shape-sect}
 
This section (briefly) explains the constructions used in Theorems~\ref{main-thm} and \ref{main-thm2} 
for the cases when $W=\W_{n}$.
This material is largely expository as in these types 
our main theorems follow directly from results in \cite{HM}
after accounting for some changes in notation; see Section~\ref{proof-sect2}. 

\subsection{Signed cycles}
Let $z=z^{-1} \in \I(W_n) $ and define
\be 
\Cyc^\pm(z) = \{ (a,b) \in [\pm n] \times [n] : |a| < z(a) = b\} \sqcup \{ (c,c) : c=z(c)\in[n]\}.
\ee
When viewed as permutations of $[\pm n]$, the nontrivial cycles of $z$
  have the form $(-a,a)$ with $a>0$ or come in pairs $(-b,-a)(a,b)$ or $(-b,a)(-a,b)$ with $0<a<b$.
The set $\Cyc^\pm(z) $ only retains the cycles $( a,b)$ and $(-a,b)$
that arise in the second two kinds of pairings. Thus 
\be
| \{ (a,b) \in \Cyc^\pm(z) : a< 0\}| = \tfrac{\ell_0(z) - \neg(z)}{2}.
\ee

\begin{example}
If $n=6$ and $z = 1\overline{2} 6\overline{54}3 = (-2,2)(-6,-3)(3,6)(-5,4)(-4,5) \in \I(\W_n)$
then
\[
\Cyc^\pm(z) = \{ (3,6),\hs (-4,5),\hs (1,1)\}.
\]
\end{example}

Given any matching $M$ on a set of integers, we additionally define
\be\label{cyc-z-m-eq}
\Cyc^\pm(z,M) = \Cyc^\pm(z)\sqcup \{ (-b,a) : \{a,b\} \in M \text{ with }0<a<b\}.
\ee
It always holds that $\Cyc^\pm(z) =\{ (a,b) \in  \Cyc^\pm(z,M) : |a| \leq b\}$.

\subsection{Type BI}\label{BI-sect}

Fix  $p,q \in \NN$ with $p+q=2n+1$ and set $k=\tfrac{|p-q|-1}{2}$.
Let $w \in \W_{n}$ be arbitrary and
suppose 
\[z \in \I_{\BI}^{(p,q)} = \Bigl\{ y \in\cI(\W_n) : \neg(y)\geq k\Bigr\}.
\]
Choose $\gamma\in \Gamma_{\BI}^{(p,q)}=\{\text{symmetric $(p,q)$-clans}\}$ with one-line representation
\[\gamma=(\gamma_{-n},\dots,\gamma_{-2},\gamma_{-1},\gamma_0,\gamma_1,\gamma_2,\dots,\gamma_n)
\quad\text{and assume that  $z=\overline{\sigma_\gamma} = \psi_{\BI}^{(p,q)}(\gamma)$.}\]

\begin{definition}[Shapes for type $\BI$]
\label{BI-shape-def}
 Let $X=\Neg(z) = \Points(\gamma)$. Then:
\bei
\item (\emph{Matchings})
Let $\cM_{\BI}^{(p,q)}(z)=\NCSP^{\geq}(X: { k}) = \{M \in \NCSP(X): \triv(M)\geq k\}$. 
 
\item (\emph{Shape operator})
Let $\sh_{\BI}^{(p,q)}(w)$    be the set containing
\bei
\item[]  $\{a,-b\}$ and $\{-a,b\}$ for each $(a,b) \in \NDes(w_{k+1}w_{k+2}\cdots w_n)$ with $0<a<-b$,
\item[]  $\{-a,a\}$ for each $a \in \NRes(w_{k+1}w_{k+2}\cdots w_n)$ with $a<0$, and 
\item[] $\{-a,a\}$ for each $a \in \{w_1,w_2,\dots,w_k\}$.
\eei

\item (\emph{Alignment})
Let $\Aligned_{\BI}^{(p,q)}(\gamma)$ be the set of $\gamma$-aligned
$M \in \NCSP^{\geq}(X: { k})$
such that
\[
\Triv(M) =\{ i_1<i_2<\dots<i_l\} \quad\Rightarrow\quad \gamma_0 \neq \gamma_{i_1} \neq \gamma_{i_2}\neq \dots \neq \gamma_{i_{l-k}}.
\]
This condition ignores the values of $\gamma_{i_j}$ for $l-k<j\leq l$.

\eei
\end{definition}

\begin{definition}[Generation for type $\BI$]  Fix $M \in \cM_{\BI}^{(p,q)}(z)$.
\bei
\item (\emph{Generators}) If  $\Triv(M) = \{ c_1>c_2>\dots>c_l\}$ then
let  $\bot_{\BI}^{(p,q)}(z,M) \in \W_n$ be the signed permutation
whose one-line representation is the concatenated word $ uv $ where 
\[
u =  c_{k}  \cdots c_2c_1 \overline{c_{k+1}c_{k+1}\cdots  c_l}
\quand  v = [[\Cyc^\pm(z,M)]]_{\des}.
\]

\item (\emph{Order})
Define $\precsim_{\BI}^{(p,q)}$ to be the partial order  $\overset{k}\precsim$ restricted to $W_{n}$.

\eei
\end{definition}

\subsection{Type CI}\label{CI-sect}

Several of the relevant definitions for type $\CI$ are special cases of type BI.
Choose elements $w \in \W_{n}$ 
and
$z \in \I_{\CI}^{(n)} =\cI(\W_n)$.
Suppose 
$\gamma \in \Gamma_{\CI}^{(n)}=\{\text{skew-symmetric $(n,n)$-clans}\}$ has $z=\overline{\sigma_\gamma} = \psi_{\CI}^{(n,n)}(\gamma)$.

\begin{definition}[Shapes for type $\CI$] Let $X=\Neg(z)=\Points(\gamma)$. Then:
\bei
\item (\emph{Matchings})
Let $\cM_{\CI}^{(n)}(z)=\NCSP(X)$.

\item (\emph{Shape operator})
Let $\sh_{\CI}^{(n)}(w)$    be the set containing
\bei
\item[]  $\{a,-b\}$ and $\{-a,b\}$ for each $(a,b) \in \NDes(w)$ with $0<a<-b$, and
\item[]  $\{-a,a\}$ for each $a \in \NRes(w)$ with $a<0$.
\eei

\item (\emph{Alignment})
Let $\Aligned_{\CI}^{(n)}(\gamma)$ be the set of all $\gamma$-aligned matchings
$M\in\NCSP(X)$.

\eei
\end{definition}

\begin{definition}[Generation for type $\CI$]
\label{CI-gen-def}
  Fix $M \in \cM_{\CI}^{(n)}(z)$.
\bei
\item (\emph{Generators})
If  $\Triv(M) = \{ c_1>c_2>\dots>c_l\}$ then
let  $\bot_{\CI}^{(n)}(z,M) \in \W_n$ be the signed permutation
whose one-line representation is the concatenated word $ uv $ where 
\[
u =   \overline{c_1c_2\cdots  c_l}\quand v = [[\Cyc^\pm(z,M)]]_{\des}.
\]

\item (\emph{Order})
Define ${\precsim_{\CI}^{(n)}} $ to be the partial order  ${\overset{0}\precsim}$ restricted to $W_{n}$.

\eei
\end{definition}

Notice that $\I_{\CI}^{(n)}=\I_{\BI}^{(n+1,n)}$. Assume $z$ belongs to this set. Then we have
\[
\cM_{\CI}^{(n)}(z) = \cM_{\BI}^{(n+1,n)}(z) 
\quad
\text{for $X = \Neg(z)$,}\] along with $\bot_{\CI}^{(n)}(z,M) = \bot_{\BI}^{(n+1,n)}(z,M)$ for all $M \in \cM_{\CI}^{(n)}(z)$. We also mention that 
\[ \sh_{\CI}^{(n)}(w)=\sh_{\BI}^{(n+1,n)}(w) \quand {\precsim_{\CI}^{(n)}}={\precsim_{\BI}^{(n+1,n)}}.\]

\subsection{Type CII}

Fix even integers $p,q \in 2\NN$ with $p+q=2n$ and set $k=\tfrac{|p-q|}{2}$.
Choose any $w \in \W_{n}$ and any
\[z \in \I_{\CII}^{(p,q)} = \Bigl\{ y \in \Ifpf(\W_n): \neg(y)\geq k\Bigr\}. 
\]
Suppose $\gamma\in \Gamma_{\CII}^{(p,q)} =\{\text{strict symmetric $(p,q)$-clans}\}$ has $z=\overline{\sigma_\gamma} = \psi_{\CII}^{(p,q)}(\gamma)$.

 \begin{definition}[Shapes for type $\CII$] Let $X=\Neg(z)=\Points(\gamma)$. Then:
\bei
\item (\emph{Matchings}) 
Let $\cM_{\CII}^{(p,q)}(z)=\NCSP(X:k) = \{ M \in \NCSP(X): \triv(M)=k\}$.

\item (\emph{Shape operator}) After expressing $w \in \W_{n}$ in one-line notation as 
\[w= a_1a_2\cdots a_k b_1c_1b_2c_2\cdots b_jc_j\quad\text{where }k = \tfrac{|p-q|}{2}\text{ and }j=\tfrac{n-k}{2}\]
let $\sh_{\CII}^{(p,q)}(w)$    be the set containing
\bei
\item[]  $\{-a_i,a_i\}$ for each $i \in [k]$, and 
\item[] $\{b_i,-c_i\}$ and $\{c_i,-b_i\}$ for each $i \in [j]$ with $0<c_i<-b_i$.
\eei

\item (\emph{Alignment}) 
Let $\Aligned_{\CII}^{(p,q)}(\gamma)$ be the set of all $\gamma$-aligned matchings
$M \in \NCSP(X:k) $.

\eei
\end{definition}

 \begin{definition}[Generation for type $\CII$] Fix $M \in \cM_{\CII}^{(p,q)}(z)$.
 \bei
\item (\emph{Generators}) 
If $\Triv(M) = \{ c_1<c_2<\dots<c_k\}$ then let $\bot_{\CII}^{(p,q)}(z,M) \in \W_n$
  to be the signed permutation
whose one-line representation is the concatenated word $ uv $ where 
\[
u =   c_1c_2\cdots  c_k\quand v = [[\Cyc^\pm(z,M)]]_{\asc}.
\]

\item (\emph{Order}) 
Define $\precsim_{\CII}^{(p,q)} $ to be the partial order  ${\overset{k}\precapprox}$ restricted to $W_{n}$.

\eei
\end{definition}

\subsection{Proofs of the main theorems}\label{proof-sect2}

When $W=\W_n$, 
Theorems~\ref{main-thm} and \ref{main-thm2} follow from results in \cite{HM}
that characterize the sets 
\[
\left\{ w^{-1} : w \in \cEABrion(z)\right\} 
\quand \left\{w^{-1} : w \in \cABrion(\gamma)\right\}.
\]
Our statements are obtained after taking relevant inverses, as we explain below.

\begin{proof}[Proof of Theorems~\ref{main-thm} and \ref{main-thm2}  in types $\BI$, $\CI$, and $\CII$]
Theorem~\ref{main-thm} follows from
\bei
\item \cite[Lem.~7.6, Cor.~7.7, and Thm.~8.7]{HM} in type $\BI$,
\item \cite[Cor.~5.9 and Thm.~8.2]{HM} in type $\CI$,
and 
\item \cite[Lem.~7.10, Cor.~7.11, and Thm.~8.4]{HM} in type $\CII$.
\eei 
The formula for $\cEABrion(z,M)$ in Theorem~\ref{main-thm2} is contained in  \cite[Cor.~5.9, Prop.~7.8, and Prop.~7.13]{HM}
for types $\CI$, $\BI$, and $\CII$, respectively.
It remains to justify why $\precsim_{\BI}^{(p,q)} $, $\precsim_{\CI}^{(n)} $, and $\precsim_{\CII}^{(p,q)} $ make  
$\cA_{\BI}^{(p,q)}(z)$, $\cA_{\CI}^{(n)}(z)$, and $\cA_{\CII}^{(p,q)}(z)$ into graded posets.
This is clear in type $\CII$ since ${\overset{k}\precapprox}$ is always a graded partial order.
In types $\BI$ and $\CI$ the desired  property holds by \cite[Cor.~6.3 and Lem.~7.6]{HM}.
\end{proof}
 
 \section{Shapes in type D}\label{D-shape-sect}
 
In this section, finally, we define all of the notation used in Theorems~\ref{main-thm} and \ref{main-thm2} 
  when $W=\WD_{n}$.
In contrast to the previous two sections,
all of this material is novel.
We have postponed the somewhat lengthy derivation of Theorems~\ref{main-thm} and \ref{main-thm2} in these types
 to the next section.

\subsection{Types DI and DII}\label{D12-sect}

Fix integers $p,q\in \NN$ with $2n=p+q$ and let $k=\frac{|p-q|}{2}$.  
Recall that
\[
 \Gamma_{\DI}^{(p,q)} =\Gamma_{\DII}^{(p,q)} =  \{\text{symmetric $(p,q)$-clans}\}.
 \]
Technically, these sets, as well as $\cI_{\DI}^{(p,q)}$ and $\cI_{\DII}^{(p,q)}$, are not supposed  to be defined 
simultaneously (only when $k$ is respectively even or odd), but for simplicity let us write
\[
\cI_{\DI}^{(p,q)} =  \left\{ y \in \cI(\WD_n): \neg(y)\geq k \right\}
\quand
\cI_{\DII}^{(p,q)} =  \left\{ y \in \cI_\diamond(\WD_n): \neg(t_0y)\geq k \right\}
\]
for all choices of $p$ and $q$. These 
are disjoint sets, and there is a bijection 
\[
\cI_{\DI}^{(p,q)} \sqcup \cI_{\DII}^{(p,q)}  \xrightarrow{\sim} \{ y \in \cI(\W_n): \neg(y)\geq k\}
\quad\text{given by }
y \mapsto \begin{cases} y &\text{if }y \in \cI_{\DI}^{(p,q)} \\
t_0y&\text{if }y\in\cI_{\DII}^{(p,q)} .
\end{cases}
\]
It is convenient to make our definitions below with respect to an an arbitrary involution
\[z \in \I(\W_n)\quad\text{with $\neg(z) \geq k$.}\]
Choose a symmetric $(p,q)$-clan 
\[\gamma=(\gamma_{-n},\dots,\gamma_{-2},\gamma_{-1},\gamma_1,\gamma_2,\dots,\gamma_n) 
\in \Gamma_{\DI}^{(p,q)} =\Gamma_{\DII}^{(p,q)} 
\quad\text{with $z = \overline{\sigma_\gamma}$.}\]
 Notice that exactly one of the following holds:
 \[z = \psi_{\DI}^{(p,q)}(\gamma) \in \cI_{\DI}^{(p,q)}
 \quord
 t_0z = \psi_{\DII}^{(p,q)}(\gamma) \in \cI_{\DII}^{(p,q)}.
 \]
Finally, let $w \in \WD_n$ be any even-signed permutation.

 \begin{definition}[Shapes for type $\DI$ and $\DII$] 
 \label{D12-def}
  Let $X=\Neg(z)=\Points(\gamma)$. Then:
\bei 
\item (\emph{Matchings}) 
Let $\cM_{\DI}^{(p,q)}(z)=\cM_{\DII}^{(p,q)}(t_0z)=\NCSP(X:k)$.

\item (\emph{Shape operator}) 
Let $\sh_{\DI}^{(p,q)}(w)=\sh_{\DII}^{(p,q)}(w)$    be the set containing
\bei
\item[]  $\{|a|,-b\}$ and $\{-|a|,b\}$ for each $(a,b) \in \NDes(w_{k+1}w_{k+2}\cdots w_n)$ with $|a|<-b$, and
\item[]  $\{-a,a\}$ for each $a \in \{w_1,w_2,\dots,w_k\}$.
\eei

\item (\emph{Alignment}) 
  Define  
$
\Aligned_{\DI}^{(p,q)}(\gamma)=\Aligned_{\DII}^{(p,q)}(\gamma)= \{\text{ all $\gamma$-aligned
 $M \in\NCSP(X:k) $ }\}.$

\eei
\end{definition}

 Recall that if
$a=a_1a_2\cdots a_n$ is any word then we let 
 $\tilde a = \overline{a_1}a_2\cdots a_n$
 and define
 $\langle a\rangle_{\es}$ to
be whichever of $a$ or $\tilde a$ 
has an even number of negative letters. 
If $a$ is an element of $\W_n$ is one-line notation, then we interpret $\langle a\rangle_{\es} $ as an element of $ \WD_n$.

  \begin{definition}[Generation for types $\DI$ and $\DII$] 
  \label{D12-gen-def}
Fix $M \in \cM_{\DI}^{(p,q)}(z)$.
\bei
\item (\emph{Generators}) 
If $\Triv(M) = \left\{  c_1 < c_2  < \dots < c_{k} \right\}$ then let 
\[\bot_{\DI}^{(p,q)}(z,M)=\bot_{\DII}^{(p,q)}(t_0z,M) \in \WD_n\]
have 
one-line representation $ \langle uv\rangle_{\es} $ where 
$
u =   c_1c_2\cdots c_{k} $ and $ v = [[\Cyc^\pm( z,M)]]_{\des}.
$

\item (\emph{Order}) 
Let $\precsim_{\DI}^{(n,n)}$ be
the transitive closure of the relation on words
that has
\[
u B C A v  \precsim_{\DI}^{(n,n)}  u C A B  v
\quand
\overline{B} C A  v  \precsim_{\DI}^{(n,n)}  \overline{C}  A B  v
\]
if $A,B,C \in \ZZ$ are integers with $A<B<C$ and $u$ and $v$ are any subwords.
When  $p\neq q$ so that $k> 0$ 
define ${\precsim_{\DI}^{(p,q)}}$ and ${\precsim_{\DII}^{(p,q)}} $ to be the transitive closure of 
$\overset{k}\precsim$ and the relation $<$  
with
\[
  uv <  \tilde u \tilde v  = \overline{u_1}u_2u_3\cdots u_k \overline{v_1}v_2v_3\cdots 
\]
whenever $u$ and $v$ are nonempty subwords with $\ell(u) = k$ and $0<v_1<|u_1|$.

\eei
\end{definition}

\subsection{Type DIII}\label{D34-sect}

Recall that
$
 \Gamma_{\DIII}^{(n)}  =  \{\text{even-strict skew-symmetric $(n,n)$-clans}\}
 $
and that
\[
\cI_{\DIII}^{(n)} =
\begin{cases}
 \left\{ y \in \Ifpf(\WD_n) : \neg(y)>0 \text{ or }  \ell_0(y) \in 4\NN\right\} &\text{if $n$ is even} \\[-10pt]\\
  \left\{ y\in  \WD_n : t_0y \in \Ifpf(\W_n)\right\} &\text{if $n$ is odd}.
\end{cases}
\]
Fix an element 
$
z \in \cI_{\DIII}^{(n)}$ and let $\zeta = (t_0)^n z \in \Ifpf(\W_n).
$
%As in the previous section, it is convenient to make our definitions  here with respect to this element, in slightly greater  generality than necessary.
Next, choose 
\[\gamma=(\gamma_{-n},\dots,\gamma_{-2},\gamma_{-1},\gamma_1,\gamma_2,\dots,\gamma_n) 
\in \Gamma_{\DIII}^{(p,q)}  
\quad\text{with $z = \psi_{\DIII}^{(n)}(\gamma)$ so that $\zeta =  \overline{\sigma_\gamma}$.}\]
Finally, choose an arbitrary $w \in \WD_n$ and express this element
in one-line notation as
\[w= \begin{cases} 
b_1c_1b_2c_2\cdots b_jc_j &\text{when $n=2j$ is even} \\ 
a_1b_1c_1b_2c_2\cdots b_jc_j &\text{when $n=2j+1$ is odd}.
\end{cases}
\]

 \begin{definition}[Shapes for type $\DIII$] 
 \label{D34-def}
 Let $X = \Neg(\zeta) = \Points(\gamma)$.
\bei
\item (\emph{Matchings}) 
Define
\[
\cM_{\DIII}^{(n)}(z)=
\begin{cases}
\{ M\in\NCSP(X):  \triv(M) \equiv \ell_0(z) \modu 4)\} &\text{if $n$ is even}\\[-10pt]
\\
\{ M\in\NCSP(X):  \triv(M) \text{ is odd}\} &\text{if $n$ is odd}.
\end{cases}
\] 

\item (\emph{Shape operator}) 
Let $\sh_{\DIII}^{(n)}(w)$   be the set containing
\bei
\item[] $\{-a_1,a_1\}$ when $n$ is odd,
\item[]  $\{b_i,-c_i\}$ and $\{-b_i,c_i\}$ for each $i \in [j]$ with $b_i<0<c_i<-b_i$, and
\item[]  $\{-b_i,b_i\}$ and $\{-c_i,c_i\}$ for each $i \in [j]$ with $b_i<c_i<0<-b_i$.
\eei

\item (\emph{Alignment}) 
  Let 
  $\Aligned_{\DIII}^{(n)}(\gamma)$
   be the set of $\gamma$-aligned 
$
M \in \cM_{\DIII}^{(n)}(z)
$ such that
\[\Triv(M)=\{g_1<h_1<g_2<h_2<g_3<h_3< \dots\}
\quad \Rightarrow\quad
\gamma_{g_i} = \gamma_{h_i}\text{ for all }i.
\]
This condition ignores the largest element of $\Triv(M)$ if the set has odd cardinality.
\eei
\end{definition}

   \begin{definition}[Generation for type $\DIII$] Fix $M \in \NCSP(X)$ for $X = \Neg(\zeta)$.
\bei
\item (\emph{Generators}) 
If $\Triv(M) =  \{ c_1<c_2<\dots<c_l\}$ then
 let 
  $u$ and $v$ be the words
\[
u =   \overline{c_l\cdots c_2c_1}
\quand  v = [[\Cyc^\pm( \zeta,M)]]_{\asc}
\]
and then define in one-line notation
 $\bot_{\DIII}^{(n)}(z,M) = \langle uv\rangle_{\es} \in \WD_n.  $

\item (\emph{Order}) 
Let $\precsim_{\DIII}^{(n)}$ be the restriction to $\WD_n$ of ${\overset{0}\precapprox}$ if $n$ is even or  ${\overset{1}\precapprox}$ if $n$ is odd.
\eei
\end{definition}

%
%\subsection{Examples}
%
%\Eric{todo}

\section{Proofs of the main theorems in type D}\label{D-proof-sect}

This section contains the proofs of Theorems~\ref{main-thm} and \ref{main-thm2} for
the classical types with Weyl group $W=\WD_n$.
We start with some general results about even-signed permutations in Sections~\ref{esi-sect} and \ref{esg-sect}.
Then in Sections~\ref{tDI-sect}, \ref{DII-sect}, and \ref{tDIII-sect} we prove
a sequence of propositions that establish the different parts of our main theorems in types DI, DII, and DIII.

\subsection{Even-signed relations}\label{esi-sect}

Fix $ z \in \I(\WD_n)$ and write $\ell$ for the Coxeter length function of $ \WD_n$.
Recall that
\be
\cAD(z) = \cAD(z:0) = \cAD(z:1) = \{ \text{minimal-length }w \in \WD_n\text{ with }w \circ w^{-1}= z\}
\ee
where $\circ$ is the Demazure product for $\WD_n$.
If $w \in \WD_n$ and $i \in [n]$ then we write $w_i =w(i)$.

\begin{lemma}\label{D1-lem0}
Suppose $w \in \cAD(z)$. Then
\ben
\item[(a)] neither $|w_1| > w_2>w_3$
nor $w_i>w_{i+1}>w_{i+2}$ holds for any $i \in [n-2]$, and 
\item[(b)] if $w_1w_2w_3 =CAB$ where $A<B<|C|$ then  $|B| < |C|$.
\een
\end{lemma}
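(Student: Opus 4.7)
The plan is to prove both parts by contradiction. Given a violating $w \in \cAD(z)$, the goal is to exhibit a reduced factorization $w = \tilde w \cdot y$ with $\ell(w) = \ell(\tilde w) + \ell(y)$ whose right factor $y$ is not minimal for its Demazure-involution $\iota \defequals y \circ y^{-1}$, meaning $\ellhat(\iota) < \ell(y)$. The key general fact, obtained from \eqref{szs-length-eq} by induction on $\ell(\tilde w)$, is the bound $\ellhat(\tilde w \circ \iota \circ \tilde w^{-1}) \leq \ellhat(\iota) + \ell(\tilde w)$, since twisted Demazure conjugation by a simple reflection changes $\ellhat$ by at most one. Combined with $z = w \circ w^{-1} = \tilde w \circ \iota \circ \tilde w^{-1}$, this yields
\[
\ellhat(z) \leq \ellhat(\iota) + \ell(\tilde w) < \ell(y) + \ell(\tilde w) = \ell(w) = \ellhat(z),
\]
the desired contradiction.

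For the case in (a) where $w_i > w_{i+1} > w_{i+2}$ with $i \in [n-2]$, the plan is to take $y = t_i t_{i+1} t_i$, an involution of length 3 for which $\ellhat(y) = 2$ (realized by the shorter element $t_i t_{i+1}$) and a direct Demazure computation gives $\iota = y$. The inequalities $w_i > w_{i+1}$, $w_i > w_{i+2}$, and $w_{i+1} > w_{i+2}$ ensure that the successive right-multiplications $w \mapsto w t_i \mapsto w t_i t_{i+1} \mapsto w t_i t_{i+1} t_i$ each decrease length by one, yielding the required reduced factorization. For the other case $|w_1| > w_2 > w_3$ with $w_1 < 0$, the same strategy applies with $y = t_{-1} t_2 t_{-1}$, whose $\ellhat$ equals 2 (realized by $t_{-1} t_2$) and for which $\iota = y$; the right-multiplication formulas of Section~\ref{d-conj-sect} verify that $w t_{-1}$, $w t_{-1} t_2$, and $w t_{-1} t_2 t_{-1}$ successively decrease in length, irrespective of the signs of $w_2$ and $w_3$.

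For part (b), negating the conclusion together with the requirement that $w$ be a valid signed permutation forces $w_1 = \pm c$, $w_2 = -a$, $w_3 = -b$ for integers $0 < c < b < a$. Here the plan is to take $y$ to be the length-5 signed involution of $\WD_n$ with $y(1) = 1$, $y(2) = -3$, $y(3) = -2$, fixing $|j| \geq 4$; this element admits both of the reduced expressions $t_1 t_2 t_{-1} t_2 t_1$ and $t_{-1} t_2 t_1 t_2 t_{-1}$. A direct Demazure computation shows that $\iota = y \circ y^{-1}$ is the length-6 involution $y \cdot t_2$ negating positions 2 and 3, with $\ellhat(\iota) = 4$ realized by the atom $v = t_1 t_2 t_1 t_{-1}$; this gives the required gap $\ellhat(\iota) < \ell(y)$. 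The main obstacle is verifying the reduced factorization $w = \tilde w \cdot y$, where the appropriate reduced word for $y$ depends on the sign of $w_1$: when $w_1 > 0$ one uses successive right-multiplications by $t_{-1}, t_2, t_1, t_2, t_{-1}$, and when $w_1 < 0$ by $t_1, t_2, t_{-1}, t_2, t_1$. In either case the five length decreases follow from the coordinates $0 < c < b < a$ and the formulas of Section~\ref{d-conj-sect}, and the resulting $\tilde w$ agrees with $w$ outside positions $\pm 1, \pm 2, \pm 3$ and has values $w_1$, $b$, $a$ at positions $1, 2, 3$.
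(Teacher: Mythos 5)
Your proposal is correct and follows essentially the same route as the paper: in each case you factor $w = \tilde w\, y$ reducedly with the same length-$3$ suffixes $t_it_{i+1}t_i$, $t_{-1}t_2t_{-1}$ in (a) and the same length-$5$ involution $1\,\bar 3\,\bar 2$ in (b) (the paper writes it via the reduced word $t_{-1}t_1t_2t_1t_{-1}$, you via $t_{-1}t_2t_1t_2t_{-1}$ and $t_1t_2t_{-1}t_2t_1$), and then contradict minimality because the suffix's Demazure square admits a strictly shorter witness. The only cosmetic differences are that you make the minimality contradiction explicit through the bound $\ellhat(\tilde w\circ\iota\circ\tilde w^{-1})\leq \ellhat(\iota)+\ell(\tilde w)$ (the argument of Lemma~\ref{dem-lem0}) and name the shorter atom $t_1t_2t_1t_{-1}$, which the paper encodes in its identity of Demazure products.
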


\begin{proof}
Recall that  
$\ell(w t_i) < \ell(w)$ if and only if $w_i > w_{i+1}$ and $\ell(wt_{-1}) < \ell(w)$ 
if and only if $-w_1>w_2$.
Hence, if $w_i>w_{i+1}>w_{i+2}$ for some $i \in[n-2]$
then $w$ has a reduced expression ending in $t_i t_{i+1} t_i$
and if $-w_1>w_2>w_3$ then $w$ has a reduced expression ending in $t_{-1}t_2 t_{-1}$.
Since 
\[ (t_i \circ t_{i+1} \circ t_i) \circ (t_i \circ t_{i+1} \circ t_i) = t_{i+1}\circ t_i \circ t_{i+1} =  ( t_{i+1} \circ t_i) \circ (t_i \circ t_{i+1})\]
and
\[ (t_{-1} \circ t_{2} \circ t_{-1}) \circ (t_{-1} \circ t_{2} \circ t_{-1}) = t_{2}\circ t_{-1} \circ t_{2} =  ( t_{2} \circ t_{-1}) \circ (t_{-1} \circ t_{2})\]
no such $w$ having these patterns can be a minimal-length element satisfying $w\circ w^{-1} =z$.

For part (b), suppose  $w_1w_2w_3 =CAB$ where $A<B<|C|$ and $|B|>|C|$.
If $C>0$ then we must have $A<B<-C<0<C$ and if $C<0$ then $A<B<C<0<-C$.
Using these inequalities, one can check that
 $w$ has a reduced expression ending in $t_{-1}t_{1} t_2 t_1 t_{-1}$
which contradicts our assumption that $w$ is a minimal-length element satisfying $w\circ w^{-1} =z$
since the Demazure products
\[ 
\ba
(t_{-1}\circ t_{1}\circ t_2\circ t_1 \circ t_{-1})\circ (t_{-1}\circ t_{1}\circ t_2\circ t_1 \circ t_{-1})
&= 
%t_{-1}\circ t_{1}\circ t_2\circ t_1 \circ t_{-1}\circ t_2\circ t_1 \circ t_{-1}
%\\&= 
%t_{-1}\circ t_{2}\circ t_1\circ t_2 \circ t_{-1}\circ t_2\circ t_1 \circ t_{-1}
%\\&= 
%t_{-1}\circ t_{2}\circ t_1\circ t_{-1} \circ t_{2}\circ t_{-1}\circ t_1 \circ t_{-1}
%\\&= 
%t_{-1}\circ t_{2}\circ t_1\circ t_{-1} \circ t_{2}\circ t_{-1}\circ t_1
%\\&= 
%t_{-1}\circ t_{2}\circ t_1\circ t_{2} \circ t_{-1}\circ t_{2}\circ t_1
%\\&= 
%t_{-1}\circ t_{1}\circ t_2\circ t_{1} \circ t_{-1}\circ t_{2}\circ t_1
%\\&= 
%t_{-1}\circ t_{1}\circ t_2\circ t_{-1} \circ t_{1}\circ t_{2}\circ t_1
%\\&= 
%t_{-1}\circ t_{1}\circ t_2\circ t_{-1} \circ t_{2}\circ t_{1}\circ t_2
%\\&= 
%t_{-1}\circ t_{1}\circ t_{-1}\circ t_{2} \circ t_{-1}\circ t_{1}\circ t_2
%\\&= 
% t_{1}\circ t_{-1}\circ t_{2} \circ t_{-1}\circ t_{1}\circ t_2
% \\&= 
% t_{1}\circ t_{2}\circ t_{-1} \circ t_{2}\circ t_{1}\circ t_2
%  \\&= 
% t_{1}\circ t_{2}\circ t_{-1} \circ t_{1}\circ t_{2}\circ t_1
%  \\&= 
( t_{1}\circ t_2\circ t_1 \circ t_{-1})\circ (t_{-1}\circ t_{1}\circ t_2\circ t_1).
\ea
  \]
  both give the even-signed permutation $ t_{1} t_{2} t_{-1}  t_{1} t_{2} t_1=1\overline{2}\overline{3}45\cdots n=(-2,2)(-3,-3)\in \I(\WD_n)$.
  \end{proof}

\begin{definition}\label{d-cover-def0}
Let $\precsimD $ be the transitive closure of the relation on words
that has
\be\label{d-cover1}
u B C A v  \precsimD   u C A B  v
\quand
\overline{B} C A  v  \precsimD   \overline{C}  A B  v
\ee
if $A,B,C \in \ZZ$ are integers with $A<B<C$ and $u$ and $v$ are any subwords.
\end{definition}

Notice that $\precsimD $ is the same as $\precsim_{\DI}^{(n,n)}$ from Definition~\ref{D12-gen-def}.

\begin{definition}\label{d-cover-def}
Let $\llD$ be the transitive closure of $\precsimD $ and the relation on words
that has
\be\label{d-cover2}
u  A\overline{B}  vC\overline{D}  w \llD   u A\overline{D} v  B \overline{C}  w
\ee
if $A,B,C,D \in \ZZ$ are integers with $0<|A|<B<C<D$ and $u$, $v$, and $w$ are subwords
such that all letters of $u$ and $v$ have absolute value strictly less than $B$.
\end{definition}

We evaluate $\precsimD $ and $\llD$ on elements of $\WD_n$ identified
with their one-line representations.
The following result
 connecting these relations to  $\cAD(z)$
 is essentially a reformulation \cite[Thm.~3.17]{HuZhang2}.

\begin{theorem}[See \cite{HuZhang2}]
\label{d-equiv-lem}
For each involution $z \in \I(\WD_n)$ the set
 $\cAD(z)$ is a single equivalence class for the symmetric closure of $\llD$.
\end{theorem}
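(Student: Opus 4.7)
The plan is to deduce this from the characterization of minimal-length atoms of type D involutions given by Hu and Zhang in \cite[Thm.~3.17]{HuZhang2}. The argument proceeds in two stages: showing closure of $\cAD(z)$ under the symmetric closure of $\llD$, and then using the Hu--Zhang result to conclude connectedness.

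First I would verify that $\cAD(z)$ is closed under the symmetric closure of $\llD$. For each elementary move $v \llD w$ given by \eqref{d-cover1} or \eqref{d-cover2}, it suffices to check (i) $\ell(v) = \ell(w)$ and (ii) $v \circ v^{-1} = w \circ w^{-1}$. For (i), a direct inspection using $\ell(w) = \tfrac{1}{2}(\inv^\pm(w) - \ell_0(w))$ shows that in each move the contributions to $\inv^\pm$ and $\ell_0$ change by matching amounts. For the move in \eqref{d-cover2}, it is precisely the condition that every letter of $u$ and $v$ has absolute value strictly less than $B$ that makes the swap length-preserving (otherwise an extra inversion would be created or destroyed). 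For (ii), I would use the Demazure conjugation formulas of Section~\ref{d-conj-sect} to recognize each move as a sequence of Demazure commutations $s \cdot z \cdot s' = s' \cdot z \cdot s$ or $s \circ z = z$ that fix $w \circ w^{-1}$; Lemma~\ref{D1-lem0} provides the pattern constraints needed to invoke the correct branch of each formula.

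Second, to see that $\cAD(z)$ is a single equivalence class, I would invoke \cite[Thm.~3.17]{HuZhang2}, which asserts that the minimal-length $w \in \WD_n$ with $w \circ w^{-1} = z$ form a single equivalence class under an explicit family of local moves on reduced expressions. Translating each of Hu and Zhang's generators from reduced-expression language into one-line notation (by tracking how right-multiplication by a simple reflection acts on the one-line sequence) yields exactly the moves appearing in \eqref{d-cover1} and \eqref{d-cover2}. Specifically, the type-A style braid moves $s_i s_{i+1} s_i \leftrightarrow s_{i+1} s_i s_{i+1}$ produce the first case of \eqref{d-cover1}, the $t_{-1} t_2 t_{-1} \leftrightarrow t_2 t_{-1} t_2$ braid moves give the second case, and the distinctive long-range type-D move corresponds to \eqref{d-cover2}.

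The main obstacle will be systematically matching conventions with \cite{HuZhang2} and, in particular, verifying that the absolute-value restriction on the subwords $u$ and $v$ in \eqref{d-cover2} is exactly the combinatorial reformulation of Hu and Zhang's compatibility hypothesis for the long move. Care is required around letters interacting with the generator $t_{-1}$, where the one-line and reduced-expression pictures differ most visibly, and where sign choices (as also reflected by the $\overline{B} \leftrightarrow \overline{C}$ branch of \eqref{d-cover1}) must be tracked precisely to match Hu and Zhang's description of the equivalence classes.
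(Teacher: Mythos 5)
Your overall strategy---quote \cite[Thm.~3.17]{HuZhang2} and translate its moves on reduced words into moves on one-line representations---is the same as the paper's, but two of your steps as described would not go through. First, you attribute the moves of \eqref{d-cover1} to the braid relations $s_is_{i+1}s_i \leftrightarrow s_{i+1}s_is_{i+1}$ and $t_{-1}t_2t_{-1}\leftrightarrow t_2t_{-1}t_2$. Braid relations are relations in the group, so two reduced words related by a braid move represent the \emph{same} element; they cannot account for a move relating the two distinct signed permutations $uBCAv$ and $uCABv$. The relations in Hu--Zhang's theorem that actually change the atom are the non-braid terminal exchanges $\cdots t_it_{i+1}\sim\cdots t_{i+1}t_i$ and $\cdots t_{-1}t_2\sim\cdots t_2t_{-1}$, together with one special eight-letter relation; it is these, not the braid moves, that the paper translates into \eqref{d-cover1}.

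Second, the special eight-letter relation translates only into the case of \eqref{d-cover2} with $\ell(u)=\ell(v)=0$, i.e.\ the pattern sitting in the first four positions of the one-line representation, so the translation does not yield ``exactly'' the moves generating $\llD$. The paper therefore needs a separate argument (Lemma~\ref{D1-lem0} plus a short chain of $\precsimD$ moves) showing that on $\cAD(z)$ the general move \eqref{d-cover2} generates nothing beyond this restricted case. Your step 1 could in principle substitute for that argument, since closure of $\cAD(z)$ under the full relation together with connectivity under the restricted moves gives the theorem; but the crucial point there---that the long move preserves the Demazure self-product $w\circ w^{-1}$---is precisely the hard step, and ``recognize each move as a sequence of Demazure commutations'' does not obviously apply to \eqref{d-cover2}: that move changes the signs of two entries and is not a composition of the elementary exchanges governed by the formulas of Section~\ref{d-conj-sect}. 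As written this is a genuine gap; it is most naturally filled either by invoking the eight-letter Hu--Zhang relation itself or by the paper's reduction of the general \eqref{d-cover2} move to the initial-position case.
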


Figure~\ref{drel-fig} shows an example of $\cAD(z)$ partially ordered by $\llD$.
 \begin{proof}
 Fix $z \in \I(\WD_n)$
and let $\cRD(z)$ be the union of the sets of reduced words for all $w \in \cAD(z)$.
Hu and Zhang's result \cite[Thm.~3.17]{HuZhang2} (see also \cite[Thm.~4.1]{HH}) 
asserts that $\cRD(z)$ is a single equivalence class for the transitive closure of 
the usual Coxeter braid relations for $\WD_n$ and the extra relations 
\be\label{pcover-eq1}
\cdots t_i t_{i+1} \sim \cdots t_{i+1} t_i \quad\text{for }i \in [n-1]
\quand
\cdots t_{-1} t_{2} \sim \cdots t_{2} t_{-1}
\ee
along with
\be\label{pcover-eq2} 
\cdots t_2t_3t_{-1} t_1 t_2t_{-1} t_1 t_3 \sim \cdots t_3 t_2 t_{-1} t_1 t_2 t_{-1} t_1 t_3.
\ee
It is a straightforward but somewhat tedious exercise to check that if two elements of $\WD_n$
have reduced words that are related as in \eqref{pcover-eq1} 
then the one-line representations of these signed permutations are related as in \eqref{d-cover1}.
Hence $\cAD(z)$ is preserved by the symmetric closure of $\precsimD $.

Similarly,
if two elements of $\WD_n$ have reduced words that are related as in \eqref{pcover-eq2} 
then their one-line representations have the form
$
 A\overline{B}  C\overline{D}  \cdots$ and $    A\overline{D}   B \overline{C}  \cdots
$
for some integers $A,B,C,D \in \ZZ$  with $0<|A|<B<C<D$.
This shows that  $\cAD(z)$ is a single equivalence class for  the symmetric closure of 
the sub-relation of
$\llD$ that only allows \eqref{d-cover2} when  $\ell(u)=\ell(v)=0$.
Restricted to $\cAD(z)$, this sub-relation is the same as the symmetric closure of $\llD$  
since if 
$0<|A|<B<C<D$ and
$u$ and $v$ are words whose letters all have absolute value strictly less than $B$,
and the words
\[
u  A\overline{B}  vC\overline{D}  w\quand u A\overline{D} v  B \overline{C}  w
\] are the one-line representations of two elements in $\cAD(z)$,
then Lemma~\ref{D1-lem0} implies
that all letters of $u$ are less than $A$ so 
$
u  A\overline{B}  vC\overline{D}  w \precsimD     A\overline{B}  C\overline{D} u vw \llD     A\overline{D}   B \overline{C}  uvw
\succsim_\D u A\overline{D} v  B \overline{C}  w.
$
 \end{proof}

 \begin{figure}
\[    \begin{tikzpicture}[xscale=1.5,yscale=1.5]%[xscale=2.3, yscale=2.3,>=latex,every node/.style={scale=0.8}]
\node at (0,0) (A1) {\color{blue}$12\bar34\bar5$};
\node at (-3,1) (B1) {\color{blue}$124\bar5\bar3$};
\node at (-1,1) (B2) {\color{blue}$2\bar314\bar5$};
\node at (1,1) (B3) {\color{red}$12\bar53\bar4$};
\node at (3,1) (B4) {\color{blue}$\bar2\bar3\bar14\bar5$};
\node at (-4,2) (C1) {\color{blue}$14\bar52\bar3$};
\node at (-2,2) (C2) {\color{blue}$2\bar34\bar51$};
\node at (0,2) (C3) {\color{red}$2\bar513\bar4$};
\node at (2,2) (C4) {\color{red}$\bar2\bar5\bar13\bar4$};
\node at (4,2) (C5) {\color{blue}$\bar2\bar34\bar5\bar1$};
\node at (-5,3) (D1) {\color{blue}$4\bar512\bar3$};
\node at (-3,3) (D2) {\color{blue}$24\bar5\bar31$};
\node at (-1,3) (D3) {\color{blue}$\bar4\bar5\bar12\bar3$};
\node at (1,3) (D4) {\color{red}$2\bar53\bar41$};
\node at (3,3) (D5) {\color{red}$\bar2\bar53\bar4\bar1$};
\node at (5,3) (D6) {\color{blue}$\bar24\bar5\bar3\bar1$};
\node at (-3,4) (E1) {\color{blue}$4\bar52\bar31$};
\node at (-1,4) (E2) {\color{blue}$\bar4\bar5\bar2\bar31$};
\node at (1,4) (E3) {\color{blue}$\bar4\bar52\bar3\bar1$};
\node at (3,4) (E4) {\color{blue}$4\bar5\bar2\bar3\bar1$};
\draw[->,thick]  (A1) -- (B1);
\draw[->,thick]  (A1) -- (B2);
\draw[->,dotted]  (A1) -- (B3);
\draw[->,thick]  (A1) -- (B4);
\draw[->,thick]  (B1) -- (C1);
\draw[->,thick]  (B2) -- (C2);
\draw[->,dotted]  (B2) -- (C3);
\draw[->,thick]  (B3) -- (C3);
\draw[->,thick]  (B3) -- (C4);
\draw[->,dotted]  (B4) -- (C4);
\draw[->,thick]  (B4) -- (C5);
\draw[->,thick]  (C1) -- (D1);
\draw[->,thick]  (C1) -- (D3);
\draw[->,thick]  (C2) -- (D2);
\draw[->,dotted]  (C2) -- (D4);
\draw[->,thick]  (C3) -- (D4);
\draw[->,thick]  (C4) -- (D5);
\draw[->,dotted]  (C5) -- (D5);
\draw[->,thick]  (C5) -- (D6);
\draw[->,thick]  (D1) -- (E1);
\draw[->,thick]  (D2) -- (E1);
\draw[->,thick]  (D2) -- (E2);
\draw[->,thick]  (D3) -- (E3);
\draw[->,thick]  (D6) -- (E3);
\draw[->,thick]  (D6) -- (E4);
     \end{tikzpicture}\]
     \caption{Hasse diagram of $\cAD(z)$ partially ordered by $\llD$ for $z= 1\overline{2345}=w_0 \in \I(\WD_5)$.
     The solid arrows indicate covering relations of the form $\precsimD$.
     Here, there are two possible values of $\shD(w) \in \NCSP(\{2,3,4,5\})$ for $w \in\cAD(z)$.
     The blue (respectively, red) vertices are the elements $w \in \cAD(z)$ with $\shD(w) $ equal to $ \{\pm \{2,3\}, \pm \{4,5\}\}$
     (respectively, $\{\pm \{2,5\}, \pm \{3,4\}\}$).
     }\label{drel-fig}
\end{figure}

Lemma~\ref{D1-lem0} and Theorem~\ref{d-equiv-lem} immediately imply the following:

 \begin{corollary}\label{well-cor}
The set $\cAD(z)$ is a well-nested family.
 \end{corollary}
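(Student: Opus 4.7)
The plan is to verify directly that $\cAD(z)$ meets both defining conditions of a well-nested family of partial permutations. Since every $w \in \WD_n$ expressed in one-line notation $w_1 w_2 \cdots w_n$ uses $n$ distinct integers from $[\pm n]$, each such word is a partial permutation, so it suffices to establish (i) absence of consecutive $321$-patterns and (ii) closure under the symmetric closure of $\precsim$.

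For (i), I would simply invoke Lemma~\ref{D1-lem0}(a), which already asserts that no $w \in \cAD(z)$ has $w_i > w_{i+1} > w_{i+2}$ for any $i \in [n-2]$. This is precisely the absence of a consecutive $321$-pattern in the one-line representation.

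For (ii), the key observation is that the generating cover $uBCAv \overset{0}\precsim uCABv$ (with $A<B<C$) defining $\precsim$ coincides with the first of the two generating relations of $\precsimD$ in Definition~\ref{d-cover-def0}. Hence $\precsim$ is a sub-relation of $\precsimD$, which is in turn a sub-relation of $\llD$. By Theorem~\ref{d-equiv-lem}, $\cAD(z)$ is a single equivalence class under the symmetric closure of $\llD$; in particular it is closed under the symmetric closure of any sub-relation, including $\precsim$.

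The main conceptual work has already been carried out in Lemma~\ref{D1-lem0} and Theorem~\ref{d-equiv-lem}, so there is no substantive obstacle remaining; the corollary reduces to bookkeeping that aligns the word relation $\precsim$ with the generating relations of $\precsimD$ and $\llD$.
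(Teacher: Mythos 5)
Your proof is correct and follows essentially the same route as the paper, which deduces the corollary immediately from Lemma~\ref{D1-lem0} (no consecutive $321$-patterns) together with Theorem~\ref{d-equiv-lem} (closure, since $\overset{0}\precsim$ is generated by the first relation of $\precsimD$ and hence is a sub-relation of $\llD$). Nothing further is needed.
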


For a word $w=w_1w_2\cdots w_n$
write $\NRes(w) = \{ a_1<\dots<a_p < b_1<\dots<b_q\}$ where $p,q \in \NN$ and
$a_i<0$ for all $i \in [p]$ and $0<b_j$ for all $j\in [q]$. 
Then define $\NDes^\pm(w)$ to be the set of  pairs
\ben
\item[] $(c, d)$ for all $(c,d) \in \NDes(w)$,
\item[] $(a_{i}, a_{i+1})$ for each odd $i\in [p-1]$, and
\item[] $(a_p, b_1)$ if $p$ is odd and $q>0$ and $|a_p|>b_1$.
\een
Also define 
\be
\NRes^\pm(w) = \begin{cases}
\{ b_2,\dots,b_q\} &\text{if $p$ is odd and $q>0$ and $|a_p|>b_1$}, \\
\{|a_p|, b_1,b_2,\dots,b_q\}&\text{if $p$ is odd and $q>0$ and $|a_p|<b_1$}, \\
\{b_1,b_2,\dots,b_q\} &\text{otherwise}.
\end{cases}
\ee
We evaluate this construction on even-signed permutations $w \in \WD_n$ in the usual way
by identifying $w$ with its one-line representation $w=w_1w_2\cdots w_n$ where $w_i = w(i)$.
Next, let
\be\label{k0-shape-eq}
\ba
\shD(w) =\sh_{\DI}^{(n,n)}(w) &= \left\{ \pm\{|a|,-b\} : (a,b) \in \NDes(w)\text{ with }|a|<-b\right\}
\\
&= \left\{ \pm\{|a|,-b\} : (a,b) \in \NDes^\pm(w)\text{ with }|a|<-b\right\}.
\ea
\ee
 \begin{lemma}\label{CC-lem}
Suppose $w \in \cAD(z)$ has $w_1w_2w_3 = \overline{C}AB$ where $A<B<C$.
Then 
\ben
\item[(a)] it holds that $(-C,A) \in \NDes^\pm(w)$, and 
\item[(b)] if $-C<A$ then 
$\{-C,A\}\subset \NRes(w)$
and
$ \NDes(w) = \NDes(w_3w_4\cdots w_n)$.
\een
 \end{lemma}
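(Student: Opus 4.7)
My plan is to prove (b) first by induction, then deduce (a) from it together with the $\cAD(z)$-structure.

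For (b), the key observation is that the assumption $-C < A < B$ forces $w_1 < w_2 < w_3$, so every descent of $w$ sits at a position $\geq 3$. Setting $v = w_3 w_4 \cdots w_n$, I would first check that $v$ belongs to a well-nested family of partial permutations inherited from $\cAD(z)$ (which is well-nested by Corollary~\ref{well-cor}): closure under $\precsim$ is automatic because the covering relations defining $\precsim$ act only on positions $\geq 3$ of $w$, and the absence of consecutive $321$-patterns is inherited by the suffix. Thus Proposition~\ref{ndes-prop} applies to $v$ just as it does to $w$. I would then induct on the number of descents of $w$: the base case (no descents) is immediate, giving $\NDes(w) = \NDes(v) = \varnothing$ and $\NRes(w) = \{-C, A\} \sqcup \NRes(v)$; for the inductive step, I pick any descent $j \geq 3$ of $w$, peel off $(w_j, w_{j+1})$ from both $w$ and $v$ using Proposition~\ref{ndes-prop}, invoke Lemma~\ref{nest-lem} to see that the residuals still lie in well-nested families, and close by induction.

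For (a) in the case $-C > A$, the pair $(w_1, w_2) = (-C, A)$ is itself the first descent of $w$, so $(-C, A) \in \NDes(w) \subseteq \NDes^\pm(w)$ by Definition~\ref{ndes-def}. In the case $-C < A$, part (b) gives $\{-C, A\} \subset \NRes(w)$, and the remaining task is to show that $(-C, A)$ matches rule (ii) or (iii) in the definition of $\NDes^\pm$. Writing the negatives of $\NRes(w)$ as $a_1 < \cdots < a_p$ and the positives as $b_1 < \cdots < b_q$, this amounts to showing either $-C = a_{i_0}$ and $A = a_{i_0+1}$ with $i_0$ odd (when $A < 0$), or $-C = a_p$ with $p$ odd and $A = b_1$ with $|a_p| > A$ (when $A > 0$).

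To establish this parity claim I would use the full $\cAD(z)$-structure rather than just well-nestedness, invoking Lemma~\ref{D1-lem0}(b) (which already forces $|B| < C$) together with the $\precsimD$-cover relations from Definition~\ref{d-cover-def0}. The idea is to argue that any negative letter $x < -C$ appearing at some position $\geq 4$ of $w$ must be paired off with another negative below $-C$ inside $\NDes(w)$, and so cannot survive into $\NRes(w)$ between $-C$ and $0$, because otherwise a cascade of $\precsimD$-moves (similar to the one used in the proof of Lemma~\ref{D1-lem0}) would produce a forbidden consecutive $321$-pattern, contradicting $w \in \cAD(z)$. This is the technically delicate part of the argument: part (b) and the $-C > A$ case are essentially bookkeeping with Proposition~\ref{ndes-prop} and Definition~\ref{ndes-def}, but the parity of $-C$ among the negatives of $\NRes(w)$ cannot be read off from well-nestedness alone, and requires tracking $\precsimD$-moves carefully to ensure that negatives below $-C$ cancel in pairs inside $\NDes(w)$.
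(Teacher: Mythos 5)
There is a genuine gap in your treatment of part (b), and this is where the real content of the lemma lies. Your induction uses only that $\cAD(z)$ is well-nested (Corollary~\ref{well-cor}) together with Proposition~\ref{ndes-prop} and Lemma~\ref{nest-lem} applied to suffixes, but those ingredients cannot force $A\in\NRes(w)$: your inductive step silently assumes that after peeling descents off the suffix no letter smaller than $A$ ever surfaces at position $3$, yet ruling out exactly that scenario is the point of the lemma, and the moment it happens position $2$ acquires a descent and your hypothesis ``all descents sit at positions $\geq 3$'' is gone. In fact the statement is false for general well-nested families: the word $\bar 5\hs 3\hs 4\hs 1\hs 2$ has the form $\overline{C}AB\,X\,Y$ with $A=3<B=4<C=5$ and $-C<A$, and it lies in a well-nested family (its standardization $1\hs4\hs5\hs2\hs3$ lies in $\cAA(z)$ for $z=(2,5)(3,4)\in\I(S_5)$), yet $\NRes(\bar5\hs3\hs4\hs1\hs2)=\{-5\}$, so $A=3\notin\NRes$. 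Any correct argument must therefore use the $\cAD(z)$-specific closure under the sign-flipping cover $\overline{B}CA\,v\precsimD\overline{C}AB\,v$. This is how the paper proceeds: by Theorem~\ref{d-equiv-lem} the partner $v=\overline{B}CA\,w_4\cdots w_n$ also lies in $\cAD(z)$; if $A\notin\NRes(w)$ then (using Proposition~\ref{ndes-prop}) one can remove a common sequence of descents from $w_4\cdots w_n$ so that $w$ becomes $\overline{C}ABXY\cdots$ with $B>X$ and $A>Y$ while $v$ becomes $\overline{B}CAXY\cdots$; whichever of $X>Y$ or $X<Y$ holds, one of these two words contains a consecutive $321$-pattern, contradicting well-nestedness via Lemma~\ref{nest-lem}. (In the example above, the partner $\bar4\hs5\hs3\hs1\hs2$ contains $5>3>1$.)

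You have also misplaced the difficulty in part (a). Once (b) is established, the residual word is descent-free, hence increasing, and begins with $-C,A$, so $-C$ and $A$ are the two smallest elements of $\NRes(w)$; in particular no letter below $-C$ can survive into $\NRes(w)$ at all, so there is nothing to ``pair off.'' If $A<0$ then $(-C,A)=(a_1,a_2)$ with the odd index $i=1$, and if $A>0$ then $-C$ is the unique negative residue, so $p=1$ is odd, $b_1=A$, and $|a_1|=C>B>A$; in either case $(-C,A)\in\NDes^\pm(w)$ directly from the definition, with no parity analysis, no appeal to Lemma~\ref{D1-lem0}(b), and no tracking of $\precsimD$-moves. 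So the step you flag as technically delicate is immediate from (b), while the step you treat as bookkeeping is precisely the one that needs the partner-word argument in $\cAD(z)$.
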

 
 \begin{proof}
 If $-C>A$ then $(-C,A) \in \NDes(w)\subseteq \NDes^\pm(w)$. Assume $-C<A$.
 By Theorem~\ref{d-equiv-lem} the signed permutation $v = \overline{B}CA w_4w_5\cdots w_n$ also belongs to $\cAD(z)$.
 It suffices to show that $A \in \NRes(w)$ as then $\NRes(w) = \{-C<A <\dots\}$.
If we assume  $A \notin \NRes(w)$, then
it follows from Proposition~\ref{ndes-prop}
that we can successively remove a sequence of descents from $w_4w_5\cdots w_n$ to transform $ w$
to a word of the form
$\overline{C} A B XY\cdots $ where $B>X$ and $A>Y$. 
But then removing the same descents would 
transform $v$ to a word of the form
$
\overline{B} C A  XY\cdots 
$.
One of these words has a consecutive $321$-pattern, contradicting
Lemma~\ref{nest-lem} since $\cAD(z)$ is a well-nested family.
 \end{proof}

Let $w \in \cAD(z)$. 
Define  $I_R(w) = \left\{ w^{-1}(a) : (b,a) \in \NDes^\pm(w)\right\} $ and $I_L(w) =[n]\setminus I_R(w)$
and suppose we can write these sets as 
\[ 
I_R(w) = \{ r_1<r_2<\dots < r_i\}\quand I_L(w) = \{ \ell_1<\ell_2<\dots < \ell_j\}.
\]
Let $w_R$ and $w^\pm_L$ denote the words
\[w_R = w(r_1)w(r_2)\cdots w(r_i) \quand w^\pm_L = w(-\ell_j)\cdots w(-\ell_2)w(-\ell_1)w(\ell_1)w(\ell_2)\cdots w(\ell_j).\]
Finally define
\be\label{rankD-def}
\rankD(w) = \tfrac{1}{2}\Bigl(\inv( w^\pm_L)- \ell_0(w)\Bigr) - \inv(w_R)
\ee
where for any word $a=a_1a_2\cdots a_m$ we set
\be 
\ba\inv(a) &= |\{ (i,j) \in [m]\times [m] : i<j\text{ and }a_i>a_j\}|\quand
\\
\ell_0(a) &=|\{i \in [m] : a_i<0\}|.
\ea
\ee

\begin{example}

The elements $v=16\overline{52}3487\precsimD  w=\overline{6512}3487 $ are both in $ \cAD(z)$ for 
\[ z=(-1,1)(-2,2)(5,-6)(-5,6)(7,8)(-7,-8) \in \I(\WD_8).
\]
We have $\Cyc^\pm(z) = \{ (-5,6),(7,8), (3,3),(4,4)\}$ and $\Neg(z) =\{1,2\}$, and it holds that
\[
\ba 
\NDes(v) &= \{(6,-5),(1,-2),(8,7) \},\\
\NRes(v) &= \{3,4\},\\
\NDes^\pm(v)&= \{(6,-5),(1,-2),(8,7) \}, \\
\NRes^\pm(v) &= \{3,4\},
\ea
\qquad
\ba 
\NDes(w) &= \{(-1,-2), (8,7)\},\\
\NRes(w) &= \{-6,-5,3,4\},\\
\NDes^\pm(w) &= \{(-1,-2),(8,7),(-6,-5)\}, \\
\NRes^\pm(w) &= \{ 3,4\}.
\ea
\]
Thus
$ v_R = w_R = \overline{52}7$
and
$v^\pm_L = \overline{84361}16348$
and
$w^\pm_L = \overline{843}16\overline{61}348 $
so we compute 
\[\rankD(v) = \tfrac{1}{2}\(4 -2\) - 0 = 1\quand \rankD(w) = \tfrac{1}{2}\(8 -4\) - 0 = 2.\]
\end{example}

  \begin{proposition}\label{rankD-prop1}
  The relation $\precsimD $ is a graded partial order on $\cAD(z)$
with rank function $\rankD$.
  \end{proposition}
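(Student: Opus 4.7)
The plan is to establish the proposition in three steps: closure of $\cAD(z)$ under $\precsimD$, verification that $\rankD$ strictly increases by one along each covering relation, and the resulting antisymmetry and gradedness. Closure is immediate from Theorem~\ref{d-equiv-lem}, since $\precsimD$ is a sub-relation of $\llD$ and its symmetric closure therefore lives inside the symmetric closure of $\llD$, which has $\cAD(z)$ as one of its equivalence classes.

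The heart of the argument is the rank calculation, which splits into the two types of covering moves from Definition~\ref{d-cover-def0}. For the first type $uBCAv \precsimD uCABv$ with $A<B<C$, I would invoke Proposition~\ref{ndes-prop} (applicable by Corollary~\ref{well-cor}) to remove the descent $(C,A)$ in both words, producing the same reduced word $uBv$. Consequently the two words share $\NDes$, $\NRes$, and hence $\NDes^{\pm}$ outside the three-letter block; the position of $A$ belongs to $I_R$ in both cases, and the positions of $B$ and $C$ belong to $I_L$. Direct comparison then shows $w_R$ is identical for the two words, while $w_L^{\pm}$ acquires exactly two new inversions --- one from the swap $BC\leftrightarrow CB$ on the positive half, and its mirror $\overline{C}\hs\overline{B}\leftrightarrow \overline{B}\hs\overline{C}$ on the negative half. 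Since $\ell_0(u)=\ell_0(v)$, this yields $\rankD(v)-\rankD(u)=1$.

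The second type $\overline{B}CAv \precsimD \overline{C}ABv$ is handled in parallel, with Lemma~\ref{CC-lem}(a) supplying the crucial fact that $(-C,A)\in\NDes^{\pm}(v)$ regardless of the sign pattern of $A,B,C$, so that position $2$ of $v$ lies in $I_R$ while positions $1$ and $3$ lie in $I_L$. Comparing with $u$ (positions $1,2$ in $I_L$, position $3$ in $I_R$) and performing the same bookkeeping yields the rank increase of one. The principal obstacle is here: the pair $(-C,A)$ may arise in $v$ either as a genuine descent (when $-C>A$) or through the parity-completion rule for $\NDes^{\pm}$ (when $-C<A$), and the classification of position $1$ must be reconciled in each subcase. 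I would handle this by splitting on the signs of $A,B,C$ (using Lemma~\ref{D1-lem0}(a) to rule out $|w_1|>w_2>w_3$) and verifying in each subcase that $\tfrac{1}{2}\bigl(\inv(w_L^{\pm})-\ell_0(w)\bigr)$ jumps by one while $\inv(w_R)$ is preserved.

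Once $\rankD$ is validated on all covering relations, antisymmetry of $\precsimD$ on $\cAD(z)$ follows from the strict monotonicity of the rank, and every maximal chain has length equal to the difference of ranks between its endpoints, establishing the graded poset structure.
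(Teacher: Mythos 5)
Your overall strategy (closure of $\cAD(z)$ via Theorem~\ref{d-equiv-lem}, then checking that $\rankD$ increases by exactly one along each of the two generating moves of Definition~\ref{d-cover-def0}) is the same as the paper's, but your analysis of the first move has a genuine gap. You assert that in $uBCAv \precsimD uCABv$ the positions of $B$ and $C$ always lie in $I_L$, so that $w_R$ is unchanged and $w_L^{\pm}$ gains exactly two inversions. That is false in general: while $C$ is always in $I_L$ and $A$ always in $I_R$, the letter $B$ may be the second entry of some pair in $\NDes^{\pm}$ coming from $u$, in which case its position lies in $I_R$. For instance $w=42513 \precsimD \hat w=45123$ (all letters positive), where $\NDes(w)=\NDes(\hat w)=\{(4,2),(5,1)\}$: here $w_R=21$ and $\hat w_R=12$, so $w_R$ is \emph{not} identical for the two words, while $w_L^{\pm}$ does not change at all; the rank increase comes entirely from the term $-\inv(w_R)$. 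The correct argument splits into the two cases $w^{-1}(B)\in I_R(w)$ and $w^{-1}(B)\in I_L(w)$ (noting, as the paper does, that membership of $B$'s position in $I_R$ is the same for both words because $\NDes(w)=\NDes(\hat w)$ by Proposition~\ref{ndes-prop} and Corollary~\ref{well-cor}); in the first case $\inv(w_R)$ drops by one with $w_L^{\pm}$ fixed, in the second your computation applies. Your claim as written proves only the second case.

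For the second move $\overline{B}CAv \precsimD \overline{C}ABv$ your setup is right: Lemma~\ref{CC-lem} gives $(-C,A)\in\NDes^{\pm}$ of the larger word, so the index sets are $\{1,2\}\subset I_L$, $3\in I_R$ versus $\{1,3\}\subset I_L$, $2\in I_R$, and $w_R$ is preserved; your appeal to Lemma~\ref{D1-lem0}(a) does force $C>|B|>0$, which is the fact the paper extracts from part (b). But you stop at ``I would split on the signs and verify,'' whereas the actual content is that only two subcases survive: the middle four letters of $w_L^{\pm}$ change from $\overline{C}B\overline{B}C$ to $\overline{B}C\overline{C}B$, giving $(\inv(w_L^{\pm}),\ell_0(w))$ increments $(2,0)$ when $0<B<C$ and $(4,2)$ when $-C<B<0$, each yielding a rank jump of one. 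So: fix the missing $I_R$-case in move one, and carry out (rather than defer) the two-subcase computation in move two; with those repairs the argument coincides with the paper's proof.
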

  
  \begin{proof}
  Let $w,\hat w \in \cAD(z)$.
We claim that if $w \precsimD  \hat w$ is a covering relation of the form \eqref{d-cover1}
  then $\rankD(\hat w) = \rankD(w)+1$. 
  First assume that in one-line notation we have
\be\label{dr-eq1}
w = u B C A v \quand  \hat w = u C A B  v
\ee
for some integers  $A<B<C$ and some subwords $u$ and $v$.
Then $\NDes(w) =\NDes(\hat w)$ since $\cAD(z)$ is well-nested. 

As 
$(C,A) \in \NDes(w) =\NDes(\hat w)$,
one has $w^{-1}(B) \in I_R(w)$ if and only if $\hat w^{-1}(B)\in I_R(\hat w)$.
 If $w^{-1}(B) \in I_R(w)$ then the subword $BA$ contributes an inversion to $w_R$ but not to $\hat w_R$ so
 we have
$
  \inv(w_R) - 1 = \inv(\hat w_R)
$ while
$  w_L^\pm = \hat w_L^\pm
$.
Alternatively if $w^{-1}(B) \in  I_L(w)$ then the subwords $\overline{BC}$ and $CB$ contribute two inversions to $\hat w^\pm_L$ that are not present in $w_L^\pm$ so
$
\inv(w_L^\pm) +2 = \inv(\hat w_L^\pm)
$ while $ w_R = \hat w_R .
$
In either case $\ell_0(w)=\ell_0(\hat w)$ so  $\rankD(\hat w) = \rankD(w)+1$. 

Now suppose instead that in one-line notation we have
\be\label{dr-eq2}
w = \overline{B} C A  v\quand \hat w= \overline{C}  A B  v\ee
for some integers $A<B<C$ where $v$ is an arbitrary subword.
Then $|B|<|C|$ by Lemma~\ref{D1-lem0} so we must have $C>0$.

By Lemma~\ref{CC-lem} we have $(C,A) \in \NDes^\pm(w)$ and $(-C,A) \in \NDes^\pm(\hat w)$, 
so $\{1,2\}\subset I_L(w)$ and  $3 \in I_R(w)$ 
while $\{1,3\}\subset I_L(\hat w)$
 and $2 \in I_R(\hat w)$.
This means that $w_R=\hat w_R$ 
while $w^\pm_L$ and $\hat w^\pm_L$ only differ in their middle four letters, which are of the form
\[ 
w^\pm_L = \cdots \overline{C} B \overline{B}C \cdots \quand \hat w^\pm_L = \cdots \overline{B} C \overline{C}B \cdots.
\]
It follows that
\bei 
\item if $0<B<C$ then we have 
$\ell_0(w)=\ell_0(\hat w)$ and $ \inv(w_L^\pm) +2= \inv(\hat w_L^\pm)$; 
\item if $-C<B<0$ then 
$\ell_0(w)+2=\ell_0(\hat w)$ and $ \inv(w_L^\pm) +4= \inv(\hat w_L^\pm)$.
\eei
Either way, we have $\rankD(\hat w) = \rankD(w)+1$ as needed.
  \end{proof}
  
  We end this section with one other fundamental property of the partial order $\precsimD$.

 \begin{lemma}\label{same-shape-lem}
 Suppose $v,w \in \cAD(z)$ have $v\precsimD  w$. Then $\shD(v) = \shD(w)$.
\end{lemma}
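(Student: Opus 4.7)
Plan. The plan is to verify the lemma on each of the two covering relations in Definition~\ref{d-cover-def0} that generate $\precsimD$; the general case then follows by transitivity.

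For the first covering relation $v = uBCAv' \precsimD uCABv' = w$ with $A<B<C$, the pair $(C,A)$ is a descent of both $v$ and $w$. Since $\cAD(z)$ is well-nested by Corollary~\ref{well-cor}, Proposition~\ref{ndes-prop} allows me to process this descent first in each word, leaving the identical residual $uBv'$. Hence $\NDes(v)=\{(C,A)\}\sqcup\NDes(uBv')=\NDes(w)$, and therefore $\shD(v)=\shD(w)$.

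For the second covering relation $v = \overline{B}CAv' \precsimD \overline{C}ABv' = w$ with $A<B<C$, Lemma~\ref{D1-lem0}(b) applied to $w$ gives $C>0$ and $|B|<C$, while Lemma~\ref{D1-lem0}(a) applied to $v$ forces $v'_1 \geq A$ whenever $v'$ is nonempty. Processing $(C,A)$ in $v$ via Proposition~\ref{ndes-prop} yields $\NDes(v) = \{(C,A)\} \sqcup \NDes(\overline{B}v')$. I would then split on the sign of $A+C$. If $-C<A$, Lemma~\ref{CC-lem}(b) gives $\NDes(w) = \NDes(Bv')$, and $(C,A)$ fails the shape-contributing condition $|C|<-A$ (since $|C|=C>-A$). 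If $-C>A$, the position-$1$ descent of $w$ yields $\NDes(w) = \{(-C,A)\} \sqcup \NDes(Bv')$, and both $(C,A)$ and $(-C,A)$ are shape-contributing and produce identical pairs $\pm\{C,-A\}$. Thus in either sub-case, the lemma reduces to showing that $\NDes(\overline{B}v')$ and $\NDes(Bv')$ produce the same set of shape-contributing pairs.

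For this reduced claim, I would analyze the role of the position-$1$ letter $\pm B$ by a case split on $v'_1$ relative to $\pm|B|$. When $v'_1<-|B|$, both reduced words admit a shape-contributing descent at position~$1$ producing the same pair $\pm\{|B|,-v'_1\}$ with identical residuals. When $v'_1 \geq |B|$, neither admits a descent at position~$1$ and the recursion proceeds on $v'$. The delicate intermediate case is when exactly one of the two words admits a descent at position~$1$ and that descent necessarily fails the shape-contributing condition; here an induction on $|v'|$ shows that $\pm B$ is either absorbed by a non-contributing descent (in $Bv'$) or deferred to $\NRes$ (in $\overline{B}v'$) without affecting the set of shape-contributing pairs extracted from the rest of the word.

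The principal obstacle is the intermediate case, where the descent-processing trajectories of $\overline{B}v'$ and $Bv'$ genuinely diverge. The conceptual resolution is that $\pm B$, owing to its small magnitude $|B|<C$, is inert with respect to $\shD$: any descent involving $\pm B$ as the larger-magnitude coordinate fails the shape-contributing condition $|a|<-b$. Rigorously establishing this inertness requires combining the well-nestedness of the families containing $\overline{B}v'$ and $Bv'$ (inherited from $v,w \in \cAD(z)$ via Lemma~\ref{nest-lem}) with the constraint $v'_1 \geq A$; in particular, any letter of $v'$ with absolute value exceeding $|B|$ that could potentially pair with $\pm B$ in a shape-contributing descent is prevented by these constraints from remaining unpaired long enough to do so.
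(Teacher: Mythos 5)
Your overall route is the same as the paper's: reduce to the two covering relations generating $\precsimD$, dispose of the first by noting $\NDes$ is unchanged (via well-nestedness and Proposition~\ref{ndes-prop}), and for the second use Lemma~\ref{D1-lem0} and Lemma~\ref{CC-lem} to see that the pairs $(C,A)$ and $(-C,A)$ either match or are non-contributing; the paper packages this same information through the set $\cN(w)=\{(|a|,b):(a,b)\in\NDes(w)\}$. The gap is exactly at the step you flag yourself: the claim that $\NDes(\overline{B}v')$ and $\NDes(Bv')$ produce the same shape-contributing pairs is never proved. The promised induction on $|v'|$ is not carried out, and the closing ``inertness'' heuristic is off target: it is not true that letters of absolute value exceeding $|B|$ are prevented from pairing with $\pm B$ in a shape-contributing descent --- when $\min\NRes(v')<-|B|$ such a descent does occur, and the correct point is that it then occurs in \emph{both} words with the \emph{same} partner, giving the same blocks. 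As written, your argument also does not rule out the scenario that actually needs excluding, namely that after the trajectories diverge (one word consumes $\pm B$ together with a small letter, the other retains both), the retained letters later pair with letters deeper in $v'$ and alter the contributing pairs extracted from the remainder.

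The missing step has a short argument that makes your ``delicate intermediate case'' trivial and needs no induction. Both $\overline{B}v'$ and $Bv'$ lie in well-nested families (Lemma~\ref{nest-lem}, applied to $\cAD(z)$ after deleting the descent pair processed at the front; in the subcase $-C<A$ one can work with $w$ itself and use Lemma~\ref{CC-lem} to keep $-C$ and $A$ out of play). Hence Proposition~\ref{ndes-prop} lets you process every descent internal to $v'$ first, identically in the two words. What then remains after the prefix letter is the increasing word on $\NRes(v')$, so the only possible additional pair involves $\pm B$ and $X=\min\NRes(v')$. If $X<-|B|$, both words acquire this pair and it contributes the same blocks $\pm\{|B|,-X\}$ to both shapes; if $-|B|<X<|B|$, exactly one word acquires it, but it is non-contributing since $|B|>-X$, while in the other word both letters simply land in the residue, so no other pair is affected; if $X>|B|$, neither word acquires it. This is precisely the content of the paper's operations (2) and (3) on $\cN(w)$, extracted there from Lemma~\ref{CC-lem}; with this insert your outline becomes a complete proof of Lemma~\ref{same-shape-lem}.
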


\begin{proof}
We must show that if $w \precsimD  \hat w$ is a covering relation of the form \eqref{d-cover1}
  then $\shD(\hat w) = \shD(w)$.
  This is clear if $w$ and $\hat w$ are as in \eqref{dr-eq1} since then $\NDes(w) = \NDes(\hat w)$.
 Instead suppose $w$ and $\hat w$ are as in \eqref{dr-eq2}.
  Define $\cN(w) = \{ (|a|, b) : (a,b) \in \NDes(w)\}$.
Then \[\shD(w) = \{ \pm \{ a, -b\} : (a,b) \in \cN(w) \text{ with }a < -b\}.\]
It follows from Lemma~\ref{CC-lem} that  $\cN(\hat w)$ is formed from $\cN(w)$ by the following operations:
\bei
\item[(1)] if $C>-A$ then remove $(C,A)$;
\item[(2)] if there is some $X<B$ with $(|B|,X) \in \cN(w)$ and $-B<X$ then
remove $(|B|,X)$;
\item[(3)] if $\NRes(w) = \{ B<X<\dots\}$ for some $X$ with $-B>X$ then add the pair $(|B|,X)$.
\eei
The pair $(C,A) \in \cN(w)$ in step (1) does not contribute any blocks to $\shD(w)$ since $C>-A$.
Neither does the pair $(|B|,X) \in \cN(w)$ in step (2)
since if $-B<X<B$ then $|B|>-X$.
Likewise, the pair $(|B|,X) \in \cN(\hat w)$ in step (3) does not contribute any blocks to $\shD(\hat w)$
since if $B<X<-B$ then $|B|>-X$.
We conclude that $\shD(\hat w) = \shD(w)$.
\end{proof}
 
 \subsection{Even-signed generators}\label{esg-sect}
 
  As in the previous section 
 we fix an even-signed involution $ z \in \I(\WD_n)$. 
 Recall that if we are
  given a word $a=a_1a_2\cdots a_n$ and $a_1\neq 0$  then we let 
 $\tilde a = \overline{a_1}a_2\cdots a_n$
 and define
 $\langle a\rangle_{\es}$ to
be whichever of $a$ or $\tilde a$ 
has an even number of negative letters. 
If $a$ is the one-line representation of an element of $\W_n$ 
then we interpret  $\langle a\rangle_{\es} \in \WD_n$ in one-line notation.

Let $X=\Neg(z)$ and write $\cMM(z)  = \cM_{\DI}^{(n,n)}(z) =\NCSP(X:0)$.
Then let 
 \be \delta(z,M)   = \bot_{\DI}^{(n,n)}(z,M) = \langle [[\Cyc^\pm(z,M)]]_{\des}\rangle_{\es}.\ee
and define $\delta(z) = \delta(z,M_{\min})$
 where
 $M_{\min} = M_{\min}(X:0)$ as in \eqref{Mmin-eq}. Notice that if
  \[\Neg(z) = \{c_1<c_2<\dots<c_k\}
  \quad\text{then}\quad
  M_{\min}  =\{ \pm\{c_1,c_2\},\pm\{c_3,c_4\},  \dots, \pm\{c_{k-1},c_k\}\}.\]

   \begin{lemma}\label{111-lem}
 If $i \in \{-1,1,2,\dots,n-1\}$ has $t_i \circ z \circ t_i \neq z$ then 
  some $M \in \cMM(t_i \circ z \circ t_i)$ has
  \[ \delta(t_i \circ z \circ t_i, M) \precsimD  t_i \cdot \delta(z) .\]
  \end{lemma}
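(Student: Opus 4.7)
The plan is to verify $t_i \cdot \delta(z) \in \cAD(z')$ for $z' = t_i \circ z \circ t_i$, identify a matching $M \in \cMM(z')$ via the shape operator $\shD$, and then relate $\delta(z', M)$ to $t_i \cdot \delta(z)$ through a chain of $\precsimD$-covers.

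First I would show that $\ell(t_i \cdot \delta(z)) = \ell(\delta(z)) + 1$ whenever $z' \neq z$. If instead $t_i$ were a left descent of $\delta(z)$, then setting $u = t_i \delta(z)$ and $y = u \circ u^{-1}$, the factorization $\delta(z) = t_i \circ u$ would yield $z = \delta(z) \circ \delta(z)^{-1} = t_i \circ y \circ t_i$, whence $z' = t_i \circ z \circ t_i = t_i \circ t_i \circ y \circ t_i \circ t_i = t_i \circ y \circ t_i = z$ by the idempotency $t_i \circ t_i = t_i$, contradicting the hypothesis. Combined with a direct verification that $\delta(z) \in \cAD(z)$ (which follows from Corollary~\ref{well-cor} applied to the explicit one-line representation $\langle[[\Cyc^\pm(z,M_{\min})]]_{\des}\rangle_\es$), Lemma~\ref{dem-lem} together with \eqref{szs-length-eq} gives $t_i \cdot \delta(z) = t_i \circ \delta(z) \in \cAD(z')$.

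Next I would take $M = \shD(t_i \cdot \delta(z))$, and a case analysis on $i \in \{-1, 1, \ldots, n-1\}$ and on which sub-case of the Demazure conjugation formulas in Section~\ref{d-conj-sect} applies will show that $M \in \cMM(z')$. Proposition~\ref{Pdes-prop}, applied to the well-nested family $\cAD(z')$ from Corollary~\ref{well-cor}, then identifies the $\precsim$-minimum of the $\precsim$-interval containing $t_i \cdot \delta(z)$ as $[[P]]_\des$, where $P = \{(a,b) : (b,a) \in \NDes(t_i \delta(z))\} \sqcup \{(c,c) : c \in \NRes(t_i \delta(z))\}$. Direct comparison with the definitions of $\Cyc^\pm$ and $\shD$ should yield $P = \Cyc^\pm(z', M)$, and therefore $[[\Cyc^\pm(z', M)]]_\des \precsim t_i \cdot \delta(z)$. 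Since $\precsim$ is contained in $\precsimD$, the only remaining task is to reconcile $[[\Cyc^\pm(z', M)]]_\des$ with its $\langle\cdot\rangle_\es$-normalization $\delta(z', M)$; when the leading letter needs to be negated, this is accomplished via a single application of the second family of covers $\overline{B}CA v \precsimD \overline{C}ABv$ from Definition~\ref{d-cover-def0}.

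The main obstacle is the case $i = -1$: here the left action of $t_{-1}$ transforms the first two letters of the one-line representation nontrivially (swapping and negating them), which interacts with the $\langle\cdot\rangle_\es$ normalization that may already have flipped the sign of $\delta(z)$'s leading entry, and the four sub-cases of the $t_{-1} \circ z \circ t_{-1}$ formula each require separate verification. A secondary difficulty is the sub-case for $i \geq 1$ where $z(i) = -i-1$ and $z(i+1) = -i$, in which a pair of negated indices in $\Neg(z)$ collapses into a $2$-cycle of $z'$: the matching $M$ must absorb or release this pair, and the bookkeeping of which entries in $\Cyc^\pm(z', M)$ arise from genuine cycles versus from the matching requires care.
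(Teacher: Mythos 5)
There is a genuine gap, and it is a circularity. Your argument leans at several points on the fact that $\delta(z) \in \cAD(z)$ (equivalently, that $\delta(z)\circ\delta(z)^{-1}=z$ with $\delta(z)$ of minimal length): you need it to run the descent computation in your first step, to conclude $t_i\cdot\delta(z)\in\cAD(t_i\circ z\circ t_i)$ via Lemma~\ref{dem-lem}, and again to apply Proposition~\ref{Pdes-prop} inside the well-nested family $\cAD(t_i\circ z\circ t_i)$. But this membership is exactly Proposition~\ref{d1-prop0}, which the paper proves \emph{by induction using the present lemma}: one passes from $\delta(y)\in\cAD(y)$ to $\delta(z,M)\in\cAD(z)$ precisely because Lemma~\ref{111-lem} supplies $\delta(z,M)\precsimD t_i\cdot\delta(y)$ and Theorem~\ref{d-equiv-lem} closes $\cAD(z)$ under these moves. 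Your claimed "direct verification" that $\delta(z)\in\cAD(z)$ "follows from Corollary~\ref{well-cor}" does not work: that corollary only says that $\cAD(z)$ is a well-nested family; it gives no criterion for deciding that a particular explicit word such as $\langle[[\Cyc^\pm(z,M_{\min})]]_{\des}\rangle_\es$ belongs to $\cAD(z)$. The same problem infects the later step "$P=\Cyc^\pm(z',M)$'': recovering $\Cyc^\pm$ and $\Neg$ from nested descents of elements of $\cAD(z')$ is Proposition~\ref{d1-prop1}(b),(c), and your route to the minimum of the $\precsim$-interval is essentially Corollary~\ref{k0-precsim-cor} --- all of which sit downstream of this lemma in the paper's logical order. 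Unless you supply an independent proof that $\delta(z)\circ\delta(z)^{-1}=z$ with the correct length, the strategy collapses.

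The paper's proof avoids all of this: it never assumes anything about membership of $\delta(z)$ in $\cAD(z)$. Instead it first reduces to $i\in[n-1]$ using the diagram automorphism $\diamond$ (which disposes of your "main obstacle'' $i=-1$ in a few lines), and then runs a case analysis on the Demazure conjugation formula \eqref{BC-dem-eq}, writing out the one-line representations of $t_i\cdot\delta(z)$ and of $\delta(t_i\circ z\circ t_i,M)$ for an explicitly chosen $M$ (either $M_{\min}$ or $M_{\min}\sqcup\{\pm\{i,i+1\}\}$) and exhibiting the required $\precsimD$ moves by hand; in most cases the two words are simply equal. A further small caveat with your plan: reconciling $[[\Cyc^\pm(z',M)]]_{\des}$ with its $\langle\cdot\rangle_{\es}$-normalization is not "a single application'' of the cover $\overline{B}CAv\precsimD\overline{C}ABv$, since that move permutes letters rather than merely negating the leading one; but this is secondary to the circularity above. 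If you want to salvage your approach, the honest fix is to prove the lemma by the direct case analysis, or to first give an independent proof of Proposition~\ref{d1-prop0}.
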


 \begin{proof}
Recall that $\diamond$ is the length-preserving involution of $\WD_n$
with $w^\diamond = t_0 \cdot w \cdot t_0$ and $t_{\pm1}^\diamond = t_{\mp 1}$.
 Notice that $z^\diamond \in \I(\WD_n)$ has $\Neg(z^\diamond) =\Neg(z)$,
 $\cMM(z^\diamond) = \cMM(z)$, and $\delta(z^\diamond,M) = \delta(z,M)^\diamond$ for all  $M$.
 Also if $v ,w \in \WD_n$ then $v \precsimD  w$ if and only if $v^\diamond \precsimD  w^\diamond$.
Given these observations, it suffices to prove the lemma when $i \in [n-1]$
 since if $t_{-1} \circ z \circ t_{-1} \neq z$ then we have 
 \[ t_{1} \circ z^\diamond \circ t_{1} = (t_{-1} \circ z \circ t_{-1} )^\diamond   \neq z^\diamond\]
in which case if some $M \in \cMM(t_{1} \circ z^\diamond \circ t_{1}) = \cMM(t_{-1} \circ z \circ t_{-1} )$
exists with 
\[ \delta(t_1 \circ z^\diamond \circ t_1, M) \precsimD  t_1 \cdot \delta(z^\diamond) 
\quad\text{then we also have}\quad \delta(t_{-1} \circ z \circ t_{-1}, M) \precsimD  t_{-1} \cdot \delta(z) .\]
 
Thus we may assume $i \in [n-1]$. If $z(i) = i$ and $z(i+1) = i+1$ then it follows from \eqref{BC-dem-eq} that
\[ \ba
\Cyc^{\pm}(t_i \circ z \circ t_i) &=  \(\Cyc^{\pm}(z) \setminus\{(i,i), (i+1,i+1)\} \) \sqcup \{(i+1,i)\}
\quand\\
\Neg(t_i\circ z \circ t_i) &= \Neg(z).
\ea
\]
In this case $\delta(t_i \circ z \circ t_i) =t_i \cdot \delta(z)$.
If $z(i) =- i-1$ and $z(i+1) = -i$ then it likewise follows that
\[
\ba
 \Cyc^{\pm}(t_i \circ z \circ t_i) &=  \Cyc^{\pm}(z) \setminus\{(-i,i+1)\}
 \quand\\
\Neg(t_i\circ z \circ t_i) &= \Neg(z)\sqcup\{i,i+1\}.
\ea
\]
In this case   $t_i \cdot \delta(z) =\delta(t_i\circ z\circ t_i,M)$
for $M = M_{\min}(X:0) \sqcup \{\pm \{i,i+1\}\} \in \cMM(t_i\circ z \circ t_i)$.

Assume we are not in the preceding cases so that $t_i \circ z \circ t_i = t_i \cdot z \cdot t_i$.
Then we have
\[
\ba
 \Cyc^{\pm}(t_i \circ z \circ t_i) &=\{ (t_i(a), t_i(b)) : (a,b)\in  \Cyc^{\pm}(z) \}
 \quand\\
\Neg(t_i\circ z \circ t_i) &= \{ t_i(a) : a \in \Neg(z)\}.
\ea
\]
If there  are numbers $A,B \in [\pm n]$ with $(A,i),(B,i+1) \in \Cyc^{\pm}(z)$ and $A\neq i$ and $B\neq i+1$,
then $z(i)=A<z(i+1)=B <i$ so we can express
$\delta(t_i\circ z \circ t_i) $ and $t_i \cdot \delta(z)$ in one-line notation as
\[\ba
\delta(t_i\circ z \circ t_i) = \langle \cdots  i, B,(i+1),A \cdots \rangle_\es 
&\precsimD 
  \langle \cdots  i, (i+1),A,B \cdots \rangle_\es
  \\&\precsimD 
  \langle \cdots  (i+1),A,i,B \cdots \rangle_\es = t_i \cdot \delta(z).
  \ea
\]
Similarly,
if we have 
$(A,i),(i+1,i+1) \in \Cyc^{\pm}(z)$ for some $i\neq A \in [\pm n]$ then $z(i)=A<i$ so
\[\ba
\delta(t_i\circ z \circ t_i) = \langle \cdots  i,(i+1),A \cdots \rangle_\es 
&\precsimD 
  \langle \cdots  (i+1),A,i \cdots \rangle_\es  = t_i \cdot \delta(z).
  \ea
\]

Now suppose $ \Neg(z)=\{ c_1<c_2<\dots<c_k\}$ and $i=c_j$ for some $j \in [k]$ (note that $k$ must be even)
while $(B,i+1) \in \Cyc^\pm(z)$ for some $B \in [\pm n]$.
If $j$ is even then $\delta(t_i\circ z \circ t_i)  = t_i \cdot \delta(z)$
while if $j$ is odd then
$\{c_j < c_{j+1}\} \in M_{\min}(X:0)$
and
\[
-c_{j+1} < z(i) = -i < z(i+1) = B < i\] so we can express
$\delta(t_i\circ z \circ t_i) $ and $t_i \cdot \delta(z)$ in one-line notation as
\[\ba
\delta(t_i\circ z \circ t_i) = \langle \cdots  i, B,(i+1),\overline{c_{j+1}} \cdots \rangle_\es 
&\precsimD 
  \langle \cdots  i, (i+1),\overline{c_{j+1}},B \cdots \rangle_\es
  \\&\precsimD 
  \langle \cdots  (i+1),\overline{c_{j+1}},i,B \cdots \rangle_\es = t_i \cdot \delta(z).
  \ea
\]
We cannot have 
$(i,i),(B,i+1) \in \Cyc^{\pm}(z)$ for some $i+1\neq B\in [\pm n]$ as this leads to the contradiction
 $B<i=z(i) < z(i+1)=B$. Likewise, we cannot have
$(A,i) \in \Cyc^{\pm}(z)$ for some $A \in [\pm n]$ when $i+1 \in \Neg(z)$ as
this leads to the contradiction $-i <A \leq z(i)<z(i+1)=-i-1$.

In all remaining cases
at most one of $i$ or $i+1$ is in $\{B: (A,B) \in \Cyc^\pm(z)\}\sqcup \Neg(z)$ 
so it easy to see from the definition of $\delta$ that $\delta(t_i\circ z \circ t_i) =t_i \cdot \delta(z)$.
\end{proof}
 
 Recall the definition of the relation $\lessdot $ on $\cMM(z)$ from Proposition~\ref{lessdot-prop}.
 \begin{lemma}\label{lessdot-lem2}
 Suppose $M,N \in \cMM(z)$ have $M\lessdot N$. Then $ \delta(z,M)\ll_\DI \delta(z,N)$.
   \end{lemma}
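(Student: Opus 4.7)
Since $\cMM(z) = \NCSP(X:0)$ contains no trivial blocks, the only relevant case in the definition of $\lessdot$ is its second operation: $N$ has nested blocks $\pm\{x_i, x_q\}$ and $\pm\{x_{i+1}, x_p\}$ (with $i+1 < p < q$ and no other block of $N$ straddling $x_i$), and $M$ is obtained by replacing them with the adjacent blocks $\pm\{x_i, x_{i+1}\}$ and $\pm\{x_p, x_q\}$. Writing $a = x_i$, $b = x_{i+1}$, $c = x_p$, $d = x_q$, I then have $0 < a < b < c < d$ with $\{a,b,c,d\} \subseteq \Neg(z)$. My strategy is to compute $[[\Cyc^\pm(z, M)]]_\des$ and $[[\Cyc^\pm(z, N)]]_\des$ explicitly, transform the first into the second using $\precsimD$-slides from \eqref{d-cover1} followed by one application of \eqref{d-cover2}, and then verify that wrapping in $\langle \cdot \rangle_\es$ preserves the resulting $\llD$-relation.

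The positions (second coordinates of pairs) appearing in $\Cyc^\pm(z, \cdot)$ come from $\Fix(z) \cap \PP$, from larger elements of two-cycles of $z$ disjoint from $\Neg(z)$, and from the smaller elements of matching blocks. Since $z(\pm e) = \mp e$ for each $e \in \{a,b,c,d\}$, the letters $\pm a, \pm b, \pm c, \pm d$ appear only in pairs coming from the four modified blocks, and among $\{a,b,c,d\}$ the positions are $\{a, c\}$ in $M$ and $\{a, b\}$ in $N$. Combined with the non-straddling hypothesis on $N$ (which forces any unchanged block with an endpoint $<a$ to have both endpoints $<a$) and with noncrossing of $\{b,c\} \in N$ (which forces any unchanged block with an endpoint in $(b,c)$ to have both endpoints in $(b,c)$), this case analysis yields
\begin{align*}
[[\Cyc^\pm(z, M)]]_\des &= U \cdot a(-b) \cdot V_1 \cdot V_2 \cdot c(-d) \cdot V_3 \cdot W, \\
[[\Cyc^\pm(z, N)]]_\des &= U \cdot a(-d) \cdot V_1 \cdot b(-c) \cdot V_2 \cdot V_3 \cdot W,
\end{align*}
where $U, V_1, V_2, V_3, W$ are the common contributions from positions in the intervals $(-\infty, a)$, $(a, b)$, $(b, c)$, $(c, d)$, $(d, \infty)$; moreover the letters of $U$, $V_1$, $V_2$ have absolute value strictly less than $a$, $b$, $c$ respectively.

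The transformation then proceeds in two steps. Each letter $\lambda$ of $V_2$ satisfies $-d < \lambda < c$ (since $|\lambda| < c < d$), so the move $\lambda \cdot c \cdot (-d) \precsimD c \cdot (-d) \cdot \lambda$ from \eqref{d-cover1} with $A = -d$, $B = \lambda$, $C = c$ slides all of $V_2$ past $c(-d)$, producing $U \cdot a(-b) \cdot V_1 \cdot c(-d) \cdot V_2 V_3 W$. Applying \eqref{d-cover2} with $A = a$, $B = b$, $C = c$, $D = d$, prefix $U$, middle $V_1$, and suffix $V_2 V_3 W$ (whose hypothesis that letters of the prefix and middle have absolute value $<b$ is satisfied) then yields exactly $[[\Cyc^\pm(z, N)]]_\des$. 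Finally, $\delta(z, \cdot) = \langle [[\Cyc^\pm(z, \cdot)]]_\des \rangle_\es$: because $M$ and $N$ have the same number of non-trivial blocks, the parities of negative letters in the two words agree, so $\langle \cdot \rangle_\es$ either negates both leading letters or neither. The relation $\llD$ is preserved under this simultaneous first-letter negation --- the two forms of $\precsimD$ in Definition~\ref{d-cover-def0} interchange, \eqref{d-cover2} remains valid since its constraint is $0 < |A|$ rather than $A > 0$, and the sliding moves above never involve the leading position. This gives $\delta(z, M) \llD \delta(z, N)$. The hardest part will be the combinatorial verification establishing the exact word decomposition and the absolute-value bounds on $U$, $V_1$, $V_2$; it is elementary but relies crucially on noncrossing of both matchings combined with $\{a,b,c,d\} \subseteq \Neg(z)$.
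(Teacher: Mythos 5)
Your proposal is correct and follows essentially the same route as the paper's proof: identify the pattern $\cdots a\overline{b}\cdots c\overline{d}\cdots$ versus $\cdots a\overline{d}\cdots b\overline{c}\cdots$ in the two words, slide the intermediate letters past $c\overline{d}$ with $\overset{0}{\precsim}$-moves, apply the relation \eqref{d-cover2}, and note that the condition $0<|A|$ together with the common parity of negative letters makes the argument compatible with $\langle\cdot\rangle_{\es}$. The paper compresses all of this into one sentence ("the definition of $\delta(z,\cdot)$ implies that we can write\ldots"), whereas you spell out the interval decomposition, the role of the non-straddling clause in $\lessdot$, and the noncrossing bounds needed for the hypothesis of \eqref{d-cover2} — the same argument, just with the details made explicit.
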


 \begin{proof}
In this case the definition of $\delta(z,\cdot)$ implies that we can write 
\[ \delta(z,M)\overset{0}\precsim  uA\overline{B}  vC\overline{D}  w
\quand
\delta(z,N)=  u A\overline{D} v  B \overline{C}  w
\]
for some $A,B,C,D \in \ZZ$ with $0<|A|<B<C<D$ where $u$, $v$, and $w$ are subwords
such that all letters of $u$ and $v$ have absolute value strictly less than $B$. We will have   $A=|A|$ if $\ell(u)>0$.
Regardless, it holds that $ \delta(z,M)\ll_\DI \delta(z,N)$.
 \end{proof}

 The next two propositions are the main results of this section.

  \begin{proposition}\label{d1-prop0}
Suppose $z \in \I(\WD_n)$ and $M\in\cMM(z)$. Then 
\[\delta(z,M) \in \cAD(z)
\quand \shD(\delta(z,M)) = M.\]
\end{proposition}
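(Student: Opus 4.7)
The plan is to prove the two parts in sequence. For the first claim, I would induct on $\ellhat(z)$.

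In the base case $\ellhat(z) = 0$, we have $z = 1$ and $\cMM(1) = \{\varnothing\}$ with $\delta(1,\varnothing) = 1 \in \cAD(1)$. For the inductive step $\ellhat(z) > 0$, a reduced-expression argument for $\delta(z)$ together with \eqref{szs-length-eq} lets me write $z = t_i \circ z' \circ t_i$ for some $i \in \{-1,1,2,\dots,n-1\}$ and some $z' \in \I(\WD_n)$ with $\ellhat(z') = \ellhat(z)-1$. The inductive hypothesis then gives $\delta(z') \in \cAD(z')$. The Demazure product $t_i \circ \delta(z')$ satisfies $(t_i \circ \delta(z')) \circ (t_i \circ \delta(z'))^{-1} = t_i \circ z' \circ t_i = z$, and combining $\ell(t_i \circ \delta(z')) \leq \ell(\delta(z'))+1 = \ellhat(z')+1$ with $\ell(t_i \circ \delta(z')) \geq \ellhat(z) = \ellhat(z')+1$ forces equality, so $t_i \circ \delta(z') \in \cAD(z)$ and coincides with $t_i \cdot \delta(z')$. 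Lemma~\ref{111-lem} now produces some $M_0 \in \cMM(z)$ with $\delta(z, M_0) \precsimD t_i \cdot \delta(z')$, and since $\precsimD \subseteq \llD$ while $\cAD(z)$ is a single equivalence class for the symmetric closure of $\llD$ by Theorem~\ref{d-equiv-lem}, I conclude $\delta(z, M_0) \in \cAD(z)$.

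To promote this to arbitrary $M \in \cMM(z)$, Proposition~\ref{lessdot-prop} supplies chains of $\lessdot$-covering relations from the unique minimum $M_{\min}(\Neg(z):0)$ up to both $M$ and $M_0$. Applying Lemma~\ref{lessdot-lem2} to each such cover relation and using transitivity of $\llD$ places $\delta(z, M)$ and $\delta(z, M_0)$ in the same equivalence class under the symmetric closure of $\llD$, so $\delta(z, M) \in \cAD(z)$ by Theorem~\ref{d-equiv-lem}.

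For the identity $\shD(\delta(z, M)) = M$, I would analyze the one-line representation $w = \delta(z, M) = \langle [[\Cyc^\pm(z,M)]]_{\des} \rangle_{\es}$ directly. Since $w \in \cAD(z)$ from the first part and $\cAD(z)$ is well-nested by Corollary~\ref{well-cor}, Proposition~\ref{ndes-prop} permits computing $\NDes(w)$ and $\NRes(w)$ by peeling off any available descent at each step, not just the leftmost. Tracing through the construction of $w$ from the sorted pairs in $\Cyc^\pm(z, M)$, each added pair $(-b, a)$ arising from a block $\{a,b\} \in M$ with $0 < a < b$ contributes a subword of the form $\cdots a (-b) \cdots$ to the deduplicated word; these are exactly the descents extracted into $\NDes^\pm(w)$ that produce the matching block $\pm\{a, b\}$ in $\shD(w)$ via \eqref{k0-shape-eq}. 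Pairs $(a,b) \in \Cyc^\pm(z)$ with $a < 0$ contribute analogously, accounting for the off-diagonal blocks associated to non-negated 2-cycles of $z$, while the trivial pairs $(c, c) \in \Cyc^\pm(z)$ collapse into $\NRes(w)$ and contribute nothing to $\shD(w)$ in the present $k=0$ setting. A direct check shows that the $\langle \cdot \rangle_\es$ adjustment, which only potentially flips the sign of $w_1$ to correct parity, preserves the descent structure relevant to $\shD$.

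The main obstacle will be this combinatorial verification: the sorted interleaving of pairs from $\Cyc^\pm(z)$ with those added from $M$ can be intricate, and tracking exactly which descents survive the deduplication and how they interact with the possible sign flip on $w_1$ demands careful case analysis. Lemma~\ref{same-shape-lem}, which shows $\shD$ is constant on $\precsimD$-classes inside $\cAD(z)$, could be used to reduce the verification to canonical representatives such as $\delta(z, M_{\min}(\Neg(z):0))$, but some direct unwinding of the definitions on at least one representative per class would still be required.
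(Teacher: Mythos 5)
Your proposal follows the paper's own proof almost exactly: the first half (induction on $z$, using Lemma~\ref{111-lem} to get $\delta(z,M_0)\in\cAD(z)$ for one $M_0$, then Proposition~\ref{lessdot-prop}, Lemma~\ref{lessdot-lem2}, and Theorem~\ref{d-equiv-lem} to promote this to all $M\in\cMM(z)$) is precisely the paper's argument. For the identity $\shD(\delta(z,M))=M$ the paper also just unwinds the definition of $w=[[\Cyc^\pm(z,M)]]_{\des}$ and disposes of the possible sign flip of $w_1$ in one line (via the set $\cN(w)$ from the proof of Lemma~\ref{same-shape-lem}: the only pair affected is $(w_1,w_2)$, and it is removed only when it contributes no block), so the ``careful case analysis'' you anticipate is not really needed; be aware, though, that your clause about pairs $(a,b)\in\Cyc^\pm(z)$ with $a<0$ ``accounting for off-diagonal blocks'' is misleading --- such a pair yields the nested descent $(b,a)$ with $|b|=b>-a$, hence contributes \emph{no} block to $\shD(w)$, which is exactly as required since every $M\in\cMM(z)$ has blocks supported only on $\pm\Neg(z)$.
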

 
  \begin{proof}
If $z = 1$ then $\cMM(z) = \{\varnothing\}$ and $\delta(z,M)= \delta(1,\varnothing) = 1 \in \cAD(1)$.
Assume $1\neq z \in \I(\W_n)$.
Then some $i \in \{-1,1,2,\dots,n-1\}$ and   $y \in \I(W_n)$ have $y \neq t_i \circ y \circ t_i = z$.
For this choice of $i$ and $y$ we must also 
have $t_i  w \in \cAD(z)$ for all $w \in  \cAD(y)$.
We may assume by induction that $\delta(y,M) \in \cAD(y)$ for all $M \in \cMM(y)$, and in particular that $\delta(y) \in \cAD(y)$.
Theorem~\ref{d-equiv-lem} and Lemma~\ref{111-lem} imply that $\delta(z,M) \in \cAD(z)$ for some $M \in \cMM(z)$.
We therefore have $\delta(z,M) \in \cAD(z)$ for all $M \in \cMM(z)$
by Proposition~\ref{lessdot-prop}, Theorem~\ref{d-equiv-lem}, and Lemma~\ref{lessdot-lem2}.

Now let $ w=[[\Cyc^\pm(z,M)]]_{\des}$
and
recall that $\tilde w = \overline{w_1}w_2\cdots w_n$.
Then $\delta(z,M) \in \{w,\tilde w\}$ and 
it holds by definition that $\shD(w)=M$.
This is also the shape of $\tilde w$
since 
if we define $\cN(w)$ as in the proof of Lemma~\ref{same-shape-lem},
then
$\cN(\tilde w)$  is obtained from $\cN(w)$ by removing 
$(w_1,w_2)$ when $-w_1 < w_2$, but in this case   
 $w_1>-w_2$ so the removed pair $(w_1,w_2)$ does not contribute to $\shD(w)$.
\end{proof}

   \begin{proposition}\label{d1-prop1}
Suppose $z \in \I(\WD_n)$ and  $w \in \cAD(z)$. Then the following properties hold:
\ben
\item[(a)] $\shD(w)  \in \cMM(z)$.
\item[(b)]  $\Cyc^\pm(z)= \left\{ (b,|a|) : (a,b) \in \NDes^\pm(w)\text{ with }|a|>-b\right\} \sqcup \{ (c,c) : c \in \NRes^\pm(w)\}$.
\item[(c)]
 $ \Neg(z) $ consists of all numbers $|a|$ and $|b|$ for $ (a,b) \in \NDes^\pm(w) $ with $|a|<-b$.
\een
\end{proposition}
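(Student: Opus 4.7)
The plan is to combine Theorem \ref{d-equiv-lem}, which says $\cAD(z)$ is a single equivalence class under the symmetric closure of $\llD$, with Proposition \ref{d1-prop0}, which provides canonical representatives $\delta(z,M) \in \cAD(z)$ whose shape $M$ already lies in $\cMM(z)$. I will first verify (a), (b), (c) on these representatives and then show that the statements propagate to all of $\cAD(z)$ through the covering moves of $\llD$.

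For the base case $w = \delta(z,M) = \langle [[\Cyc^\pm(z,M)]]_\des\rangle_\es$, statement (a) is immediate from Proposition \ref{d1-prop0}. For (b) and (c), I would trace how the entries of $\Cyc^\pm(z,M) = \Cyc^\pm(z) \sqcup \{(-b,a) : \{a,b\} \in M,\ 0<a<b\}$ turn into descents in $\delta(z,M)$: each nontrivial cycle $(a,b) \in \Cyc^\pm(z)$, for which $|a|<b$, produces a descent $(b,a)$ with $-b<a$, contributing the pair $(b,|a|)$ to the right-hand side of (b); each matching pair $\{a,b\} \in M$ instead yields a descent $(a,-b)$ with $|a|<-b$, contributing $\{a,b\}$ to the right-hand side of (c). A bookkeeping argument analogous to the last paragraph of the proof of Proposition \ref{d1-prop0} shows the sign flip $\langle \cdot \rangle_\es$ does not disturb either side.

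Next I turn to invariance under the covering relations defining $\llD$. Corollary \ref{well-cor} makes $\cAD(z)$ well-nested, so Proposition \ref{ndes-prop} implies $\NDes(w)$ and $\NRes(w)$, and hence $\NDes^\pm(w)$, are constant on each $\precsimD$-equivalence class; this gives $\precsimD$-invariance of (b) and (c), while Lemma \ref{same-shape-lem} supplies it for (a). For the extra move $u A\overline{B} v C\overline{D} x \llD u A\overline{D} v B \overline{C} x$ of \eqref{d-cover2}, the constraint that letters of $u$ and $v$ have absolute value less than $B$ localizes the change to the four affected descents, so $\NDes$ swaps $\{(A,-B),(C,-D)\}$ with $\{(A,-D),(B,-C)\}$ while $\NRes$ is unchanged. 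All four descents satisfy $|a|<-b$, so both pairings contribute the same set $\{|A|,B,C,D\}$ to the right-hand side of (c) and contribute nothing to the right-hand side of (b); by Theorem \ref{d-equiv-lem}, (b) and (c) then propagate to every $w \in \cAD(z)$.

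Finally, (a) in full generality follows from (c): once $\Neg(z)$ is identified as the disjoint union of $\{|a|,|b|\}$ over descents $(a,b)$ with $|a|<-b$, the definition of $\shD$ produces a symmetric perfect matching on $\Neg(z)\sqcup-\Neg(z)$, and Lemma \ref{ijkl-lem} forces that matching to be noncrossing, placing $\shD(w) \in \NCSP(\Neg(z):0)=\cMM(z)$. The main obstacle will be the bookkeeping in the base case: verifying that the deduplication $[[\cdot]]_\des$ combined with the sign-flip $\langle\cdot\rangle_\es$ produces exactly the predicted descent pattern in $\delta(z,M)$, with no spurious adjacent equalities or missed descents at the interfaces between cycle-derived and matching-derived letters.
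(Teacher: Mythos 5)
Your overall strategy is the same as the paper's (verify the claims at the canonical elements $\delta(z,M)$ via Proposition~\ref{d1-prop0}, then propagate through the covering moves of $\llD$ using Theorem~\ref{d-equiv-lem}), but the propagation step contains a genuine error. You claim that, because $\cAD(z)$ is well-nested, Proposition~\ref{ndes-prop} makes $\NDes(w)$, $\NRes(w)$, and $\NDes^\pm(w)$ constant on $\precsimD$-equivalence classes. That is false: Proposition~\ref{ndes-prop} only handles the letter-preserving move $uBCAv \precsimD uCABv$. The second move in \eqref{d-cover1}, $\overline{B}CAv \precsimD \overline{C}ABv$, replaces the letters $-B,C$ by $-C,B$, and $\NDes$, $\NRes$ genuinely change; the paper's own example ($v=16\overline{52}3487 \precsimD w=\overline{6512}3487$, where $\NRes(v)=\{3,4\}$ but $\NRes(w)=\{-6,-5,3,4\}$ and the nested descent sets differ) refutes your claim. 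The sets appearing in (b) and (c) are indeed invariant under this move, but establishing that requires the case analysis through Lemma~\ref{CC-lem} (tracking which pairs appear, disappear, or move between $\NDes$ and $\NRes$, as in the proof of Lemma~\ref{same-shape-lem}) — exactly the check the paper performs and your argument skips. Your treatment of the \eqref{d-cover2} move, by contrast, is fine.

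There is a second gap in your final paragraph, where you deduce (a) from (c) by asserting that Lemma~\ref{ijkl-lem} forces $\shD(w)$ to be noncrossing. That lemma constrains the \emph{positions} of two nested descents whose values satisfy $w_i<w_k$ and $w_j<w_l$; for a hypothetical crossing pair of blocks $\{|a_1|,-b_1\}$, $\{|a_2|,-b_2\}$ with $|a_1|<|a_2|<-b_1<-b_2$ one has $b_2<b_1$, so the lemma's hypotheses are not met in the needed direction and no contradiction results. The paper instead proves (a) by propagation from the base case: $\shD$ is unchanged under \eqref{d-cover1} moves by Lemma~\ref{same-shape-lem}, and under \eqref{d-cover2} moves it changes by a $\lessdot$-move (this is where Lemma~\ref{ijkl-lem} is actually used, to verify the side condition in the definition of $\lessdot$), which keeps the shape inside $\NCSP(\Neg(z):0)=\cMM(z)$ by Proposition~\ref{lessdot-prop}. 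Some argument of this kind is needed; noncrossingness does not follow from (c) alone.
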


  \begin{proof}

For part (a), suppose $v \in \cAD(z)$ has $v \llD w$ or $w \llD v$.
If $v$ and $w$ are related by a covering relation of the form \eqref{d-cover1}
then $\shD(v)=\shD(w)$ by Lemma~\ref{same-shape-lem}.
If $v$ and $w$ are related by a covering relation of the form \eqref{d-cover2} then 
Lemma~\ref{ijkl-lem} implies that $\shD(v)\lessdot \shD(w)$ or $\shD(w)\lessdot \shD(v)$.
We conclude from Proposition~\ref{lessdot-prop} that 
$\shD(v) \in \cMM(z)$ if and only if $\shD(w)\in \cMM(z)$.
Part (a) follows from this property given  Theorem~\ref{d-equiv-lem} and Proposition~\ref{d1-prop0}.

Similarly, to prove (b) and (c),
one may observe that both claims hold when $w = \delta(z)$,
and then use Lemma~\ref{CC-lem} to check that the sets 
described 
in each part do not change when $w$ is replaced by a signed-permutation
related to $w$ by a covering relation of the form \eqref{d-cover1} or \eqref{d-cover2}.
 \end{proof}
 
  \begin{corollary}\label{k0-precsim-cor}
 Let $z \in \I(\WD_n)$, $w \in \cAD(z)$, and  $M=\shD(w)$. Then $\delta(z,M) \precsimD  w$
 \end{corollary}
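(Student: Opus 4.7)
I would argue by induction on the non-negative integer $\rankD(w) - \rankD(\delta(z,M))$. This quantity is well-defined because $\delta(z,M) \in \cAD(z)$ (Proposition~\ref{d1-prop0}) and $\rankD$ is a rank function for the graded partial order $\precsimD$ on $\cAD(z)$ (Proposition~\ref{rankD-prop1}). Since $\shD(\delta(z,M)) = M = \shD(w)$ (Proposition~\ref{d1-prop0}) and $\shD$ is constant along $\precsimD$-chains (Lemma~\ref{same-shape-lem}), every element produced during the induction will automatically land in the shape class $\cA_M = \{v \in \cAD(z) : \shD(v) = M\}$.

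For the inductive step, suppose $\rankD(w) > \rankD(\delta(z,M))$; I aim to produce $v \in \cAD(z)$ with $v \precsimD w$ as a covering relation. Corollary~\ref{well-cor} says $\cAD(z)$ is well-nested, so Proposition~\ref{Pdes-prop} identifies the unique $\overset{0}\precsim$-minimum of the interval of $w$ as $w^\flat = [[P_w]]_\des$, where $P_w = \{(a,b) : (b,a) \in \NDes(w)\} \sqcup \{(c,c) : c \in \NRes(w)\}$. If $w \neq w^\flat$, then a covering relation of the first kind in \eqref{d-cover1} (which is the same as $\overset{0}\precsim$ restricted to $\WD_n$) provides the desired $v$. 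Otherwise $w = w^\flat$, and we look for a cover of the second kind. I will show that if additionally $w \neq \delta(z,M)$, then the first three letters of $w$ have the form $\overline{C}AB$ with $A<B<C$, which lets us set $v$ to be the signed permutation whose one-line representation starts with $\overline{B}CA$ and agrees with $w$ elsewhere; Theorem~\ref{d-equiv-lem} guarantees $v \in \cAD(z)$, and $v \precsimD w$ by Definition~\ref{d-cover-def0}.

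For the base case, $\rankD(w) = \rankD(\delta(z,M))$. Both elements lie at minimal rank in their respective $\overset{0}\precsim$-intervals (otherwise Proposition~\ref{Pdes-prop} would produce strictly shorter rank representatives), so both equal their $[[P]]_\des$-presentations, modulo the sign flip that the operator $\langle\cdot\rangle_\es$ applies to the first letter of $\delta(z,M) = \langle[[\Cyc^\pm(z,M)]]_\des\rangle_\es$. The key identification step is to verify, using Proposition~\ref{d1-prop1}(b) together with the definitions of $\Cyc^\pm(z,M)$ and $\shD$, that the pairs of $P_w$ and of $\Cyc^\pm(z,M)$ correspond under the swap $(a,b) \leftrightarrow (b,|a|)$ in such a way that $[[P_w]]_\des$ and $[[\Cyc^\pm(z,M)]]_\des$ agree as words (up to the leading sign). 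Any residual sign discrepancy on the first letter is bridged by a single application of the second covering in \eqref{d-cover1}, which is precisely the mechanism by which $\precsimD$ refines $\overset{0}\precsim$. Concluding $w = \delta(z,M)$ then closes the base case.

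The main obstacle will be the letter-by-letter comparison of $[[P_w]]_\des$ with $[[\Cyc^\pm(z,M)]]_\des$ in the base case: $P_w$ records descent pairs plus diagonal residues, whereas $\Cyc^\pm(z,M)$ records nontrivial cycles plus pairs coming from $M$, and the two encodings overlap only after one invokes the augmentation $\NDes^\pm(w) \supset \NDes(w)$ from Proposition~\ref{d1-prop1}(b) and tracks the parity of negatives that dictates $\langle\cdot\rangle_\es$. Handling the interaction between the sign flip and the second covering relation — particularly ensuring the resulting element remains in $\cAD(z)$ — is the delicate piece of the argument.
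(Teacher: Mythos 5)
Your overall route is the same as the paper's: both arguments reduce the corollary to identifying the $\precsimD$-minimal elements of a shape class with $\delta(z,M)$ (the paper simply passes to a minimal element $v$ with $v\precsimD w$, notes $v\in\cAD(z)$ and $\shD(v)=M$ via Theorem~\ref{d-equiv-lem} and Lemma~\ref{same-shape-lem}, and then pins $v$ down using Proposition~\ref{Pdes-prop}, Lemma~\ref{D1-lem0}, and Proposition~\ref{d1-prop1}). The problem is that your plan leaves exactly this identification unproved. The pivotal assertion in your inductive step --- that an $\overset{0}\precsim$-minimal element $w\in\cAD(z)$ with $w\neq\delta(z,M)$ must begin with $\overline{C}AB$ where $A<B<C$ --- is announced (``I will show'') but never argued, and it does not follow formally from anything you cite: it is logically equivalent to saying that $\delta(z,M)$ is the unique $\precsimD$-minimal element of its shape class, i.e.\ to the corollary itself. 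The real work is the word comparison you defer to the base case: showing via Proposition~\ref{d1-prop1}(b),(c) that for a $\precsimD$-minimal element the set $P$ built from $\NDes$ and $\NRes$ (of the element or of its first-letter negation, which is where Lemma~\ref{D1-lem0} and the $\NDes^\pm$ bookkeeping enter) coincides with $\Cyc^\pm(z,M)$, so that $[[P]]_\des$ reproduces $[[\Cyc^\pm(z,M)]]_\des$. Without that step the induction has no engine.

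Two pieces of your scaffolding are also flawed as stated. First, inducting on $\rankD(w)-\rankD(\delta(z,M))$ presupposes this quantity is nonnegative, and your base-case claim that equal rank forces $w$ to be $\overset{0}\precsim$-minimal presupposes that $\delta(z,M)$ minimizes $\rankD$ on the shape class; both are consequences of the corollary, not available hypotheses. Induct on $\rankD(w)$ itself, or pass directly to a minimal element below $w$ as the paper does. Second, in the base case you propose to bridge a ``residual sign discrepancy on the first letter'' by ``a single application of the second covering'' in \eqref{d-cover1}; but a covering relation raises $\rankD$ by one (Proposition~\ref{rankD-prop1}), which is impossible between elements of equal rank. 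The correct resolution is parity: two elements of $\WD_n$ cannot differ only in the sign of a single letter, since that changes the parity of the number of negative entries --- this is precisely what the operator $\langle\cdot\rangle_\es$ in the definition of $\delta(z,M)$ encodes. (By contrast, the point you single out as delicate, ``ensuring the resulting element remains in $\cAD(z)$,'' is immediate from Theorem~\ref{d-equiv-lem}, as you yourself note earlier in the plan.)
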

 
    \begin{proof}
   Let $v \in \WD_n$ be a minimal element with $v \precsimD w$.
   Then $v \in \cAD(z)$ and $\shD(v) =M$ by Theorem~\ref{d-equiv-lem} and Lemma~\ref{same-shape-lem}. 
Recall that we define $\tilde v =\overline{v_1}v_2\cdots v_n \in \W_n$. Let
\[
\ba
 P &= \{ (a,b) : (b,a) \in \NDes( v)\}\sqcup \{(c,c) : c \in \NRes( v)\}\quand \\
\tilde P &= \{ (a,b) : (b,a) \in \NDes(\tilde v)\}\sqcup \{(c,c) : c \in \NRes(\tilde v)\}.
\ea
\] 
Then it follows from Proposition~\ref{Pdes-prop} and Lemma~\ref{D1-lem0} that either $v=[[P]]_{\des}$ and every $(a,b) \in P$ has $a>0$,
or   $\tilde v = [[\tilde P]]_{\des}$ and every $(a,b) \in \tilde P$ has $a>0$.
In either case we can deduce that $v = \delta(z,M)$ from  Proposition~\ref{d1-prop1}.
 \end{proof}

As another corollary, we can slightly strengthen Theorem~\ref{d-equiv-lem}.
    \begin{corollary}
If $z \in \I(\WD_n)$ then $\llD$ is a partial order on $\cAD(z)$ with unique minimum $\delta(z)$.
  \end{corollary}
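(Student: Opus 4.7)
Reflexivity and transitivity of $\llD$ are built into the definition, so the plan is to establish antisymmetry and then identify $\delta(z)$ as the minimum. The central idea is to monitor the shape operator $\shD$ along $\llD$-chains, exploiting that the two families of covers generating $\llD$ affect $\shD$ in complementary ways.

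The key observation is that covers of type \eqref{d-cover1} preserve $\shD$, by Lemma~\ref{same-shape-lem}, whereas a cover $v \llD w$ of type \eqref{d-cover2} sends $\shD(v)$ to $\shD(w)$ by replacing the adjacent blocks $\pm\{|A|,B\},\pm\{C,D\}$ with the nested blocks $\pm\{|A|,D\},\pm\{B,C\}$. The statistic $\nb$ used in the proof of Proposition~\ref{lessdot-prop} strictly increases, so $\shD(v)$ lies strictly below $\shD(w)$ in the partial order on $\cMM(z)$ generated by $\lessdot$. (This refines the comparability statement extracted from Lemma~\ref{ijkl-lem} in the proof of Proposition~\ref{d1-prop1}(a).) Consequently, if $v \llD w$ and $w \llD v$ in $\cAD(z)$, then along each chain the sequence of shapes is nondecreasing, and comparing the two directions forces all shapes along the chains to be equal; hence neither chain uses a cover of type \eqref{d-cover2}. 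Both chains are then $\precsimD$-chains, and Proposition~\ref{rankD-prop1} yields $v = w$.

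For uniqueness of the minimum, given $w \in \cAD(z)$ set $M = \shD(w) \in \cMM(z)$ (by Proposition~\ref{d1-prop1}(a)). Corollary~\ref{k0-precsim-cor} gives $\delta(z,M) \precsimD w$, and hence $\delta(z,M) \llD w$. Proposition~\ref{lessdot-prop} supplies a $\lessdot$-chain from $M_{\min}(X:0)$ up to $M$ in $\cMM(z)$; iterating Lemma~\ref{lessdot-lem2} and invoking transitivity of $\llD$ yields $\delta(z) = \delta(z, M_{\min}(X:0)) \llD \delta(z,M)$, so altogether $\delta(z) \llD w$. Combined with antisymmetry, this shows $\delta(z)$ is the unique minimum. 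The main obstacle is the directional claim in the key observation: a priori Lemma~\ref{ijkl-lem} only gives comparability of shapes, and one has to directly verify via $\nb$ that a \eqref{d-cover2}-cover always raises $\shD$ rather than lowering it, accounting carefully for the implicit constraint that the subwords $u$ and $v$ in \eqref{d-cover2} contain only letters of absolute value strictly below $B$.
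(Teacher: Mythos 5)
Your proof is correct and follows essentially the same route as the paper: antisymmetry by combining the graded order $\precsimD$ of Proposition~\ref{rankD-prop1} with the fact that \eqref{d-cover1} covers preserve $\shD$ while \eqref{d-cover2} covers strictly ``raise'' it, and the minimum obtained exactly as in the paper from Corollary~\ref{k0-precsim-cor}, Lemma~\ref{lessdot-lem2}, and Proposition~\ref{lessdot-prop}. One small caution: your inference from ``$\nb$ strictly increases'' to ``$\shD(v)$ lies below $\shD(w)$ in the order generated by $\lessdot$'' is not valid (and the two shapes need not be $\lessdot$-comparable when the re-paired blocks are not consecutive in $X$), but your argument only uses the monotonicity of $\nb\circ\shD$ along $\llD$-chains, which is correct and suffices.
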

  
  \begin{proof}
  As noted in the proof of Proposition~\ref{d1-prop1}, if 
 $v\llD w $ is a covering relation for $v,w\in \cAD(z)$
 then either $v\precsimD  w$ or $\shD(v) \lessdot \shD(w)$.
 Thus $\llD $ is a partial order
  by Propositions~\ref{lessdot-prop} and \ref{rankD-prop1}.
  One can check directly that $\delta(z)$ is minimal under this order.
  This is the only minimal element in $\cAD(z)$ in view of 
  Proposition~\ref{lessdot-prop},
  Lemma~\ref{lessdot-lem2}, and Corollary~\ref{k0-precsim-cor}.
    \end{proof}
  
  We close this section with two technical lemmas that follow from the results above.
  
    \begin{lemma}\label{cabd-lem}
 Suppose $z \in \I(\WD_n)$ and $w \in \cAD(z)$. If there exists a pair $(a,b) \in \NDes(w)$ with $0<-a<-b$
 then no pairs $(c,d) \in \NDes(w)$ exist with  $|c| < |a| < -b < -d$.
 \end{lemma}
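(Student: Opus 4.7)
The approach is by contradiction. Assume $w \in \cAD(z)$ admits pairs $(a,b), (c,d) \in \NDes(w)$ with $d < b < a < 0$ and $|c| < |a| < |b| < |d|$ (the sign of $c \in (a,-a)$ being unrestricted). By Proposition~\ref{d1-prop1}(c) all four absolute values lie in $\Neg(z)$, and the shape $M = \shD(w)$ contains the nested blocks $\pm\{|a|,|b|\}$ and $\pm\{|c|,|d|\}$. By Corollary~\ref{k0-precsim-cor} we have $\delta(z, M) \precsimD w$, and by Lemma~\ref{same-shape-lem} the shape is preserved all along this $\precsimD$-chain.

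The second step is to inspect $\delta(z, M)$ directly. The outer block $\{|c|,|d|\}$ contributes $(-|d|, |c|)$ to $\Cyc^\pm(z, M)$, and its second coordinate $|c|$ is strictly smaller than that of any other nontrivial matching-block contribution (since $|c| < |a|$); hence the word $[[\Cyc^\pm(z, M)]]_\des$ contains the consecutive four-letter pattern $|c|, -|d|, |a|, -|b|$ arising from the outer- and inner-block contributions. The twist $\langle\cdot\rangle_\es$ either leaves this alone or negates only the first letter of $\delta(z, M)$, and in either case a direct inspection of $\NDes$ shows that the only both-negative pair that can appear is $(-|c|, -|d|)$, coming from the twisted case and corresponding to the outermost block. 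In particular, $\delta(z, M)$ itself does not exhibit the forbidden configuration: any both-negative pair has first coordinate $-|c|$, and no block of $M$ can surround $\{|c|, |d|\}$ on the positive side to furnish the outer pair required by the hypothesis.

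The third step is to propagate this property to an arbitrary $w$ with $\shD(w) = M$ by induction along the $\precsimD$-chain from $\delta(z, M)$ to $w$. Since the covering relations of $\precsimD$ only permute letters of the one-line representation, the negative-letter multiset of $w$ coincides with that of $\delta(z, M)$, so the candidate both-negative descent pairs of $\NDes(w)$ are drawn from a fixed finite set. I would then analyse how each covering — the interior swap $uBCAv \to uCABv$ of case~1 and the type-D-specific initial-segment swap $\bar B C A v \to \bar C A B v$ of case~2 — alters $\NDes$ near pairs of consecutive negatives, with the aim of showing that no such step can introduce a new both-negative pair $(\alpha, \beta)$ with $|\alpha| > |c|$. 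The sole both-negative pair in $\NDes(w)$ therefore continues to involve the outer block $\pm\{|c|, |d|\}$, which together with the assumed inequality $|a| > |c|$ yields the contradiction $(a,b) = (-|c|,-|d|)$ and hence $|a| = |c|$.

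The main obstacle will be the detailed bookkeeping in the third step. Although neither case of $\precsimD$ can act at positions $1$--$3$ of $\delta(z, M)$ itself — the required ordering $A < B < C$ on the initial three entries fails whenever $|c| < |a|$ — a sequence of interior case~1 swaps can migrate letters in $w$ so as to enable a subsequent prefix swap further along the chain. The delicate point is to show, using Lemma~\ref{D1-lem0}(b) applied to each intermediate element together with the absence of consecutive $321$ patterns from Corollary~\ref{well-cor}, that any reconfiguration producing a new both-negative nested descent with first-coordinate absolute value $|a|$ must simultaneously force a prefix of $w$ of the form $CAB$ with $|B| > |C|$, an impossibility for a minimal-length element of $\cAD(z)$.
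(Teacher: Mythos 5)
Your overall skeleton---verify the statement at the generator $\delta(z,M)$ and then propagate it up the $\precsimD$-chain supplied by Corollary~\ref{k0-precsim-cor} and Lemma~\ref{same-shape-lem}---matches the paper's, but as written the proposal has a genuine gap plus two faulty supporting claims. First, your assertion that the covering relations of $\precsimD$ ``only permute letters,'' so that the negative-letter multiset of $w$ coincides with that of $\delta(z,M)$, is false: the prefix relation $\overline{B}CAv \precsimD \overline{C}ABv$ in \eqref{d-cover1} replaces the letters $-B,C$ by $B,-C$, so signs do change along the chain (in the paper's own example $16\overline{52}3487 \precsimD \overline{6512}3487$, the larger element acquires the new both-negative pair $(-1,-2)$ in its nested descent set). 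This destroys the premise of your third step that the candidate both-negative pairs are drawn from a fixed finite set. Second, in the base case the contributions of the outer and inner blocks to $[[\Cyc^\pm(z,M)]]_{\des}$ need not be consecutive: any fixed point, $2$-cycle of $z$, or further block of $M$ whose relevant coordinate lies between $|c|$ and $|a|$ is interleaved between them, so the claimed consecutive pattern $|c|,-|d|,|a|,-|b|$ need not appear (only a subword statement is available, which is all the paper uses); your conclusion about $\NDes(\delta(z,M))$ can still be reached, but not by this route. Third and most seriously, the inductive step---that no covering move can create the forbidden nested configuration---is the actual content of the lemma, and you leave it as a plan (``I would then analyse\dots'', ``the delicate point is to show\dots'') whose proposed contradiction mechanism (forcing a prefix $CAB$ with $|B|>|C|$) is not substantiated; note also that new both-negative pairs genuinely do appear along the chain, so the invariant you aim for has to be chosen more carefully than ``no new both-negative pair with large first coordinate.''

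The paper closes exactly this gap with a different, cleaner invariant: writing the two blocks as $\{a,b\},\{c,d\}\in M$ with $0<c<a<b<d$, the lemma is equivalent to the assertion that the letter $a$ occurs \emph{positively} in every $w$ with $\shD(w)=M$. This holds for $\delta(z,M)$ by construction (either $c\,\overline{d}\,a\,\overline{b}$ or $\overline{c}\,\overline{d}\,a\,\overline{b}$ is a subword of its one-line representation), and it is preserved under both moves in \eqref{d-cover1} because Lemma~\ref{ijkl-lem}, applied to the pairs $(a,-b)$ and $(c,-d)$ or $(-c,-d)$ in $\NDes(w)$, forces the letter $a$ to occupy a position at least $3$, while the covering relations only alter signs among the first letters. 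If you want to complete your argument, replace the direct bookkeeping of $\NDes$ in your third step with an invariant of this positional/sign type.
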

 
 \begin{proof}
 Let $M\in \cMM(z)$ and suppose $\{a,b\},\{c,d\} \in M$ are blocks with $0<c<a<b<d$.
Consulting the definition of $\shD(w)$ in \eqref{k0-shape-eq}, we see that it is equivalent to prove that 
if $\shD(w) = M$ then $a \in \{w_1,w_2,\dots,w_n\}$
rather than $-a \in \{w_1,w_2,\dots,w_n\}$.
This is true when $w = \delta(z,M)$
since 
 either $c\overline{d} a\overline{b}$ or  $\overline{c}\overline{d} a\overline{b}$ 
is a %(not necessarily consecutive) 
subword of the one-line representation of $\delta(z,M)$.

Now assume that $\shD(w) = M$  and $a \in \{w_1,w_2,\dots,w_n\}$. 
Suppose $\hat w \in \WD_n$ and $w \precsimD  \hat w$ is either covering relation in \eqref{d-cover1}.
Given Corollary~\ref{k0-precsim-cor}, it suffices to show that 
we also have $a \in \{\hat w_1,\hat w_2,\dots,\hat w_n\}$.

To prove this, notice that our hypothesis implies that
 $\NDes(w)$ contains $(a,-b)$ and either $(c,-d)$ or $(-c,-d)$.
Since $-c<c<a$ and $-d < -b$, it follows from Lemma~\ref{ijkl-lem} that 
$a\in \{w_3,w_4,\dots,w_n\}$.
Finally, it is clear from \eqref{d-cover1} that $\{w_3,w_4,\dots,w_n\} \subset  \{\hat w_1,\hat w_2, \dots,\hat w_n\}$.
 \end{proof}

Let $y,z \in \I(\WD_n)$ and suppose $v \in \cAD(y)$ and $w \in \cAD(y,z)$ so that $wv \in \cAD(z)$.
Choose a reduced expression $w=t_{i_m}\cdots t_{i_3}t_{i_2}t_{i_1}$
so that we may write 
\be\label{yz-path-eq} y =z^0 \xrightarrow{t_{i_1}} z^1 \xrightarrow{t_{i_2}} z^2 \xrightarrow{t_{i_3}}  \cdots \xrightarrow{t_{i_m}} z^m = z\ee
in the notation of Section~\ref{weak-order-sect}
for some elements $z^i \in \I(\WD_n)$. 
Recall that 
\[C(t_i) =  \{ \{i,|i|+1\},\{-|i|-1,-i\}\} 
\quand \ccM(z) = C(\overline{z})
\]
where $C(u)$ is the set of nontrivial cycles of a signed permutation $u$. 
Define 
\be
\Lambda_0 = \left\{ \pm \{ |w(a)|, |w(b)|\} : \{a,b\} \in \shD(v) \right\}.
\ee
Then, for each $j \in [m]$  let  
$u=t_{i_m}\cdots t_{i_{j+2}}t_{i_{j+1}}$ and define
\be\Lambda_j
=\begin{cases}
\left\{ \pm\left\{|u(a)|, |u(b)|\right\} : \{a,b\} \in C (t_{i_j})\right\} &\text{if $C(t_{i_j}) \subseteq \ccM(z^{j-1})$}\\
\varnothing &\text{if $C(t_{i_j}) \not\subseteq \ccM(z^{j-1})$}.
\end{cases}
\ee 
In terms of this notation, the following holds:

\begin{lemma}\label{Lambda-lem}
We have $\shD(wv) = \Lambda_0 \sqcup  \Lambda_1 \sqcup  \Lambda_2\sqcup \cdots \sqcup  \Lambda_m$.
\end{lemma}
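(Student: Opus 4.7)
The plan is to proceed by induction on $m = \ell(w)$. The base case $m=0$ has $w=1$, so that $y=z$ and $wv = v$; in this case $\Lambda_0 = \shD(v)$ by definition and there are no $\Lambda_j$ for $j \geq 1$, so the identity is immediate.

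For the inductive step, set $s = t_{i_m}$ and $w' = t_{i_{m-1}}\cdots t_{i_1}$, so that $v' \defequals w'v$ belongs to $\cAD(z^{m-1})$ and $wv = sv'$. Applying the inductive hypothesis to the reduced expression for $w'$ yields
\[
\shD(v') = \Lambda_0' \sqcup \Lambda_1 \sqcup \cdots \sqcup \Lambda_{m-1},
\]
where $\Lambda_0' = \{\pm\{|w'(a)|, |w'(b)|\} : \{a,b\}\in\shD(v)\}$. For each $j \in \{0,1,\dots,m-1\}$, the pointer word used to define $\Lambda_j$ with respect to $w$ is precisely $s$ times the corresponding pointer word with respect to $w'$; since $|s(\pm c)| = |s(c)|$ for every $c$, one checks that $\Lambda_j = \{\pm\{|s(x)|,|s(y)|\} : \{x,y\} \in \Lambda_j'\}$ for every such $j$. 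Hence the lemma reduces to the following \emph{transport identity}:
\[
\shD(sv') = \bigl\{\pm\{|s(x)|, |s(y)|\} : \{x,y\} \in \shD(v')\bigr\} \sqcup \Lambda_m,
\]
where $\Lambda_m = \{\pm\{|a|,|b|\} : \{a,b\} \in C(s)\}$ if $C(s) \subseteq \ccM(z^{m-1})$ and $\Lambda_m = \varnothing$ otherwise.

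To verify the transport identity I would combine three ingredients. Proposition~\ref{d1-prop1}(a) guarantees that $\shD(v')$ and $\shD(sv')$ are noncrossing symmetric perfect matchings on $\pm \Neg(z^{m-1})$ and $\pm \Neg(z)$, respectively. Lemma~\ref{MS-lem} then splits the analysis into two cases: in case (a), $C(s) \subseteq \ccM(z^{m-1})$, whence $\cS(z) = \cS(z^{m-1}) \sqcup \supp(s)$, so $\Neg(z) = \Neg(z^{m-1}) \sqcup \{|i_m|,|i_m|+1\}$ and $s$ acts as the identity on $\pm\Neg(z^{m-1})$ (because the points moved by $s$ are disjoint from $\cS(z^{m-1})$ in this case); in case (b), $s$ restricts to a bijection $\pm\Neg(z^{m-1}) \to \pm\Neg(z)$. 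Finally, the Demazure-conjugation formulas of Section~\ref{d-conj-sect} describe how the one-line notation of $sv'$ differs from that of $v'$, and the identity follows by applying \eqref{k0-shape-eq} together with the characterization of $\Neg(z)$ in Proposition~\ref{d1-prop1}(c).

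I expect the main obstacle to be case (a), where a genuinely new symmetric pair of blocks enters $\shD(sv')$ and one must show that this new pair is precisely $\pm\{|a|,|b|\}$ for $\{a,b\} \in C(s)$, without disturbing the previously existing blocks. The natural argument is to single out a descent of $sv'$ corresponding to the two new consecutive letters introduced by $s$ and to remove it first via Proposition~\ref{ndes-prop} (available here because $\cAD(z)$ is well-nested by Corollary~\ref{well-cor}); after removal the residual descent and nested-residue data should coincide with those of $v'$, which by Proposition~\ref{d1-prop1} and \eqref{k0-shape-eq} produces exactly the transported matching. Case (b) is comparatively routine: the descent structure of $sv'$ is merely the $s$-transport of the descent structure of $v'$, since $\pm\Neg$ is simply relabeled by $s$ and no new blocks are created.
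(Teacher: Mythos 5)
Your proposal is correct and follows essentially the same route as the paper: your induction on $m$, hinging on $|s(|c|)|=|s(c)|$, is precisely the paper's reduction to the case $m=1$, and your ``transport identity'' is exactly the single-step statement the paper then verifies (left as an exercise) using Proposition~\ref{d1-prop1}. One small caution: in your case (b) the nested-descent data of $sv'$ is not always literally the $s$-transport of that of $v'$ (for instance when $z^{m-1}$ fixes both $i$ and $i+1$, left multiplication by $t_i$ can create a new descent pair $(i+1,i)$), but any such new pair satisfies $|i+1|>-i$ and hence contributes no block to $\shD$, so the shape-level conclusion you need still holds.
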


\begin{proof}
Since $ |u(|a|)|=|u(a)|$ when $u \in \WD_n$ and $a \in [\pm n]$,
it suffices to prove the lemma when $m=1$.
In this case, what we need to show is that if $z=t_i \circ y \circ t_i\neq y$ and $v \in \cAD(y)$ then 
$\shD(t_iv)$  is the disjoint union of
$
 \left\{ \pm \{ |t_i(a)|, |t_i(b)|\} : \{a,b\} \in \shD(v) \right\}
$
and
\[
\begin{cases}
\left\{ \left\{i,i+1\right\}, \left\{-i,-i-1\right\}  \right\} &\text{if $i>0$ and $y(i) = -i-1$}\\
\left\{ \left\{1,2\right\}, \left\{-1,-2\right\}  \right\} &\text{if $i=-1$ and $y(1) = 2$}\\
\varnothing &\text{otherwise}.
\end{cases}
\]
This can be checked as an exercise using Proposition~\ref{d1-prop1}.
\end{proof}

 \subsection{Type DI}\label{tDI-sect}

The propositions in this section establish Theorems~\ref{main-thm} and \ref{main-thm2} in type DI.
Fix an integer $0\leq k \leq n$ and an involution $z \in \I(\WD_n)$.
Recall from Definition~\ref{AA-def} that 
\be\label{AA-def2}
 \hat \sigma_k^n =\overline{1}\hs \overline{2}\hs \overline{3}\cdots \overline{k}(k+1)\cdots n
 \text{ if $k$ is even}
 \quand
 \hat \sigma_k^n =1\hs \overline{2}\hs \overline{3}\cdots \overline{k}(k+1)\cdots n
 \text{ if $k$ is odd},
 \ee
 and that we define $\cAD(z:k) = \cA(\hat \sigma_k^n, z)$ and $\cAD(z) = \cA(1, z)$.

 \begin{lemma}\label{inc-lem1}
The injective map  $\inc_{\DI}^{n,k} :\WD_n \to \WD_n$ given by
\[\inc_{\DI}^{n,k} : w \mapsto 
\begin{cases}
w_1\overline{w_2} w_3\overline{w_4}w_5\overline{w_6}\cdots w_{k-1}\overline{w_k}w_{k+1}w_{k+2}w_{k+3}\cdots w_n &\text{if $k\equiv 0\modu 4)$} \\
w_1w_2\overline{w_3}w_4\overline{w_5}w_6\cdots w_{k-1}\overline{w_k}w_{k+1}w_{k+2}w_{k+3}\cdots w_n &\text{if $k\equiv 1\modu 4)$} \\
\overline{w_1}\hs\overline{w_2} w_3\overline{w_4}w_5\overline{w_6}\cdots w_{k-1}\overline{w_k}w_{k+1}w_{k+2}w_{k+3}\cdots w_n &\text{if $k\equiv 2\modu 4)$} \\
\overline{w_1}w_2\overline{w_3}w_4\overline{w_5}w_6\cdots w_{k-1}\overline{w_k}w_{k+1}w_{k+2}w_{k+3}\cdots w_n &\text{if $k\equiv 3\modu 4)$}
\end{cases} 
\]
restricts to a bijection 
\[\cAD(z:k) \to  \left\{ w \in \cAD(z) :  |w_1| < (-1)^{k+1} w_2< (-1)^{k+2} w_3<  \dots  < w_{k-1} < -{w_k}\right\}.\]
\end{lemma}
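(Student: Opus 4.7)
The plan is to identify the map $\inc := \inc_{\DI}^{n,k}$ as right multiplication by the diagonal signed permutation $\alpha := \inc(\id) \in \WD_n$. Since $\inc$ changes signs only at positions determined by $k \bmod 4$, one has $\inc(w) = w\alpha$ for every $w \in \WD_n$, and the four-case parity check in the definition of $\inc$ is exactly what makes $\alpha$ have an even number of negated positions, so $\alpha \in \WD_n$. Injectivity is then immediate, and the lemma reduces to identifying the image.

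The main step is to show that $\alpha \in \cA(\hat\sigma_k^n)$, that is, $\alpha \circ \alpha = \hat\sigma_k^n$ (using that $\alpha$ is an involution) and $\ell(\alpha) = \ellhat(\hat\sigma_k^n)$. A reduced expression for $\alpha$ can be built by conjugating $\hat\sigma_2^n = t_{-1}t_1$ to successive pairs of positions along $(1,2), (2,3), \ldots, (k-1,k)$ --- for instance $\alpha = t_2 t_{-1} t_1 t_2$ when $k = 3$ --- with a boundary adjustment that depends on $k \bmod 4$. Iterating the type D Demazure formulas of Section~\ref{d-conj-sect} then yields $\alpha \circ \alpha = \hat\sigma_k^n$, while a direct length computation via $\ell = (\inv^\pm - \ell_0)/2$ confirms minimality. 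Lemma~\ref{dem-lem} consequently provides a bijection
\[
u \mapsto u\alpha \,:\, \cAD(z:k) \xrightarrow{\sim} \bigl\{\, w \in \cAD(z) : \ell(w\alpha) = \ell(w) - \ell(\alpha) \,\bigr\}.
\]

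The final task is to identify the length-reduction condition on the right with the descent chain $|w_1| < (-1)^{k+1} w_2 < \cdots < w_{k-1} < -w_k$. Using the chosen reduced word for $\alpha$, the length condition decomposes into a sequence of right-descent conditions on partial products of $w$ with suffixes of $\alpha$, which translate into a family of pairwise inequalities between entries of $w$. The chain readily implies all of these inequalities. For the converse, the pairwise inequalities alone are a priori weaker than the chain, but become equivalent once one invokes Lemma~\ref{D1-lem0} to rule out pathological configurations such as $|w_1| > w_2 > w_3$ which cannot occur inside $\cAD(z)$. The main obstacle is the case analysis on $k \bmod 4$, which simultaneously governs the reduced expression of $\alpha$ and the boundary behavior at position 1: the appearance of $|w_1|$ rather than a signed $w_1$ reflects the fact that $t_1$ and $t_{-1}$ must contribute descents simultaneously at the left end, a phenomenon specific to type D that has no analog in types B or C.
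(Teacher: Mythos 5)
Your overall route is the same as the paper's: the map is right multiplication by the element $\alpha=\inc_{\DI}^{n,k}(1)$ (the paper calls it $v^{(k)}$ and builds it by exactly the conjugation recursion you describe, $v^{(k)}=t_{k-1}\cdots t_2t_1t_{-1}t_2\cdots t_{k-1}v^{(k-2)}$), one shows $\alpha\in\cAD(\hat\sigma^n_k)$, and Lemma~\ref{dem-lem} reduces the lemma to identifying $\{w\in\cAD(z):\ell(w\alpha)=\ell(w)-\ell(\alpha)\}$ with the set cut out by the chain of inequalities. One soft spot before the main issue: ``a direct length computation confirms minimality'' is not by itself enough to give $\alpha\in\cA(\hat\sigma^n_k)$, since computing $\ell(\alpha)$ does not show it equals $\ellhat(\hat\sigma^n_k)$; the paper gets $v^{(k)}\in\cAD(\hat\sigma^n_k)$ by induction along the recursion using \eqref{szs-eq} and \eqref{szs-length-eq}. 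This is fixable, but it needs an argument.

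The genuine gap is in the converse of your last step. Unwinding any reduced word of $\alpha$, the length condition is equivalent to the family of inequalities $\max\{|w_1|,\dots,|w_{j-1}|\}<-w_j$ for all $2\le j\le k$ with $j\equiv k\modu 2)$, and these are strictly weaker than the chain; Lemma~\ref{D1-lem0} alone does not close the gap. For instance, with $k=6$ consider a prefix $w_1\cdots w_6=1\hs\bar3\hs2\hs\bar6\hs\bar4\hs\bar8$: all of the above inequalities hold, there is no consecutive $321$ pattern, and neither forbidden pattern of Lemma~\ref{D1-lem0} occurs in $w_1w_2w_3$, yet the chain fails because $-w_2=3>2=w_3$. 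Ruling such configurations out of $\cAD(z)$ is exactly where the paper has to work: it uses that $\cAD(z)$ is a well-nested family (Corollary~\ref{well-cor}, via Theorem~\ref{d-equiv-lem}) to place the pairs $(w_{j-1},w_j)$ in $\NDes(w)$, and then invokes Lemma~\ref{ijkl-lem}, Lemma~\ref{cabd-lem}, and above all the noncrossing property of $\shD(w)$ from Proposition~\ref{d1-prop1}(a) to force the interleavings $-w_j<w_{j+1}$ (in the example, the shape would contain the crossing blocks $\{1,3\}$ and $\{2,6\}$, which Proposition~\ref{d1-prop1}(a) forbids). Without these inputs the converse implication does not go through, so your proof needs to cite or reprove this nesting/noncrossing machinery rather than rely on Lemma~\ref{D1-lem0} alone.
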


\begin{proof}
Let $v^{(k)} \in \WD_n$ be the element with $v^{(0)} = v^{(1)} = 1$ and $v^{(2)} = t_1\cdot t_{-1}$ and  
\[v^{(k)} =  t_{k-1}\cdots t_3\cdot t_2\cdot  t_1 \cdot t_{-1}\cdot t_2\cdot t_3 \cdots t_{k-1} \cdot v^{(k-2)}\quad\text{ for }3\leq k \leq n.\]
It is easy to check by induction on $k$ that $\inc_{\DI}^{n,k}(w) = wv^{(k)}$ for all $w \in \WD_n$.
Since $\inc_{\DI}^{n,k}$ is an involution, the same is true of the element $v^{(k)}$.
Using this fact, and
recalling 
 for $i \in [n-1]$ that
 \be\label{d-descent-eq}
 \ell(w t_i) > \ell(w)\text{ when $w_i > w_{i+1}$}
 \quand
 \ell(wt_{-1}) > \ell(w)\text{ when $ -w_1<w_2$,}
 \ee
  we can likewise deduce by induction that 
$
\ell(v^{(k)}) = \ell(v^{(k-2)}) + 2k-2 $ when $k\geq 2$.
Hence, our inductive formula for $v^{(k)}$ provides a reduced expression for this element.

Next, using \eqref{szs-eq} and \eqref{szs-length-eq}, one can show by induction in a similar way that
 $v^{(k)} \in \cAD( \hat \sigma_k^n)$ for all $k$.
Lemma~\ref{dem-lem} therefore implies that $\inc_{\DI}^{n,k}$ is a bijection
\[\cAD(z:k) = \cA(\hat\sigma_n^k,z) \to  \left\{ w \in \cAD(z) : \ell( w v^{(k)}) = \ell(w) - \ell(v^{(k)})\right\}.\]
Let $w \in \cAD(z)$. To finish the proof, we will show that 
$ \ell( w v^{(k)}) = \ell(w) - \ell(v^{(k)})$
if and only if \be
\label{chain-eq}
 |w_1| < (-1)^{k+1} w_2< (-1)^{k+2} w_3<  \dots  < w_{k-1} < -{w_k}.
 \ee
It follows from \eqref{d-descent-eq}  that 
$
\ell(w\cdot  t_{j-1}\cdots t_3\cdot t_2\cdot  t_1 \cdot t_{-1}\cdot t_2\cdot t_3 \cdots t_{j-1} ) = \ell(w) - (2j-2)
$
if and only if 
\be\label{inc-cond-eq} 
 \max\{|w_1|, |w_2|,\dots,|w_{j-1}|\} < -w_j.
\ee
This fact makes it easy to see that \eqref{chain-eq} implies $ \ell( w v^{(k)}) = \ell(w) - \ell(v^{(k)})$.

Conversely suppose that $ \ell( w v^{(k)}) = \ell(w) - \ell(v^{(k)})$.
Then \eqref{inc-cond-eq} must hold for all $1<j \leq k$ with $j \equiv k \modu 2)$ so
for all such indices $j$ we have
\[(w_{j-1},w_j) \in \NDes(w)\quand \{ |w_{j-1}|, -w_j\} \in \shD(w)
\] 
along with $0 < \dots < -w_{k-4} < -w_{k-2} < -w_{k}. $ We deduce from
Lemma~\ref{ijkl-lem} that $w_{j-3} < w_{j-1}$ whenever $3<j \leq k$ has $j\equiv k \modu 2)$.

If $k \in \{1,2\}$ then \eqref{chain-eq} follows immediately.
Assume $k\geq 3$ is odd. Then we have $|w_1| < -w_3$ and $|w_{2}| < -w_3$ so $w_2>w_3$.
As we cannot have $|w_1| > w_2>w_3$ by Lemma~\ref{D1-lem0}
it follows that 
\[|w_1| < w_2 < -w_3 < -w_5 <-w_7< \dots < -w_k
\quand 0<w_2<w_4<w_6<\dots<w_{k-1}.\]
Since  $ \shD(w)$ is noncrossing
by Proposition~\ref{d1-prop1},
 the   chain of inequalities \eqref{chain-eq} must hold.
 
Finally assume $k\geq 4$ is even. Then $|w_1| < -w_2<-w_4$ and $|w_3| < -w_4$ and $w_1<w_3$.
If $w_1$ is negative then Lemma~\ref{cabd-lem} implies that $|w_1|<w_3$.
Thus $|w_1|<w_3$ so  $\{ |w_1|, -w_2\}$ and $\{w_3,-w_4\}$ are both blocks
of the noncrossing matching $\shD(w)$,
which means that we must have
\[|w_1| < -w_2< w_3 < -w_4 < -w_6 < -w_8<\dots <-w_k
\quand
|w_1| <w_3 < w_5 <\dots < w_{k-1}.
\]
The desired property \eqref{chain-eq} now follows again because $\shD(w)$ is noncrossing.
\end{proof}

From now on, choose $p,q \in \NN$ with $2n=p+q$ such that $n+p$ is even, and set $k = \frac{|p-q|}{2}$.
Assume 
\[ z \in \I_{\DI}^{(p,q)} = \{ y \in \I(\WD_n): \neg(y) \geq k\}\]
and notice that $k = |p-n|$ is also even. 
Recall that $\cA_{\DI}^{(p,q)}(z) =  \cAD(z:k)$.

\begin{corollary}\label{D1-well-cor}
The map $w=w_1w_2\cdots w_n \mapsto w_{k+1}w_{k+2}\cdots w_n$ is a bijection
from $\cAD(z:k)$ to a well-nested family of partial permutations.
\end{corollary}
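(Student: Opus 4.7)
My plan is to identify $\cAD(z:k)$ with a subset of $\cAD(z)$ via Lemma~\ref{inc-lem1}, and then deduce the statement from the fact (Corollary~\ref{well-cor}) that $\cAD(z)$ is itself well-nested. Specifically, let $S\subseteq\cAD(z)$ denote the image of $\inc_{\DI}^{n,k}$, namely the set of $w\in\cAD(z)$ satisfying the chain condition
\[|w_1| < (-1)^{k+1} w_2 < (-1)^{k+2} w_3 < \cdots < w_{k-1} < -w_k.\]
Because $\inc_{\DI}^{n,k}$ fixes positions $k+1,\ldots,n$, the map in question factors as the bijection $\inc_{\DI}^{n,k}\colon\cAD(z:k)\to S$ followed by the projection $w\mapsto w_{k+1}\cdots w_n$ on $S$. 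It therefore suffices to prove that this projection is injective with well-nested image.

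For injectivity, I would first observe that the chain condition forces $|w_1|<|w_2|<\cdots<|w_k|$, so given $u=w_{k+1}\cdots w_n$ the multiset $\{|w_1|,\ldots,|w_k|\}$ is necessarily the complement of $\{|w_{k+1}|,\ldots,|w_n|\}$ in $[n]$, arranged in increasing absolute value. The chain also dictates the signs of $w_2,\ldots,w_k$, and the parity requirement that $w\in\WD_n$ has an even number of negative entries then uniquely determines the sign of $w_1$. Hence $w$ is recovered from $u$.

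It remains to verify that the image is a well-nested family: its elements are partial permutations with no consecutive $321$-pattern, and the set is closed under $\precsim=\overset{0}\precsim$. The first point is immediate from Lemma~\ref{D1-lem0}(a), since any pattern $u_i>u_{i+1}>u_{i+2}$ in $u=w_{k+1}\cdots w_n$ would yield a consecutive descent in $w$ at index $k+i\geq 1$, which is forbidden. The main step is closure: given $w\in S$ with projection $u$, and a word $v$ differing from $u$ by a single covering step of $\overset{0}\precsim$ (in either direction) at three consecutive positions $i,i+1,i+2$ of the truncated word, I form $w'=w_1\cdots w_k v$ and observe that $w$ and $w'$ are related by the \emph{first} kind of covering relation in \eqref{d-cover1}, which is permitted with any (possibly empty) prefix and does not alter any signs. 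By Theorem~\ref{d-equiv-lem} we then have $w'\in\cAD(z)$, and since $w'$ retains the first $k$ letters of $w$ it still satisfies the chain condition; thus $w'\in S$ and projects to $v$.

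The main obstacle is this final lifting step: one must check that only the sign-preserving first type of covering in \eqref{d-cover1} is ever invoked, and never the second (negation-altering) type, which would disturb the chain-condition prefix. This holds precisely because our exchange takes place strictly beyond position $k$, so the full word always has a nonempty prefix available (and the degenerate case $k=0$ reduces immediately to Corollary~\ref{well-cor} since the chain condition is then vacuous and the projection is the identity).
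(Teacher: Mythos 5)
Your proposal is correct and takes essentially the same route as the paper, whose proof simply cites Lemma~\ref{inc-lem1} for injectivity and Corollary~\ref{well-cor} for well-nestedness of the image. You have just made explicit the details the paper leaves implicit: recovering the first $k$ letters (absolute values from the complement, signs from the chain condition plus the $\WD_n$ parity) to get injectivity, and lifting $\overset{0}\precsim$-covers of the truncated words through the sign-preserving relation in \eqref{d-cover1} together with Theorem~\ref{d-equiv-lem} and Lemma~\ref{D1-lem0}(a) to get closure and the absence of consecutive $321$-patterns.
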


\begin{proof}
By Lemma~\ref{inc-lem1} this map is 
injective, and its image is well-nested by Corollary~\ref{well-cor}.
\end{proof}

Below, we write $\precsim$ to denote the relation $\overset{0}\precsim$ from Section~\ref{word-rel-sect}.
\begin{lemma}\label{D1-t-lem}
Assume that $k >0$ is even and 
 $w \in \cAD(z:k)$.
 \ben
 \item[(a)] If $w_{k+1}<0$ then $|w_{k+1}| < |w_{1}| $.

\item [(b)] If $(a,b) \in \NDes(w_{k+1} \cdots w_n)$ has $|a|<-b$ then no $i\in [k]$ exists with $|a| < |w_i|<-b$.
\een
\end{lemma}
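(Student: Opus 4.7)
My plan is to work with $\tilde w := \inc_{\DI}^{n,k}(w) \in \cAD(z)$, lifted from $w \in \cAD(z:k)$ via Lemma~\ref{inc-lem1}. This will give the chain $c_1 < c_2 < \cdots < c_k$, where $c_j := |\tilde w_j|$, together with $\tilde w_i = w_i$ for all $i > k$ and $|\tilde w_1| = |w_1|$. The blocks $B_j := \{c_{2j-1}, c_{2j}\}$ for $j \in [k/2]$ will then belong to $\shD(\tilde w) \in \cMM(z)$; I will use Proposition~\ref{ndes-prop} and Corollary~\ref{well-cor} to decompose $\NDes(\tilde w) = \{(\tilde w_{2j-1},\tilde w_{2j}) : j \in [k/2]\} \sqcup \NDes(\tilde w_{k+1}\cdots\tilde w_n)$, and $\shD(\tilde w)$ is noncrossing by Proposition~\ref{d1-prop1}.

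For part (b), I would begin with a pair $(a,b) \in \NDes(\tilde w_{k+1}\cdots\tilde w_n)$ with $|a|<-b$, so that the block $\{|a|,-b\}$ lies in $\shD(\tilde w)$. Supposing for contradiction some $i\in[k]$ has $|a|<c_i<-b$, set $j=\lceil i/2\rceil$. Noncrossing of $\shD(\tilde w)$ applied to $B_j$ and $\{|a|,-b\}$ forces either $B_j\subset (|a|,-b)$ or $\{|a|,-b\}\subsetneq B_j$. The latter places $(|a|,-b)\subset(c_{2j-1},c_{2j})$, an open interval containing no $c_\ell$, contradicting the existence of $i$. In the former case I would write $a=\tilde w_p$, $b=\tilde w_{p+1}$ for some $p\geq k+1$ and apply Lemma~\ref{ijkl-lem} to the pairs $(\tilde w_p, \tilde w_{p+1})$ and $(\tilde w_{2j-1}, \tilde w_{2j})$: using $a<c_{2j-1}=\tilde w_{2j-1}$ (whether $a>0$ or $a<0$) and $b<-c_{2j}=\tilde w_{2j}$ (since $-b>c_{2j}$ forces $b<0$), the lemma demands $p<p+1<2j-1<2j$, so $p\leq 2j-3$; but $p\geq k+1\geq 2j+1$, a contradiction.

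For part (a), I would first apply Lemma~\ref{D1-lem0}(a) to the triple $\tilde w_{k-1}\tilde w_k\tilde w_{k+1}$ (where $\tilde w_{k-1}>0>\tilde w_k$ comes from the chain) to force $\tilde w_{k+1}>\tilde w_k$, yielding $|w_{k+1}|=-\tilde w_{k+1}<c_k$. Signed-permutation distinctness then excludes $-\tilde w_{k+1}\in\{c_1,\dots,c_k\}$, so assuming for contradiction $-\tilde w_{k+1}\geq c_1$, I set $Z := -\tilde w_{k+1}$ and pick $j_0\in[k-1]$ with $c_{j_0}<Z<c_{j_0+1}$. I will then successively apply the $\precsimD$-covering move from Definition~\ref{d-cover-def0} to shift $-Z$ two positions leftward at a time: at step $m$, the triple $(c_{k+1-2m}, -c_{k+2-2m}, -Z)$ at positions $k+1-2m, k+2-2m, k+3-2m$ is in CAB form (with $A=-c_{k+2-2m}$, $B=-Z$, $C=c_{k+1-2m}$) and transforms to $(-Z, c_{k+1-2m}, -c_{k+2-2m})$. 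Such a move is valid precisely when $Z<c_{k+2-2m}$, and Theorem~\ref{d-equiv-lem} will keep every intermediate word in $\cAD(z)$. When $j_0=1$, after $m=k/2-1$ moves the entries at positions $1,2,3$ become $(\tilde w_1, -c_2, -Z)$ with $Z<c_2$ and $|\tilde w_1|=c_1$, so Lemma~\ref{D1-lem0}(b) (with $C=\tilde w_1$, $A=-c_2$, $B=-Z$) yields $Z<c_1$, contradicting $Z>c_1$. When $j_0\geq 2$, setting $l=\lceil (j_0-1)/2 \rceil$, the process halts at $m=k/2-l$ moves with $-Z$ at position $2l+1$, and the triple $(c_{2l-1},-c_{2l},-Z)$ at positions $2l-1,2l,2l+1$ satisfies $c_{2l-1}>-c_{2l}>-Z$ (since $Z>c_{j_0}\geq c_{2l}$), producing a consecutive $321$-pattern contradicting Lemma~\ref{D1-lem0}(a).

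The hard part will be the bookkeeping in part (a): one must simultaneously verify that each $\precsimD$-move preserves $\cAD(z)$-membership and that the three-letter configuration at the stopping point fits the hypothesis of Lemma~\ref{D1-lem0}(a) or Lemma~\ref{D1-lem0}(b) depending on whether $j_0\geq 2$ or $j_0=1$. Once the correct stopping position is pinned down, the two sub-cases should close cleanly via Lemma~\ref{D1-lem0}.
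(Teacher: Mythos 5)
Part (a) of your proposal is correct and is essentially the paper's own argument made explicit: the paper also pushes the letter $w_{k+1}=-Z$ leftward through the pairs $(c_{2i-1},-c_{2i})$ via the covering moves of $\precsimD$, using Theorem~\ref{d-equiv-lem} to stay inside $\cAD(z)$ and Lemma~\ref{D1-lem0} at the end; your case split on $j_0$ just records where the process stops (either a consecutive $321$ pattern appears, or $-Z$ reaches position $3$ and Lemma~\ref{D1-lem0}(b) applies), which is a legitimate, slightly more careful write-up of the same idea. One cosmetic point: at the stopping triple with $l=1$ the first letter may be $\tilde w_1=-c_1$ rather than $c_1$, but since $-c_1>-c_2>-Z$ the $321$ contradiction survives, so this is harmless.

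Part (b) has a genuine gap. Your application of Lemma~\ref{ijkl-lem} needs the inequality $a<\tilde w_{2j-1}$, and you justify it by writing $\tilde w_{2j-1}=c_{2j-1}>0$; this is true for $2j-1\ge 3$, but when $j=1$ (i.e.\ the offending index $i$ lies in $\{1,2\}$) the first letter of $\tilde w$ may be negative, $\tilde w_1=-c_1$. In that case the nested-descent pair coming from positions $1,2$ is $(-c_1,-c_2)$, and since $|a|<c_1$ one has $a>-c_1$, so neither orientation of Lemma~\ref{ijkl-lem} applies to the pairs $(a,b)$ and $(-c_1,-c_2)$: you would need $a<-c_1$ (false) or $b>-c_2$ (false, as $-b>c_2$). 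This is exactly the exceptional configuration the paper isolates ("$i=1$ and $v_1<0$") and excludes by invoking Lemma~\ref{cabd-lem}, which says that a pair $(a',b')\in\NDes(\tilde w)$ with $0<-a'<-b'$ cannot coexist with a pair $(a,b)\in\NDes(\tilde w)$ satisfying $|a|<-a'<-b'<-b$. Your proof is repaired simply by adding this citation for the $j=1$, $\tilde w_1<0$ case, but as written the step fails there. A second, minor imprecision: you write $a=\tilde w_p$, $b=\tilde w_{p+1}$, i.e.\ you assume a nested-descent pair occupies adjacent positions, which need not hold; fortunately your argument only uses that both positions exceed $k$, so this does not affect correctness once stated properly.
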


\begin{proof}
Let $v = \inc_{\DI}^{n,k}(w)$. 
For part (a), suppose $w_{k+1}<0$ and $|w_{k+1}| > |w_{1}| $. 
Then $v_{k+1} <0$ and $v_{k+1} <- |v_1|$.
Since 
$
 |v_1| < -v_2< v_3<  \dots  < v_{k-1} < -v_k
 $ by Lemma~\ref{inc-lem1},
 it follows from Corollary~\ref{well-cor} that
 there exists an element $u\in \cAD(z)$ with $u\precsim v$
and $u_1u_2u_3 = v_1v_2 v_{k+1}$.
But then either $u_1>u_2>u_3$, or   $u_2<u_3<|u_1|$ while 
\[|u_3|=|v_{k+1}|=|w_{k+1}|>|w_1| = |v_1|=|u_1|.\]
Both cases contradict Lemma~\ref{D1-lem0},
so if $w_{k+1}<0$ then we must have $|w_{k+1}|<|w_1|$.

For part (b), let $M = \shD(v)$.
By Lemma~\ref{inc-lem1}, this matching consists of the blocks 
\[\pm \{ |w_1|<|w_2|\},\ \pm \{ |w_3|< |w_4|\},\ \dots,\ \pm\{|w_{k-1}|< |w_{k}|\}\]
together with $\pm \{|a|,-b\}$ for each $(a,b) \in \NDes(v_{k+1}\cdots v_n)= \NDes(w_{k+1}\cdots w_n)$
with $|a|<-b$.
Suppose $(a,b)\in  \NDes(w_{k+1}\cdots w_n)$ is a pair with $|a| < |w_i| < -b$ for some $i \in [k]$.
Since $M$ is noncrossing by Proposition~\ref{d1-prop1}(a),
the index $i$ must be odd and
  $|a| < |w_i| < |w_{i+1}|<-b$.
Then, since $|v_i|=|w_i|$ and $
 |v_1| < -v_2< v_3<  \dots  < v_{k-1} < -v_k
 $,
it follows that 
$a<|v_i|$ and $b < v_{i+1}$. The last two
inequalities contradict  Lemma~\ref{ijkl-lem}
unless $i=1$ and $v_1<0$, since outside this case we would have  $v_i=|v_i|$
and $(a,b),(v_i,v_{i+1}) \in \NDes(v)$.

We are left to consider the situation when $v_1<0$ and some
 $(a,b) \in \NDes(v_{k+1}\cdots v_n)$ has
$|a| < |v_1| < |v_2| = -v_2 < -b$. However, Lemma~\ref{cabd-lem} says precisely
that this case cannot occur.
\end{proof}

In the following proofs let $X=\Neg(z)$ and recall that $ \cM_{\DI}^{(p,q)}(z) = \NCSP(X:k)$.

\begin{proposition}\label{d1-prop2}
Suppose $z \in\I_{\DI}^{(p,q)}$ and
 $w \in \cA_{\DI}^{(p,q)}(z)$ when $k = \frac{|p-q|}{2}$ is even. Then \[\sh_{\DI}^{(p,q)}(w)  \in \cM_{\DI}^{(p,q)}(z).\]
\end{proposition}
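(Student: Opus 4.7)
The plan is to reduce the claim to an application of Proposition~\ref{d1-prop1}(a) to the lifted element $v = \inc_{\DI}^{n,k}(w)$. First I would invoke Lemma~\ref{inc-lem1}: since $k$ is even, $v \in \cAD(z)$ satisfies
\[
|v_1| < -v_2 < v_3 < -v_4 < \cdots < v_{k-1} < -v_k,
\]
and unwinding the explicit formula for $\inc_{\DI}^{n,k}$ translates this chain into $|w_1| < w_2 < w_3 < \cdots < w_k$ in the coordinates of $w$. In particular $|w_1|, w_2, \dots, w_k$ are pairwise distinct positive integers, and $w_i > 0$ for $i \geq 2$.

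Next I would unpack the structure of $\shD(v)$. Since $v \in \cAD(z)$, Corollary~\ref{well-cor} places $v$ in a well-nested family, and Proposition~\ref{ndes-prop} lets me compute $\NDes(v)$ by peeling off the descents $(v_1,v_2), (v_3,v_4), \dots, (v_{k-1}, v_k)$ (each of which is a descent by the chain above), after which the residual word is $v_{k+1}\cdots v_n = w_{k+1}\cdots w_n$. Each peeled pair satisfies $|v_{2i-1}| < -v_{2i}$, so contributes a block $\pm\{|v_{2i-1}|,-v_{2i}\}$ to $\shD(v)$; under $\inc_{\DI}^{n,k}$ this block equals $\pm\{|w_{2i-1}|, |w_{2i}|\}$. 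The remaining blocks of $\shD(v)$ come from $\NDes(w_{k+1}\cdots w_n)$. By Proposition~\ref{d1-prop1}(a), $\shD(v) \in \NCSP(X:0)$ where $X = \Neg(z)$, which forces $\{|w_1|,\dots,|w_k|\} \subseteq X$.

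Comparing with the definition of $\sh_{\DI}^{(p,q)}(w)$, I would then observe that its non-trivial blocks coincide with the blocks of $\shD(v)$ arising from $\NDes(w_{k+1}\cdots w_n)$, while the $k$ trivial blocks $\{-|w_i|,|w_i|\}$ cover the same $2k$ vertices as the first $k/2$ pairs of $\shD(v)$. It follows that $\sh_{\DI}^{(p,q)}(w)$ is a symmetric perfect matching on $X \sqcup -X$ with exactly $k$ trivial blocks.

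The main obstacle is verifying the noncrossing property. Mutual non-crossing among the non-trivial blocks is inherited from $\shD(v)$, and the trivial blocks $\{-|w_i|, |w_i|\}$ are pairwise nested by the chain $|w_1| < w_2 < \cdots < w_k$. The hard part is ruling out crossings between a trivial block $\{-|w_i|,|w_i|\}$ and a non-trivial block $\pm\{|a|,-b\}$ coming from some $(a,b) \in \NDes(w_{k+1}\cdots w_n)$ with $|a| < -b$; such a crossing would require $|a| < |w_i| < -b$, which is exactly the configuration ruled out by Lemma~\ref{D1-t-lem}(b). Hence $\sh_{\DI}^{(p,q)}(w) \in \NCSP(X:k) = \cM_{\DI}^{(p,q)}(z)$ as required.
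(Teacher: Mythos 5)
Your proof is correct and follows essentially the same route as the paper's: lift $w$ to $v=\inc_{\DI}^{n,k}(w)$ via Lemma~\ref{inc-lem1}, apply Proposition~\ref{d1-prop1} to get $\shD(v)\in\NCSP(X:0)$, note that $\sh_{\DI}^{(p,q)}(w)$ is obtained by replacing the blocks $\pm\{|w_1|,w_2\},\dots,\pm\{w_{k-1},w_k\}$ with trivial blocks, and invoke Lemma~\ref{D1-t-lem}(b) to rule out crossings between trivial and non-trivial blocks. Your extra detail on peeling nested descents (and the implicit $k=0$ degeneration) just fills in steps the paper leaves terse.
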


\begin{proof}
When $k=0$ this result holds by Proposition~\ref{d1-prop1}, so assume $k>0$.
Let $v = \inc_{\DI}^{n,k}(w)$ and $M = \sh_{\DI}^{(n,n)}(v)$.
Then $M \in \NCSP(X:0)$ by Proposition~\ref{d1-prop1},
and $\sh_{\DI}^{(p,q)}(w) $ is formed from $M$ by replacing 
the blocks $\pm \{ |w_1|, w_2\}, \pm \{ w_3, w_4\},\dots, \pm \{w_{k-1}, w_k\}$
with the trivial blocks $\{\pm w_1\}, \{\pm w_2\},\dots, \{\pm w_k\}$.
Lemma~\ref{D1-t-lem}(b) implies that this shape belongs to $\NCSP(X:k)$
as all other blocks in $M$ have the form $\pm\{|a|,-b\}$ for pairs $(a,b) \in \NDes(w_{k+1}\cdots w_n)$ with $|a|<-b$.
\end{proof}

\begin{proposition}\label{d1-prop3}
Suppose $z \in\I_{\DI}^{(p,q)}$ and $M\in\cM_{\DI}^{(p,q)}(z)$  when $k = \frac{|p-q|}{2}$ is even. Then
\[\bot_{\DI}^{(p,q)}(z,M) \in  \cA_{\DI}^{(p,q)}(z)
\quand
\sh_{\DI}^{(p,q)}\(\bot_{\DI}^{(p,q)}(z,M)\) = M.\]
\end{proposition}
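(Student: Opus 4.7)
Proof plan. The argument reduces to the $k=0$ case established in Proposition~\ref{d1-prop0}: when $k = 0$ we have $\bot_{\DI}^{(p,q)}(z,M) = \delta(z,M)$ and $\sh_{\DI}^{(p,q)} = \shD$, so the claim is exactly that proposition. Assume $k>0$ is even, let $w = \bot_{\DI}^{(p,q)}(z,M)$ with $\Triv(M) = \{c_1 < c_2 < \cdots < c_k\}$, and define $\tilde M \in \NCSP(X:0) = \cM_{\DI}^{(n,n)}(z)$ from $M$ by replacing each pair of trivial blocks $\{\pm c_{2i-1}\}, \{\pm c_{2i}\}$ with the paired blocks $\pm\{c_{2i-1}, c_{2i}\}$. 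The noncrossing property of $\tilde M$ follows from that of $M$: any non-trivial block $\{a,b\} \in M$ that crossed $\{c_{2i-1},c_{2i}\}$ would also cross the trivial block $\{-c_{2i-1}, c_{2i-1}\}$ of $M$, contradicting $M \in \NCSP(X:k)$.

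First I would verify $\sh_{\DI}^{(p,q)}(w) = M$. By Definition~\ref{D12-def}, the trivial blocks of $\sh_{\DI}^{(p,q)}(w)$ come from $\{w_1, \ldots, w_k\}$, whose absolute values are $c_1, \ldots, c_k$ by construction, yielding $\Triv(M)$; while the paired blocks come from the pairs $(a,b) \in \NDes(w_{k+1} \cdots w_n) = \NDes([[\Cyc^\pm(z, M)]]_\des)$ with $|a| < -b$. A short descent analysis shows these arise precisely from the added pairs $(-b,a) \in \Cyc^\pm(z,M)$ with $\{a,b\} \in M$ non-trivial and $0 < a < b$, contributing blocks $\pm\{a,b\}$, and that the pairs coming from $\Cyc^\pm(z)$ itself never contribute because fixed points yield no descent and any $2$-cycle $(a,b) \in \Cyc^\pm(z)$ with $|a| < b$ fails $|a| < -b$.

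Next I would establish $w \in \cA_{\DI}^{(p,q)}(z) = \cAD(z:k)$ via Lemma~\ref{inc-lem1}, which reduces the problem to checking that $\inc_{\DI}^{n,k}(w) \in \cAD(z)$ and that $\inc_{\DI}^{n,k}(w)$ satisfies the chain of inequalities $|v_1| < -v_2 < v_3 < \cdots < v_{k-1} < -v_k$. The chain holds because the first $k$ letters of $\inc_{\DI}^{n,k}(w)$ are $\pm c_1, \pm c_2, \ldots, \pm c_k$ with signs that, after a case analysis on $k \bmod 4$ combined with the parity adjustment in $\langle\cdot\rangle_\es$, match the required alternating pattern, reducing the chain to $c_1 < c_2 < \cdots < c_k$. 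For $\inc_{\DI}^{n,k}(w) \in \cAD(z)$ I would compute $\shD(\inc_{\DI}^{n,k}(w))$ by the same kind of descent analysis: the sign-flipped prefix produces descents $(c_{2i-1}, -c_{2i})$ giving paired blocks $\pm\{c_{2i-1}, c_{2i}\}$, while the unchanged tail contributes the non-trivial blocks of $M$ (which agree with the non-trivial blocks of $\tilde M$), so $\shD(\inc_{\DI}^{n,k}(w)) = \tilde M$.

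The main obstacle is passing from the shape identity $\shD(\inc_{\DI}^{n,k}(w)) = \tilde M$ to membership in $\cAD(z)$. I would handle this by combining Theorem~\ref{d-equiv-lem} with Corollary~\ref{k0-precsim-cor}: since $\delta(z,\tilde M) \in \cAD(z)$ by Proposition~\ref{d1-prop0} and is the $\precsimD$-minimum among all elements of shape $\tilde M$, and since the $\llD$-equivalence class of $\delta(z,\tilde M)$ is exactly $\cAD(z)$, it suffices to exhibit a chain of $\llD$-covers connecting $\delta(z,\tilde M)$ to $\inc_{\DI}^{n,k}(w)$. Such a chain can be constructed inductively by tracking the positions of the pairs $(-c_{2i}, c_{2i-1})$ in the sorted order defining $\delta(z,\tilde M)$, repeatedly applying $\precsimD$- and $\llD$-covering moves to push them into the initial $k$ positions where $\inc_{\DI}^{n,k}(w)$ places them; the length equality $\ell(\inc_{\DI}^{n,k}(w)) = \ell(w) + \ell(v^{(k)}) = \ellhat(z)$ follows automatically from the graded nature of $\precsimD$ on $\cAD(z)$ (Proposition~\ref{rankD-prop1}).
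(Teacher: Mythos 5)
Your proposal is correct and follows essentially the same route as the paper: reduce to the $k=0$ case of Proposition~\ref{d1-prop0}, pair up the trivial blocks of $M$ to form $\tilde M\in\NCSP(X:0)$, relate $\inc_{\DI}^{n,k}\bigl(\bot_{\DI}^{(p,q)}(z,M)\bigr)$ to $\delta(z,\tilde M)=\bot_{\DI}^{(n,n)}(z,\tilde M)$ by covering moves so that Theorem~\ref{d-equiv-lem} gives membership in $\cAD(z)$, and then invoke Lemma~\ref{inc-lem1} via the chain of inequalities on the first $k$ letters. The chain of moves you sketch is exactly the paper's one-line relation claim, whose legality rests on the observation (forced by $M$ being noncrossing with trivial blocks $\{\pm c_j\}$) that every intervening letter lies strictly between $-c_{2i}$ and $c_{2i-1}$; your detour through $\shD(\inc_{\DI}^{n,k}(w))=\tilde M$ and Corollary~\ref{k0-precsim-cor} is not needed for this.
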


\begin{proof}
By Proposition~\ref{d1-prop0} we may assume $k>0$.
The second identity is clear from the definitions. 
Write $\Triv(M) = \{i_1<i_2<\dots<i_k\}$. We can 
form an element $\tilde M \in \NCSP(X:0)$ from $M$ by replacing its trivial blocks with
$\pm \{i_1,i_2\}, \pm \{i_3,i_4\},\dots,\pm\{i_{k-1},i_k\}$.
Since every block $\{a,b\} \in M$ with $0<a<b$ and $a<i_j$ for some $j \in [k]$
must have $a<b<i_j$, it follows that 
\[ \inc_{\DI}^{(p,q)}\(\bot_{\DI}^{(p,q)}(z,M)\) \precsim^{(n,n)}_{\DI} \bot_{\DI}^{(n,n)}(z,\tilde M) \in \cAD(z).\]
Thus $w=\inc_{\DI}^{(p,q)}\(\bot_{\DI}^{(p,q)}(z,M)\)$
 belongs to $\cAD(z)$ by Theorem~\ref{d-equiv-lem}.
As by definition
$ |w_1| < - w_2<  w_3< \dots  < w_{k-1} < -{w_k}$,
 Lemma~\ref{inc-lem1} implies that $\bot_{\DI}^{(p,q)}(z,M) \in \cAD(z:k)$.
\end{proof}

Propositions~\ref{d1-prop2} and \ref{d1-prop3} imply that
$\sh_{\DI}^{(n)} : \cA_{\DI}^{(n)}(z) \to \cM_{\DI}^{(n)}(z)$ is a surjective map.
The following result establishes Theorem~\ref{main-thm} in type DI.

 \begin{proposition}\label{D1-alignment-prop}
Suppose $\gamma \in \Gamma_{\DI}^{(p,q)}$ and 
$z=\psi_{\DI}^{(p,q)}(\gamma) \in \cI_{\DI}^{(p,q)}$
 when $k = \frac{|p-q|}{2}$ is even. Then
 \[
 \cW_{\DI}^{(p,q)}(\gamma) = \left\{ w \in \cA_{\DI}^{(p,q)}(z) : \sh_{\DI}^{(p,q)}(w) \in \Aligned_{\DI}^{(p,q)}(\gamma)\right\}.
 \]
\end{proposition}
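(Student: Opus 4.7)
The plan is to reduce the statement to Lemma~\ref{weak-order-lem} and then identify the set partition $\Lambda(P)$ attached to any reduced expression of $w$ with the nontrivial blocks of $\sh_{\DI}^{(p,q)}(w)$ via Lemma~\ref{Lambda-lem}. Fix $w \in \cA_{\DI}^{(p,q)}(z) = \cAD(z:k)$ and choose a reduced expression $w = s_m\cdots s_2 s_1$, which yields a sequence
\[
P = \(z_\dense = \hat\sigma_k^n = z^0 \xrightarrow{s_1} z^1 \xrightarrow{s_2} \cdots \xrightarrow{s_m} z^m = z\)
\]
as in \eqref{path-eq}. By Lemma~\ref{weak-order-lem}, $w \in \cW_{\DI}^{(p,q)}(\gamma)$ if and only if each block $\{a,b\}$ of the set partition $\Lambda(P) = \Lambda_1(P) \sqcup \cdots \sqcup \Lambda_m(P)$ satisfies $|S_+(\gamma) \cap \{a,b\}| = |S_-(\gamma) \cap \{a,b\}| = 1$.

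Next I will identify $\Lambda(P)$ with the nontrivial blocks of $\sh_{\DI}^{(p,q)}(w)$. Take $v = \hat\sigma_k^n \in \cAD(\hat\sigma_k^n)$, so that $wv = \inc_{\DI}^{n,k}(w) \in \cAD(z)$ by Lemma~\ref{inc-lem1}. Lemma~\ref{Lambda-lem} then gives
\[
\shD(\inc_{\DI}^{n,k}(w)) = \Lambda_0 \,\sqcup\, \Lambda_1(P) \,\sqcup\, \cdots \,\sqcup\, \Lambda_m(P),
\]
where $\Lambda_0 = \{\pm\{|w_{2i-1}|, |w_{2i}|\} : 1 \leq i \leq k/2\}$ is the image under $w$ of $\shD(\hat\sigma_k^n) = \{\pm\{1,2\}, \pm\{3,4\}, \ldots, \pm\{k-1,k\}\}$. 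The proof of Proposition~\ref{d1-prop2} shows that $\sh_{\DI}^{(p,q)}(w)$ is obtained from $\shD(\inc_{\DI}^{n,k}(w))$ by replacing the blocks in $\Lambda_0$ with the trivial blocks $\{\pm w_1\}, \ldots, \{\pm w_k\}$. Consequently, the nontrivial blocks of $\sh_{\DI}^{(p,q)}(w)$ are exactly $\Lambda(P)$, while the $2k$ elements covered by the singletons $\Lambda_0(P) = \{\{w(a)\} : a \in [\pm k]\}$ coincide with the $2k$ elements paired by the trivial blocks of $\sh_{\DI}^{(p,q)}(w)$.

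To conclude, recall that $\sh_{\DI}^{(p,q)}(w) \in \NCSP(X:k)$ by Proposition~\ref{d1-prop2}, and the combination of noncrossing and symmetry under negation forces every nontrivial block to take the form $\pm\{A,B\}$ with $0 < A < B$. Using the symmetry $\gamma_{-i} = \gamma_i$ of the symmetric clan $\gamma$, the Lemma~\ref{weak-order-lem}(c) condition on a block $\{a,b\} \in \Lambda(P)$ with $a<b$ is equivalent to the same condition on its positive representative $\{|a|,|b|\}$, which is exactly the $\gamma$-aligned condition on the corresponding nontrivial block of $\sh_{\DI}^{(p,q)}(w)$; trivial blocks impose no alignment constraint on either side. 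The main obstacle will be justifying the shape identification: although Lemma~\ref{Lambda-lem} furnishes the decomposition of $\shD(\inc_{\DI}^{n,k}(w))$, one must verify carefully that the substitution of $\Lambda_0$ by the trivial blocks $\{\pm w_1\}, \ldots, \{\pm w_k\}$ exactly accounts for the first $k$ letters of $w$. This can be extracted from the explicit computations underlying the proof of Proposition~\ref{d1-prop2}, together with the inequalities from Lemma~\ref{inc-lem1} that force $w_2, w_3, \ldots, w_k$ to be positive, so that $|w_i|=w_i$ in the relevant range.
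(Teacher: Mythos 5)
Your proposal follows the paper's own proof essentially step for step: reduce to the block condition of Lemma~\ref{weak-order-lem}, apply Lemma~\ref{Lambda-lem} to $\inc_{\DI}^{n,k}(w)$ to identify $\Lambda_1(P)\sqcup\cdots\sqcup\Lambda_m(P)$ (after taking absolute values and $\pm$-symmetrizing) with the nontrivial blocks of $\sh_{\DI}^{(p,q)}(w)$, and then use the symmetry of $\gamma$ to pass between a block of $\Lambda(P)$ and its positive representative; the trivial blocks impose no constraint on either side.

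The one concrete slip is your choice of the auxiliary element: you set $v=\hat\sigma_k^n$ and assert both $v\in\cAD(\hat\sigma_k^n)$ and $wv=\inc_{\DI}^{n,k}(w)$. Neither holds once $k\geq 4$: the element $\hat\sigma_k^n$ is much longer than its atoms (for instance $\hat\sigma_4^4=w_0$ has length $12$ while $\ellhat(\hat\sigma_4^4)=8$), so it is not an atom of itself, and $w\hat\sigma_k^n=\overline{w_1}\,\overline{w_2}\cdots\overline{w_k}\,w_{k+1}\cdots w_n$, which differs from $\inc_{\DI}^{n,k}(w)$. As written, the hypotheses of Lemma~\ref{Lambda-lem} (which needs $v\in\cAD(y)$ and $w\in\cAD(y,z)$) are therefore not met. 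The element you actually need, and implicitly use, is $v=\inc_{\DI}^{n,k}(1)$, i.e.\ the element $v^{(k)}$ constructed in the proof of Lemma~\ref{inc-lem1}: it does lie in $\cAD(\hat\sigma_k^n)$, it satisfies $wv=\inc_{\DI}^{n,k}(w)$, and it has $\shD(v)=\{\pm\{1,2\},\pm\{3,4\},\dots,\pm\{k-1,k\}\}$ — coincidentally the same shape as $\hat\sigma_k^n$, which is why your formula for $\Lambda_0$ comes out right anyway. With this substitution your argument is exactly the paper's; the remaining point you flag, that replacing $\Lambda_0$ by the trivial blocks accounts precisely for the first $k$ letters of $w$, is settled as in the proof of Proposition~\ref{d1-prop2} using the inequalities $|w_1|<w_2<\cdots<w_k$ from Lemma~\ref{inc-lem1}.
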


 \begin{proof} 
 Fix $w \in \cA_{\DI}^{(p,q)}(z) $ with a reduced expression $w=t_{i_m}\cdots t_{i_3}t_{i_2}t_{i_1}$ and consider the sequence
 \[
 P = \(  \sigma_k^n=z^0 \xrightarrow{t_{i_1}} z^1 \xrightarrow{t_{i_2}} z^2 \xrightarrow{t_{i_3}}  \cdots \xrightarrow{t_{i_m}} z^m = z\).
 \]
 Lemma~\ref{weak-order-lem} asserts that $w \in \cW_{\DI}^{(p,q)}(\gamma) $ if and only if 
\be\label{align-equiv0-eq}
 |S_+(\gamma) \cap \{a,b\}| = |S_-(\gamma) \cap \{a,b\}| = 1\text{ for all $\{a,b\} \in \Lambda(P)$ with $0<a<b$}.
 \ee

Now let $v  = \inc_{\DI}^{n,k}(1)$  
 so that $wv \in \cAD(z)$, and define $\Lambda_0, \Lambda_1,\dots,\Lambda_m$
 relative to $v$ and the reduced expression $w=t_{i_m}\cdots t_{i_3}t_{i_2}t_{i_1}$.
 Lemma~\ref{inc-lem1} implies that $|w_1| < w_2 <\dots<w_k$ and that $\sh_{\DI}^{(p,q)}(w)$
 is obtained from $\shD(wv) = \sh_{\DI}^{(n,n)}(wv)$ 
 by replacing 
 \[\pm \{ |w_1|, w_2\}, \pm \{ w_3, w_4\},\dots, \pm \{w_{k-1}, w_k\}\]
with trivial blocks. 
Since
$ \shD(v) = \{ \pm \{1,2\}, \pm \{3,4\},\dots,\pm\{k-1,k\}\}$ we have
 \[
\Lambda_0=  \{ \pm \{ |w(a)|, |w(b)|\} : \{a,b\} \in \shD(v) \} = \{ \pm \{ |w_1|, w_2\}, \pm \{ w_3, w_4\},\dots, \pm \{w_{k-1}, w_k\}\}.
  \]
Thus, the union $\Lambda_1\sqcup\Lambda_2\sqcup \cdots \sqcup\Lambda_m$ 
is just $\sh_{\DI}^{(p,q)}(w)$ with its trivial blocks removed,
so the matching
$ \sh_{\DI}^{(p,q)}(w)$ is $\gamma$-aligned
if and only if
\be\label{align-equiv1-eq} |S_+(\gamma) \cap \{a,b\}| = |S_-(\gamma) \cap \{a,b\}| = 1\text{ for all $\{a,b\} \in \Lambda_1\sqcup \cdots \sqcup\Lambda_m$ with $0<a<b$}\ee

Finally,  it is clear from the definitions that 
$\Lambda_1\sqcup \Lambda_2\sqcup \cdots \sqcup\Lambda_m = \{ \pm \{ |a|, |b|\} : \{a,b\} \in \Lambda(P)\}.
$
As the clan $\gamma$ is symmetric, meaning that $a \in S_\pm(\gamma)$ if and only if $|a| \in S_\pm(\gamma)$,
this identity implies that \eqref{align-equiv0-eq} and \eqref{align-equiv1-eq} are equivalent,
so $w \in \cW_{\DI}^{(p,q)}(\gamma) $ 
if and only if $ \sh_{\DI}^{(p,q)}(w)$ is $\gamma$-aligned.
 \end{proof}

For any $w \in \cAD(z:k)= \cA_{\DI}^{(p,q)}(z)$ we define
\be\label{rankD-def2}
\rankD^{k}(w) =
\begin{cases}
 \rankD(w_{k+1}w_{k+2}\cdots w_n) + \ell_0(w_{k+1}w_{k+2}\cdots w_n) &\text{if }k>0
 \\
  \rankD(w) &\text{if }k=0.
  \end{cases}
\ee

\begin{proposition}\label{D1-rank-prop}
Assume $z \in \cI_{\DI}^{(p,q)}$ when $k=\frac{|p-q|}{2}$ is even.
If $v,w\in \WD_n$ have $v \precsim_{\DI}^{(p,q)} w$ then
$v \in \cA_{\DI}^{(p,q)}(z)$ if and only if $w \in  \cA_{\DI}^{(p,q)}(z)$,
and in this case
$\sh_{\DI}^{(p,q)}(v) = \sh_{\DI}^{(p,q)}(w)$.
Moreover, the relation $\precsim_{\DI}^{(p,q)}$ is a graded partial order on $ \cA_{\DI}^{(p,q)}(z) $
for the rank function $  \rankD^k$.
\end{proposition}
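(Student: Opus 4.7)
The plan is to first dispatch the case $k = 0$, where $\precsim_{\DI}^{(p,q)} = \precsim_{\DI}^{(n,n)}$ is just the relation $\precsimD$ from Definition~\ref{d-cover-def0} restricted to $\WD_n$. Here the conclusions are immediate: Theorem~\ref{d-equiv-lem} together with the containment $\precsimD \subset \llD$ gives membership preservation; Lemma~\ref{same-shape-lem} gives shape preservation; and Proposition~\ref{rankD-prop1} identifies $\precsimD$ as a graded partial order on $\cAD(z) = \cA_{\DI}^{(n,n)}(z)$ with rank function $\rankD = \rankD^0$.

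For $k > 0$ I would proceed via the embedding $\inc_{\DI}^{n,k}$ of Lemma~\ref{inc-lem1}, which realizes $\cA_{\DI}^{(p,q)}(z) = \cAD(z:k)$ as the subset of $\cAD(z)$ characterized by the chain $|w'_1| < (-1)^{k+1} w'_2 < \cdots < -w'_k$, where $w' = \inc_{\DI}^{n,k}(w)$. The relation $\precsim_{\DI}^{(p,q)}$ is generated by the tail swaps $\overset{k}\precsim$ (affecting only positions $>k$) and the sign-flips $uv < \tilde u \tilde v$ (flipping signs at positions $1$ and $k+1$), so I would handle these separately. For a tail swap, positions $\leq k$ are fixed, and under $\inc_{\DI}^{n,k}$ the same swap occurs in $\cAD(z)$, which is a covering in $\precsimD$. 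The chain on the first $k$ positions is untouched; the shape $\sh_{\DI}^{(p,q)}(w)$, which only records $|w_i|$ for $i \leq k$ together with the noncrossing descent data of the tail, is preserved by Lemma~\ref{same-shape-lem}; and $\rankD^k$ increments by $+1$ since $\rankD$ of the tail rises by $+1$ (by the computation in Proposition~\ref{rankD-prop1}, which applies verbatim to partial permutations) while $\ell_0$ of the tail is unchanged by a rearrangement of its letters.

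For the sign-flip $uv < \tilde u \tilde v$ with $\ell(u) = k$ and $0 < v_1 < |u_1|$, I would verify directly that membership in $\cAD(z:k)$, the shape $\sh_{\DI}^{(p,q)}$, and the rank $\rankD^k$ are all preserved. The chain condition on $\inc_{\DI}^{n,k}(w)$ is invariant under flipping $w_1$ because it depends only on $|w_1|$; to keep the image in $\cAD(z)$, I would show that the simultaneous sign flip at positions $1$ and $k+1$ of $\inc_{\DI}^{n,k}(w)$ is realized as a composite of coverings in the symmetric closure of $\llD$ (Definition~\ref{d-cover-def}), whose equivalence classes preserve $\cAD(z)$ by Theorem~\ref{d-equiv-lem}; the exact path depends on $k \bmod 4$, owing to the alternating sign pattern in $\inc_{\DI}^{n,k}$. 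Shape preservation follows from Lemma~\ref{same-shape-lem} (applied along the $\precsimD$-steps in this path) combined with the absolute-value definition of $\sh_{\DI}^{(p,q)}$. For the rank, a direct computation tracking how $\NDes$, $\NRes$, $I_R$, $I_L$, $w_R$, and $w^\pm_L$ of the tail $w_{k+1}\cdots w_n$ change when $v_1 > 0$ becomes $\bar v_1 < 0$ shows that $\rankD$ of the tail stays the same while $\ell_0$ of the tail increases by one, giving a net increment of $+1$ in $\rankD^k$.

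The principal obstacle is the sign-flip covering: both the $\llD$-path in $\cAD(z)$ realizing the double sign swap and the verification that $\rankD$ of the tail is exactly balanced against $\ell_0$ require a careful case analysis, whereas the tail swap reduces cleanly to the $k = 0$ arguments. Once these verifications are complete, the strict monotonicity of $\rankD^k$ along every covering of $\precsim_{\DI}^{(p,q)}$ guarantees antisymmetry of the transitive closure and establishes the claimed graded poset structure.
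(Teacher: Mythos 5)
Your proposal is correct and follows essentially the same route as the paper: reduce $k=0$ to Proposition~\ref{rankD-prop1}, Lemma~\ref{same-shape-lem}, and Theorem~\ref{d-equiv-lem}, then for $k>0$ use the embedding of Lemma~\ref{inc-lem1}, treat the two generating relations of $\precsim_{\DI}^{(p,q)}$ separately, realize the sign-flip covering inside the $\llD$-equivalence class (the paper does this by exhibiting an explicit common lower bound under $\precsim_{\DI}^{(n,n)}$ for $\inc_{\DI}^{n,k}(v)$ and $\inc_{\DI}^{n,k}(w)$, with the same four-case dependence on $k \bmod 4$ you anticipate), and verify the rank increment exactly as you describe ($\rankD$ of the tail and $\ell_0$ of the tail for the two coverings). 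The only cosmetic difference is that the paper checks shape invariance under the sign flip directly from the definitions via Corollary~\ref{D1-well-cor} rather than through Lemma~\ref{same-shape-lem} along the connecting path, but both arguments work.
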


\begin{proof}
By Proposition~\ref{rankD-prop1} and Lemma~\ref{same-shape-lem} we may assume that $k>0$.
 Theorem~\ref{d-equiv-lem} and Lemma~\ref{inc-lem1} imply that if $v \overset{k}\precsim w$ then 
$v \in \cA_{\DI}^{(p,q)}(z)$ if and only if $w \in  \cA_{\DI}^{(p,q)}(z)$.
Moreover, if $v$ and $w$ are related by a covering relation of the form 
$
\cdots BCA\cdots \overset{k}\precsim \cdots CAB\cdots 
$
 with $A<B<C$  
then
it is straightforward  to check that 
$
 \rankD^k(w) =  \rankD^k(v) + 1.
 $ The argument is similar to one in the proof of Proposition~\ref{rankD-prop1}.

Suppose instead that $v$ and $w$ are related by the other covering relation for $\precsim_{\DI}^{(p,q)}$ so that
\[0<-w_{k+1}<|w_1|\quand v=\overline{w_1}w_2\cdots w_k \overline{w_{k+1}} w_{k+2}\cdots w_n.\]
 Then Theorem~\ref{d-equiv-lem}, Lemma~\ref{inc-lem1},  and Corollary~\ref{well-cor} 
imply that some $u \in \WD_n$ has
\[
u \precsim_{\DI}^{(n,n)} \inc_{\DI}^{n,k}(v)\quand u \precsim_{\DI}^{(n,n)} \inc_{\DI}^{n,k}(w).
\] Specifically, this element is given in one-line notation as
\[u =
\begin{cases}
w_{k+1} w_1\overline{w_2} w_3\overline{w_4}w_5\overline{w_6}\cdots w_{k-1}\overline{w_k}w_{k+2}w_{k+3}\cdots w_n 
&\text{if $k\equiv 0 \modu 4)$ and $w_1>0$} 
\\
\overline{w_{k+1}} \overline{w_1}\overline{w_2} w_3\overline{w_4}w_5\overline{w_6}\cdots w_{k-1}\overline{w_k}w_{k+2}w_{k+3}\cdots w_n 
&\text{if $k\equiv 0 \modu 4)$ and $w_1<0$} 
\\
\overline{w_{k+1}} w_1\overline{w_2} w_3\overline{w_4}w_5\overline{w_6}\cdots w_{k-1}\overline{w_k}w_{k+2}w_{k+3}\cdots w_n 
&\text{if $k\equiv 2 \modu 4)$ and $w_1>0$} 
\\
w_{k+1} \overline{w_1}\overline{w_2} w_3\overline{w_4}w_5\overline{w_6}\cdots w_{k-1}\overline{w_k}w_{k+2}w_{k+3}\cdots w_n 
&\text{if $k\equiv 2 \modu 4)$ and $w_1<0$} .
\end{cases}
\]
By Lemma~\ref{d-equiv-lem} 
we have $\inc_{\DI}^{n,k}(v) \in \cAD(z)$ if and only if $\inc_{\DI}^{n,k}(w) \in \cAD(z)$
 so by Lemma~\ref{inc-lem1} 
we again have $v \in \cA_{\DI}^{(p,q)}(z)$ if and only if $w \in  \cA_{\DI}^{(p,q)}(z)$. 

When this occurs, it is easy to see using Corollary~\ref{D1-well-cor}
that $(v_{k+1}\cdots v_n)_R = (w_{k+1}\cdots w_n)_R$
and that $(w_{k+1}\cdots w_n)_L^\pm$ is obtained from $(v_{k+1}\cdots v_n)_L^\pm$
by changing its middle two letters from $w_{k+1} \overline{w_{k+1}}$ to $\overline{w_{k+1}}w_{k+1}$,
which increases the number of inversions by one since $w_{k+1} < 0$. The negativity of $w_{k+1}$ also implies that
 $\ell_0(w_{k+1}\cdots w_n) = \ell_0(v_{k+1}\cdots v_n) +1$, so
we conclude from the formulas \eqref{rankD-def} and \eqref{rankD-def2} that
$ \rankD^k(w) =  \rankD^k(v) + 1$.

Finally,  when $v,w \in \cA_{\DI}^{(p,q)}(z)$ have $v \precsim_{\DI}^{(p,q)} w$ 
 it is easy to see that $\sh_{\DI}^{(p,q)}(v) = \sh_{\DI}^{(p,q)}(w)$
 from the definitions using Corollary~\ref{D1-well-cor}.
\end{proof}

The only part of Theorem~\ref{main-thm2} left to prove in type $\DI$ is the following result.

\begin{proposition}
Suppose $z  \in \cI_{\DI}^{(p,q)}$ and $M \in \cM_{\DI}^{(p,q)}(z)$ when $k=\frac{|p-q|}{2}$ is even.
Then 
\[
 \Bigl\{w \in \cA_{\DI}^{(p,q)}(z)  :\sh_{\DI}^{(p,q)}(w) =M\Bigr\} 
 = \Bigl\{w \in \WD_n : \bot_{\DI}^{(p,q)}(z,M) \precsim_{\DI}^{(p,q)} w\Bigr\}.
 \]
\end{proposition}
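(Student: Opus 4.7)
The plan is to establish the two inclusions separately.

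The inclusion $\supseteq$ is immediate from Propositions~\ref{d1-prop3} and \ref{D1-rank-prop}: the former places the element $\bot = \bot_{\DI}^{(p,q)}(z,M)$ in $\cA_{\DI}^{(p,q)}(z)$ with $\sh_{\DI}^{(p,q)}(\bot) = M$, and the latter propagates both properties to any $w$ above $\bot$ under $\precsim_{\DI}^{(p,q)}$.

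For the reverse inclusion I argue by induction on the rank difference $\rankD^k(w) - \rankD^k(\bot)$, using the graded structure afforded by Proposition~\ref{D1-rank-prop}. When $k=0$ the relation $\precsim_{\DI}^{(n,n)}$ coincides with $\precsimD$ and $\bot_{\DI}^{(n,n)}(z,M) = \delta(z,M)$ by Definition~\ref{D12-gen-def}, so the desired inequality $\bot \precsim_{\DI}^{(n,n)} w$ is exactly the content of Corollary~\ref{k0-precsim-cor}. For $k>0$ the inductive step reduces to showing that if $w \in \cA_{\DI}^{(p,q)}(z)$ has $\sh_{\DI}^{(p,q)}(w) = M$ and $w \neq \bot$, then $w$ admits a covering predecessor $w' \precsim_{\DI}^{(p,q)} w$ with $w' \neq w$; such a $w'$ automatically has the required shape and membership by Proposition~\ref{D1-rank-prop}, and the induction hypothesis then yields $\bot \precsim_{\DI}^{(p,q)} w' \precsim_{\DI}^{(p,q)} w$.

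To locate such a covering predecessor I consider the two families of covering moves generating $\precsim_{\DI}^{(p,q)}$ for $k>0$: the word relation $\overset{k}\precsim$, which acts only on positions beyond the first $k$, and the sign-flip relation $uv < \tilde u \tilde v$ of Definition~\ref{D12-gen-def}. An $\overset{k}\precsim$-predecessor of $w$ exists if and only if the tail $w_{k+1}w_{k+2}\cdots w_n$ contains a consecutive subword of the form $CAB$ with $A<B<C$, equivalently, if and only if this tail fails to be minimal under $\overset{0}\precsim$ within its well-nested equivalence class (Corollary~\ref{D1-well-cor} and Proposition~\ref{Pdes-prop}). By Lemma~\ref{D1-t-lem}(a), a sign-flip predecessor of $w$ exists if and only if $w_{k+1}<0$. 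I claim that if the tail is $\overset{0}\precsim$-minimal and $w_{k+1}\geq 0$, then $w = \bot$.

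The claim reduces to matching the $\overset{0}\precsim$-minimum of the tail with $[[\Cyc^\pm(z, M)]]_\des$, which is precisely the tail of $\bot$. By Proposition~\ref{Pdes-prop} this minimum equals $[[P]]_\des$ for $P$ built from the nested descent and residue sets of the tail, so it suffices to show that $P = \Cyc^\pm(z, M)$. Here I apply Proposition~\ref{d1-prop1}(b,c) to $v = \inc_{\DI}^{n,k}(w) \in \cAD(z)$: the nested descents $(a,b)$ of $v$ with $|a|>-b$ pull off the nontrivial cycles of $z$, those with $|a|<-b$ pull off the nontrivial blocks of $M = \sh_{\DI}^{(p,q)}(w)$, and the residue carries exactly the fixed points of $z$ not captured by $\Triv(M)$. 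Once the two tails agree, the $\WD_n$-parity constraint together with $w_{k+1} \geq 0$ forces $w_1 = \bot_1$. The main obstacle is the bookkeeping that identifies $P$ with $\Cyc^\pm(z, M)$: one must track how the alternating sign pattern forced on the first $k$ entries of $v$ by Lemma~\ref{inc-lem1} interacts with the cycle/residue decomposition of Proposition~\ref{d1-prop1}, and verify that the contributions of $\Triv(M)$ and of the $\hat\sigma_k^n$-twist in Definition~\ref{AA-def} remain confined to the first $k$ positions rather than leaking into the tail.
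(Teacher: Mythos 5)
Your proposal is correct and takes essentially the same route as the paper: the containment $\supseteq$ from Propositions~\ref{d1-prop3} and \ref{D1-rank-prop}, the $k=0$ case from Corollary~\ref{k0-precsim-cor}, and for $k>0$ a reduction (yours via gradedness and covering predecessors, the paper's via choosing a minimal element below $w$) to showing that any $\precsim_{\DI}^{(p,q)}$-minimal atom of shape $M$ equals $\bot_{\DI}^{(p,q)}(z,M)$, using exactly the paper's ingredients (Corollary~\ref{D1-well-cor}, Proposition~\ref{Pdes-prop}, Lemma~\ref{D1-t-lem}(a), Proposition~\ref{d1-prop1}, Lemma~\ref{inc-lem1}). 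The final bookkeeping identifying $P$ with $\Cyc^\pm(z,M)$ that you flag as the remaining obstacle is handled at the same level of brevity in the paper's own proof, so nothing essential is missing.
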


\begin{proof}
The right hand set is contained in the left by Propositions~\ref{d1-prop3} and \ref{D1-rank-prop},
and the reverse inclusion holds when $k=0$ by Corollary~\ref{k0-precsim-cor}.

Assume $k>0$ and suppose $w \in \cA_{\DI}^{(p,q)}(z)$ has $\sh_{\DI}^{(p,q)}(w) =M$.
Let $v \in \WD_n$ be a minimal element with $v \precsim_{\DI}^{(p,q)} w$.
Then it follows from  Proposition~\ref{Pdes-prop} and Corollary~\ref{D1-well-cor}
that  $v_{k+1}\cdots v_n = [[P]]_{\des}$ for $
P = \{ (a,b) : (b,a) \in \NDes(v_{k+1}\cdots v_n)\}\sqcup \{(c,c) : c \in \NRes(v_{k+1}\cdots v_n)\}.
$
It then follows from Lemma~\ref{D1-t-lem}(a) that every $(a,b) \in P$ must have $a>0$.
Given these properties, we deduce from Proposition~\ref{d1-prop1} and Lemma~\ref{inc-lem1}
that $v = \bot_{\DI}^{(p,q)}(z,M)$.
We conclude that even when $k>0$ our two expressions 
are  
contained in each other and hence equal.
\end{proof}

 \subsection{Type DII}\label{DII-sect}
 
Below, we describe a way of embedding type DII in rank $n$ into type DI in rank $n+1$.
This will let us quickly derive our main theorems in type DII from the results in the previous section.
 
  Fix $p,q \in \NN$ with $2n=p+q$ such that $n+p$ is odd. Then $k = \frac{|p-q|}{2} = |p-n|$ is also odd.
Let 
\[z \in \I_{\DII}^{(p,q)} = \{ y \in \I_\diamond(\WD_n): \neg(t_0y) \geq k\}\] and recall
from Definition~\ref{AA-def}
 that 
 \[\cA_{\DII}^{(p,q)}(z) =  \cAD_\diamond(z:k)  = \cA_\diamond(\hat \sigma_k^n, z)
  \quad\text{with $\hat \sigma_k^n$ as in \eqref{AA-def2}.}\]
For any $y \in \I_\diamond(\WD_n)$ define $y^\vee\in \I(\WD_{n+1})$ to be the even-signed permutation 
 with
 \be\label{iotaD-eq}
y^\vee : i \mapsto  \begin{cases} 
 y_i &\text{if $i \in [n]$ and $|y_i| \neq 1$} \\ 
- y_i &\text{if $i \in [n]$ and $|y_i| = 1$}   \\
-n-1 &\text{if $i=n+1$}
\end{cases}
\quad\text{for }i \in [n+1].
\ee

\begin{lemma}\label{inc-lem-D-pre}
If $w \in \WD_n$ and $ y \in \I_\diamond(\WD_n)$
then 
$ (w^\diamond \circ z \circ w^{-1})^\vee = w \circ z^\vee \circ w^{-1}.$
\end{lemma}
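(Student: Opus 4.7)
The plan is to induct on $\ell(w)$ to reduce to the case when $w$ is a simple generator, and then verify each case directly using the explicit Demazure conjugation formulas in Section~\ref{d-conj-sect}. (The statement should be read with $y=z$, i.e.\ $z \in \I_\diamond(\WD_n)$.)

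First, suppose $\ell(w) \geq 1$ and write $w = s \cdot u$ with $s \in S$ and $\ell(w) = 1 + \ell(u)$; this is an equality in $(\WD_n, \circ)$. By associativity of the Demazure product,
\[
w^\diamond \circ z \circ w^{-1} = s^\diamond \circ (u^\diamond \circ z \circ u^{-1}) \circ s,
\qquad
w \circ z^\vee \circ w^{-1} = s \circ (u \circ z^\vee \circ u^{-1}) \circ s.
\]
By \eqref{szs-eq}, Demazure conjugation preserves $\I_\diamond(\WD_n)$, so $z' \defequals u^\diamond \circ z \circ u^{-1}$ lies in $\I_\diamond(\WD_n)$, and by the inductive hypothesis $(z')^\vee = u \circ z^\vee \circ u^{-1}$. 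Therefore it suffices to prove $(s^\diamond \circ z' \circ s)^\vee = s \circ (z')^\vee \circ s$ for every simple generator $s \in S = \{t_{-1},t_1,t_2,\ldots,t_{n-1}\}$ and every $z' \in \I_\diamond(\WD_n)$.

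For $s = t_i$ with $2 \leq i \leq n-1$ one has $s^\diamond = s$, and both sides are computed by the formula \eqref{BC-dem-eq} — the left in $\WD_n$, the right in $\WD_{n+1}$. The branching depends on $z(i),z(i+1)$ versus $z^\vee(i),z^\vee(i+1)$. The definition of $\vee$ gives $z^\vee(j) = z(j)$ whenever $|z(j)| \neq 1$, and the condition $z \in \I_\diamond(\WD_n)$ forces that if $|z(j)|=1$ then $j = \pm 1$, hence $z^\vee$ and $z$ may only differ at positions $\pm 1$; since $i \geq 2$, the simple generator $t_i$ commutes with the sign-flip on $\{\pm 1\}$ that relates $z$ and $z^\vee$. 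Running through the four branches of \eqref{BC-dem-eq} one verifies termwise that the two sides agree.

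The remaining (and most laborious) case is $s \in \{t_1, t_{-1}\}$, where $t_1^\diamond = t_{-1}$ and vice versa. The left-hand side uses the explicit formulas for $t_{\pm 1}^\diamond \circ z \circ t_{\pm 1}$ displayed in Section~\ref{d-conj-sect}, whereas the right-hand side uses the standard Demazure conjugation formula for the ordinary involution $z^\vee \in \I(\WD_{n+1})$ — note $z^\vee \in \WD_{n+1}$ rather than merely $\W_{n+1}$ because $\ell_0(z^\vee) = \ell_0(t_0 z)+1$ is even. The translation $z^\vee(j)=(t_0 z)(j)$ for $j \in [n]$ and $z^\vee(n+1)=-(n+1)$ sets up a bijection between the four branches on each side: for example, the branch $z(1)=-1,\ z(2)=2$ in the formula for $t_1^\diamond \circ z \circ t_1$ (giving $z\cdot t_1$) corresponds to $z^\vee(1)=1,\ z^\vee(2)=2$ on the right (giving $z^\vee \cdot t_1$), and $(z \cdot t_1)^\vee = z^\vee \cdot t_1$ because $t_1$ commutes with the $\pm 1$ sign-flip on the $\geq 2$ coordinates; an analogous matching handles the remaining seven branches. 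The main obstacle is simply this bookkeeping, since there is no single clean algebraic identity that replaces the case-check — but each individual branch is a direct comparison of two explicit formulas, and the pairing of branches is forced by the definition of $\vee$.
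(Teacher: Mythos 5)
Your overall route---reduce by induction on $\ell(w)$, via associativity of $\circ$, to the case of a single simple generator, then compare the Demazure conjugation formulas of Section~\ref{d-conj-sect} on the two sides---is exactly the paper's proof (the paper does the reduction in one sentence and leaves the case check to the formulas). Your extra bookkeeping for the reduction (preservation of $\I_\diamond(\WD_n)$ under $z\mapsto s^\diamond\circ z\circ s$, evenness of $\ell_0(z^\vee)$, the sample branch for $t_1$) is correct, and your level of detail in the $t_{\pm1}$ cases matches the paper's.

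However, your justification of the generators $t_i$ with $2\le i\le n-1$ rests on a false claim: $z\in\I_\diamond(\WD_n)$ does \emph{not} force $|z(j)|=1$ only for $j=\pm1$, and $z$, $z^\vee$ need not agree on all positions $\ge 2$. For instance $z=3\,\bar 2\,\bar 1\in\I_\diamond(\WD_3)$ (indeed $t_0z=3\,\bar 2\,1$ is an involution in $\W_3$), and here $z(3)=-1$ while $z^\vee(3)=1$, so they differ at position $3$. In general $z^\vee(j)=(t_0z)(j)$ for all $j\in[n]$, so the two differ exactly at the unique $j\in[n]$ with $|z(j)|=1$, which can be any position. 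Consequently, checking the branches of \eqref{BC-dem-eq} under the assumption $z^\vee(i)=z(i)$ and $z^\vee(i+1)=z(i+1)$ does not cover all cases. The conclusion for $i\ge2$ still holds, but for a different reason: at most one of $z(i),z(i+1)$ can lie in $\{\pm1\}$, and negating a value in $\{\pm1\}$ never changes its comparison with a value of absolute value at least $2$, so the branch conditions of \eqref{BC-dem-eq} for $z$ and for $z^\vee$ coincide; and the outputs match because $z^\vee$ is the extension by $n+1\mapsto -n-1$ of $t_0z$ and $t_0t_i=t_it_0$ for $i\ge2$, whence $(zt_i)^\vee=z^\vee t_i$ and $(t_izt_i)^\vee=t_iz^\vee t_i$. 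With this repair your argument is complete and coincides with the paper's.
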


\begin{proof} It suffices to check that
$ (t_i^\diamond \circ y \circ t_i)^\vee = t_i \circ y^\vee \circ t_i$ for $i \in \{-1,1,2,\dots,n-1\}$
and this follows from the formulas in Section~\ref{d-conj-sect}.
\end{proof}

Additionally, for each 
$
\gamma=(S_+,S_-,M) \in \Gamma_{\DII}^{(p,q)} = \{\text{ symmetric $(p,q)$-clans } \}
$ 
define 
\be
\gamma^\vee = 
\begin{cases}
(S_+\sqcup \{-n-1,n+1\}, S_-,M)&\text{if }p\geq q \\
(S_+,S_-\sqcup \{-n-1,n+1\},M)&\text{if }p< q.
\end{cases}
\ee
Then we have $\gamma^\vee \in \Gamma_{\DI}^{(p^\vee,q^\vee)}$ 
when we define
\be
p^\vee = \begin{cases} p+2 &\text{if }p \geq q \\
p&\text{if } p < q
\end{cases}
\quand
q^\vee = \begin{cases} q &\text{if }p \geq q \\
q+2&\text{if } p < q.
\end{cases}
\ee
Moreover, if $\psi_{\DII}^{(p,q)}(\gamma) = z$ then 
$
\psi_{\DI}^{(p^\vee,q^\vee)}(\gamma^\vee) = z^\vee.
$

  \begin{lemma}\label{inc-lem-DII}
The injective map  $\inc_{\DII}^{n,k} :\WD_n \to \WD_{n+1}$ given by
\[\inc_{\DII}^{n,k} : w \mapsto 
w_1 w_2\cdots w_k (n+1) w_{k+1} w_{k+2}\cdots w_n
\]
restricts to a bijection 
\[\cA_{\DII}^{(p,q)}(z) =\cAD_\diamond(z:k) \to  \left\{ w \in\cA_{\DII}^{(p^\vee,q^\vee)}(z^\vee)= \cAD(z^\vee: k+1) :  w_{k+1} = n+1\right\}.\]
Moreover, if $w \in \WD_n$ and $\gamma \in  \Gamma_{\DII}^{(p,q)}$
when $k=\frac{|p-q|}{2}$ is odd,
then
\[
w \in \cW_{\DII}^{(p,q)}(\gamma)
\quad\text{if and only if}
\quad
\inc_{\DII}^{n,k}(w) \in \cW_{\DI}^{(p^\vee,q^\vee)}(\gamma^\vee).
\]
\end{lemma}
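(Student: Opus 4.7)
My plan is to factor $\inc_{\DII}^{n,k}$ as $\inc_{\DII}^{n,k}(w) = \iota(w) \cdot u$, where $\iota: \WD_n \hookrightarrow \WD_{n+1}$ is the natural inclusion extending $w \in \WD_n$ by $w(n+1) = n+1$, and $u \defequals t_n \cdot t_{n-1} \cdots t_{k+1} \in \WD_{n+1}$. A direct comparison of one-line representations verifies this factorization, and counting inversions gives $\ell(u) = n-k$ and $\ell(\inc_{\DII}^{n,k}(w)) = \ell(w) + (n-k)$. I then verify $u \circ \hat\sigma_{k+1}^{n+1} \circ u^{-1} = (\hat\sigma_k^n)^\vee$ in $\I(\WD_{n+1})$ by iterating Demazure conjugation: at the $j$-th step, the involution $z_{j-1}$ being conjugated by $t_{k+j}$ satisfies $z_{j-1}(k+j) = -(k+j)$ and $z_{j-1}(k+j+1) = k+j+1$, placing us in the ``otherwise'' case of \eqref{BC-dem-eq} so each step is ordinary conjugation. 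Combined with Lemma~\ref{inc-lem-D-pre} and the factorization, this gives
\[
\inc_{\DII}^{n,k}(w) \circ \hat\sigma_{k+1}^{n+1} \circ \inc_{\DII}^{n,k}(w)^{-1} \;=\; \bigl(w^\diamond \circ \hat\sigma_k^n \circ w^{-1}\bigr)^\vee
\]
for every $w \in \WD_n$, so the conjugation identities in $\WD_n$ and $\WD_{n+1}$ are equivalent.

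To promote this to a bijection between $\cA$-sets, I would establish the length identity
$\ellhat(z^\vee) - \ellhat(\hat\sigma_{k+1}^{n+1}) = \ellhat_\diamond(z) - \ellhat_\diamond(\hat\sigma_k^n) + (n-k)$
for all $z \in \cI_{\DII}^{(p,q)}$ by induction on $\ellhat_\diamond(z)$ using \eqref{szs-length-eq}: the base case $z = \hat\sigma_k^n$ follows from the conjugation identity above (yielding the upper bound) together with a direct length-minimality argument for $u \in \cA(\hat\sigma_{k+1}^{n+1}, (\hat\sigma_k^n)^\vee)$; the inductive step uses Lemma~\ref{inc-lem-D-pre} to show that $y \mapsto y^\vee$ intertwines $\diamond$-twisted Demazure conjugation by $s \in S$ on $\I_\diamond(\WD_n)$ with untwisted Demazure conjugation by $s$ on $\I(\WD_{n+1})$, so a covering step in one weak-order graph corresponds to a covering step in the other. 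Combined with Lemma~\ref{dem-lem0}, this shows $\inc_{\DII}^{n,k}$ maps $\cAD_\diamond(z:k)$ into $\cAD(z^\vee:k+1)$. For the inverse, given $v \in \cAD(z^\vee:k+1)$ with $v_{k+1} = n+1$, set $v' \defequals vu^{-1}$; since $u^{-1}(n+1) = k+1$ we have $v'(n+1) = n+1$, so $v' = \iota(w')$ for a unique $w' \in \WD_n$, and the length identity combined with the conjugation identity forces $w' \in \cAD_\diamond(z:k)$. The main obstacle is the base case of the length identity, requiring careful analysis of the explicit cycle structures of $(\hat\sigma_k^n)^\vee$ and $\hat\sigma_{k+1}^{n+1}$.

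For the second claim, I apply Lemma~\ref{weak-order-lem} to both weak-order graphs. A reduced expression $w = s_m \cdots s_2 s_1$ in $\WD_n$ gives the reduced expression $\inc_{\DII}^{n,k}(w) = s_m \cdots s_2 s_1 \cdot t_n \cdot t_{n-1} \cdots t_{k+1}$ in $\WD_{n+1}$, whose associated path in the $\DI$ weak-order graph first applies $t_{k+1}, t_{k+2}, \dots, t_n$ starting from $\gamma_\dense^{\DI} \in \Gamma_{\DI}^{(p^\vee, q^\vee)}$. Using Theorem~\ref{weak-order-thm}, I verify that at each of these $n-k$ steps we have $C(t_{k+j}) \not\subseteq M$ and $t_{k+j}$ simply shifts the boundary between isolated signs and matching pairs one position outward, so this prefix transports $\gamma_\dense^{\DI}$ deterministically to $(\gamma_\dense^{\DII})^\vee$. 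After this prefix, the generators $s_i$ involve only positions of absolute value at most $n$; since $\gamma^\vee$ agrees with $\gamma$ at all such positions and carries only an extra fixed sign at $\pm(n+1)$, Lemma~\ref{weak-order-lem}'s alignment condition in terms of the set partition $\Lambda(P)$ coincides in both settings, giving $w \in \cW_{\DII}^{(p,q)}(\gamma)$ iff $\inc_{\DII}^{n,k}(w) \in \cW_{\DI}^{(p^\vee, q^\vee)}(\gamma^\vee)$.
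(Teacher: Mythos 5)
Your proposal is correct and follows essentially the same route as the paper: the factorization $\inc_{\DII}^{n,k}(w) = w\circ v$ with $v = t_n\cdots t_{k+1}$ and additive length, the verification that $v$ is a minimal-length element Demazure-conjugating $\hat\sigma_{k+1}^{n+1}$ to $(\hat\sigma_k^n)^\vee$, the intertwining Lemma~\ref{inc-lem-D-pre} combined with Lemma~\ref{dem-lem0} for the atom bijection, and Theorem~\ref{weak-order-thm} together with Lemma~\ref{weak-order-lem} (via the always-liftable prefix path $t_{k+1},\dots,t_n$) for the equivalence of the $\cW$-sets. Your explicit induction for the $\ellhat$-identity and your direct comparison of the set partitions $\Lambda(P)$ are simply expanded versions of steps the paper's proof leaves implicit.
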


\begin{proof}
Identify $\WD_n$ with the set of elements in $\WD_{n+1}$ that fix $n+1$. 
Then 
$\inc_{\DII}^{n,k} (w) = wv=w\circ v$ for $v= t_n \cdots t_{k+3}t_{k+2} t_{k+1}$ 
and this map is   a bijection $\WD_n \to \{ u \in \WD_{n+1} : u_{k+1} = n+1\}$
with \[\ell(\inc_{\DII}^{n,k} (w)) = \ell(w) + \ell(v) = \ell(w) + n-k.\]
One can check that $v$ is a minimal-length element with
$ v\circ \hat \sigma_{k+1}^{n+1} \circ v^{-1} = (\sigma_{k}^{n})^\vee 
$.
Since $y \mapsto y^\vee$ is an injective operation and since
$
w\circ  (\hat\sigma_{k}^{n})^\vee \circ w^{-1} =  (w^\diamond \circ \hat\sigma_{k}^{n}\circ w^{-1})^\vee 
$ for all $w \in \WD_n
$
by Lemma~\ref{inc-lem-D-pre},  
Lemma~\ref{dem-lem0} implies
that 
$w \in \cAD_\diamond(z:k) $ if and only if $\inc_{\DII}^{n,k}(w) \in  \cAD(z^\vee: k+1)$.

This proves the first claim in the lemma.
For the second claim, we first note that Theorem~\ref{weak-order-thm} and Lemma~\ref{inc-lem-D-pre} imply
that $\beta \xrightarrow{t_i} \gamma$ is an edge in the weak order graph for type DII if and only if 
 $\beta^\vee \xrightarrow{t_i} \gamma^\vee$ is an edge in the weak order graph for type DI.
 Since in the notation of Section~\ref{weak-order-sect}
\[
\hat \sigma_{k+1}^{n+1}  \xrightarrow{t_{k+1}} \cdots \xrightarrow{t_{n}}   (\sigma_{k}^{n})^\vee
\]
always lifts to a path in the weak order graph for type DI,
we deduce from Lemma~\ref{weak-order-lem}
that if $\gamma \in  \Gamma_{\DII}^{(p,q)}$ then
$w \in \cW_{\DII}^{(p,q)}(\gamma)
$ if and only if 
$wv \in \cW_{\DI}^{(p^\vee,q^\vee)}(\gamma^\vee)$.
\end{proof}

Let $X = \Neg(t_0 z)$ and recall that $\cM_{\DII}^{(p,q)}(z) = \NCSP(X : k)$.
 The following proposition is equivalent to Theorem~\ref{main-thm} in type $\DII$.
\begin{proposition}\label{D2-prop1}
Suppose $\gamma \in \Gamma_{\DII}^{(p,q)}$ and 
 $w \in \WD_n$ when $k = \frac{|p-q|}{2}$ is odd.  Define
 \[z =t_0 \cdot \overline{\sigma_\gamma}=\psi_{\DII}^{(p,q)}(\gamma) \in \cI_{\DII}^{(p,q)}.\]
 Then the following properties hold:
 \ben
 \item[(a)] If $w \in \cA_{\DII}^{(p,q)}(z)$ then  $\sh_{\DII}^{(p,q)}(w)  \in \cM_{\DII}^{(p,q)}(z).$
 
 \item[(b)] If $M\in\cM_{\DII}^{(p,q)}(z)$ then $\bot_{\DII}^{(p,q)}(z,M) \in  \cA_{\DII}^{(p,q)}(z)
$ and $
\sh_{\DII}^{(p,q)}\(\bot_{\DII}^{(p,q)}(z,M)\) = M$.

\item[(c)] One has $ \cW_{\DII}^{(p,q)}(\gamma)= \left\{ w \in  \cA_{\DII}^{(p,q)}(z):  \sh_{\DII}^{(p,q)}(w) \in \Aligned_{\DII}^{(p,q)}(\gamma)\right\}$.
 \een
 \end{proposition}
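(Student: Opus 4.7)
The plan is to reduce the entire proposition to the already-established type $\DI$ statements (Propositions~\ref{d1-prop2}, \ref{d1-prop3}, and \ref{D1-alignment-prop}) via the embedding $\inc_{\DII}^{n,k}$ from Lemma~\ref{inc-lem-DII}. Since $k = \frac{|p-q|}{2}$ is odd in type DII, the exponents $p^\vee, q^\vee$ from Lemma~\ref{inc-lem-DII} satisfy $\frac{|p^\vee - q^\vee|}{2} = k+1$, which is even, so $\DI$ results are available for the target data $(z^\vee, \gamma^\vee)$.

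First I will verify the three compatibilities needed: for any $w\in \WD_n$,
\[
\sh_{\DI}^{(p^\vee,q^\vee)}\!\left(\inc_{\DII}^{n,k}(w)\right) \;=\; \sh_{\DII}^{(p,q)}(w) \,\sqcup\, \{\{-n-1,n+1\}\};
\]
for any $M \in \cM_{\DII}^{(p,q)}(z)$, the matching $M^\vee := M \sqcup \{\{-n-1,n+1\}\}$ lies in $\cM_{\DI}^{(p^\vee,q^\vee)}(z^\vee)$ and satisfies
\[
\inc_{\DII}^{n,k}\!\left(\bot_{\DII}^{(p,q)}(z,M)\right) \;=\; \bot_{\DI}^{(p^\vee,q^\vee)}(z^\vee, M^\vee);
\]
and $M$ is $\gamma$-aligned if and only if $M^\vee$ is $\gamma^\vee$-aligned. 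The first two identities are straightforward from the formulas, using that inserting $n{+}1$ in position $k{+}1$ adds one trivial block $\{\pm(n+1)\}$ to the shape and one leading letter $n{+}1$ to the word $u$ in the generator formula (together with a parity compensation via $\langle\cdot\rangle_{\es}$). The third equivalence holds because $n{+}1$ is placed into $S_\pm(\gamma^\vee)$ as a single element, so the added block $\{\pm(n+1)\}$ is trivial and contributes no obstruction to $\gamma^\vee$-alignment; and a would-be obstruction block $\{a,b\}\subset S_\pm(\gamma)$ in $M$ with $0<a<b$ is the same obstruction in $M^\vee$ against $\gamma^\vee$.

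With these compatibilities in hand, parts (a) and (b) follow immediately: for (a), if $w \in \cA_{\DII}^{(p,q)}(z)$ then $\inc_{\DII}^{n,k}(w) \in \cA_{\DI}^{(p^\vee,q^\vee)}(z^\vee)$ by Lemma~\ref{inc-lem-DII}, so its $\sh_{\DI}^{(p^\vee,q^\vee)}$-image lies in $\cM_{\DI}^{(p^\vee,q^\vee)}(z^\vee)$ by Proposition~\ref{d1-prop2}; removing the $\{\pm(n+1)\}$ block shows $\sh_{\DII}^{(p,q)}(w)\in\cM_{\DII}^{(p,q)}(z)$. For (b), Proposition~\ref{d1-prop3} applied to $M^\vee$ shows that $\bot_{\DI}^{(p^\vee,q^\vee)}(z^\vee,M^\vee) \in \cA_{\DI}^{(p^\vee,q^\vee)}(z^\vee)$ with shape $M^\vee$; since this element has $n{+}1$ in position $k{+}1$, Lemma~\ref{inc-lem-DII} allows us to invert $\inc_{\DII}^{n,k}$ to recover $\bot_{\DII}^{(p,q)}(z,M) \in \cA_{\DII}^{(p,q)}(z)$ with shape $M$. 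For (c), combine the weak order statement of Lemma~\ref{inc-lem-DII} with Proposition~\ref{D1-alignment-prop}: $w \in \cW_{\DII}^{(p,q)}(\gamma) \iff \inc_{\DII}^{n,k}(w) \in \cW_{\DI}^{(p^\vee,q^\vee)}(\gamma^\vee) \iff \sh_{\DI}^{(p^\vee,q^\vee)}(\inc_{\DII}^{n,k}(w))$ is $\gamma^\vee$-aligned $\iff \sh_{\DII}^{(p,q)}(w)$ is $\gamma$-aligned.

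The main obstacle will be verifying the generator identity $\inc_{\DII}^{n,k}(\bot_{\DII}^{(p,q)}(z,M)) = \bot_{\DI}^{(p^\vee,q^\vee)}(z^\vee,M^\vee)$, since this requires a careful bookkeeping of how $\Cyc^\pm(z^\vee,M^\vee)$ relates to $\Cyc^\pm(z,M)$ (using Lemma~\ref{inc-lem-D-pre} and the explicit formula \eqref{iotaD-eq} for $z^\vee$), together with tracking how the $\langle\cdot\rangle_{\es}$ convention interacts with prepending the letter $n{+}1$ in the trivial-block portion of the word. Once the cycle-set bookkeeping is settled, the rest of the argument is essentially a mechanical transfer from the type $\DI$ results.
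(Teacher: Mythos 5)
Your proposal matches the paper's proof essentially step for step: the paper establishes exactly the shape identity $\sh_{\DI}^{(p^\vee,q^\vee)}\bigl(\inc_{\DII}^{n,k}(w)\bigr) = \sh_{\DII}^{(p,q)}(w) \sqcup \{\{-n-1,n+1\}\}$, the generator identity $\inc_{\DII}^{n,k}\bigl(\bot_{\DII}^{(p,q)}(z,M)\bigr) = \bot_{\DI}^{(p^\vee,q^\vee)}(z^\vee,M^\vee)$ for $M^\vee = M \sqcup \{\{-n-1,n+1\}\}$, and the alignment transfer between $\gamma$ and $\gamma^\vee$, and then invokes Propositions~\ref{d1-prop2}, \ref{d1-prop3}, and \ref{D1-alignment-prop} together with Lemma~\ref{inc-lem-DII} just as you do. One cosmetic correction: in the generator identity the letter $n+1$ is the largest element of $\Triv(M^\vee)$ and so is appended as the \emph{last} letter of the trivial-block word $u$ (which is precisely what puts it in position $k+1$), not as a leading letter.
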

 
 \begin{proof}
 For part (a), suppose $w \in \cA_{\DII}^{(p,q)}(z)$. 
 Notice that 
 \be\label{be-eq}
 \sh_{\DI}^{(p^\vee,q^\vee)}(\inc_{\DII}^{n,k}(w)) = \sh_{\DII}^{(p,q)}(w) \sqcup \{ \{-n-1,n+1\}\}.
 \ee
Since by Proposition~\ref{d1-prop2} and Lemma~\ref{inc-lem-DII} this matching  belongs to 
 \be\label{cMDI-eq}
 \cM_{\DI}^{(p^\vee,q^\vee)}(z^\vee) = \NCSP(X \sqcup \{n + 1\} : k+1),
 \ee
 it is evident that $\sh_{\DII}^{(p,q)}(w) \in \cM_{\DII}^{(p,q)}(z)$.
 
 For part (b), notice that if $M\in\cM_{\DII}^{(p,q)}(z)$ and $M^\vee \defequals M   \sqcup \{ \{-n-1,n+1\}\}$ then 
 \be\label{inc-gen-DII}
 \inc_{\DII}^{n,k}(w)\(\bot_{\DII}^{(p,q)}(z,M)\) = \bot_{\DI}^{(p^\vee,q^\vee)}(z^\vee,M^\vee)
.\ee This element is in $\cA_{\DI}^{(p^\vee,q^\vee)}(z^\vee)$
by Proposition~\ref{d1-prop3}
since $M^\vee $ clearly belongs to \eqref{cMDI-eq},
so it follows from Lemma~\ref{inc-lem-DII} that $\bot_{\DII}^{(p,q)}(z,M) \in  \cA_{\DII}^{(p,q)}(z)$.
We can compute that 
\[
\sh_{\DII}^{(p,q)}\(\bot_{\DII}^{(p,q)}(z,M)\) =M
\] either 
 by combining Proposition~\ref{d1-prop3} and \eqref{be-eq} or by a direct computation.
 
 To prove part (c), it suffices by Proposition~\ref{D1-alignment-prop} and Lemma~\ref{inc-lem-DII}
 to show for $w \in  \cA_{\DII}^{(p,q)}(z)$ that 
 \[\sh_{\DII}^{(p,q)}(w) \in \Aligned_{\DII}^{(p,q)}(\gamma)
 \quad\text{if and only if}\quad \sh_{\DI}^{(p^\vee,q^\vee)}(\inc_{\DII}^{n,k}(w)) \in \Aligned_{\DI}^{(p^\vee,q^\vee)}(\gamma^\vee).\]
 In view of \eqref{be-eq}, to prove this equivalence we just need to check that if
 $M \in \Aligned_{\DII}^{(p,q)}(\gamma)$ then 
 \[
 n+1\in S_{+}(\gamma^\vee)\text{ when }\Triv(M)\subset S_{+}(\gamma)
\quand n+1\in S_{-}(\gamma^\vee)\text{ when }\Triv(M)\subset S_{-}(\gamma).\]
But this is indeed the case, as the first two conditions both hold when $p> q$ and the second two conditions both hold when $p< q$.
 \end{proof}

  The next result is equivalent to Theorem~\ref{main-thm2} in type $\DII$.
  
\begin{proposition}\label{D2-rank-prop}
Assume $z \in \cI_{\DII}^{(p,q)}$ when $k=\frac{|p-q|}{2}$ is odd.
If $v,w\in \WD_n$ have $v \precsim_{\DII}^{(p,q)} w$ then
$v \in \cA_{\DII}^{(p,q)}(z)$ if and only if $w \in  \cA_{\DII}^{(p,q)}(z)$,
and in this case
$\sh_{\DII}^{(p,q)}(v) = \sh_{\DII}^{(p,q)}(w)$.
Moreover, the relation $\precsim_{\DII}^{(p,q)}$ is a graded partial order on $ \cA_{\DII}^{(p,q)}(z) $
for the rank function $  \rankD^k$ and 
\[
 \Bigl\{w \in \cA_{\DII}^{(p,q)}(z)  :\sh_{\DII}^{(p,q)}(w) =M\Bigr\} 
 = \Bigl\{w \in \WD_n : \bot_{\DII}^{(p,q)}(z,M) \precsim_{\DII}^{(p,q)} w\Bigr\}
\]
 for each $M \in \cM_{\DII}^{(p,q)}(z)$.
\end{proposition}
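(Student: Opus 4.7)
The strategy is to reduce every assertion of Proposition~\ref{D2-rank-prop} to its counterpart in Proposition~\ref{D1-rank-prop} via the embedding $\inc_{\DII}^{n,k}$ from Lemma~\ref{inc-lem-DII}. Since $k$ is odd, the shifted parameter $k^\vee = \frac{|p^\vee - q^\vee|}{2}$ equals $k+1$ and is even, so Proposition~\ref{D1-rank-prop} applies to the DI data $(p^\vee, q^\vee)$. Moreover, Lemma~\ref{inc-lem-DII} identifies $\cA_{\DII}^{(p,q)}(z)$ with the subset $\{u \in \cA_{\DI}^{(p^\vee,q^\vee)}(z^\vee) : u_{k+1} = n+1\}$.

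The first step is to show that $\inc_{\DII}^{n,k}$ intertwines the generating relations of $\precsim_{\DII}^{(p,q)}$ with those of $\precsim_{\DI}^{(p^\vee,q^\vee)}$ restricted to the image. The covering relation $\overset{k}\precsim$ acts on positions $k+1, k+2, \dots$ of $w \in \WD_n$, which via $\inc_{\DII}^{n,k}$ become positions $k+2, k+3, \dots$ of $\inc_{\DII}^{n,k}(w)$---precisely where $\overset{k+1}\precsim$ operates. The auxiliary relation $uv < \tilde u \tilde v$ with $\ell(u) = k$ negates positions $1$ and $k+1$ of $w$, which correspond to positions $1$ and $k+2$ of $\inc_{\DII}^{n,k}(w)$, matching the DI auxiliary relation for $\ell(u) = k+1$; the defining inequality $0 < v_1 < |u_1|$ translates verbatim, since the relevant letters are $w_1$ and $w_{k+1}$ in both settings. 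Crucially, no generating relation for $\precsim_{\DI}^{(p^\vee,q^\vee)}$ ever disturbs position $k+1$ of $\inc_{\DII}^{n,k}(w)$, so the image of $\inc_{\DII}^{n,k}$ is closed under $\precsim_{\DI}^{(p^\vee,q^\vee)}$-covering relations in both directions, yielding an order isomorphism.

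Given this intertwining, every assertion of the proposition reduces to its DI counterpart. The closure of $\cA_{\DII}^{(p,q)}(z)$ under $\precsim_{\DII}^{(p,q)}$ and the constancy of $\sh_{\DII}^{(p,q)}$ on intervals follow from the DI analogues, using \eqref{be-eq} to pass between the DI and DII shapes (which differ only by the constant block $\{-n-1,n+1\}$). For the grading, observe directly that $\rankD^{k+1}(\inc_{\DII}^{n,k}(w)) = \rankD^k(w)$, since $\rankD^{k+1}$ strips the first $k+1$ letters of its argument, which on $\inc_{\DII}^{n,k}(w) = w_1 \cdots w_k (n+1) w_{k+1} \cdots w_n$ leaves exactly $w_{k+1}\cdots w_n$; thus the DI grading pulls back to $\rankD^k$. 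For the final set equality, combine Proposition~\ref{D1-rank-prop} applied to $M^\vee = M \sqcup \{\{-n-1,n+1\}\}$ with the identity $\inc_{\DII}^{n,k}(\bot_{\DII}^{(p,q)}(z,M)) = \bot_{\DI}^{(p^\vee,q^\vee)}(z^\vee, M^\vee)$ from \eqref{inc-gen-DII}. The main obstacle is the bookkeeping in the intertwining step---in particular, verifying in both directions that the image of $\inc_{\DII}^{n,k}$ is stable under all DI covering relations, and that these descend to exactly the generating relations of $\precsim_{\DII}^{(p,q)}$ rather than to some strictly larger relation---but this is forced by the positional structure of the two defining relations.
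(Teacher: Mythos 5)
Your proof is correct and follows essentially the same route as the paper: reduce to type DI via the embedding $\inc_{\DII}^{n,k}$, combining Lemma~\ref{inc-lem-DII}, \eqref{be-eq}, \eqref{inc-gen-DII}, the identity $\rankD^k(w)=\rankD^{k+1}(\inc_{\DII}^{n,k}(w))$, and the type DI results, with the intertwining of $\precsim_{\DII}^{(p,q)}$ and $\precsim_{\DI}^{(p^\vee,q^\vee)}$ (preservation of position $k+1$) made more explicit than in the paper's terse argument. The only nitpick is that the final set equality really uses the DI fiber description $\{w:\sh_{\DI}^{(p^\vee,q^\vee)}(w)=M^\vee\}=\{w:\bot_{\DI}^{(p^\vee,q^\vee)}(z^\vee,M^\vee)\precsim_{\DI}^{(p^\vee,q^\vee)}w\}$ from the proposition following Proposition~\ref{D1-rank-prop}, not Proposition~\ref{D1-rank-prop} alone—a point on which the paper's own citation is equally brief.
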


\begin{proof}
If we order
$\WD_n $ by $ \precsim_{\DII}^{(p,q)}$ and 
$\WD_{n+1} $ by $ \precsim_{\DI}^{(p^\vee,q^\vee)}$ 
then the map
 $
 \inc_{\DI}^{n,k}$
is order-preserving. 
The first claim in the proposition follows by combining this observation with
Proposition~\ref{D1-rank-prop} and Lemma~\ref{inc-lem-DII},
using \eqref{be-eq} for the identity $\sh_{\DII}^{(p,q)}(v) = \sh_{\DII}^{(p,q)}(w)$ when $v \precsim_{\DII}^{(p,q)} w$.

The same results imply that if $w\in \cA_{\DII}^{(p,q)}(z)$ then 
\[  \rankD^k(w) =  \rankD^{k+1}\(  \inc_{\DI}^{n,k}(w)\).\]
Since the image of $\cA_{\DII}^{(p,q)}(z)$ under  $
 \inc_{\DII}^{n,k}$ is a union of upper intervals in  $\cA_{\DI}^{(p^\vee,q^\vee)}(z^\vee)$,
 which are graded by $\rankD^{k+1}$ and whose minimal elements have the form \eqref{inc-gen-DII}
% \[\inc_{\DII}^{n,k}\(\bot_{\DII}^{(p,q)}(z,M)\) =\bot_{\DI}^{(p^\vee,q^\vee)}(z^\vee,M^\vee)\]
 for $M \in \cM_{\DII}^{(p,q)}(z)$ by Proposition~\ref{D1-rank-prop}, 
the second sentence in the result statement follows.
\end{proof}

 \subsection{Type DIII}\label{tDIII-sect}
 
  The propositions in this section will establish Theorems~\ref{main-thm} and \ref{main-thm2} in type DIII.
 When $n$ is even,  our proofs will use an embedding of type DIII into type DI,
 and when $n$ is odd we will use an embedding into even rank $n+1$.
  Throughout, we fix an element $z \in \cI_{\DIII}^{(n)}$.
  
  When $n$ is even, we have $z \in \Ifpf(\WD_n)$ and
 $\cA_{\DIII}^{(n)}(z)=\cAfpfD(z) = \cAD(t_1t_3t_5\cdots t_{n-1}, z)$.

 \begin{lemma}\label{inc-lem2}
Assume $n$ is even. Then the injective map $\incDIIIDI :\WD_n\to\WD_n$ given by 
\[
\incDIIIDI : w \mapsto 
w_2w_1w_4w_3\cdots w_n w_{n-1} 
\]
restricts %for each $z \in \Ifpf(\WD_n)$ 
to a bijection 
$\cAfpfD(z) \to  
 \left\{ w \in \cAD(z) :  w_i > w_{i+1}\text{ for all odd }i \in [n-1]\right\}.
$
\end{lemma}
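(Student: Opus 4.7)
The plan is to identify $\inc_{\DIII}^n$ as right multiplication by the involution $v = t_1 t_3 t_5 \cdots t_{n-1}$ and then apply the general Demazure bijection from Lemma~\ref{dem-lem}. Direct inspection of one-line notation confirms that $\inc_{\DIII}^n(w) = w\cdot v$ for every $w \in \WD_n$, since each factor $t_{2i-1}$ swaps positions $2i-1$ and $2i$ on the right, and the factors act on disjoint coordinate pairs.

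Next I would verify the three properties needed to feed $v$ into Lemma~\ref{dem-lem}: $v$ is a reduced product (so $\ell(v) = n/2$), $v$ is an involution that belongs to $\WD_n$ (its factors are mutually commuting, each inducing two sign changes for a total of zero signs, which is even), and $v \circ v^{-1} = v$. For the last point, commutativity lets us factor the Demazure product as $v \circ v = \prod_i (t_{2i-1}\circ t_{2i-1}) = \prod_i t_{2i-1} = v$. A short length argument using \eqref{szs-length-eq} shows that no element shorter than $n/2$ has Demazure square equal to $v$, so $v \in \cA(v)$. In particular, $v$ is precisely the distinguished element used to define $\cAfpfD$ when $n$ is even, so $\cAfpfD(z) = \cA(v,z)$.

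Now Lemma~\ref{dem-lem} with $y = v$ gives that $u \mapsto uv$ is a bijection
\[
\cAfpfD(z) \;\xrightarrow{\sim}\; \bigl\{ w \in \cAD(z) : \ell(wv) = \ell(w)-\ell(v)\bigr\}.
\]
It remains to translate the length condition $\ell(wv) = \ell(w) - n/2$ into the claimed one-line descent pattern. By \eqref{d-descent-eq}, $\ell(w t_i) < \ell(w)$ iff $w_i > w_{i+1}$. Because $t_1, t_3, \dots, t_{n-1}$ commute pairwise and act on disjoint coordinate pairs, we may absorb them into $w$ one at a time in any order, and no factor can interfere with the descent condition of another. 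Hence $\ell(wv) = \ell(w) - n/2$ holds if and only if $w_{2i-1} > w_{2i}$ for every $i \in [n/2]$, which is exactly the condition ``$w_i > w_{i+1}$ for all odd $i \in [n-1]$'' in the statement.

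No step in this plan presents a real obstacle: once $\inc_{\DIII}^n$ is recognised as right multiplication by $v$, everything reduces to the abstract bijection of Lemma~\ref{dem-lem} plus a coordinate-disjoint length calculation. The only subtlety is confirming that the specific reduced expression for $v$ witnesses minimality in $\cA(v)$, and this follows instantly from commutativity of its factors together with $s\circ s = s$ for simple reflections.
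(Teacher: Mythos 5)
Your proposal is correct and follows essentially the same route as the paper: the paper's proof is precisely the one-line observation that the claim follows from Lemma~\ref{dem-lem} applied with $v=y=t_1t_3t_5\cdots t_{n-1}$, since $\cAfpfD(z)=\cA(y,z)$ and right multiplication by $v$ is $\inc_{\DIII}^{n}$. You have merely filled in the routine verifications (that $v\in\cA(v)$ and that the length condition $\ell(wv)=\ell(w)-\ell(v)$ translates into the descent pattern at odd positions), which the paper leaves implicit.
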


\begin{proof}
This follows from Lemma~\ref{dem-lem} with $v =y= t_1t_3t_5\cdots t_{n-1}$
as $\cAfpfD(z) = \cA(y,z)$.
\end{proof}

  When $n$ is odd, we have $z \in \cI_\diamond(\WD_n)$ and
$   t_0 z \in \Ifpf(\W_n)$
   and
 \[\cA_{\DIII}^{(n)}(z) = \cAfpfD(z) = \cAD_\diamond(t_2t_4t_5\cdots t_{n-1}, z).\]
 For this case, 
define $z^\vee $ as in \eqref{iotaD-eq} and notice that 
\[
z^\vee \in \cI_{\DIII}^{(n+1)} =\left\{ y \in \Ifpf(\WD_{n+1}) : \neg(y)>0 \text{ or }  \ell_0(y) \in 4\NN\right\}
\]
so $\cA_{\DIII}^{(n+1)}(z^\vee)= \cAfpfD(z^\vee) = \cA_\diamond (t_1 t_3  t_5\cdots t_{n}, z^\vee).$
 Now, for each
 \[
\gamma=(S_+,S_-,M) \in \Gamma_{\DIII}^{(n)} = \{\text{ even-strict skew-symmetric $(n,n)$-clans } \}
\] we redefine our earlier notation by setting
\be
\gamma^\vee = 
 (S_+,S_-\sqcup \{-n-1,n+1\},M)
.\ee
Then we have $\gamma^\vee \in    \Gamma_{\DIII}^{(n+1)}$ and
if $\psi_{\DIII}^{(n)}(\gamma) = z$ when $n$ is odd then 
$
\psi_{\DIII}^{(n+1)}(\gamma^\vee) = z^\vee$.

  \begin{lemma}\label{inc-lem-DIV}
  Assume $n$ is odd. Then the injective map  $\incDIIIEven :\WD_n \to \WD_{n+1}$ given by
\[\incDIIIEven : w \mapsto 
(-n-1)\overline{w_1} w_2w_3\cdots  w_n
\]
is a bijection 
$\cAfpfD(z)\to  \left\{ w \in \cAfpfD(z^\vee) :  w_{1} = -n-1\right\}.$
Also, if $w \in \WD_n$ and $\gamma \in  \Gamma_{\DIII}^{(n)}$
then
\[
w \in \cW_{\DIII}^{(n)}(\gamma)
\quad\text{if and only if}
\quad
\incDIIIEven(w) \in \cW_{\DIII}^{(n+1)}(\gamma^\vee).
\]
\end{lemma}

\begin{proof} 
The proof is similar to that of Lemma~\ref{inc-lem-DII}.
We again identify $\WD_n$ with the set of elements in $\WD_{n+1}$ that fix $n+1$. 
Then 
$\incDIIIEven (w) = wv=w\circ v$ for $v= t_n \cdots t_3 t_2 t_{-1}$ 
and this map is   a bijection $\WD_n \to \{ u \in \WD_{n+1} : u_{1} = -n-1\}$
with \[\ell(\incDIIIEven (w)) = \ell(w) + \ell(v) = \ell(w) + n.\]
One can check that $v$ is a minimal-length element with
\[v\circ t_1t_3t_5\cdots  t_n \circ v^{-1} = (t_2t_4t_6\cdots t_{n-1})^\vee 
.\]
It follows using
Lemmas~\ref{dem-lem0} and \ref{inc-lem-D-pre}
that 
$w \in \cAfpfD(z) $ if and only if $\incDIIIEven(w) \in  \cAfpfD(z^\vee)$.

Next, we can deduce for $\gamma \in  \Gamma_{\DIII}^{(n)}$ that $w \in \cW_{\DIII}^{(n)}(\gamma)
$ if and only if 
$wv \in \cW_{\DIII}^{(n+1)}(\gamma^\vee)$
from Lemma~\ref{weak-order-lem},
after 
using Theorem~\ref{weak-order-thm} and Lemma~\ref{inc-lem-D-pre} to note
that $\beta \xrightarrow{t_i} \gamma$ is an edge in the weak order graph for type DIII in rank $n$ if and only if 
 $\beta^\vee \xrightarrow{t_i} \gamma^\vee$ is an edge in the weak order graph for type DIII in rank $n+1$,
 and that 
$
t_1t_3t_5\cdots t_n \xrightarrow{t_{-1}} t_{-1}  t_1t_3t_5\cdots t_n\xrightarrow{t_{2}} \cdots \xrightarrow{t_{n}}   (t_2t_4t_6\cdots t_{n-1})^\vee
$
always lifts to a path in the weak order graph for type DIII in rank $n+1$.
\end{proof}

Let $X = \Neg(z)$ and $Y=\Neg(t_0 z)$.
Recall that when $n$ is even we have 
\[
 \cM_{\DIII}^{(n)}(z) = \{ M\in\NCSP(X):  \triv(M) \equiv \ell_0(z) \modu 4)\}
 \]
 and when $n$ is odd we have 
\[
 \cM_{\DIII}^{(n)}(z) = \{ M\in\NCSP(Y):  \triv(M) \text{ is odd}\}.
\]
Also recall the definitions of ${\overset{k}\precsim}$ and ${\overset{k}\precapprox}$ from Section~\ref{word-rel-sect},
and that ${\precsim_{\DIII}^{(n)}}={\overset{k}\precapprox}$ for $k=\frac{1-(-1)^n}{2}$.

\begin{proposition}\label{D34-prop1}
The following properties hold for any $z   \in \cI_{\DIII}^{(n)}$:

 \ben
  \item[(a)] If $v,w\in \WD_n$ have $v  \precsim_{\DIII}^{(n)} w$ then $v \in \cA_{\DIII}^{(n)}(z)$ if and only if $w \in \cA_{\DIII}^{(n)}(z)$,
in which case \[\sh_{\DIII}^{(n)}(v) = \sh_{\DIII}^{(n)}(w).\]

 \item[(b)] If $w \in \cA_{\DIII}^{(n)}(z)$ then  $\sh_{\DIII}^{(n)}(w)  \in \cM_{\DIII}^{(n)}(z).$
 
 \item[(c)] If $M\in\cM_{\DIII}^{(n)}(z)$ then $\bot_{\DIII}^{(n)}(z,M) \in  \cA_{\DIII}^{(n)}(z)
$ and $
\sh_{\DIII}^{(n)}\(\bot_{\DIII}^{(n)}(z,M)\) = M$.

 \een
 \end{proposition}
 
 \begin{proof}
 To prove part (a), we first assume that $n$ is even.
If $v,w \in \WD_n$ have one-line representations 
\[v = \cdots BCAD \cdots {\overset{0}\precapprox} \cdots ADBC \cdots = w\]
where $A<B<C<D$ occur in positions $i,i+1,i+2,i+3$ for some odd $i \in [n-3]$,
then it is easy to see that
$\sh_{\DIII}^{(n)}(v) = \sh_{\DIII}^{(n)}(w)$, and 
 the images of $v$ and $w$ under $\incDIIIDI$ have the form
\[
\incDIIIDI(v) = \cdots CB DA \cdots \overset{0}\precsim \cdots C DA B \cdots \overset{0}\precsim \cdots DA CB \cdots = \incDIIIDI(w).
\]
In this case $\incDIIIDI(v) \in \cAD(z)$ if and only if  $\incDIIIDI(w) \in \cAD(z)$
by Lemma~\ref{d-equiv-lem},
so it follows from Lemma~\ref{inc-lem2} that $v \in \cAfpfD(z)=\cA_{\DIII}^{(n)}(z)$ if and only if $w \in \cAfpfD(z)=\cA_{\DIII}^{(n)}(z)$.

Alternatively, if $n$ is odd and $v,w\in \WD_n$ have $v  \overset{1}\precapprox w$, then
 it is obvious that
 \[\sh_{\DIII}^{(n)}(v) = \sh_{\DIII}^{(n)}(w) 
\quand 
\incDIIIEven(v)  \overset{0}\precapprox \incDIIIEven(w),\]
so 
  Lemma~\ref{inc-lem-DIV} and the even rank case 
 imply that $v \in \cA_{\DIII}^{(n)}(z)$ if and only if $w \in \cA_{\DIII}^{(n)}(z)$.

To prove part (b) we again first assume that $n$ is even.
If $v=\incDIIIDI(w) \in \cAD(z)$
then $M=\sh_{\DIII}^{(n)}(w)$ is obtained from $N=\sh_{\DI}^{(n,n)}(v)$
by replacing all blocks of the form $\pm \{|a|, b\}$ for $(a,b) \in \NDes(v)$
with $0 < -a < -b$ by the trivial blocks $\{ \pm a\}$ and $\{\pm b\}$.
Since $N \in \cMM(z)=\NCSM(X:0)$ by Proposition~\ref{d1-prop1},
Lemma~\ref{cabd-lem} implies that $M \in \NCSM(X)$.

Let $l$ be the number of trivial blocks in $M$.
It remains to show that $l \equiv \ell_0(z) \modu 4)$.
Since $v_i>v_{i+1}$ and $w_i < w_{i+1}$ for all odd $i \in [n]$
by Lemma~\ref{inc-lem2},
we have 
\[
 l =2| \{ \text{odd }i\in[n]: w_{i} < 0\text{ and }w_{i+1} < 0\}|
 \]
 along with
$\NDes(v) = \NDes^\pm(v)$.
It follows from Proposition~\ref{d1-prop1} that 
\[
\ba
\ell_0(z) &= 2 |\{ (a,b) \in \Cyc^{\pm}(z) : a<0\}| + \neg(z) 
\\&= 2 | \{ (a,b) \in \NDes(v) : b < 0\}|
\\&= 2| \{ \text{odd }i\in[n]: w_{i} < 0\}|.
\ea
\]
As there are no odd indices $i \in [n]$ with $w_i > 0 > w_{i+1}$, we conclude that
\[ \tfrac{1}{2}\(\ell_0(z) - l\) =| \{ \text{odd }i\in[n]: w_{i} < 0<w_{i+1}\}
=| \{ \text{odd }i\in[n]: w_{i} w_{i+1}<0\}.\]
This number must be even since $n$ is even $w \in \WD_n$, 
so $\ell_0(z) - l \equiv 0 \modu 4)$ as needed.

Now assume $n$ is odd. 
Then  $ \sh_{\DIII}^{(n+1)}\(\incDIIIEven(w)\)$  is given by 
 \be\label{be-eq2}
 \sh_{\DIII}^{(n)}(w) \sqcup \{ \{-n-1,n+1\}\} 
 \quad\text{if $w_1 >0$}
 \ee
 and by
 \be\label{be-eq3}
\( \sh_{\DIII}^{(n)}(w) \setminus \{ \{-w_1,w_1\}\} \)\sqcup \{ \{ -w_1,n+1\}, \{-n-1,w_1\}\}
\quad\text{if $w_1 < 0$.}
\ee
In either case we see that 
if   $ \sh_{\DIII}^{(n+1)}\(\incDIIIEven(w)\)$ is noncrossing then so is $\sh_{\DIII}^{(n)}(w)$, and the numbers of trivial blocks in the two matchings have different parities.
We can therefore deduce that  $\sh_{\DIII}^{(n)}(w)  \in \cM_{\DIII}^{(n)}(z)$ from Lemma~\ref{inc-lem-DIV} and the even rank handled above.

%This completes the proof of part (a). 
For part (c),
let $M\in\cM_{\DIII}^{(n)}(z)$ and write $w=\bot_{\DIII}^{(n)}(z,M)$.
The claim that $\sh_{\DIII}^{(n)}(w) = M$ is immediate from the definitions.
To prove that $w\in  \cA_{\DIII}^{(n)}(z)$ we first assume $n$ is even.

Write $\Triv(M) = \{c_1<c_2<\dots<c_l\}$
and note that $l$ is even. 
When $n$ is even, $w$ is given in one-line notation as the concatenation $uv$ where
\[
u = \overline{c_l\cdots c_2c_1}
\quand 
v = [[\Cyc^\pm(z,M)]]_\asc.
\]
Consider the modified words
\[
\hat u = \begin{cases} c_1 \overline{c_2} c_3 \overline{c_4}\cdots c_{l-1} \overline{c_l} &\text{if $l\equiv 0 \modu 4)$} 
\\
\overline{c_1} \overline{c_2} c_3 \overline{c_4}\cdots c_{l-1} \overline{c_l}&\text{if $l\equiv 2 \modu 4)$}
\end{cases}
\quand
\hat v = [[\Cyc^\pm(z,M)]]_\des
\]
Since $l\equiv \ell_0(z)\modu 4)$ and since the number of negative letters in $\hat v$ is 
\[ \tfrac{1}{2} ( \ell_0(z) - \neg(z)) + \tfrac{1}{2}(\neg(z) - l) = \tfrac{1}{2} ( \ell_0(z)-l),
\]
the number of negative letters in the word $\hat w = \hat u \hat v$  is whichever of $\tfrac{1}{2} \ell_0(z)$ or  $\tfrac{1}{2} \ell_0(z)+1$ is even.  
Thus $\hat w \in \WD_n$ and more specifically we have $\hat w = \bot_{\DI}^{(n,n)}(z,\hat M) $ for the matching
\[ 
\hat M = M \sqcup \{ \pm \{c_1,c_2\}, \pm \{c_3,c_4\},\dots,\pm \{c_{l-1},c_l\}\} \in \cMM(z).
\]
It follows as an exercise that \[\hat w \precsim_{\DI}^{(n,n)} \incDIIIDI(w)\] so we have $ \incDIIIDI(w) \in \cAD(z)$
 in view of Theorem~\ref{d-equiv-lem} and Proposition~\ref{d1-prop0}.
 As by definition
$w_i > w_{i+1}$ for all odd $i \in [n]$,
 Lemma~\ref{inc-lem2} implies that $w \in  \cA_{\DIII}^{(n)}(z)$.

Now assume $n$ is odd.
In this case,
to prove that $w \in   \cA_{\DIII}^{(n)}(z)$ it suffices by
 Lemma~\ref{inc-lem-DIV}
to show that $\incDIIIEven(w) \in \cA_{\DIII}^{(n+1)}(z^\vee)$.
For this, observe
that $\{-w_1,w_1\}$ is the outermost trivial block of $M$,
so both \eqref{be-eq2} and \eqref{be-eq3} give an element $\NCSP( X \sqcup\{n+1\})$.
Moreover, the number of trivial blocks in this matching is 
\[ \triv(M) + 1 \text{ if }w_1>0 \quand \triv(M)-1\text{ if }w_1<0.\]
Consulting the definition of $\bot_{\DIII}^{(n)}(z,M)$ shows that $w_1<0$ if and only if 
\be\label{parity-consult-eq}
\tfrac{\ell_0(t_0z) - |X|}{2} +   \triv(M) + \tfrac{|X|- \triv(M)}{2} = \tfrac{\ell_0(t_0z) +\triv(M)}{2} = \tfrac{\ell_0(z^\vee) +\triv(M)-1}{2} \equiv 0 \modu 2).\ee
Thus, the number of trivial blocks in $N= \sh_{\DIII}^{(n+1)}\(\incDIIIEven(w)\)$
is congruent to $\ell_0(z^\vee)$ modulo 4 and we have $N \in \cM_{\DIII}^{(n+1)}(z^\vee)$.
 Finally, notice that we respectively have
 \[\incDIIIEven(w) = \bot_{\DIII}^{(n+1)}(z^\vee,N)
\quand \bot_{\DIII}^{(n+1)}(z^\vee,N) \overset{0}\precapprox \incDIIIEven(w)\]
when $w_1>0$ and $w_1<0$.
Hence
 $\incDIIIEven(w) \in \cA_{\DIII}^{(n+1)}(z^\vee)$ by our arguments for even $n$. \end{proof}

The next proposition requires one additional lemma.

\begin{lemma}\label{ADBC-lem}
Assume $n$ is even, $z \in \Ifpf(\WD_n)$, and $w \in \cAfpfD(z)$.
Choose an odd index  $i \in[n-3]$ and suppose $w_{i+1} > w_{i+3}$. 
Then $w_iw_{i+1}w_{i+2}w_{i+3}=ADBC$ where $A<B<C<D$.

\end{lemma}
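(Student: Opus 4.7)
The plan is to transport the statement through the bijection of Lemma~\ref{inc-lem2} to the set $\cAD(z)$, which is a well-nested family by Corollary~\ref{well-cor}, and then derive a contradiction from any violation of the desired pattern by exhibiting a consecutive $321$.

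Let $v = \inc_{\DIII}^n(w) \in \cAD(z)$. Since $i$ is odd, the formula in Lemma~\ref{inc-lem2} gives $v_i v_{i+1} v_{i+2} v_{i+3} = w_{i+1} w_i w_{i+3} w_{i+2}$. Setting $a = w_i$, $d = w_{i+1}$, $b = w_{i+2}$, $c = w_{i+3}$, Lemma~\ref{inc-lem2} tells us $v_i > v_{i+1}$ and $v_{i+2} > v_{i+3}$, i.e.\ $a < d$ and $b < c$, while the hypothesis $w_{i+1} > w_{i+3}$ gives $c < d$. Thus $w_i w_{i+1} w_{i+2} w_{i+3} = adbc$ with $a < d$, $b < c$, and $c < d$, and the task reduces to showing the single remaining inequality $a < b$.

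Suppose for contradiction that $a > b$ (they cannot be equal since $w$ is a signed permutation). If moreover $a > c$, then $v_i v_{i+1} v_{i+2} = dac$ already forms a consecutive $321$-pattern in $v$, contradicting well-nestedness by Corollary~\ref{well-cor}. So we may assume $b < a < c$. Then I would apply the first covering rule of Definition~\ref{d-cover-def0} with $A = b < B = a < C = c$ sitting in consecutive positions $i+1,i+2,i+3$ (and with the prefix $u = v_1 \cdots v_i$, which ends in $d$, left untouched): the relation $uBCAv \precsimD uCABv$ yields $v \precsimD v'$, where $v'$ has $dcba$ at positions $i, i+1, i+2, i+3$. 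By Theorem~\ref{d-equiv-lem} we still have $v' \in \cAD(z)$; but $v'_i v'_{i+1} v'_{i+2} = dcb$ is a consecutive $321$-pattern, since $b < c < d$, once more contradicting Corollary~\ref{well-cor}. Hence $a < b$, and $a < b < c < d$ gives the pattern $w_i w_{i+1} w_{i+2} w_{i+3} = ADBC$ with $A = a$, $B = b$, $C = c$, $D = d$.

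The argument is short and uses only tools already assembled in Sections~\ref{esi-sect} and \ref{tDI-sect}. There is no substantive obstacle; the only bookkeeping step requiring care is the positional translation between $w$ and $v = \inc_{\DIII}^n(w)$, which swaps letters within each odd-even block so that local patterns in $w$ and $v$ look different but correspond bijectively.
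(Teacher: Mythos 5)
Your proof is correct and takes essentially the same route as the paper: transport $w$ to $v=\inc_{\DIII}^{n}(w)=\cdots DACB\cdots\in\cAD(z)$, extract $A<D$ and $B<C<D$ from Lemma~\ref{inc-lem2} and the hypothesis, and rule out $A>B$ via well-nestedness of $\cAD(z)$ (Corollary~\ref{well-cor}) together with the move $DACB\precsim DCBA$ in the sub-case $B<A<C$. The only cosmetic difference is that in the sub-case $A>C$ you exhibit the consecutive $321$-pattern as $DAC$ rather than the paper's $ACB$.
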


\begin{proof}
Lemma~\ref{inc-lem2} implies that  $w_i<w_{i+1}$ and $w_{i+2}<w_{i+3}$
so $A<D$ and $B<C<D$.
Since \[\incDIIIDI(w) = \cdots DACB\cdots \in \cAD(z)\] by the same lemma,
it follows from Corollary~\ref{well-cor} that $A<B$,
since if $A>C$ then $A>C>B$, and if $B<A<C$ then 
 $ DACB  \overset{0}\precsim DCBA$ and $D>C>B$.
\end{proof}

  The following result implies Theorem~\ref{main-thm2} in type DIII.

\begin{proposition}\label{D34-rank-prop}
Continue to assume $z \in \cI_{\DIII}^{(n)}$ and let $M \in \cM_{\DIII}^{(n)}(z)$. Then
  $\precsim_{\DIII}^{(n)} $ is a graded partial order on the set $\cA_{\DIII}^{(n)}(z)$
and  one has
\[
  \Bigl\{w \in  \cA_{\DIII}^{(n)}(z) : \sh_{\DIII}^{(n)}(w) =M\Bigr\}
 = \Bigl\{w \in \WD_n : \bot_{\DIII}^{(n)}(z,M) \precsim_{\DIII}^{(n)} w\Bigr\}
. \]
\end{proposition}

\begin{proof}
Part (a) of Proposition~\ref{D34-prop1} shows that 
 ${\precsim_{\DIII}^{(n)}}$ is a partial order on $\cA_{\DIII}^{(n)}(z)$.
 As noted in Section~\ref{word-rel-sect} this relation is always graded.
 
To prove the remaining claim, first assume $n$ is even.
The right hand set is contained in the left by Propositions~\ref{D34-prop1}.
Conversely, suppose $w \in \cA_{\DIII}^{(n)}(z)$
has 
$\sh_{\DIII}^{(n)}(w) =M$.
Then  by Lemma~\ref{ADBC-lem}   there is a unique element $v\in \cA_{\DIII}^{(n)}(z)$ with $v \precsim_{\DIII}^{(n)} w$
and $v_2<v_4<v_6<\dots<v_n$, and we have $\sh_{\DIII}^{(n)}(v) = M$.
Let $u = \incDIIIDI(v) \in \cAD(z)$.
Then \[\NDes(u)=\NDes^\pm(u)  = \{ (v_2,v_1), (v_4,v_3), \dots, (v_{n-1},v_n)\}\]
by Lemma~\ref{inc-lem2}, so it follows from
Proposition~\ref{d1-prop1} that $v = \bot_{\DIII}^{(n)}(z,M)$.
This shows that our two expressions 
 are contained in each other and therefore equal.

Now assume $n$ is odd and $w \in \cA_{\DIII}^{(n)}(z)$ has $ \sh_{\DIII}^{(n)}(w)  = M \in\cM_{\DIII}^{(n)}(z)$.
To handle this parity of $n$,
it suffices by Proposition~\ref{D34-prop1} to check that  
\[
\bot_{\DIII}^{(n)}(z,M) \precsim_{\DIII}^{(n)} w.
\]
By applying Lemma~\ref{ADBC-lem} to $\incDIIIEven(w) \in \cA_{\DIII}^{(n+1)}(z^\vee)$
we deduce that
 there is a unique element $v\in \cA_{\DIII}^{(n)}(z)$ with $v \precsim_{\DIII}^{(n)} w$
and $v_3<v_5<v_7<\dots<v_n$, and we have $\sh_{\DIII}^{(n)}(v) = M$.

To see that $v=\bot_{\DIII}^{(n)}(z,M)$,
we consider its image $u =\incDIIIEven(v) \in \cA_{\DIII}^{(n+1)}(z^\vee)$.
As observed in the proof of Proposition~\ref{D34-prop1}, the shape $N = \sh_{\DIII}^{(n+1)}(u)$ is given by 
\[
M\sqcup \{ \{-n-1,n+1\}\} \quord \( M \setminus \{ \{-v_1,v_1\}\} \)\sqcup \{ \{ -v_1,n+1\}, \{-n-1,v_1\}\}
\]
when $v_1>0$ or $v_1<0$, respectively.
By  Proposition~\ref{D34-prop1}(a), if $i \in [n]$ is odd then $u_i < u_{i+1}$
and if these numbers are both negative then $-u_{i+1}$ and $-u_i$ are consecutive elements of $\Triv(N)$.
Moreover, if $v_1 < 0$ so that $\{ -v_1,n+1\} \in N$, 
then since $N$ is a noncrossing matching, the numbers $|v_1|$ and $n+1$ must be greater than all elements of $\Triv(N)\setminus\{|v_1|,n+1\}$.

We conclude that $|v_1|$ is the largest element of $\Triv(M)$. Also, if $i \in [n-1]$ is even then 
$v_i < v_{i+1}$ and if these numbers are both negative then they are consecutive elements of $\Triv(M) \setminus\{|v_1|\}$.
As $v_3<v_5<v_7<\dots<v_n$ and $\sh_{\DIII}^{(n)}(v) = M$, these properties imply that $v=\bot_{\DIII}^{(n)}(z,M)$.
\end{proof}

 Proposition~\ref{D34-prop1} implies
that $\sh_{\DIII}^{(n)} : \cA_{\DIII}^{(n)}(z) \to \cM_{\DIII}^{(n)}(z)$ is a surjective map. 
The following result establishes Theorem~\ref{main-thm}.

 \begin{proposition}\label{D34-alignment-prop}
Suppose  $\gamma \in \Gamma_{\DIII}^{(n)}$ and $z =\psi_{\DIII}^{(n)}(\gamma) \in \cI_{\DIII}^{(n)}$. Then
\[
\cW_{\DIII}^{(n)}(\gamma) = \left\{ w \in \cA_{\DIII}^{(n)}(z) : \sh_{\DIII}^{(n)}(w) \in \Aligned_{\DIII}^{(n)}(\gamma)\right\}.
\]
\end{proposition}

\begin{proof}
First assume $n$ is even so that $z=\overline{\sigma_\gamma}$.
 Let $j = n/2$ and 
  fix an element $w \in \cA_{\DIII}^{(n)}(z)$ with a reduced expression $w=t_{i_m}\cdots t_{i_3}t_{i_2}t_{i_1}$.
  Express $w$ in one-line notation as $w=b_1c_1b_2c_2\cdots b_jc_j$ where each $b_i <c_i$. Notice that 
  \be\label{cyc-fpf-d3-eq}
  \Cyc^\pm(z) = \{ (b,c) \in [n]\times[n] : -c<b<c=z(b)\} = \{ (b_i, c_i) : i\in [j]\text{ with }{-b_i} < c_i \}
  \ee
  by Proposition~\ref{D34-rank-prop}.
  Now define 
  $ \ssh(w) $ to be the set of unordered pairs $ \{ b_i, -c_i\} $ and $ \{ -b_i, c_i\} $
for each $i \in [j]$ with $c_{i} < -b_i$,
%   and consider the sequence
% \[
% P = \(  t_1t_3t_5\cdots t_{n-1} =z^0 \xrightarrow{t_{i_1}} z^1 \xrightarrow{t_{i_2}} z^2 \xrightarrow{t_{i_3}}  \cdots \xrightarrow{t_{i_m}} z^m = z\).
% \]I
so that if 
$M =  \sh_{\DIII}^{(n)}(w)$ has $\Triv(M) = \{ g_1<h_1< \dots<g_l<h_l\}$
then $\ssh(w)$ is formed 
from $M$ by replacing its trivial blocks with 
\[ \{g_1,-h_1\},\ \{-g_1,h_1\},\ \{g_2,-h_2\},\ \{-g_2,h_2\},\ \dots \ \{g_l,-h_l\},\ \{-g_l,h_l\}.
\] 

We claim that in the notation of Lemma~\ref{weak-order-lem} we have
$\ssh(w) = \Lambda(P)$ where $P$ is the sequence
\[
 P = \(  t_1t_3t_5\cdots t_{n-1} =z^0 \xrightarrow{t_{i_1}} z^1 \xrightarrow{t_{i_2}} z^2 \xrightarrow{t_{i_3}}  \cdots \xrightarrow{t_{i_m}} z^m = z\)
.\]
This claim implies the proposition
by the following observations.
First, since $\gamma$ is skew-symmetric, we have $ \sh_{\DIII}^{(n)}(w)\in \Aligned_{\DIII}^{(n)}(\gamma)$
precisely when   $|S_+(\gamma)\cap \{a,b\}| = |S_-(\gamma)\cap \{a,b\}|=1$ for all $\{a,b\} \in \ssh(w)$.
On other hand, Lemma~\ref{weak-order-lem} implies that $w \in \cW_{\DIII}^{(n)}(\gamma)$
if and only if the identical condition $|S_+(\gamma)\cap \{a,b\}| = |S_-(\gamma)\cap \{a,b\}|=1$ holds for all $\{a,b\} \in \Lambda(P)$.

To prove our claim,
suppose $  t_i \circ z \circ t_i \neq z$ for some $i \in \{-1,1,\dots,n-1\}$. Let $Q$ be the sequence 
\[
Q = \(  t_1t_3t_5\cdots t_{n-1} =z^0 \xrightarrow{t_{i_1}} z^1 \xrightarrow{t_{i_2}} z^2 \xrightarrow{t_{i_3}}  \cdots \xrightarrow{t_{i_m}} z^m = z \xrightarrow{t_i} t_i\circ z \circ t_i\).
\]
By \eqref{Lambda_i-eq}, the set partition
$\Lambda(Q) $ is the disjoint union of 
$\{ \{ t_i(a), t_i(b)\} : \{a,b\} \in \Lambda(P)\}$
and
\be\label{case-set-eq}
\begin{cases}
\left\{ \{i,i+1\}, \{-i,-i-1\}\right\}  &\text{if $i>0$ and $z(i) = -i-1$}\\
\left\{ \{-1,2\}, \{1,-2\}\right\}  &\text{if $i=-1$ and $z(1) = 2$}\\
\varnothing &\text{othewise},
\end{cases}
\ee
Hence, we can deduce our claim by induction just
by checking that
$\ssh(t_i w) $ is the disjoint union of 
$\{ \{ t_i(a), t_i(b)\} : \{a,b\} \in \ssh(w)\}$ and the same set \eqref{case-set-eq}.
This is straightforward using \eqref{cyc-fpf-d3-eq} and the Demazure conjugation formulas in Section~\ref{d-conj-sect}.

Finally assume $n$ is odd
so that $z=t_0\cdot \overline{\sigma_\gamma}$.
Let $w \in \cW_{\DIII}^{(n)}(z) $ and define $M = \sh_{\DIII}^{(n)}(w)$ and $N = \sh_{\DIII}^{(n+1)}(\incDIIIEven(w))$.
  Proposition~\ref{D34-rank-prop} implies that $|w_1|$ is the largest element of $\Triv(M)$ 
  and so is in $S_+(\gamma)$ or $S_-(\gamma)$.

By the even rank case and Lemma~\ref{inc-lem-DIV}
it suffices to show that 
$M \in \Aligned_{\DIII}^{(n)}(\gamma)$
if and only if 
$ N \in \Aligned_{\DIII}^{(n+1)}(\gamma^\vee)$.
 In view of \eqref{be-eq2} and \eqref{be-eq3}, the reverse direction of this equivalence 
 it immediate and 
 to prove this forward direction just need to check that if
 $M \in \Aligned_{\DIII}^{(n)}(\gamma)$ then
\[
w_1\in S_-(\gamma)\text{ when $w_1>0$}
\quand 
{-w_1} \in S_+(\gamma)\text{ when $w_1<0$}
 \]
 since then we will have 
 \[\{w_1,n+1\} \subset S_-(\gamma^\vee)\quad\text{when}\quad 
\{|w_1|,n+1\} \subset \Triv(N)\]
along with 
\[| \{|w_1|,n+1\} \cap S_+(\gamma^\vee)| = | \{|w_1|,n+1\} \cap S_-(\gamma^\vee)| = 1
\quad\text{when}\quad \{|w_1|,n+1\} \in N,
\]
which is enough to deduce that $N \in  \Aligned_{\DIII}^{(n+1)}(\gamma^\vee)$.

First suppose $w_1<0$. Then Proposition~\ref{D34-rank-prop} implies
via \eqref{parity-consult-eq} for $X=\Neg(t_0z)$ that
\[
\tfrac{\ell_0(t_0z) - |X|}{2} +   \triv(M) + \tfrac{|X|- \triv(M)}{2} = \tfrac{\ell_0(t_0z) +\triv(M)}{2}  \equiv 0 \modu 2).
\]
On the other hand, 
the even number $h(\gamma)$ from Definition~\ref{h-def} is given by
\be\label{hh-eq1} h(\gamma)= \tfrac{\ell_0(t_0z) -|X|}{2}+ |S_+(\gamma) \cap [n]| \ee
and since $M \in \Aligned_{\DIII}^{(n)}(\gamma)$ we have 
\be\label{hh-eq2}
|S_+(\gamma) \cap [n]| \equiv \tfrac{|X|-\triv(M)}{2} + |\{ |w_1|\} \cap S_+(\gamma)| \modu 2).
\ee
Combining these identities gives
\[ 0 \equiv \tfrac{\ell_0(t_0z) +\triv(M)}{2} + h(\gamma) \equiv \ell_0(t_0z) + |\{ |w_1|\} \cap S_+(\gamma)|
\modu 2)
\]
which implies that ${-w_1} \in S_+(\gamma)$ since $\ell_0(t_0z)=\ell_0(z)\pm 1$ is odd.

Finally, if $w_1>0$ then $\tfrac{\ell_0(t_0z) +\triv(M)}{2}$ is odd
whereas \eqref{hh-eq1} and \eqref{hh-eq2} still hold, so 
\[ 1 \equiv \tfrac{\ell_0(t_0z) +\triv(M)}{2} + h(\gamma) \equiv 1 + |\{ |w_1|\} \cap S_+(\gamma)|
\modu 2)
\]
which implies that $|w_1 | \notin S_+(\gamma)$ so $w_1 \in S_-(\gamma)$ as needed.
\end{proof}

\section{Applications}\label{app-sect}

This final section presents a few classification results that can be derived from our main theorems.
We also discuss a general definition of \defn{involution Schubert polynomials} for all classical types.

\subsection{Multiplicity-free orbit closures}

It is interesting to identify when the $K$-orbit closure $Y_\gamma$ is \defn{multiplicity-free} in the sense that the map $d_\gamma(w) = 0$
for all $w \in \cABrion(\gamma)$. 
(See \cite{Knutson2009} for an application of this property.)
The following results give a classification of such orbit closures in classical type.

\begin{proposition}\label{dz-prop}
Suppose $(G,K)$ is of classical type and $z \in \cI^G_K$.
For the types AIII, BI, CII, DI, and DII that depend on parameters $p$ and $q$, define $k$ and $n$ as in Table~\ref{extended-brion-tbl}.
%Table~\ref{rs-image-tbl}.
Then:
\ben
\item[(a)] 
In types AII, AIII, CII, and DIII, one has $d_z=0$ for all $z \in \cI^G_K$.

\item[(b)] In type AI,
one has $d_z=0$ if and only if $z =1$.

\item[(c)] In type BI one has $d_z=0$
if and only if  $\neg(z)=k$ and $|z(i)| =i$ for all $i \in [n]$.

\item[(d)] In type CI
one has $d_z=0$ if and only if $\neg(z)\leq 1$ and $|z(i)| =i$ for all $i \in [n]$.

\item[(e)] In type  DI one has $d_z=0$ if and only if  $\neg( z)=k$ and $|z(i)| = i$ for all $i\in[n]$.

\item[(f)] In type  DII one has $d_z=0$
if and only if  $\neg(t_0 z)=k$ and $|z(i)| =i$ for all $i \in [n]$.
\een
%In parts (d), (e) and (f) the parameter $k$ is defined  as in Tables~\ref{rs-image-tbl} and \ref{extended-brion-tbl}.
\end{proposition}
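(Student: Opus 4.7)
Cases (a), (b), (e), and (f) follow directly from Table~\ref{extended-brion-tbl} because $d_z(w)$ in these types depends only on $z$, not $w$. For (a), the listed formulas are identically zero. For (b), $|\{i \in [n+1] : z(i) < i\}|$ vanishes iff the involution $z$ satisfies $z(i) \geq i$ for all $i$, forcing $z = 1$. For (e) and (f), I will decompose $[\pm n]$ into cycles of $z$ (respectively $t_0 z$) and observe that a negation $(c,-c)$ contributes $1$ to $|\{i \in [\pm n] : z(i) < i\}|$, while each cycle pair $(a,b)(-a,-b)$ or $(a,-b)(-a,b)$ with $0 < a < b$ contributes $2$. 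Writing $c_+(z) \defequals |\{i \in [n] : 0 < z(i) < i\}|$ for the number of the former and $c_-(z) \defequals \tfrac{\ell_0(z) - \neg(z)}{2}$ for the number of the latter (equivalently, $|\{(a,b) \in \Cyc^\pm(z) : a < 0\}|$ as noted in the paper), this gives $|\{i \in [\pm n] : z(i) < i\}| = \neg(z) + 2c_+(z) + 2c_-(z)$. Setting this equal to $k$ and using $\neg(z) \geq k$ (resp.\ $\neg(t_0 z) \geq k$) together with nonnegativity forces $\neg = k$ and $c_+ = c_- = 0$, equivalently $|z(i)| = i$ for all $i$ (using that $t_0$ preserves absolute values in case (f)).

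For the remaining cases (c) and (d), the formula for $d_z(w)$ involves $\ell_0(w)$, so I first need to control how this statistic varies over $\cEABrion(z)$. The key observation is that the partial order $\precsim^G_K = \overset{k}\precsim$ in BI and CI (with $k=0$ in CI) consists of moves $uBCAv \mapsto uCABv$ that only permute letters, so $\ell_0(w)$ is constant on each shape class $\cEABrion(z,M)$ with value $\ell_0(\bot^G_K(z,M))$. I will compute this value by writing $\bot^G_K(z,M) = uv$ and counting negative letters in each factor separately. The factor $u$ contributes $\triv(M)$ negative letters in CI and $\triv(M) - k$ in BI. The factor $v = [[\Cyc^\pm(z,M)]]_\des$ contributes $c_-(z)$ negative letters from the pairs $(a,b) \in \Cyc^\pm(z)$ with $a < 0$, plus an additional $\tfrac{\neg(z) - \triv(M)}{2}$ negative letters of the form $-b$ arising from the pairs $(-b,a)$ added for nontrivial blocks $\{a,b\} \in M$. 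Summing and using $c_-(z) = \tfrac{\ell_0(z) - \neg(z)}{2}$ yields
\[
\ell_0(\bot_{\CI}^{(n)}(z,M)) = \tfrac{\triv(M) + \ell_0(z)}{2}
\quand
\ell_0(\bot_{\BI}^{(p,q)}(z,M)) = \tfrac{\triv(M) + \ell_0(z)}{2} - k.
\]

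Substituting into the formulas from Table~\ref{extended-brion-tbl} and simplifying via the cycle identity $|\{i \in [n] : z(i) < i\}| = \neg(z) + c_+(z) + 2c_-(z)$ in CI, one obtains $d_z(w) = c_+(z) + c_-(z) + \tfrac{\neg(z) - \triv(M)}{2}$ in CI and $d_z(w) = c_+(z) + c_-(z) + \tfrac{\neg(z) + \triv(M) - 2k}{2}$ in BI. For $d_z \equiv 0$ on $\cEABrion(z)$ one needs both that $\triv(M)$ is constant on $\cM^G_K(z)$ and that the constant value of $d_z$ is zero. In CI, $\cM_{\CI}^{(n)}(z) = \NCSP(X)$ with $X = \Neg(z)$ has $\triv$ constant iff $|X| \leq 1$, i.e., $\neg(z) \leq 1$, and the remaining condition becomes $c_+(z) + c_-(z) = 0$. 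In BI, $\cM_{\BI}^{(p,q)}(z) = \NCSP^+(X:k)$, and the parity constraint $\triv(M) \equiv |X| \pmod 2$ combined with $k \leq \triv \leq |X|$ forces $|X| \in \{k, k+1\}$; the case $|X| = k+1$ would yield $d_z = c_+(z) + c_-(z) + 1 \geq 1$, which is impossible, so $|X| = \neg(z) = k$ and again the condition reduces to $c_+(z) + c_-(z) = 0$. In both types this is equivalent to $|z(i)| = i$ for all $i \in [n]$, completing the proof. The most delicate steps will be the accurate tally of negative letters in $v = [[\Cyc^\pm(z,M)]]_\des$ and the parity bookkeeping that rules out the $|X| = k+1$ case in BI.
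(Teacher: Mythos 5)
Your proposal is correct and follows essentially the same route as the paper: parts (a), (b), (e), (f) are read off from the $w$-independent formulas in Table~\ref{extended-brion-tbl} (using $\neg(z)\geq k$, resp.\ $\neg(t_0z)\geq k$, from Table~\ref{rs-image-tbl} in types DI/DII), and parts (c), (d) are handled via Theorems~\ref{main-thm} and \ref{main-thm2} together with the value of $\ell_0$ on the generators $\bot^G_K(z,M)$. Your exact formulas $\ell_0(\bot_{\CI}^{(n)}(z,M))=\tfrac{\triv(M)+\ell_0(z)}{2}$ and $\ell_0(\bot_{\BI}^{(p,q)}(z,M))=\tfrac{\triv(M)+\ell_0(z)}{2}-k$, with $\ell_0$ constant on each $\precsim$-class, reproduce (in slightly more explicit form) the bound and equality analysis the paper uses, so no gap.
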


\begin{proof}
Parts (a) and (b) are immediate from the formulas in Table~\ref{extended-brion-tbl}.
Parts (e) and (f) likewise follow
after noting from Table~\ref{rs-image-tbl}
that if $z \in \cI^G_K$ in type DI (respectively DII) then $\neg(z) \geq k$  (respectively, $\neg(t_0 z) \geq k$).

For parts (c) and (d) we use Theorems~\ref{main-thm} and \ref{main-thm2} in types BI and CI,
which reformulate results in \cite{HM}.
First, in type BI it follows from the material in Section~\ref{BI-sect}
that any given $w \in  \cEABrion(\gamma)$ has $\ell_0(w) = 0$ if and only $\neg(z) = k$ (as in this type any $z \in \cI^G_K$ has $\neg(z) \geq k$) and $z(i) \in [n]$ for all $i \in [n] \setminus \Neg(z)$, and if these conditions hold then $|\{ i \in [n] : 0 <z(i) < i\}| =0$ if and only if $|z(i)| =i$.

Similarly, in type CI when $d_z(w)  =|\{i \in [n] :  z(i) < i\}| - \ell_0(w)$,  it follows from Theorems~\ref{main-thm} and \ref{main-thm2} using the definitions in Section~\ref{CI-sect}
that every $w \in \cEABrion(z)$ has 
\[\ell_0(w) \geq \left\lceil \tfrac{\neg(z)}{2}\right\rceil + \tfrac{\ell_0(z) - \neg(z)}{2} = 
\left\lceil \tfrac{\ell_0(z)}{2}\right\rceil
\text{\ \ so\ \ }
d_z(w) \leq | \{ i \in [n] :  0 < z(i)<i\}| + \left\lfloor \tfrac{\ell_0(z)}{2}\right\rfloor.\]
Moreover,  equality is achieved for $w = \bot_{\CI}(z,M)$ when $M\in \NCSP(\Neg(z))$ has $\triv(M)\leq 1$.
The upper bound on $d_z(w)$ is zero precisely when at most one $i \in [n]$ has $z(i) < 0$
and $z$ fixes all other points in $[n]$,
which happens if and only if $\neg(z)\leq 1$ and $|z(i)| =i$ for all $i \in [n]$.
\end{proof}

Combining the previous result with the following tells us precisely when $Y_\gamma$ is multiplicity-free.
\begin{proposition}
Suppose $(G,K)$ is of classical type. Choose $\gamma \in \Gamma^G_K$ and let $z = \RSphi(\gamma) \in \cI^G_K$.
\ben
\item[(a)] In type CI, one has $d_\gamma =0$ if and only if 
\[S_+(\gamma) \subset \PP\text{ or }  S_-(\gamma) \subset \PP
\quand\text{$a+b=0$ for all $\{a,b\}\in M(\gamma)$.}
\] If these conditions hold then $|z(i)| =i$ for all $i>0$.

\item[(b)] In all other types, we have $d_\gamma=0$ if and only if $d_z=0$.
\een
\end{proposition}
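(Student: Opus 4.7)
Table~\ref{extended-brion-tbl} shows that in every classical type except BI and CI the formula for $d_z(w)$ depends only on $z$ (not on $w$), so the equivalence in part (b) is immediate in those types. The plan is to handle the remaining cases—BI in part (b) and CI in part (a)—by the same strategy: evaluate $\ell_0$ at a generator $w = \bot^G_K(z, M)$ by counting signs in its explicit one-line form, and combine this with the observation that the covering relations of $\precsim_{\BI}^{(p,q)}$ and $\precsim_{\CI}^{(n)}$ permute letters without changing signs, so $\ell_0$ is invariant on each $\cEABrion(z, M)$.

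For type BI, I would assume $d_\gamma = 0$ and use the fact that the constant summand $|\{i \in [n] : 0 < z(i) < i\}|$ is independent of $w$ to force $|\{i \in [n] : 0 < z(i) < i\}| = 0$. Theorem~\ref{main-thm} guarantees that $\Aligned_{\BI}^{(p,q)}(\gamma)$ is nonempty, so one can pick $M$ in it and set $w = \bot_{\BI}^{(p,q)}(z, M) = uv$. A direct count of negative letters—the prefix $u$ contributes $\triv(M) - k$, and $v = [[\Cyc^\pm(z, M)]]_{\des}$ contributes $(\ell_0(z) - \neg(z))/2$ from mixed cycles of $z$ plus $(\neg(z) - \triv(M))/2$ from nontrivial pairs of $M$—yields
\[
\ell_0(w) = (\triv(M) - k) + (\ell_0(z) - \triv(M))/2.
\]
Both summands are non-negative (using $\triv(M) \geq k$ and $\triv(M) \leq \neg(z) \leq \ell_0(z)$), so $\ell_0(w) = 0$ forces $\triv(M) = \ell_0(z) = k$ and hence $\neg(z) = k$ with no mixed cycles. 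Combined with the constant-part condition this gives $|z(i)| = i$ for all $i \in [n]$, so $d_z = 0$ by Proposition~\ref{dz-prop}(c).

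For part (a) in type CI, writing $p = |\{(a,b) : 0 < a < b = z(a)\}|$ and $q = (\ell_0(z) - \neg(z))/2$, the analogous count at $w = \bot_{\CI}^{(n)}(z, M)$ will yield $\ell_0(w) = (\triv(M) + \ell_0(z))/2$ and $|\{i \in [n] : z(i) < i\}| = \ell_0(z) + p$, whence
\[
d_z(w) = p + (\ell_0(z) - \triv(M))/2.
\]
Both summands are non-negative, so vanishing forces $p = 0$ and $\triv(M) = \ell_0(z) = \neg(z)$, i.e.\ $q = 0$ and $M$ is the all-trivial matching. Using invariance of $\ell_0$ on each $\cEABrion(z, M)$, the condition $d_\gamma = 0$ is therefore equivalent to $p = q = 0$ together with the requirement that every $M \in \Aligned_{\CI}^{(n)}(\gamma)$ is all-trivial. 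The condition $p = q = 0$ says $|z(i)| = i$ for all $i > 0$, which via $z = \overline{\sigma_\gamma}$ rephrases as $a + b = 0$ for every block $\{a, b\} \in M(\gamma)$ and thereby establishes the final clause of part (a).

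The main obstacle is then to show that every $\gamma$-aligned $M \in \NCSP(X)$ (where $X = \Points(\gamma)$) is all-trivial if and only if $S_+(\gamma) \subset \PP$ or $S_-(\gamma) \subset \PP$. The $(\Leftarrow)$ direction will follow from skew-symmetry of $\gamma$: either hypothesis forces $X$ to be sign-monochromatic, so any nontrivial block $\{x_i, x_j\}$ with $0 < x_i < x_j$ in $X$ lies entirely in $S_+(\gamma)$ or entirely in $S_-(\gamma)$ and violates alignment. For $(\Rightarrow)$ the argument will be: if $X$ contains both a $+$-point and a $-$-point, then by inspecting consecutive elements along the linear order of $X$ one finds $x_i < x_{i+1}$ in $X$ of opposite signs; completing $\{x_i, x_{i+1}\}$ and $\{-x_i, -x_{i+1}\}$ to a noncrossing matching on $X \sqcup -X$ (with the remaining elements as trivial blocks) produces a $\gamma$-aligned $M \in \NCSP(X)$ that is not all-trivial.
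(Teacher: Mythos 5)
Your proposal is correct and follows essentially the same route as the paper: outside types BI and CI the formula for $d_z$ does not depend on $w$, and in BI and CI one evaluates $\ell_0$ at the generators $\bot^G_K(z,M)$, obtaining $\ell_0 = (\triv(M)-k) + \tfrac{\ell_0(z)-\triv(M)}{2}$ (resp.\ $\tfrac{\ell_0(z)+\triv(M)}{2}$), so that the inequalities $k \le \triv(M)\le \neg(z)\le \ell_0(z)$ force exactly the stated conditions, which are then translated via (skew-)symmetry of $\gamma$ into the sign conditions on the clan. The only difference is cosmetic: you spell out the equivalence between ``every $\gamma$-aligned matching in $\NCSP(X)$ is all-trivial'' and ``$S_+(\gamma)\subset\PP$ or $S_-(\gamma)\subset\PP$,'' which the paper asserts without detail, and you re-derive the BI case at a generator rather than citing the preceding proposition's argument.
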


\begin{proof}
Assume $(G,K)$ has type CI so that $z = \overline{\sigma_\gamma}$.  
Let $X = \Points(\gamma) =\Neg(z)$.
In view of Definition~\ref{CI-gen-def}, Theorems~\ref{main-thm} and \ref{main-thm2}
imply that $d_\gamma =0$
if and only if 
\[
|\{ i \in [n] : z(i)<i\}| = \ell_0\(\bot_{\CI}^{(n)}(z,N)\)
\quad\text{for all $\gamma$-aligned $N \in \NCSP(X)$.}\]
However, one can check that 
\[\ba \ell_0\(\bot_{\CI}^{(n)}(z,N)\)&=
\triv(N) + \tfrac{\ell_0(z) - \neg(z)}{2} + \tfrac{\neg(z) - \triv(N)}{2} 
\\&= \tfrac{\ell_0(z) +\neg(z)}{2} + \tfrac{\triv(N)-\neg(z)}{2} 
\\&= |\{ i \in [n] : -i \leq  z(i) < 0 \}| + \tfrac{ \triv(N)-\neg(z)}{2}.
\ea
\]
Since $\triv(N) \leq \neg(z)$, the last expression is equal to $|\{ i \in [n] : z(i)<i\}| $ 
precisely when 
\[\triv(N) = \neg(z)\quand \{ i \in [n] : z(i) < i\} = \{ i\in [n] : -i \leq z(i) < 0\}.\]
As $\gamma$ is skew-symmetric, the first condition holds for all $\gamma$-aligned $N \in \NCSP(X)$
if and only if $S_+(\gamma) \subset \PP$ or $S_-(\gamma) \subset \PP$,
and the second condition holds if and only if $|z(i)| = i$ for all $i \in [n]$,
or equivalently if $a+b=0$ for all $\{a,b\}\in  M(\gamma)$.
This proves part (a).

In type BI, the argument in the proof of Proposition~\ref{dz-prop}
shows that if $d_z(w) =0$ for any $w \in \cEABrion(z)$ then $d_z=0$, so $d_\gamma=0$ if and only if $d_z=0$.
The same is true in all other classical types besides CI
since then $d_z$ takes the same value on all $w \in \cEABrion(z)$. This proves part (b).
\end{proof}

\subsection{Uniform and alternating clans}

Assume $(G,K)$ is of classical type. Let $\gamma \in \Gamma^G_K$
and $z = \phiRS(\gamma) \in \I^G_K$.
We can also classify when 
\be\label{ua-eq} \cW^G_K(\gamma) = \cA^G_K(z).\ee
This involves two complementary properties, the first of which is the following:

\begin{definition} The index $\gamma \in \Gamma^G_K$
is \defn{uniform} if $|\cM^G_K(z)| = 1$.
\end{definition}

Recall that if $\gamma = (S_+,S_-,M)$ is a clan then  $\APoints(\gamma) = S_+ \sqcup S_-$. Let $\points(\gamma) = |\APoints(\gamma)|$.

\begin{proposition}
Suppose $(G,K)$ is of classical type and $\gamma \in \Gamma^G_K$.
For the types AIII, BI, CII, DI, and DII that depend on extra parameters, let $p$, $q$, and $n$ be as in Table~\ref{extended-brion-tbl}.
Then:
\ben
\item[(a)] In types AI and AII, the index $\gamma$ is always uniform.
\item[(b)] In types AIII, BI, CII, DI, and DII, the clan $\gamma$ is uniform if and only if $\points(\gamma)= |p-q|$.
\item[(c)] In type CI, the clan $\gamma$ is uniform if and only if $\points(\gamma) \in \{0,2\}$.
\item[(d)] In type DIII, the clan $\gamma$ is uniform if and only if $\points(\gamma) \in \{0,2,4\}$.
\een
\end{proposition}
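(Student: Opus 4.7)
The plan is to verify each part of the proposition by computing $|\cM^G_K(z)|$ directly from the formulas in Table~\ref{shapes-summary-tbl} and translating the resulting count into a condition on $\points(\gamma)$. Part (a) is immediate since $\cM^{(n)}_{\AI}(z) = \cM^{(n)}_{\AII}(z) = \{\varnothing\}$, so I focus on the remaining parts.

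For part (b), in types AIII, CII, DI, and DII one has $\cM^G_K(z) = \NCSP(X:k)$ with $X \in \{\Twist(z), \Neg(z), \Neg(t_0z)\}$ and $k \in \{|p-q|, |p-q|/2\}$. I will first show $|\NCSP(X:k)| = 1$ iff $|X| = k$. The ``if'' direction is trivial ($\NCSP(X:k)$ contains only the all-trivial matching). For the ``only if'' direction, I will use Remark~\ref{ncsp-gen-rmk}: when $|X| \geq k+2$ (with matching parity), at least two distinct matchings arise by choosing different consecutive pairs in the first step of the algorithm; explicit witnesses are $M_{\min}(X:k)$ (as in \eqref{Mmin-eq}) and any matching obtained from it by one step of the cover relation $\lessdot$ of Proposition~\ref{lessdot-prop}, whose existence is guaranteed by the same proposition whenever $|X| > k$. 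Translating $|X| = k$ into a statement about $\points(\gamma)$ is then type-specific: in AIII the base set is $[n+1]$ and the analysis of $\tilde\pi_\gamma$ shows $\Twist(z) = \APoints(\gamma)$, so $|X| = \points(\gamma)$ and $k = |p-q|$; in CII, DI, DII the base set is $[\pm n]$ and the symmetry of the clan gives $\points(\gamma) = 2|\Points(\gamma)| = 2|X|$ while $|p-q|=2k$. The BI case, whose $\cM^{(p,q)}_{\BI}(z) = \NCSP^+(X:k)$ differs, requires an extra parity observation: since 0 is always signed in BI, $\points(\gamma) = 2|\Neg(z)|+1$ and $|p-q|=2k+1$; combining the parity constraint $\triv(M) \equiv |X| \pmod 2$ with the threshold $\triv(M)\geq k$ and the case analysis above yields the desired equivalence.

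For part (c), the set $\cM_{\CI}^{(n)}(z) = \NCSP(\Neg(z))$ has cardinality $\binom{m}{\lfloor m/2\rfloor}$ for $m = |\Neg(z)|$ by the formula in Section~\ref{matching-sect}, equalling $1$ iff $m \leq 1$. Since CI clans are skew-symmetric on base set $[\pm n]$, one has $\points(\gamma) = 2m$, so uniformity is equivalent to $\points(\gamma) \in \{0,2\}$. For part (d), I will first observe that $\ell_0(z)$ is even for any $z \in \WD_n$, and that a cycle-pair argument (non-central 2-cycles of $z$ contribute to $\ell_0(z)$ in pairs, so $\ell_0(z) \equiv \neg(z) \pmod 2$) forces $\neg(z)$ to be even; hence $\ell_0(z) \pmod 4 \in \{0,2\}$ and $m = |X|$ is even. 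Then I will enumerate $|\{M \in \NCSP(X) : \triv(M) \equiv \ell_0(z) \pmod 4\}|$ by cases on $m$: $m=0$ gives $1$ (using the hypothesis $\ell_0(z)\in 4\NN$ when $\neg(z)=0$ built into $\cI_{\DIII}^{(n)}$); $m=2$ gives $1$ since exactly one of $\triv\in\{0,2\}$ matches $\ell_0(z) \bmod 4$; and $m\geq 4$ yields $\geq 2$ by direct inspection of the three values $\triv \in \{0,2,4\}$. Since $\points(\gamma) = 2m$, this gives $\points\in\{0,4\}$. Part (e) is analogous, using that $t_0 z \in \Ifpf(\W_n)$ with $z \in \WD_n$ forces $\ell_0(t_0z)$ odd and hence $|X| = \neg(t_0z)$ odd; enumerating matchings with odd $\triv(M)$ shows uniqueness iff $m = 1$, i.e.\ $\points(\gamma) = 2$.

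The main obstacle will be the careful bookkeeping in the BI case, where the threshold condition $\triv(M)\geq k$ interacts with the parity of $|X|$ in a way that differs slightly from the other types of part (b); this requires explicitly listing which values of $\triv(M)$ are simultaneously feasible (parity $\equiv |X| \pmod 2$) and permissible ($\geq k$), and verifying that for $|X| > k$ these always admit at least two distinct matchings.
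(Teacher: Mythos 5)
Your overall strategy is the same as the paper's: express $\points(\gamma)$ in terms of $|X|$ in each type and then read off $|\cM^G_K(z)|$ from Table~\ref{shapes-summary-tbl}; your treatments of parts (a), (c), (d) and (e) are correct and match the intended inspection. The gap is in part (b). Your key claim that $|\NCSP(X:k)|=1$ if and only if $|X|=k$ is false in the boundary case $k=0$, $|X|=2$: for $X=\{a<b\}$ the only alternative to $\pm\{a,b\}$ with no trivial blocks is $\{\{-b,a\},\{-a,b\}\}$, which is crossing, and the all-trivial matching has $\triv=2$, so $\NCSP(X:0)$ is a singleton. Correspondingly, your appeal to Proposition~\ref{lessdot-prop} is a misreading: that proposition only asserts that the order generated by $\lessdot$ has unique minimal element $M_{\min}(X:k)$; it does not guarantee that any element lies above it, and indeed $M_{\min}(X:0)$ has no cover when $|X|=2$. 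The BI bookkeeping has the same defect: when $|X|=k+1$, parity forces $\triv(M)=k+1$ for every $M\in\NCSP^+(X:k)$, so that set is again a singleton, contradicting your assertion that $|X|>k$ always admits at least two matchings.

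These boundary configurations are not vacuous, so they cannot be waved away: in type AIII with $p=q=1$ the clan $(+,-)$ has $z=w_0$, $\Twist(z)=\{1,2\}$, $k=0$; in type BI with $(p,q)=(2,1)$ the clan $(+,-,+)$ has $\Neg(z)=\{1\}$, $k=0$; and in types CII and DI with $p=q$ the clan $(+,-,-,+)$ gives $\Neg(z)=\{1,2\}$, $k=0$. In each of these $|\cM^G_K(z)|=1$ while $\points(\gamma)>|p-q|$, so the ``only if'' direction of (b) cannot follow from the count you give; the naive count suffices only when $k\geq 1$ in the $\NCSP(X:k)$ families (e.g.\ type DII, where $k$ is odd). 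You therefore need either a separate argument handling the cases $p=q$ (AIII, CII, DI) and $\neg(z)=k+1$ (BI) --- this is exactly the same small-$|X|$ phenomenon you exploited in parts (d) and (e) --- or an explanation of why such clans are excluded; note that their existence also means the stated equivalence in (b) deserves the same level of care that parts (c)--(e) received, and as written your proof of (b) fails at precisely these configurations.
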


%Notice that in the types listed in part (b) we have $\points(\gamma) \geq |p-q|$ for all $\gamma \in \Gamma^G_K$.
 \begin{proof}
Let $z = \phiRS(\gamma)$. Then $\points(\gamma)$ is equal to $ \twist(z)$ in type AIII, to $1+2\cdot \neg(z)$ in type BI, 
to $2\cdot \neg(z)$ in types CI, CII, DI, and DIII (with $n$ even), and to $2\cdot \neg(t_0 z)$ in types DII and DIII (with $n$ odd).
Given these facts, the proposition follows by 
inspecting %the definitions of $\cM^G_K(z)$ in 
Table~\ref{shapes-summary-tbl}.
 \end{proof}
 
The identity
 \eqref{ua-eq} holds for trivial reasons when $\gamma$ is uniform, but also in other situations. 
 
 \begin{definition}
 \label{alt-def}
  Fix an index $\gamma \in \Gamma^G_K$.
 \ben
 \item[(a)] In types AI and AII, we define every $\gamma \in \Gamma^G_K$ to be \defn{alternating}.
 \item[(b)] In type DIII, we define $\gamma$ to be \defn{alternating} if $\points(\gamma)\in \{0,2\}$.

\item[(c)] In all other types, we define $\gamma$ to 
be \defn{alternating} if we have 
\[|S_+(\gamma)\cap\{a,b\}| = |S_-(\gamma)\cap \{a,b\}|=1\]
whenever $a$ and $b$ are consecutive \emph{nonnegative} elements of $S_+(\gamma)\sqcup S_-(\gamma)$.\een
\end{definition}

In types BI and CI,
condition (c) means that 
 removing all integers from any one-line representation of $\gamma$
yields the alternating sequence
$
(+,-,+,-,+,\dots)$ or $ (-,+,-,+,-,\dots)
$.
In types CII, DI, and DII, condition (c) means that the same operation yields
\[ 
(\dots,+,-,+,-,+,+,-,+,-,+,\dots)\quord (\dots,-,+,-,+,-,-,+,-,+,-,\dots).
\]

\begin{proposition}\label{alternating-prop}
Suppose $(G,K)$ is of classical type. Choose  $\gamma \in \Gamma^G_K$ and let $z = \phiRS(\gamma) \in \I^G_K$.
Then the following properties are equivalent:
\ben
\item[(a)] $ \cW^G_K(\gamma) = \cA^G_K(z)$,
\item[(b)] $\Aligned^G_K(\gamma) = \cM^G_K(z)$, and
\item[(c)] $\gamma$ is uniform or alternating.
\een
\end{proposition}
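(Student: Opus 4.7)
The plan is to first establish $(a)\Leftrightarrow(b)$ from Theorem~\ref{main-thm}, which gives disjoint union decompositions
\[ \cW^G_K(\gamma) = \bigsqcup_{M \in \Aligned^G_K(\gamma)} \cA^G_K(z,M) \quad\text{and}\quad \cA^G_K(z) = \bigsqcup_{M \in \cM^G_K(z)} \cA^G_K(z,M). \]
Each block $\cA^G_K(z,M)$ is nonempty since Theorem~\ref{main-thm2} identifies the specific element $\bot^G_K(z,M)$ in it, and the containment $\Aligned^G_K(\gamma)\subseteq\cM^G_K(z)$ is built into the constructions of Sections~\ref{A-shape-sect}--\ref{D-shape-sect}. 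Hence the two unions coincide if and only if $\Aligned^G_K(\gamma)=\cM^G_K(z)$.

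For the direction $(c)\Rightarrow(b)$, I would handle the uniform case trivially: $|\cM^G_K(z)|=1$ and $\Aligned^G_K(\gamma)$ is nonempty (since $\cW^G_K(\gamma)$ is nonempty, containing a product associated to any path from $\gammaDense$ to $\gamma$ in the weak order graph), so $\Aligned^G_K(\gamma)=\cM^G_K(z)$. For the alternating case I would proceed type by type, leveraging a structural fact about noncrossing symmetric perfect matchings on $X\sqcup -X$: a non-trivial positive block $\{x_i,x_j\}$ has $j-i$ odd, any two consecutive trivial points $x_{a_1}<x_{a_2}$ in $\Triv(M)$ have $a_2-a_1$ odd, and the smallest trivial position $a_1$ is itself odd. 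Consequently, if the signs of $\gamma$ alternate along the anchored sequence $0,x_1,x_2,\dots$ in type BI, or along $x_1,x_2,\dots$ in types AIII, CII, DI, DII, then every $M\in\cM^G_K(z)$ automatically satisfies both the $\gamma$-alignment condition on non-trivial blocks and the extra trivial-block sign condition in type BI. In types DIII and DIV, I would observe that the condition $\points(\gamma)\leq 2$ together with the parity constraints $\triv(M)\equiv\ell_0(z)\pmod{4}$ or $\triv(M)$ odd forces $|\cM^G_K(z)|=1$, so every alternating $\gamma$ is automatically uniform in these cases.

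For $(b)\Rightarrow(c)$ I would argue the contrapositive: assuming $\gamma$ is neither uniform nor alternating, I would exhibit some $M\in\cM^G_K(z)\setminus\Aligned^G_K(\gamma)$. In types AIII, BI, CII, DI, DII, non-alternation produces consecutive elements $a<b$ of $\{0\}\cup\APoints(\gamma)$ with $\gamma_a=\gamma_b$. If $a,b>0$, I would use the algorithm of Remark~\ref{ncsp-gen-rmk} to construct $M\in\cM^G_K(z)$ containing the block $\pm\{a,b\}$, violating $\gamma$-alignment; the bounds on $|X|$ implied by non-uniformity ensure such an $M$ exists with the correct value of $\triv(M)$. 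If $a=0$ in type BI, I would instead select $M$ with $b=x_1$ as the first positive element of $\Triv(M)$, violating the sign alternation condition imposed on $\Triv(M)$. In types DIII and DIV, non-alternation means $\points(\gamma)\geq 3$, and direct inspection of the small number of matchings in $\cM^G_K(z)$ shows that the pairing condition $\gamma_{g_i}=\gamma_{h_i}$ together with non-trivial alignment places mutually incompatible sign requirements on $X$ across different matchings, so at least one of them fails to be aligned.

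The hard part will be the case analysis in types DIII and DIV, where the trivial-block pairing condition $\gamma_{g_i}=\gamma_{h_i}$ replaces sign alternation and interacts nontrivially with the noncrossing structure. Here one must show that no single sign pattern on $X$ of size at least four (DIII) or three (DIV) can simultaneously satisfy the alignment and pairing conditions across all elements of $\cM^G_K(z)$; the difficulty comes from tracking how $\Triv(M)$ shifts as different adjacent pairs in $X$ are matched into non-trivial blocks. Outside these types, the analysis reduces cleanly to the odd-parity observation on noncrossing symmetric matchings together with the construction in Remark~\ref{ncsp-gen-rmk}.
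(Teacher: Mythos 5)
Your proposal is correct and follows essentially the same route as the paper: the equivalence of (a) and (b) from Theorem~\ref{main-thm} (with nonemptiness of the blocks $\cEABrion(z,M)$), the implication (c)$\Rightarrow$(b) via the parity structure of matchings in $\NCSP(X)$ visible from Remark~\ref{ncsp-gen-rmk} together with the trivial uniform case, and (b)$\Rightarrow$(c) by exhibiting a non-aligned matching when $\gamma$ is neither uniform nor alternating. You actually supply more detail than the paper, which dismisses the last step as a straightforward exercise from Table~\ref{shapes-summary-tbl}; your remaining sketch for types DIII and DIV (incompatible alignment requirements across distinct matchings, and the observation that alternating implies uniform there) is at the same level of rigor as the paper's own argument and is correct.
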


 \begin{proof}
 Properties (a) and (b) are equivalent by Theorem~\ref{main-thm}.
 
On inspecting Remark~\ref{ncsp-gen-rmk}, it is clear that if a clan $\gamma$ is alternating in sense of 
Definition~\ref{alt-def}(c)
then every matching $M \in \NCSP(X)$ for $X= \Points(\gamma)$ is $\gamma$-aligned. Given this observation and the data in Table~\ref{shapes-summary-tbl},
one see that (b) holds whenever $\gamma$ is alternating. 
Property (b) also holds when $\gamma$ is uniform since we have
$\varnothing\neq \Aligned^G_K(\gamma) \subseteq \cM^G_K(z)$.
Thus property (c) implies (b). 

Finally, if $\gamma$ is not uniform or alternating then it is a straightforward exercise 
from Table~\ref{shapes-summary-tbl} to construct some $M \in \cM^G_K(z)$ with $M \notin \Aligned^G_K(\gamma)$.
Thus (b) and (c) are equivalent.
  \end{proof}
 
\subsection{Involution Schubert polynomials}

We say that the classical types AIII and BI (respectively, CII, DI, and DII) from Table~\ref{tbl1} are \defn{balanced}
if the associated parameters $p$ and $q$ satisfy $|p-q|\leq 1$ (respectively, $|p-q|\leq 2$).
We refer to the other classical types AI, AII, CI, and DIII as \defn{balanced} without any further conditions.

The set of alternating indices in $\Gamma^G_K$ is nonempty only when $(G,K)$ is of balanced type.
Here, we briefly discuss Brion's formula \eqref{brion} specialized to such indices. This leads to definitions of several families 
of polynomials indexed by twisted involutions in classical Weyl groups. These 
generalize the \defn{involution Schubert polynomials} for type A introduced in \cite{WY0,WY}.

For each $\pi\in S_{n+1}$ there is an associated \defn{Schubert polynomial of type A} that we write as 
$\fkS_\pi$,
and 
for each $u,v \in \W_n$ and $w \in \WD_n$, Billey and Haiman have defined analogous \defn{Schubert polynomials of types B, C, and D}
that we write as 
$\fkSB_u$, $\fkSC_v$, and $ \fkSD_w$; see \cite{BilleyTransitions,BilleyHaiman} for the definitions. One has 
\be \fkS_\pi \in \NN[x_1,x_2,\dots,x_n]\quand \fkSC_v = 2^{\ell_0(v)}  \fkSB_v\ee
among many other notable properties.
Technically, Schubert polynomials outside type A are formal power series rather than polynomials,
but in all types these objects are homogeneous with degrees equal to the Coxeter lengths of their indices.

If $z$ is an element of  $S_{n+1}$ or $\W_n$ then let 
\be\kappa(z) = |\{ i \in \PP : -i \leq z(i)<i\}|
\quand
\nu(z) = |\{ i \in \PP : 0<z(i)<i\}|.\ee
We have already seen $\kappa(z)$ used in Table~\ref{extended-brion-tbl}.
Also, if $z \in \WD_n$ then let
\be
\delta(z) =
 \tfrac{1}{2}|\{ i \in [\pm n] : z(i)<i\}| 
\quand
 \delta_\diamond(z)=
 \tfrac{1}{2}|\{ i \in [\pm n] : t_0z(i)<i\}| -\tfrac{1}{2} .
 \ee
The following definitions specify certain linear combination of Schubert polynomials
for each balanced type. We refer to these constructions as \defn{involution Schubert polynomials}.

\begin{definition} Given $v \in \I(S_{n+1})$, $y \in \Ifpf(S_{n+1})$, and $z \in \I_\ast(S_{n+1})$ let
\[ 
\widehat\fkS^\AI_v =  2^{\kappa(v)}\sum_{w   \in \cAA(v)}  \fkS_{w^{-1}},
\qquad
\widehat\fkS^\AII_y=   \sum_{w   \in \cAfpfA(y)}\fkS_{w^{-1}},
\qquad
\widehat\fkS^\AIII_z =  \sum_{w    \in \cAA_\ast(z)} \fkS_{w^{-1}}.
\]
\end{definition}

\begin{definition}
Next, given $y \in\I(\W_n)$ and $z \in \Ifpf(\W_n)$ let
\[ 
\widehat\fkS^\BI_y =  \sum_{w    \in \cAB(y)} 2^{\nu(y)+\ell_0(w)} \fkSB_{w^{-1}},
\qquad
\widehat\fkS^\CI_y =  \sum_{w  \in \cAC(y)} 2^{\kappa(y)-\ell_0(w)}\fkSC_{w^{-1}},
\qquad
\widehat\fkS^\CII_z =  \sum_{w    \in \cAfpfC(z)} \fkSC_{w^{-1}}
\]
\end{definition}

Recall that $ \cAB(y)=  \cAC(y)=\cA(y)$ for $y \in \I(\W_n)$ and $\ell_0(w)=\ell_0(w^{-1})$,
so we also have
\be
\widehat\fkS^\BI_y = 2^{\nu(y)} \sum_{w    \in \cA(y)} \fkSC_{w^{-1}}
\qquand
\widehat\fkS^\CI_y =  2^{\kappa(y)}\sum_{w  \in \cA(y)} \fkSB_{w^{-1}}.
\ee
Define $\Jfpf(\WD_n)$ to be the set of ``negated-point-free'' and ``fixed-point-free'' elements
\be
\Jfpf(\WD_n) = 
\begin{cases}
\Bigl\{z \in \Ifpf(\WD_n) : \neg(z) = 0\text{ and }\ell_0(z) \equiv 0 \modu 4)\Bigr\} &\text{if $n$ is even} \\[-10pt]\\
\Bigl\{ z \in \WD_n : t_0z \in \Ifpf(\W_n)\text{ and }\neg(t_0z)=1 \Bigr\}&\text{if $n$ is odd}.
\end{cases}
\ee
This is the set of $z$ in $ \I_\DIII^{(n)}$ (when $n$ is even) or $ \I_\DIII^{(n)}$ (when $n$ is odd) with $\neg(z)\leq 1$.

\begin{definition}
Finally, for $v \in \I(\WD_{n})$, $y \in \I_\diamond(\WD_{n})$, and $z \in \Jfpf(\WD_{n})$ let
\[\textstyle
\widehat\fkS^\DI_v =  2^{\delta(v)}\sum_{w    \in \cAD(v)} \fkSD_{w^{-1}},
\qquad
\widehat\fkS^\DII_y =  2^{\delta_\diamond(y)} \sum_{w    \in \cAD_\diamond(y)} \fkSD_{w^{-1}} ,
\qquad
\widehat\fkS^\DIII_z =  \sum_{w    \in \cAfpfD(z)} \fkSD_{w^{-1}}.
\]
%where 
%$
%\delta(z) =
% \tfrac{1}{2}|\{ i \in [\pm n] : z(i)<i\}| 
% $
% and
% $
% \delta_\diamond(z)=
% \tfrac{1}{2}|\{ i \in [\pm n] : t_0z(i)<i\}| -\tfrac{1}{2} .
% $
 \end{definition}

Involution Schubert polynomials
in types AI, AII, and AIII have been extensively studied previously \cite{BurksPawlowski,HMP6,Pawlowski,WY0,WY}.
All 9 families consist of homogeneous polynomials (in type A) or power series (in types B, C, and D)
with the degrees listed in the last column of Table~\ref{extended-brion-tbl}.
 These objects are geometrically meaningful in the following sense.

The classical type Schubert polynomials $\fkS_w$ ($w \in S_{n+1}$), $\fkSB_w$ ($w \in \W_n$), $\fkSC_w$ ($w \in \W_n$),
and $\fkSD_w$ ($w\in \WD_n$) may be identified with the $\ZZ$-basis of Schubert classes $X_w$ from \eqref{Xw-eq}
for the cohomology ring of  $G/B$ for $G=\GL(n+1)$, $\SO(2n+1)$, $\Sp(2n)$, and $\SO(2n)$, respectively \cite[\S2]{BilleyHaiman}.
Via this connection, we obtain the following statement as a special case of Brion's formula \eqref{brion}.

\begin{proposition}\label{inv-schubert-prop}
Fix a type $\type \in \{\AI,\AII,\AIII,\BI,\CI,\CII,\DI,\DII,\DIII\}$  and assume $(G,K)$ is of balanced type $\type$.
Choose an alternating index $\gamma \in \Gamma^G_K$.
Then $ [Y_\gamma] = \widehat\fkS^\type_z$ for $z = \phiRS(\gamma) \in \I^G_K$.
\end{proposition}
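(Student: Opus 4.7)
The plan is to specialize Brion's cohomology formula \eqref{brion} using Propositions~\ref{alternating-prop} and \ref{dgamma-prop}, and then match the result against each type-specific definition of $\widehat\fkS^X_z$. Since $\gamma$ is alternating, Proposition~\ref{alternating-prop} gives $\cW^G_K(\gamma) = \cA^G_K(z)$, and Proposition~\ref{dgamma-prop} gives $d_\gamma(w) = d_z(w)$ for all $w$ in this common set. Formula \eqref{brion} therefore becomes
\[\textstyle [Y_\gamma] = \sum_{w \in \cA^G_K(z)} 2^{d_z(w)} [X_{w^{-1}}].\]
After substituting $u = w^{-1}$ and identifying each class $[X_u]$ with the Schubert polynomial $\fkS_u$, $\fkSB_u$, $\fkSC_u$, or $\fkSD_u$ appropriate to the ambient group $G$, the claim $[Y_\gamma] = \widehat\fkS^X_z$ reduces to a purely combinatorial identity of sums of Schubert polynomials.

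The first step in verifying this identity is to note that the balanced hypothesis forces the parameter $k$ attached to $\cA^G_K(z)$ in Table~\ref{extended-brion-tbl} to take its minimum value in each type (which is $0$ in most cases and $1$ in DII). The auxiliary elements $\omega_k^{n+1}$, $\hat\sigma_k^n$, $\sigma_k^n$ used in Definitions~\ref{AA-def} and~\ref{AA-fpf-def} then reduce either to the identity or to $t_1 t_3\cdots t_{n-1}$, so the corresponding sets $\cA^G_K(z)$ simplify to precisely the ambient atom sets appearing in the definition of $\widehat\fkS^X_z$. For types AI, AII, AIII, CII, DI, DII, and DIII, Table~\ref{extended-brion-tbl} moreover shows that $d_z$ is constant, with value equal to the scalar exponent $\kappa(z)$, $\delta(z)$, $\delta_\diamond(z)$, or $0$ appearing in $\widehat\fkS^X_z$, so the identity follows immediately.

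The step that I expect to require the most care is the verification in types BI and CI, where $d_z(w)$ depends nontrivially on $w$ through $\ell_0(w)$: one has $d_z(w) = \nu(z) + \ell_0(w)$ in BI and $d_z(w) = \kappa(z) - \ell_0(w)$ in CI. In these types the cohomology basis on $G/B$ is $\{\fkSB_u\}$ for $G=\SO(2n+1)$ and $\{\fkSC_u\}$ for $G=\Sp(2n)$, but $\widehat\fkS^\BI_z$ is defined using $\fkSC$ and $\widehat\fkS^\CI_z$ using $\fkSB$. The Billey--Haiman identity $\fkSC_u = 2^{\ell_0(u)} \fkSB_u$, combined with $\ell_0(u^{-1}) = \ell_0(u)$, exactly absorbs the $w$-dependent factor $2^{\pm \ell_0(u)}$ from Brion's formula into the change of Schubert basis, leaving only the pure scalar $2^{\nu(z)}$ or $2^{\kappa(z)}$ matching the exponent in $\widehat\fkS^\BI_z$ or $\widehat\fkS^\CI_z$. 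This completes the matching in all nine cases.
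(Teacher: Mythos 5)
Your proposal is correct and follows essentially the same route as the paper: specialize Brion's formula \eqref{brion} via Proposition~\ref{alternating-prop} and Table~\ref{extended-brion-tbl}, identify the Schubert classes with the corresponding classical Schubert polynomials, and in types BI and CI absorb the $w$-dependent part of $d_z$ into the change of basis $\fkSC_{u} = 2^{\ell_0(u)}\fkSB_{u}$ (your remark that $\ell_0(u^{-1})=\ell_0(u)$ makes this absorption explicit). Your reading of the DII case, where the minimal admissible parameter is $k=1$ so that $d_z(w)=\delta_\diamond(z)$ and the atom set becomes $\cAD_\diamond(z)$, is the intended interpretation and matches the definition of $\widehat\fkS^\DII_z$.
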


\begin{proof}
Using Table~\ref{extended-brion-tbl} and Proposition~\ref{alternating-prop},
one checks that $ \widehat\fkS^\type_z$ coincides with 
the right hand side of \eqref{brion} when the Schubert class $[X_{w^{-1}}]$ 
is replaced by $\fkS_{w^{-1}}$, $\fkSB_{w^{-1}}$, $\fkSC_{w^{-1}}$, or $\fkSD_{w^{-1}}$ as appropriate.
%$\sum_{w\in \cABrion(\gamma)} 2^{d_\gamma(w)} [X_{w^{-1}}]$
%Specifically, notice that in types BI and CI the parts of the formula for $d_\gamma(w)=d_z(w)$ that depend on $w$
%are absorbed into the summands $ \fkSC_{w}$ and $ \fkSB_{w} $ in the respective definitions of $\widehat\fkS^\BI_z$ and $\widehat\fkS^\CI_z$.
\end{proof}

Billey and Haiman \cite{BilleyHaiman}, extending an earlier construction in \cite{Stanley}, also define \defn{Stanley symmetric functions of types A, B, C, D},
which we write as 
$F_w$ (for $w \in S_{n+1}$), $\FB_w$ (for $w \in \W_n$), $\FC_w = 2^{\ell_0(w)} \FB_w$ (for $w \in \W_n$), and $\FD_w$ (for $w \in \WD_n$).
These objects are all homogeneous symmetric power series in commuting variables.

\begin{definition}
For each type 
$
\type$
as in Proposition~\ref{inv-schubert-prop}
define 
the \defn{involution Stanley symmetric function}
$
\widehat F^\type_z 
$
by replacing the letter ``$\fkS$'' with ``$F$'' in the definition of $\widehat\fkS^\type_z$.
\end{definition}

Results in \cite{BilleyTransitions,BilleyHaiman,BL,TKLam} show that 
Stanley symmetric functions of classical type are closely related to the well-known \defn{Schur $P$-functions} $P_\lambda$ and \defn{Schur $Q$-functions} $Q_\lambda$. These symmetric functions
 are indexed by strict partitions $\lambda=(\lambda_1>\lambda_2>\dots>\lambda_m>0)$.
See \cite[\S2.1]{MP}
for a precise definition of $Q_\lambda$ and its skew generalization $Q_{\lambda/\mu}$.
One then can define 
\be P_\lambda = 2^{-\ell(\lambda)} Q_\lambda\quand S_\lambda = Q_{(\lambda+\delta)/\delta}\ee
where if $\lambda$ has $m$ nonzero parts, then we set $\ell(\lambda)=m$ and $\delta = (m,m-1,\dots,2,1)$.

Billey and Haiman \cite{BilleyHaiman} prove that $\FB_w$ and $\FD_w$ are \defn{Schur $P$-positive}
while $\FC_w$ are \defn{Schur $Q$-positive}.
The same positivity properties hold for  $\widehat F^\AII_z$ and $\widehat F^\AI_z$, respectively by  \cite{HMP4,HMP5}.
These phenomena do not extend to all families of involution Stanley symmetric functions,
but there are still interesting (known and conjectural) identities for these power series at specific involutions.

\begin{theorem}[See \cite{HMP4,HMP5,MP}]
Let $w_0 \in W$ be the longest element in the Weyl group.
\ben
\item[(a)] If $W=S_{n+1}$ then $\widehat F^\AI_{w_0} = Q_{(n,n-2,n-4,\dots)}$.

\item[(b)] If $W=S_{n+1}$ with $n$ odd then $\widehat F^\AII_{w_0} = P_{(n-1,n-3,\dots,4,2)}$.

\item[(c)] If $W=S_{n+1}$  then $\widehat F^\AIII_{w_0} = P_{(n,n-2,n-4,\dots)}$.

\item[(d)] If $W=\W_{n}$  then $\widehat F^\BI_{w_0} = S_{(n, n-1,\dots,2,1)}$.

\een
%More strongly, when defined, every $\widehat F^\AI_z$ is Schur $Q$-positive and every $\widehat F^\AII_z$ is Schur $P$-positive.

\end{theorem}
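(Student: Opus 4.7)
The plan is to treat the four identities by interpreting each involution Stanley symmetric function as a generating function for the atomic words attached to $w_0$ and matching this with the tableau formula for the target Schur-type function on the right-hand side. Parts (a), (b), and (c) are recovered from the literature: part (a) is \cite{HMP5} via a bijection between involution words of the longest permutation $w_0 \in S_{n+1}$ and standard shifted tableaux of staircase shape $(n, n-2, n-4, \dots)$; part (b) is the analogous fixed-point-free bijection from \cite{HMP4}; and part (c) is an application of the main results of \cite{MP} at the longest $\ast$-twisted involution in $S_{n+1}$.

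For part (d), the novel statement, the plan is to apply Theorems~\ref{main-thm} and \ref{main-thm2} in type BI to $w_0 = \overline{1}\overline{2}\cdots\overline{n} \in \W_n$. First, I would observe that $\Cyc^\pm(w_0) = \varnothing$ and $\Neg(w_0) = [n]$, so $\cM_\CI^{(n)}(w_0) = \NCSP([n])$ and
\[
\cAB(w_0) \;=\; \bigsqcup_{M \in \NCSP([n])} \cEABrion(w_0, M),
\]
where each piece is the graded $\precsim_{\CI}^{(n)}$-interval above the generator $\bot_{\CI}^{(n)}(w_0, M)$ from Definition~\ref{CI-gen-def}. The central step is to exhibit a weight-preserving bijection between the disjoint union on the right and the set of primed semistandard shifted tableaux of skew shape $(2n, 2n-2, \dots, 2)/(n, n-1, \dots, 1)$. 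Combined with the Billey--Haiman tableau formula for $\FC_w$ summed over its reduced words, this will identify $\widehat F^\BI_{w_0}$ with the skew Schur $Q$-function $Q_{(2n, 2n-2, \dots, 2)/(n, n-1, \dots, 1)}$, which by definition equals $S_{(n, n-1, \dots, 2, 1)}$.

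The hard part will be constructing this bijection. My approach is to lean on the well-nested family analysis in Section~\ref{shape-prelim-sect}: Proposition~\ref{Pdes-prop} supplies a canonical minimum in each shape interval, and the nested descent combinatorics developed there should allow a recursive encoding of each atom $w \in \cEABrion(w_0, M)$ by a standard shifted tableau of the skew staircase shape, with the shape operator $\sh_{\CI}^{(n)}$ from Section~\ref{CI-sect} tracking the outer profile determined by $M$. Extending from standard to primed semistandard tableaux (to accommodate the $\FC_w$ generating function rather than just word counts) should then follow from a type-C adaptation of Edelman--Greene insertion, in the spirit of the references cited in \cite{HM}. An alternative route, if the explicit insertion proves cumbersome, is to sum instead over shapes $M$ using the shifted Pieri rule and induct on $n$, reducing to the case $n-1$ by conditioning on the block containing the element $n$.
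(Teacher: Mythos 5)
Your treatment of part (d) has a genuine gap. The paper's proof of (d) is a one-line citation: the identity $\widehat F^{\BI}_{w_0} = S_{(n,n-1,\dots,2,1)}$ is exactly \cite[Thm.~1.6]{MP}. You propose instead to reprove it from Theorems~\ref{main-thm} and~\ref{main-thm2}: the decomposition of $\cAB(w_0)$ into shape intervals indexed by $\NCSP([n])$ is available (and your identification $S_{(n,n-1,\dots,1)} = Q_{(2n,2n-2,\dots,2)/(n,n-1,\dots,1)}$ is correct), but everything then hinges on a weight-preserving correspondence between these atoms and primed shifted tableaux of the skew staircase, which you explicitly leave unconstructed (``the hard part will be constructing this bijection''). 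Nothing in Sections~\ref{shape-prelim-sect} or~\ref{CI-sect} supplies such a correspondence: each summand $\FC_w$ is itself a symmetric function, so you would need a shifted Edelman--Greene/Kra\'skiewicz-type insertion applied to the reduced words of all atoms of $w_0$ simultaneously, compatibly with the shape decomposition, and producing that is essentially the entire content of \cite[Thm.~1.6]{MP}. As written, (d) is a research plan rather than a proof. (A minor point: you invoke $\sh_{\CI}^{(n)}$, $\bot_{\CI}^{(n)}$, $\precsim_{\CI}^{(n)}$ while claiming to work in type BI; this is harmless only because for balanced BI one has $k=\tfrac{|p-q|-1}{2}=0$, where the BI and CI constructions coincide.)

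On (a)--(c) you take the same route as the paper (quote the literature), but two attributions are off and one step is missing. The involution case (a) is \cite[Cor.~1.14]{HMP4}, while the fixed-point-free case (b) comes from \cite{HMP5}; you have these swapped. More importantly, (c) is not an application of \cite{MP}, which concerns signed involutions and is precisely what the paper cites for (d). The paper instead derives (c) from \cite[Cor.~3.9]{HMP2} together with an extra argument your sketch omits: since $\widehat F^{\AIII}_{w_0}$ sums $F_w$ over permutations $w$ whose \emph{inverses} lie in the atom set (the paper's conventions invert an index), one also needs that the linear map $F_w \mapsto F_{w^{-1}}$ fixes every Schur $P$-function (see \cite[\S 5]{Mar2020}) in order to conclude $\widehat F^{\AIII}_{w_0} = P_{(n,n-2,n-4,\dots)}$.
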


\begin{proof}
Part (a) 
is equivalent to   $\sum_{w   \in \cAA(w_0)} F_{w^{-1} } =P_{(n,n-2,n-4,\dots)}$ which 
is \cite[Cor.~1.14]{HMP4}. Part (b) follows from the discussion after \cite[Thm.~1.4]{HMP5}.
By \cite[Cor.~3.9]{HMP2} we have $\widehat F^\AIII_{w_0} = \sum_{w    \in \cAA_\ast(w_0)} F_{w^{-1} }=\sum_{w   \in \cAA(w_0)} F_{w}$.
Part (c) follows as
 the linear involution of the ring of symmetric functions sending $F_w \mapsto F_{w^{-1}}$ fixes each Schur $P$-function (see \cite[\S5]{Mar2020}).
%in view of
%\cite[Lem.~5.3]{Mar202} (with $\beta=0$) and
% \cite[Cor.~5.10]{Mar2020}.
 Part (d) is  \cite[Thm.~1.6]{MP}.
\end{proof}

We mention some other conjectures.
Write $w_0 \in W$ for the longest element in the Weyl group.

\begin{conjecture}
If $W=\W_{n}$  then $\widehat F^\CI_{w_0} = Q_{(n,n-2,n-4,\dots)}Q_{(n-1,n-3,n-5,\dots)}$.
\end{conjecture}

In the statements below, let $\delta(n) = (n,n-1,\dots,2,1)$.
If $\lambda = (\lambda_1>\dots>\lambda_m>0)$ is a strict partition and 
$a \in \{\lambda_1,\dots,\lambda_m\}$ then let $\lambda\ominus a$
be the partition formed from $\lambda$ by removing $a$.

\begin{conjecture}
If $W=\W_{n}$  then $\widehat F^\CII_{w_0} = S_{\delta(n)\ominus a}$ for $a= \lceil \frac{n}{2}\rceil$.
\end{conjecture}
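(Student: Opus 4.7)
The plan is to prove this conjecture by extending the combinatorial approach that establishes the companion identity $\widehat F^\BI_{w_0} = S_\delta$ in \cite{MP}. Let $\iota = \sigma_{k \times \fpf}^n$ with $k = 0$ if $n$ is even and $k = 1$ if $n$ is odd, so that $\iota \in \Ifpf(\W_n)$ has $\cAfpfC(w_0) = \cA(\iota, w_0)$ by Definition~\ref{AA-fpf-def}(b). By Lemma~\ref{dem-lem} applied with $y = \iota$ and $z = w_0$, choosing any minimal-length $v \in \cA(\iota)$ gives a bijection $u \mapsto uv$ from $\cAfpfC(w_0)$ onto $\{ w \in \cA(w_0) : \ell(wv^{-1}) = \ell(w) - \ell(v)\}$. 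Thus each $u \in \cAfpfC(w_0)$ arises as the ``quotient'' $wv^{-1}$ obtained by deleting a reduced $v$-suffix from a type CI atom $w \in \cA(w_0)$, which provides a concrete combinatorial link to the known expansion $\sum_{w^{-1} \in \cA(w_0)} \FC_w = S_\delta$.

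The next step is to exploit the parametrization of $\cA(w_0)$ from Theorem~\ref{main-thm2} in type CI: each $w \in \cA(w_0)$ lies in a graded interval $\cA_{\CI}^{(n)}(w_0, M)$ above a generator $\bot_{\CI}^{(n)}(w_0, M)$ for some $M \in \NCSP([n])$. A careful choice of reduced expression for $v$, tailored to the factorization $\sigma_{k \times \fpf}^n = \sigma_k^n \cdot t_{k+1} \cdot t_{k+3} \cdots t_{n-1}$, should allow the constraint ``$v$ is a reduced suffix of $w$'' to be translated into a selection of allowed matchings $M$ together with a descent constraint on the one-line representation of $w$. Under the shifted insertion bijection underlying $\widehat F^\BI_{w_0} = S_\delta$, which sends $\cA(w_0)$ to standard shifted tableaux generating $Q_{(\delta + \delta)/\delta}$, this ``quotient by $v$'' operation should correspond to stripping a fixed block of cells from each tableau, leaving a shifted skew tableau generating $S_{\delta \ominus a}$.

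The main obstacle is identifying which cells get stripped, and in particular showing that they form the single row of length $a = \lceil n/2 \rceil$ in $\delta$. This requires understanding exactly how the distinguished reduced expression for $v$ embeds into the bijection of \cite{MP}, and is complicated by the fact that $\iota$ mixes signed and unsigned reflections in a nontrivial way. I would begin by computing both sides for $n \leq 5$ to pin down the precise row being removed and to test whether it depends on $n\bmod 2$ in the expected way, then attempt a shifted jeu-de-taquin or promotion argument that slides the $v$-contribution into a single removable row of the shifted shape. A viable fallback, should the direct bijective approach prove too delicate, is to derive a transition recurrence for $\widehat F^\CII_{w_0}$ as $n$ varies, analogous to the transition identities established in \cite{HMP4, HMP5} for other types, and to match it term-for-term against the corresponding Pieri-type step for Schur-$S$ functions.
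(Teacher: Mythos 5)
There is a genuine gap here, and in fact there has to be: the statement you are addressing is left as an open conjecture in the paper (it generalizes \cite[Conj.~7.15]{HM} and is supported only by computations for $n\leq 7$), and your proposal does not close it — it is a research plan whose decisive step is exactly the unproven content of the conjecture. Concretely: the reduction via Lemma~\ref{dem-lem} is fine, and it does realize $\cAfpfC(w_0)=\cA(\sigma^n_{k\times\fpf},w_0)$ inside $\cA(w_0)$ via $u\mapsto uv$; but this does not give the "concrete combinatorial link" you claim to the identity $\widehat F^{\BI}_{w_0}=S_\delta$, because the CII sum is $\sum_u \FC_{u^{-1}}$ over the smaller set while the BI sum is $\sum_w \FC_{w^{-1}}$ over all of $\cA(w_0)$, and the Stanley symmetric functions $\FC_u$ and $\FC_{uv}$ are not related term-by-term in any simple way even when $\ell(uv)=\ell(u)+\ell(v)$. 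So restricting the index set does not amount to "stripping a fixed block of cells" from tableaux contributing to $S_\delta$.

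Two further points make the plan, as written, not executable. First, you presuppose a "shifted insertion bijection underlying $\widehat F^{\BI}_{w_0}=S_\delta$" sending atoms to standard shifted tableaux; no such bijection is supplied by \cite{MP} (whose proof of \cite[Thm.~1.6]{MP} proceeds by other means), and $S_\delta=Q_{(\delta+\delta)/\delta}$ is a skew Schur $Q$-function, so its tableau model is not of the form your stripping argument assumes. Second, the crucial claim — that the constraint "$v$ is a reduced suffix of $w$" translates, under some such bijection, into removing precisely the row of length $a=\lceil n/2\rceil$ from $\delta$ — is stated as the "main obstacle" and never argued; your proposed remedies (small-case computation, a hoped-for jeu-de-taquin argument, or an unspecified transition recurrence matched against a Pieri rule) are directions, not proofs. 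Until one of those is carried out in full, the statement remains exactly where the paper leaves it: a conjecture.
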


The preceding conjecture generalizes  \cite[Conj.~7.15]{HM}.
The next conjecture similarly lifts enumerative identities predicted in \cite[Conj.~6.2]{MP}.
These conjectures have been tested for $n\leq 7$.

\begin{conjecture}
 If $W=\WD_{n}$  then 
$\widehat F^\DI_{w_0} = S_{\delta(n)\ominus a}$ for $a= \lceil \frac{n}{2}\rceil$
and
$\widehat F^\DII_{w_0} =  S_{\delta(n)\ominus b}$ for $b= \lceil \frac{n+1}{2}\rceil$.
\end{conjecture}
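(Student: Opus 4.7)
The plan is to mimic the proof of part (d) of the preceding theorem, namely $\widehat F^\BI_{w_0} = S_{(n,n-1,\dots,2,1)}$, which is \cite[Thm.~1.6]{MP}. That theorem is established via a careful analysis of the atoms in $\cAB(w_0) \subseteq \W_n$ combined with Billey--Haiman's expansion of $\FC_w$ into Schur $Q$-functions. The same overall strategy should apply in type D with two principal complications: the diagram automorphism $\diamond$ in type DII, and the absence of the ``extra'' rank-one generator $t_0$ in $\WD_n$ (so no direct analog of the enumerative lemmas used in type B).

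First I would make the atom sets $\cAD(w_0)$ and $\cAD_\diamond(w_0)$ completely explicit. Here Theorem~\ref{d-equiv-lem} together with Corollary~\ref{well-cor} is the key tool: each atom set is a single equivalence class for the symmetric closure of $\ll_\D$, so one can describe it by locating a canonical minimum via Proposition~\ref{Pdes-prop} and then parametrize the rest by the $\precsim_{\DI}^{(n,n)}$-interval above that minimum. For $w_0$, whose one-line representation is $\bar 1\bar 2\cdots\bar n$ when $n$ is even and $1\bar 2\cdots\bar n$ when $n$ is odd, I expect the minimum atom to have a clean staircase description, and the whole atom set to biject with standard shifted tableaux on the staircase shape $\delta$. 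In type DII one must also track how the involution $\diamond$ permutes atoms, which will introduce a $\ZZ/2$-quotient accounting for the different value of $a$ versus $b$.

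The bulk of the work will then consist in substituting Billey--Haiman's Pfaffian/tableau formula for $\FD_w$ into $\widehat F^\DI_{w_0}$ and $\widehat F^\DII_{w_0}$, summing over all atoms, and collapsing the result into a skew Schur $Q$-function of the form $Q_{\mu/\nu}$. To match this with $S_{\delta\ominus a} = Q_{(\delta\ominus a + \delta)/\delta}$, one needs to identify the resulting shape combinatorially; I would guess the identification is forced by the fact that $\delta\ominus a + \delta = (2n-1, 2n-3, \dots)$ with exactly one value skipped, which should correspond precisely to the one ``deficient row'' that appears in the staircase enumeration of atoms.

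The main obstacle will be the type DII case. Unlike types BI and DI, the twist $\Theta=\diamond$ changes the Coxeter structure in a way that is not fully parallel to DI, and the cases split by the parity of $n$. In addition, $\FD_w$ is not known to be Schur $P$-positive in general, so the argument cannot rely on an all-purpose positivity theorem; instead, the cancellation between atoms must be made explicit, most naturally through a sign-reversing involution on shifted tableaux of staircase shape (mimicking \cite[\S5--\S6]{MP}). I would first verify both identities computationally for $n \leq 7$, both as a sanity check and as a way to read off the precise bijection; once the bijection is guessed, an induction on $n$ using a standard branching/transition rule for $\FD_w$ (analogous to Billey's transitions) together with the recursion provided by Lemma~\ref{Lambda-lem} should close the argument.
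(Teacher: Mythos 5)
The statement you are addressing is a conjecture, not a theorem: the paper offers no proof of $\widehat F^\DI_{w_0} = S_{\delta\ominus a}$ or $\widehat F^\DII_{w_0} = S_{\delta\ominus b}$, only the report that both identities have been checked by computer for $n\leq 7$ (and the remark that they lift enumerative predictions from \cite[Conj.~6.2]{MP}, which is itself open). So there is no argument in the paper to compare yours against, and your submission should be judged on its own terms --- and on those terms it is a research plan rather than a proof. Every step that would actually establish the identity is left as an expectation or a guess: that the minimal atom of $\cAD(w_0)$ (resp.\ $\cAD_\diamond(w_0)$) has a ``clean staircase description,'' that the full atom set bijects with standard shifted tableaux of shape $\delta$, that the diagram automorphism $\diamond$ induces exactly the $\ZZ/2$-quotient accounting for $a$ versus $b$, that substituting Billey--Haiman's formula for $\FD_w$ and summing over atoms ``collapses'' to a single skew Schur $Q$-function, and that the resulting shape is $(\delta\ominus a+\delta)/\delta$. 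None of these claims is argued, and the last two are essentially a restatement of the conjecture itself. The tools you invoke from the paper (Theorem~\ref{d-equiv-lem}, Corollary~\ref{well-cor}, Proposition~\ref{Pdes-prop}, Lemma~\ref{Lambda-lem}) describe the order structure and shapes of atom sets; they do not by themselves produce any Schur $P$/$Q$/$S$ expansion of a sum of $\FD_w$'s, and no analog of the sign-reversing involution or transition recursion you allude to is constructed.

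A further caution about the intended strategy: the type B model you want to imitate, \cite[Thm.~1.6]{MP}, does not proceed by a naive ``atoms $\leftrightarrow$ shifted tableaux of shape $\delta$'' bijection, and $\FD_w$ is only known to be Schur $P$-positive, which does not directly control the $S_\lambda$ (skew-$Q$) expansion you need; the cancellations required to produce $S_{\delta\ominus a}$ are precisely the hard content. In type DII you would also need to handle the two parity regimes of $n$ and the twisted atom set $\cAD_\diamond(w_0)$, for which the paper's explicit structural results (Sections~\ref{esi-sect}--\ref{DII-sect}) are stated for $\cAD(z:k)$ and $\cAD_\diamond(z:k)$ but give no tableau enumeration. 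In short, the proposal identifies a plausible line of attack and correctly flags DII as the hard case, but as written it contains no proof of either identity; the conjecture remains open.
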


The set $\Jfpf(\WD_n)$ does not contain the longest element of $\WD_n$ when $n\geq 2$.
However, as a substitute, it contains a (non-unique) element $\upsilon^+_0 \in \WD_n$ of maximal length given by
\be
\upsilon^+_0 =
\begin{cases} \bar 2\hs \bar 1\hs \bar4\hs \bar3\hs \bar6\hs \bar5 \cdots \overline{n} \hs\hs \overline{n-1}&\text{when $n\equiv 0 \modu 4)$}\\[-12pt]
\\
2\hs 1\hs \bar4\hs \bar3\hs \bar6\hs \bar5 \cdots \overline{n} \hs\hs \overline{n-1}&\text{when $n\equiv 2 \modu 4)$}\\[-12pt]
\\
\bar 2\hs 1\hs \bar4\hs \bar3\hs \bar6\hs \bar5 \cdots \overline{n-2} \hs\hs \overline{n-1}\hs\hs \overline{n}& \text{when $n$ is odd}.
 \end{cases}
\ee
The following conjectures have been tested for $n\leq 7$.

\begin{conjecture}
If $n$ is even then 
$\widehat F^\DI_{\upsilon^+_0}=S_{\delta(n)}$ and if $n$ is odd then 
$\widehat F^\DII_{\upsilon^+_0}=S_{\delta(n)}$.
\end{conjecture}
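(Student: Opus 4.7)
The plan is to model the argument on \cite[Thm.~1.6]{MP}, which proves the type B analogue $\widehat F^\BI_{w_0} = S_\delta$, and adapt it to the type D setting using the concrete description of atoms developed in Sections~\ref{esi-sect}--\ref{esg-sect}. I would begin by verifying that $\upsilon^+_0$ is length-maximal in $\Jfpf(\WD_n)$ and computing $\delta(\upsilon^+_0) = \lfloor n/2\rfloor$ for $n$ even (and $\delta_\diamond(\upsilon^+_0) = \lfloor n/2\rfloor$ for $n$ odd), so that the prefactor appearing in both $\widehat F^\DI_{\upsilon^+_0}$ and $\widehat F^\DII_{\upsilon^+_0}$ is $2^{\lfloor n/2\rfloor}$. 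Because $\neg(\upsilon^+_0) = 0$ when $n$ is even and $\neg(t_0\upsilon^+_0) = 1$ when $n$ is odd, the parameter $k = |p-q|/2 = 0$ in both cases and $\cM_\DI^{(n,n)}(\upsilon^+_0) = \cM_\DII^{(n,n)}(\upsilon^+_0) = \{\varnothing\}$, so Theorems~\ref{main-thm} and \ref{main-thm2} reduce the problem to understanding a single graded interval under $\precsim^{(n,n)}_\DI$.

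Using Propositions~\ref{d1-prop0} and \ref{d1-prop1}, one sees that the minimum of this interval is the explicit word $\langle 2\bar 1\, 4\bar 3\cdots n\overline{n-1}\rangle_{\es}$, with an analogous $t_0$-twisted formula in the odd case. The task then becomes to establish the identity
\[
2^{\lfloor n/2\rfloor} \sum_{w^{-1} \in \cAD(\upsilon^+_0)} \FD_w \;=\; S_\delta.
\]
The most promising route is an involution Monk-type recursion: one picks a simple reflection $s$ for which $\upsilon^+_0 s \neq \upsilon^+_0$ and $\ellhat(s \circ \upsilon^+_0 \circ s) > \ellhat(\upsilon^+_0)$, then applies a type D analogue of the transition equations of \cite{WY0,WY,HMP2} to relate $\widehat F^\DI_{\upsilon^+_0}$ to $\widehat F^\DI_y$ for shorter $y \in \Jfpf(\WD_{n-1})$. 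On the symmetric function side, the shifted Jacobi--Trudi identity for $S_\delta$ supplies a parallel peel-off recursion for staircase shapes, and an induction comparing the two from the base case $\widehat F^\DI_1 = 1 = S_\varnothing$ would complete the argument. The odd case should then be reduced to the even case through Lemma~\ref{inc-lem-D-pre} and the embedding $z \mapsto z^\vee$ from Section~\ref{tDIV-sect}, transferring the computation from $\cAD_\diamond(\upsilon^+_0) \subset \WD_n$ to $\cAD$ of a related involution in $\WD_{n+1}$.

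The main obstacle I foresee is the lack of a clean type D Monk rule for involutions. The Hu--Zhang braid-like move \eqref{pcover-eq2} has no type B counterpart, and its presence introduces correction terms in any would-be transition equation for $\widehat F^\DI$ that must be shown to cancel globally. A secondary difficulty is matching the prefactor $2^{\lfloor n/2\rfloor}$ with the combinatorial content of $S_\delta$; I would expect this to require a weight-preserving bijection between a suitable index set for the reduced words of elements of $\cAD(\upsilon^+_0)$ and the marked standard shifted tableaux of staircase shape $\delta$ that enumerate $S_\delta$. Verifying that the correct power of two emerges uniformly across the residue classes $n \bmod 4$, which govern both the $\langle \cdot \rangle_\es$ convention used to define $\bot_\DI^{(n,n)}(\upsilon^+_0,\varnothing)$ and the parity of $\ell_0(\upsilon^+_0)$ dictating membership in $\Jfpf(\WD_n)$, is where I anticipate the bookkeeping to be most delicate.
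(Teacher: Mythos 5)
This statement is not a theorem of the paper: it is stated as a conjecture, supported only by computer checks for $n\leq 7$, so there is no proof of record to compare against, and your proposal does not supply one. What you have written is a research plan whose two central ingredients are exactly the open content of the conjecture. First, the ``involution Monk-type recursion'' / type D transition equation for $\widehat F^{\DI}$ and $\widehat F^{\DII}$ does not exist in the literature you cite (\cite{WY0,WY,HMP2} are type A; \cite{MP} is type B/C), and you yourself note that the Hu--Zhang relation \eqref{pcover-eq2} would introduce correction terms with no argument for why they cancel. Second, the ``weight-preserving bijection'' between reduced words of elements of $\cAD(\upsilon^+_0)$ and marked shifted tableaux of staircase shape, which is what would actually produce $S_\delta$ together with the factor $2^{\lfloor n/2\rfloor}$, is only posited. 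Saying that an induction comparing a hypothesized atom recursion with the Jacobi--Trudi peel-off for $S_\delta$ ``would complete the argument'' assumes both unproven pieces; nothing in Sections~\ref{esi-sect}--\ref{esg-sect} of the paper gets you past the reduction to the single shape class, which is the easy part.

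There is also a small but real slip in the setup for the odd case. In type DII the parameter $k=\frac{|p-q|}{2}=|p-n|$ is necessarily odd, so the relevant value is $k=1$, not $0$ (consistently, $\widehat F^{\DII}_y$ is built from $\cAD_\diamond(y)=\cA_\diamond(\hat\sigma^n_1,y)$ with $\hat\sigma^n_1=1$). Since $\neg(t_0\upsilon^+_0)=1$, the shape set is $\NCSP(X:1)$ with $|X|=1$, i.e.\ a single matching with one trivial block, not $\{\varnothing\}$; note that $\NCSP(X:0)$ would be empty here because $|X|$ is odd. This does not derail the one-interval reduction, but it signals that the parity bookkeeping across $n\bmod 4$ that you flag as delicate is already off at the first step. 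As it stands, the proposal identifies plausible lines of attack but proves nothing beyond what Theorems~\ref{main-thm} and \ref{main-thm2} already give, and the statement should continue to be regarded as open.
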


\begin{conjecture}
It holds that $\widehat F^\DIII_{\upsilon^+_0} = 2^{-c} S_{\delta(n-1)\ominus a}$ for $a=\lceil \frac{n-1}{2}\rceil $ and $c = \lfloor \frac{n-1}{2}\rfloor$.
\end{conjecture}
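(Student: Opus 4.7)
The plan is to reduce the conjecture to a symmetric function identity by using Theorem~\ref{main-thm} and Theorem~\ref{main-thm2} to describe $\cAfpfD(\upsilon^+_0)$ explicitly, then applying Billey--Haiman's Schur $P$-positive expansion of each $\FD_w$. Because the statement gives a closed-form answer of the shape $2^{-c} S_\mu$ with $\mu = \delta \ominus a$ a strict partition, the right-hand side is a single skew Schur $Q$-function $Q_{(\mu+\delta)/\delta}$ up to a power of $2$, so the substance of the conjecture is that the $\WD_{n+1}$-sum defining $\widehat F^\DIII_{\upsilon^+_0}$ collapses to a single shifted-tableau generating function.

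First I would pin down the combinatorics of $\upsilon^+_0$ inside $\WD_{n+1}$: record its cycle type, its fixed/negated-point sets, and its image $\Neg(\upsilon^+_0)$ or $\Neg(t_0\upsilon^+_0)$ according to the parity of $n+1$. In each parity class the maximal element in $\Jfpf(\WD_{n+1})$ has a canonical ``staircase of pairs'' structure, which makes $\cM^\DIII(\upsilon^+_0)$ (or $\cM^\DIV(\upsilon^+_0)$) identifiable with a well-understood family of noncrossing symmetric perfect matchings. Using Theorem~\ref{main-thm} I would then write
\[
\widehat F^\DIII_{\upsilon^+_0} \;=\; \sum_{M \in \cM^\DIII(\upsilon^+_0)} \sum_{w^{-1} \in \cE^\DIII(\upsilon^+_0,M)} \FD_w,
\]
and use Theorem~\ref{main-thm2} to rewrite each inner sum as the generating function of an interval in $(\WD_{n+1}, \precapprox)$ with explicit bottom element $\bot^\DIII(\upsilon^+_0,M)$.

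Next I would translate each $\FD_w$ into Schur $P$-functions via Billey--Haiman, and match this to the expansion $Q_{(\mu+\delta)/\delta} = \sum_\lambda f^\lambda_\mu P_\lambda$ of the skew Schur $Q$-function on the right-hand side. A natural strategy here is to build a shape-preserving bijection between the pairs $(M,w^{-1})$ with $w^{-1} \in \cE^\DIII(\upsilon^+_0,M)$ and the shifted skew tableaux of shape $(\mu+\delta)/\delta$ whose reading word lies in the relevant Coxeter class, with the global factor $2^{-c}$ accounting for the power of $2$ relating $Q$ and $P$ and for the normalization $\FC_w = 2^{\ell_0(w)}\FB_w$ transplanted to type D. An alternative is to check the identity on specializations (e.g. the stable principal specialization, or by equating coefficients of $P_\delta$ and $P_{\delta\ominus a}$) to pin down a unique symmetric function of the right degree and $P$-positivity class, and then invoke a uniqueness result for maximal-length involutions analogous to \cite[Thm.~1.6]{MP}.

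The main obstacle is the last step: the transition from the well-nested intervals produced by $\precapprox$ to the shifted-tableau combinatorics underlying $S_{\delta \ominus a}$. In types AI, AII, AIII, and BI this obstruction is overcome by an involution word insertion algorithm producing shifted tableaux, and the analogue in type DIII for fixed-point-free involutions is not yet available in the literature; constructing such an insertion — or, failing that, a transition-style recurrence on $\widehat F^\DIII_z$ reducing $\upsilon^+_0$ to a shorter maximal element whose symmetric function is known inductively — is where the real work will lie. The factor $2^{-c}$ strongly suggests that exactly $c$ of the $\FD_w$ summands contribute with a doubled multiplicity reflecting the $\ell_0$-statistic, and correctly accounting for this matches naturally with the expected recursion.
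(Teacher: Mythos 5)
This statement is not proved in the paper at all: it is stated as a conjecture, supported only by computer verification for $n\leq 7$, so there is no argument of the paper's to compare yours against. Your proposal, by your own account, is a research plan rather than a proof, and the decisive step is exactly the one you leave open: nothing in the paper or in the literature you cite supplies a type-D fixed-point-free analogue of the insertion algorithms (or of the transition recurrences) that prove the known identities in types AI, AII, AIII, and BI, and without it the reduction via Theorems~\ref{main-thm} and \ref{main-thm2} only re-expresses $\widehat F^\DIII_{\upsilon^+_0}$ as a sum of $\FD_w$ over explicitly described intervals -- it does not show that this sum collapses to $2^{-c}Q_{((\delta\ominus a)+\delta')/\delta'}$. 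The fallback you mention (checking specializations or a few $P$-coefficients and invoking a uniqueness principle analogous to \cite[Thm.~1.6]{MP}) is also not a proof: finitely many specializations do not determine a symmetric function, and no uniqueness statement of the required form is available for this family.

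Two further points in the sketch need correction even as a plan. First, the heuristic that ``exactly $c$ of the $\FD_w$ summands contribute with a doubled multiplicity'' is inconsistent with the definition of $\widehat F^\DIII_z$, in which every $w$ with $w^{-1}\in\cAfpfD(z)$ contributes $\FD_w$ with coefficient exactly $1$; the factor $2^{-c}$ can only arise from the bookkeeping between $Q$- and $P$-expansions (e.g.\ $Q_\lambda=2^{\ell(\lambda)}P_\lambda$ and the Schur $P$-expansions of the $\FD_w$ from \cite{BilleyHaiman}), not from multiplicities in the defining sum. Second, note that the element $\upsilon^+_0\in\Jfpf(\WD_{n+1})$ is explicitly non-unique as a maximal-length element of that set, so any transition-style induction must be checked to be independent of (or compatible with) this choice. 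In short, the conjecture remains open, and your outline, while a reasonable way to attack it, does not close the gap.
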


Finally, define $\upsilon_0 \in \W_n$ to be the analogous fixed-point-free involution
\be
\upsilon_0 =
\begin{cases} \bar 2\hs \bar 1\hs \bar4\hs \bar3\hs \bar6\hs \bar5 \cdots \overline{n} \hs\hs \overline{n-1}&\text{when $n$ is even}\\[-12pt]
\\
\bar 2\hs \bar1\hs \bar4\hs \bar3\hs \bar6\hs \bar5 \cdots \overline{n-2} \hs\hs \overline{n-1}\hs\hs \overline{n}& \text{when $n$ is odd}.
 \end{cases}
\ee
This is an involution of maximal length in $\W_n$ with at most one negated point.
Compute experiments for $n\leq 7$ also support the following conjectures.

%\begin{conjecture}
%Let $\delta = (n,n-1,n-2,\dots,3,2,1)$ and let $w_0 \in W$ be the longest element.
%\ben
%\item[(a)] If $W=\W_{n}$  then $\widehat F^\CII_{\upsilon_0} = S_{\delta}$ and $\widehat F^\CII_{w_0} = S_{\delta\ominus a}$ for $a= \lceil \frac{n}{2}\rceil$.
%
%\item[(b)] If $W=\WD_{n}$  then $\widehat F^\DI_{\upsilon^+_0}  = S_\delta$ when $n$ is even, and 
%$\widehat F^\DI_{w_0} = S_{\delta\ominus a}$ for $a= \lceil \frac{n}{2}\rceil$.
%
%\item[(c)] If $W=\WD_{n}$  then $\widehat F^\DI_{\upsilon^+_0}  = S_\delta$ when $n$ is odd,
%and
%$\widehat F^\DII_{w_0} =  S_{\delta\ominus a}$ for $a= \lceil \frac{n+1}{2}\rceil$.
%
%
%%\item[(d)] If $W=\WD_n$ then $S_\delta = \begin{cases}
%%\widehat F^\DI_{\upsilon^+_0} &\text{if $n$ is even} \\[-12pt]\\
%%\widehat F^\DII_{\upsilon^+_0} &\text{if $n$ is even}.
%%\end{cases}$
%
%%\item[(d)] If $W=\WD_{n}$  then $\widehat F^\DIII_{w_0} = 2^{- \lfloor n/2\rfloor}S_{(n-1, n-2,\dots,3,2,1)}$.
%\item[(d)] If $W=\WD_{n+1}$  then $\widehat F^\DIII_{\upsilon^+_0} = 2^{- \lfloor n/2\rfloor}S_{\delta\ominus a}$ for $a=\lfloor \frac{n+1}{2}\rfloor $.
%
%\een
%\end{conjecture}

\begin{conjecture}\label{pen-conj}
It holds that $\widehat F^\BI_{\upsilon_0} =2^{-c}  \widehat F^\CI_{\upsilon_0}= S_{\delta(n)\ominus c}$ for $c = \lfloor \frac{n}{2}\rfloor$.
\end{conjecture}

The identity $  2^{\lfloor \frac{n}{2}\rfloor}  \widehat F^\BI_{\upsilon_0} = \widehat F^\CI_{\upsilon_0}$  is easy to deduce from \cite[Cors.~5.9 and 5.10]{HM}.

\begin{conjecture}
It holds that $\widehat F^\CII_{\upsilon_0} = S_{\delta(n-1)}$.
\end{conjecture}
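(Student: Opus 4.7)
My plan is to adapt the strategy that established the companion identity $\widehat F^\BI_{w_0} = S_\delta$ in $\W_n$ (Theorem~1.6 of \cite{MP}) to the fixed-point-free symplectic setting. Since both $\widehat F^\BI_{w_0}$ in $\W_n$ and $\widehat F^\CII_{\upsilon_0}$ in $\W_{n+1}$ should equal the same Schur $S$-function $S_\delta$, the most natural first step is to seek a bijection $\Phi : \cAB(w_0) \to \cAfpfC(\upsilon_0)$ between the respective atom sets in $\W_n$ and $\W_{n+1}$ that preserves the Billey--Haiman Stanley symmetric function, i.e.\ $\FC_{\Phi(w)} = \FC_w$ in the ring of symmetric functions. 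Since $\nu(w_0) = 0$, the prefactors in the definitions of $\widehat F^\BI_{w_0}$ and $\widehat F^\CII_{\upsilon_0}$ agree, so such a $\Phi$ would reduce the conjecture immediately to the known theorem.

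To construct $\Phi$, I would use Theorem~\ref{main-thm2} to decompose each atom set as a disjoint union of intervals indexed by matchings (under ${\overset{0}\precsim}$ for BI with $k=0$, and under ${\overset{0}\precapprox}$ or ${\overset{1}\precapprox}$ for CII, depending on the parity of $n+1$). The generators of these intervals are described explicitly in terms of nested descents and cycle data in Section~\ref{BI-sect} and the subsequent section on type CII. I would first match up indexing matchings via an index-shifting correspondence, using that $w_0 \in \W_n$ has maximally negated $\Neg(w_0) = [n]$ while $\upsilon_0 \in \W_{n+1}$ has $\Neg$ either empty or a singleton, so the noncrossing symmetric perfect matchings indexing both sides have parallel extremal structures. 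Then I would extend $\Phi$ across intervals by checking that its restriction to covering relations is compatible with the recursive structure of Billey--Haiman Stanley functions.

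If the direct bijection proves too intricate, my fallback is an induction on $n$ via a transition-type recursion. I would derive an identity expressing $\widehat F^\CII_{\upsilon_0}$ in $\W_{n+1}$ as a sum of $\widehat F^\CII$ values at smaller fixed-point-free involutions obtained by peeling the outermost $2$-cycle of $\upsilon_0$, then verify that the same linear combination recovers the skew-Pieri expansion of $S_\delta = Q_{2\delta / \delta}$ in terms of $S_{\delta'}$ for the appropriate subshapes $\delta'$. The small-$n$ base cases can be verified by direct computation against the shifted-tableau formula for $Q_{2\delta/\delta}$.

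The hardest part will be reconciling the extra $t_0$-structure on the CII side with the implicit factor $2^{\ell_0(w)}$ coming from $\FC_w = 2^{\ell_0(w)} \FB_w$. On the BI side, $w_0$ has $\Neg(w_0) = [n]$ and the reduced words involve all simple reflections symmetrically; on the CII side the fixed-point-free constraint imposed by $\sigma^{n+1}_{0 \times \fpf}$ forbids $t_0$ from appearing freely in certain positions, so any bijection $\Phi$ must carefully track how negative letters in the one-line representations get rearranged and absorbed. If this bookkeeping cannot be carried out uniformly, I would split into cases based on the parity of $n+1$, since $\upsilon_0$ has genuinely different combinatorial structure in the even and odd cases.
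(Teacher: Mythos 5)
There is nothing in the paper to compare your argument against: the statement you are addressing is presented as an open \emph{conjecture}, supported only by computer experiments for $n\leq 7$, so the paper contains no proof of it. Your text, moreover, is a research plan rather than a proof. Its central step --- a bijection $\Phi:\cAB(w_0)\to\cAfpfC(\upsilon_0)$ between atom sets in $\W_n$ and $\W_{n+1}$ satisfying $\FC_{\Phi(w)}=\FC_w$ --- is never constructed, and there is no reason offered why such a map should exist. The identity $\widehat F^\BI_{w_0}=S_\delta$ only forces the \emph{sums} of the Stanley symmetric functions over the two atom sets to agree; it does not force an atom-by-atom matching of the $\FC_w$'s, which is a far stronger statement and is essentially equivalent in difficulty to the conjecture itself. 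Saying you would ``match up indexing matchings'' and ``extend $\Phi$ across intervals'' does not address this: the interval decompositions on the two sides are genuinely different (for $w_0\in\W_n$ one has $\Neg(w_0)=[n]$ and $\binom{n}{\lfloor n/2\rfloor}$ shapes, while for $\upsilon_0\in\W_{n+1}$ the set $\Neg(\upsilon_0)$ is empty or a singleton and there is at most one shape), so there is no parallel combinatorial structure to transport along, and the behaviour of $t_0$ and of the factor $2^{\ell_0(w)}$ in $\FC_w=2^{\ell_0(w)}\FB_w$ --- which you yourself flag as ``the hardest part'' --- is exactly where the argument would have to do real work and is left entirely open.

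The fallback via a transition-type recursion is in the same situation: no recursion for $\widehat F^\CII$ at $\upsilon_0$ is derived, no compatible Pieri-type rule for $S_\delta=Q_{2\delta/\delta}$ is verified, and base cases plus an unproved recursion do not constitute a proof. Also note a parity issue you gloss over: the definition of $\cAfpfC(z)$ used in $\widehat F^\CII$ is tied to the minimal fixed-point-free element $\sigma^{n+1}_{k\times\fpf}$, whose form (and the admissible $k$) changes with the parity of $n+1$, so even the statement of your proposed bijection needs to be split into cases before any construction can begin. As it stands, the proposal identifies plausible strategies but proves nothing beyond what the paper already records as conjectural.
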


%\def\hhline{\\ & & & &&  \\ [-6pt]\hline & & & && \\ [-6pt]}
%\def\gap{\\[-6pt]&&&&&\\}
%\begin{table}[h]
%\begin{center}
%{
%\begin{tabular}{| l | l | l | l | l |  l |}
%\hline&&&&& \\[-6pt]
%Type &  $G$ & Parity of $n$ &  $K=G^\theta$  & $\cI^G_K$ & $\cABrion(z)$
%\hhline
%AI & $\GL(n+1)$ & any & $\O(n+1)$ & $\I(S_{n+1})$ & $\cAA(z)$ 
%\gap
%AII  &  & odd &  $\Sp(n+1)$ &  $\Ifpf(S_{n+1})$ & $\cAfpfA(z)$ 
%\gap
%AIII  %& & any & $\GL(\frac{n}{2}+1)\times \GL(\frac{n}{2})$ \\
%     & & any &  $S(\GL(\lceil\frac{n+ 1}{2}\rceil)\times \GL(\lfloor\frac{n+ 1}{2}\rfloor))$ %for $p=\lceil\frac{n+ 1}{2}\rceil$ and $q=\lfloor\frac{n+ 1}{2}\rfloor$
%     &  $\I_\ast(S_{n+1})$ & $\cAA_\ast(z)$
%\hhline 
%BI & $\SO(2n+1)$ & any &  $S(\O(n+1)\times \O(n))$  & $\I(\W_n)$ & $\cAB(z)$
%\hhline
%CI & $\Sp(2n)$ & any & $\GL(n)$ & $\I(\W_n)$& $\cAB(z)$  
%\gap
%CII  & & even &  $\Sp(n)\times \Sp(n)$ & $\Ifpf(\W_n)$ & $\cAfpfB(z)$ \\
%   & & odd &  $\Sp(n+1)\times \Sp(n-1)$ &  &  
%\hhline
%DI   & $\SO(2n)$ & any &  $ S(\O(n) \times \O(n))$ & $\I(\WD_n)$ & $\cAD(z)$  
%\gap
%DII    &   & any &  $ S(\O(n+1) \times \O(n-1))$ & $\I_\diamond(\WD_n)$  & $\cAD_\diamond(z)$    
%\gap
%DIII &  & any &  $\GL(n)$ & $\cI_{\DIII}^{(n)}$  & $\cAfpfD(z)$  
%\gap
%\hline
%\end{tabular}}
%\end{center}
%\caption{
%Balanced classical types of rank $n$
%}\label{tbl1b}
%\end{table}

\end{document}